\newtheorem{theorem}{Theorem}
\newtheorem{proposition}{Proposition}
\newtheorem{remark}{Remark}
\newtheorem{lemma}{Lemma}
\theoremstyle{definition}
\newtheorem{definition}{Definition}
\newcommand{\C}{\mathbb{C}}
\newcommand{\F}{\mathbb{F}}
\newcommand{\R}{\mathbb{R}}
\newcommand{\Z}{\mathbb{Z}}
\newcommand{\M}{\mathcal{M}}
\newcommand{\cM}{{\overline{\M}}}
\newcommand{\Q}{\mathbb{Q}}
\DeclareMathOperator{\Bl}{Bl}
\newcommand{\vdim}{\text{vdim }}
\newcommand{\tensor}{\otimes}
\newcommand{\CP}{{\mathbb{C}\mathbb{P}}}
\renewcommand{\P}{{\mathbb{P}}}
\newcommand{\Grad}{\nabla}
\newcommand{\oo}[1]{\mathring{#1}}
\title{Integral Arnol'd Conjecture}
\author{Semon Rezchikov}
\date{February 2022}
\begin{document}
	
	\maketitle
	
	\section{Introduction}
	Let $(M^{2n}, \omega)$ be a compact symplectic manifold. The letter $J$ will always denote some compatible almost-complex structure. We will fix a time-dependent Hamiltonian $H = H_t$, $t \in [0,1]$, which generates a nondegenerate Hamiltonian diffeomorphism $\phi_H$. 
	In this document, we prove the following 
	\begin{theorem}[Integral Arnold Conjecture]
		\label{thm:integral-arnold}
		The number of fixed points $|Fix(H)|$ of a nondegenerate Hamiltonian diffeomorphism $\phi_H$ is bounded below by the minimal rank of a cochain complex of free $\Z$ modules with cohomology isomorphic to $H^*(M, \Z)$. 
	\end{theorem}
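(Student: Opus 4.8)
The plan is to show that the usual construction of Hamiltonian Floer cohomology can be carried out over $\Z$ rather than only over $\Q$, and then to read the bound directly off the resulting complex. First I would reduce Theorem \ref{thm:integral-arnold} to the statement that there exists a finite cochain complex $(CF^*(H), d)$ of free $\Z$-modules --- or, for general $M$, of free modules over the integral Novikov ring $\Lambda$ --- whose underlying graded module has a basis in bijection with $\mathrm{Fix}(H)$ and whose cohomology is $H^*(M,\Z)$ (respectively $H^*(M,\Z)\tensor\Lambda$). Granting this, $|\mathrm{Fix}(H)| = \mathrm{rank}\, CF^*(H)$, which is by definition at least the minimal rank of such a complex; over $\Lambda$ one additionally invokes the standard algebraic fact that a finite free $\Lambda$-complex with cohomology $H^*(M,\Z)\tensor\Lambda$ has rank no smaller than the minimal rank of a $\Z$-complex with cohomology $H^*(M,\Z)$ (the Conley--Zehnder grading being $\Z$-valued only after passing to $\Lambda$). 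I treat these algebraic points as known.

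\textbf{Moduli spaces and global charts.} The generators of $CF^*(H)$ are the $1$-periodic orbits of $H$ (equivalently, the fixed points of $\phi_H$), graded by Conley--Zehnder index, and $d$ is the virtual count of isolated points of the compactified trajectory spaces $\cM(x,y)$. The obstruction to doing this over $\Z$ is that $\cM(x,y)$ is not cut out transversally and carries isotropy, so that the classical (Fukaya--Ono, Pardon, \ldots) virtual fundamental chains are defined only rationally. To get around this I would use the global Kuranishi charts $(G,V,E,s)$ for the spaces $\cM(x,y)$ constructed earlier in this paper --- a compact Lie group $G$ acting on a manifold $V$, with a $G$-equivariant vector bundle $E$ and section $s$ such that $s^{-1}(0)/G \cong \cM(x,y)$ --- together with two additional structures carried by these charts: first, a \emph{stable complex structure} on the associated derived orbifold, which is available because the linearized Cauchy--Riemann operators and the deformation spaces of the Floer domains are complex-linear; and second, coherent compatibility of the charts along the codimension-one boundary strata $\cM(x,z)\times\cM(z,y)$, and later with the auxiliary moduli entering the continuation and PSS constructions.

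\textbf{Integral virtual chains: the main difficulty.} The heart of the argument is to extract from such a chart a genuinely \emph{integral} virtual fundamental chain. I would use an FOP-type (isotropy-stratified, ``normally complex'') transversality theory: perturb the equivariant section $s$ inside a class of perturbations that are complex-linear and transverse relative to the isotropy strata, so that the perturbed zero locus is, stratum by stratum, a space carrying a normally complex structure --- and hence, because the Euler/Thom classes of complex bundles are integral and multiplicative, a canonical \emph{integral} fundamental class; in dimension zero this yields an honest signed integer count of Floer trajectories. Feeding the boundary-compatibility of the charts through this construction produces the master equation, so that $d\circ d = 0$ holds over $\Z$ on the nose, not merely after inverting the torsion. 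This step --- producing the count over $\Z$ with $d^2=0$ coherently across all strata and all pairs $(x,y)$ --- is exactly the point at which earlier virtual technology gave only the rational Arnold conjecture, and I expect it to absorb essentially all the work; the remaining labour is orientation and sign bookkeeping in the normally complex setting.

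\textbf{Identifying the cohomology.} Finally I would show $H^*(CF^*(H),d) \cong H^*(M,\Z)$. The standard routes are a continuation map to a $C^2$-small autonomous Hamiltonian, whose integral Floer complex is the Morse complex of $M$, or a PSS-type comparison with the Morse (or singular) cochains of the closed manifold $M$; in either case the comparison maps, the chain homotopies, and the higher coherences are defined by counting auxiliary moduli spaces that carry global Kuranishi charts with stable complex structures of precisely the same kind, so the whole comparison takes place over $\Z$. Organising these maps via the flow-category / acyclic-models formalism then gives a chain homotopy equivalence $CF^*(H) \simeq C^*(M,\Z)$, which completes the proof. The main risk in this last step is combinatorial rather than conceptual: one must arrange the charts to be coherent under all the gluings and forgetful maps that the continuation and PSS moduli require.
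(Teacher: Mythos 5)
Your proposal captures the paper's strategy faithfully: global Kuranishi charts for the compactified Floer trajectory spaces, stable (normally complex) structures on the associated derived orbifolds, FOP-type perturbations transverse to the isotropy stratification to extract integral counts, coherent compatibility along boundary strata to get $d^2=0$ over $\Z$, a PSS/continuation comparison to Morse cohomology, and the Novikov-ring PID argument reducing the rank bound over $\Lambda$ to one over $\Z$. This is essentially the paper's own proof; the technical content that you flag as the place where the work goes (coherent smoothing and coherent perturbation of the charts across all strata and all pairs $(x,y)$, and across the auxiliary continuation/PSS moduli) is exactly what the paper's $\langle k\rangle$-manifold formalism, doubling and collaring constructions, and inductive perturbation scheme are built to handle.
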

	The method is to set up a compatible system of global charts for Hamiltonian Floer Homology via an adaptation of \cite{AMS}, and then to apply the recent results involving FOP perturbations \cite{bai_xu} to extract differentials for the Floer complex with integer values. This is a stronger result than the result established in \cite{abouzaid-blumberg}; the latter only proves the Arnol'd conjecture for all primes $p$, and so in particular fails to capture the interaction of $p$-torsion information for distinct primes simultaneously. 
	
	The general idea of using perturbations similar to the form used in this paper is due to McLean \cite{mclean_secret}. The technical contributions of this paper are a convenient two-form which makes it easy to make sense of McLean stabilization in the context of Hamiltonian Floer Homology (Section \ref{sec:integral-two-form}), as well as a language in which to express how to compatibly choose perturbation data such that the various global charts can be  \emph{compatibly smoothed} and such that the resulting Kuranishi sections can be \emph{inductively perturbed}.  The author intends to use these charts for a somewhat more sophisticated result in an upcomping paper; this paper is written first because it is an easier result, in order to keep documents short, and to keep separate ideas into separate papers. 

    Much of the discussion is centered around around about the notion of a \emph{$G-\langle k \rangle$-manifold} and a \emph{derived $\langle k \rangle$-orbifold}. Regular Morse-theoretic moduli spaces (including those involved in more complex operations beyond the differential) are naturally smooth $\langle k \rangle$-manifolds,  and in the general case, we expect that the  (compatible) global charts constructed in this paper should naturally give the moduli spaces underlying Floer-theoretic operations with multiple inputs and outputs the structure of derived $\langle k \rangle$-orbifolds. A $\langle k \rangle$-manifold is a type of manifold with corners to which one can apply the \emph{doubling} trick -- the natural generalization to manifolds with corners of the standard realization of a manifold-with-boundary as a fundamental domain for a $\Z/2$ action on a closed manifold. Passing to the double allows us to avoid much of the general complexity of working with manifolds or orbifolds with corners, and in particular to smooth the moduli spaces using a standard application of equivariant smoothing theory on the doubled spaces. Compatibility is achieved by a generalization of uniqueness of stable equivariant smoothing up to concordance, together with taking collars, which is also a particularly nice operation on $\langle k \rangle$-manifolds.  We thus hope that the combinatorial  language and group-theoretic constructions of this paper will be helpful to others working in pseudoholomorphic curve theory.
    
     \begin{remark}The author was informed shorly before making this paper public that the forthcoming independent paper of Bai-Xu \cite{} has also achieved a proof of this conjecture, with 
     analogous but distinct techniques. We hope that the geometric and combinatorial constructions of our paper can provide a shortcut to using these techniques for those interested in flexibly applying these tools to various problems in Floer theory. \end{remark}
    
    \paragraph{Acknowledgements.} We thank Egor Shelukhin for his encouragement and commentary when the author was trying to understand \cite{AMS}. We also thank Mohammed Abouzaid for answering the author's questions about \cite{AMS}. Finally, we thank Shayoun Bai and 
    Guangbo Xu for communication regarding their paper.

	\section{Basic Notions}
	\subsection{Floer homology conventions}
	We write 
	\begin{equation}
		S^1 = [0,1]/(0 \sim 1);  Z = \R_s \times S^1_{t}; 
		D^2 = \{ z \in \C; |z|\leq 1\}; \partial D^2 \simeq S^1 \text{ via }  z= e^{2 \pi i t}. 
	\end{equation}
	We have $i \partial/\partial_s = \partial/\partial_t$. 
	
	The Hamiltonian vector field $X_H$ associated to a Hamiltonian function $H: M \to \R$ satisfies $\omega(X_H, v) = -dH(v)$. This convention reproduces the flow $X_H = p (\partial/\partial q)$ of the standard free-particle Hamiltonian if we choose $H: \R^2_{p, q} \to \R$ to be $\frac{1}{2}p^2$ with $\omega = dp dq$. If $H$ depends on an auxiliary time variable then we continue to write $X_H$ for the associated time-dependent Hamiltonian vector field. 
	
	A capping of a free loop $x: S^1 \to M$ is a map $\tilde{x}: D^2 \to M$ which restricts to $x$ on $S^1 = \partial D^2$. Two cappings are equivalent if the functional \[\bar{x} \mapsto \int_{\bar{x}} \omega\] agrees on the cappings. 
	\begin{remark}
	There is another variant of this equivalence relation, where one also requires that the functional 
	\[ \bar{x} \mapsto \int_{\bar{x}} \tilde{c}_1(M)\]where $\tilde{c}_1(M)$ is any differential form representing the real first Chern class $c_1^{\R}(M) \in H^2(M, \R)$ agrees on the cappings. This allows one to define integrally graded Floer groups, which we will avoid in this paper. 
	\end{remark}
	
	Let $LM$ be the free loop space of $M$, and let $
	\widetilde{LM}$ be its Novikov cover. Elements of the latter are elements $x \in LM$ equipped with equivalence classes $\bar{x}$ of cappings of the loops $x$.  The fiber over a loop $x$ is naturally a torsor over the group 
	\begin{equation}
	    \label{eq:novikov-group}
	    \Pi = \frac{\pi_2(M)}{\ker [\omega]}
	\end{equation} 
	via which acts on the fiber via connect sum $(A, \bar{x}) \mapsto \bar{x} \# A.$ 
	
	We write $N$ for minimal Chern number of $M$, that is, the positive generator of the subgroup $c_1(\Pi) \subset \Z$. 
	
    We define $\Lambda^\Pi_\Z$ to be completion of the integral group ring $\mathbb{Z}[\Pi]$ with respect to the valuation defined by $\omega$. 
    
    We define the \emph{universal Novikov ring} $\Lambda^{univ}_\Z$ to be the set 
    \begin{equation}
        \Lambda^{univ}_\Z = \{\sum_i c_i T^{r_i} | c_i \in \Z, r_i \in \R, \#\{c_i \neq 0 | r_i < R\} < \infty \text{ for all } R \in \R\}. 
    \end{equation}
    We write $\Lambda^{univ}_{\Z,0}$ to consist of sums $\sum_i c_i T^{r_i}$ with $r_i \geq 0$ for all $i$, and $\Lambda^{univ}_{\Z,+}$ to consist of such sums with $r_i > 0$ for all $i$. 
    
	
	We will write $Fix(H) = \{x: S^1 \to M; x'= X_{H}, [x] = [pt] \in \pi_0(LM)\}$, and will write $\widetilde{Fix(H)}$ for the set of equivalence classes of cappings of elements of $Fix(H)$.
	
	Given $\bar{x}_\pm \in \widetilde{Fix}(H)$, we write $\M(\bar{x}_-, \bar{x}_+)$ for the set of Floer trajectories 
	\begin{equation}
		\label{eq:floer-moduli-space}
		\{u: Z \to M :  \partial_su + J (\partial_tu - X_{H}) = 0, \lim_{s \to \pm \infty} u(s, \cdot) = x_\pm(\cdot), \bar{x}_- = \tilde{x}_+ \# u . \}
	\end{equation}
	Here the last line denotes the concatenation action of cylinders on cappings. The Floer compactifications of these spaces are denoted by $\cM(\tilde{x}_-,\tilde{x}_+)$; elements of $\cM(\tilde{x}_-,\tilde{x}_+)$ are sequences of Floer trajectories with pseudoholomorphic bubble trees attached.

	For any given Floer trajectory, as $s$ increases, the family of loops $u(s, \cdot)$ traces out \cite{salamon-zehnder} a negative gradient trajectory of the action functional $\mathcal{A}: \widetilde{LM} \to \R$  given by  
	\begin{equation}\label{eq:action-functional}\mathcal{A}_{H}(x) = \int_0^1 H(t, x(t))\,dt - \int_{\bar{x}} \omega.
	\end{equation}
	
	The Floer cochain groups $CF^k(H, J;\Z)$ consist, as abelian groups, of infinite sums $\sum_i a_i \bar{x_i}$ with $a_i \in \F$, all $\bar{x}_i$ of Conley-Zehnder index $CZ(\tilde{x}_i) = k \;(\mod 2N)$, and such that for any $C >0$,  the set $\{i| a_i \neq 0, \omega(\bar{x}_i) < C\}$ is finite for all $C \in \R$. Our conventions on Conley-Zehnder indices are such that if we take a constant capping of a fixed point the Hamiltonian flow of a $C^2$-small morse function, its Conley-Zehnder index is equal to its index as critical point (modulo $2N)$. To compare with the conventins of \cite{SalamonZehnder}, we have that $CZ(\tilde{x}) = \mu_\tau(\tilde{x}) + n$ (for $\mu_\tau$ as in \cite{SalamonZehnder}). Whenever $\# Fix(\phi_H) < \infty$, these are finitely generated free modules over $\Lambda^{\Pi}_\Z$ of dimension equal to the number of elements of $Fix(\phi_H)$ admitting a cap of Conley-Zehnder index equal to $k$ modulo $2N$; a basis can be obtained by fixing an equivalence class of such a cap for each such periodic orbit. The task of this paper is to define a differential on this complex which is equivariant with respect to the operation of $\Pi$ on these groups, and show that its quasi-isomorphism type agrees with the $2N$-periodized Morse cochain complex of $M$ with coefficients in $\Lambda^\Pi_\Z$.

	\subsection{Constructions on manifolds with corners}
	In this section we will discuss the notion of a $\langle k \rangle$-manifold, which is a particularly convenient type of manifold with corners. The global Kuranishi charts we will produce for Floer trajectories will be $\langle k \rangle$-manifolds, and the extra structure will record the decomposition of the underlying Floer domain into a sequence of $1$-level Floer domains. 
	
	\subsubsection{Manifolds with corners.}
	A stratified topological space $X$ is a filtration on a space $X$ by closed subsets 
	$X = X(d) \supset X(d-1) \supset \ldots \supset X(0) \supset X(-1) = \emptyset$. The \emph{open stratum} associated to $k = 0, \ldots, d$ is $X(k) \setminus X(k-1) = \oo{X}(k)$.
	
	More generally, we can consider a topological space $X$ stratified by a poset $\mathcal{P}$. The data of such a stratification is an assignment, for each $p \in \mathcal{P}$, of a closed subset $X(p) \subset X$, such that if $p \leq q$ then $X(p) \subset X(q)$. If $\mathcal{P}$ is a finite poset, then we can define the open stratum associated to $p$ to be \[\oo{X}(p) = X(p) \setminus \left(\bigcup_{q < p} X(q)\right).\] This notion specializes to the previous one when we take $\mathcal{P}= \{ 0 < 1 < \ldots < d\}.$ Given $S \in \mathcal{P}$, we will refer to the maximal length of a chain of strictly increasing elements of $\mathcal{P}$ starting with $S$ as the \emph{codimension of the stratum} $X(S)$.
    
	Write $\R^d_+$ for the closed positive orthant in $\R^d$. This is space stratified by $0< \ldots < d$ with $\R^d_+(i)$ the subset with at least $d-i$ coordinates equal to zero. A topological manifold of dimension $d + \ell$ with codimension-$d$ corners is a second-countable Hausdorff space $X$ stratified by $0 < \ldots < d$ such that for every $x \in X$, there is an open neighborhood $X \supset V \ni x$ and a homeomorphism called a \emph{chart}
	\begin{equation}
	    \psi: \R^d_+ \times \R^\ell \supset U \to V \subset X
	\end{equation}
	such that $\psi(U \cap \R^d_+(i) \times \R^\ell) \subset X(i)$, and such that $\oo{X}(d) \neq \emptyset$.  A topological manifold with codimension-$d$ corners is countable disjoint union of the above objects where $\ell$ is allowed to vary. We will drop ``codimension-$d$'' if we do not need to specify $d$, although $d$ is always finite. We note that topological manifolds are topological manifolds with codimension $d$ corners for all $d \geq 0$. 

    A smooth manifold with codimension $d$ corners is a topological manifold with corners equipped with a maximal atlas of charts to $\R^d_+ \times \R^\ell$ with smooth transition functions between such charts.

    \subsubsection{$\langle k \rangle$-manifolds}
    \label{sec:k-manifolds}
    Let $[k] = \{1, \ldots, k\}$ be the set with $k$ elements. (We set $[0] = \emptyset$.) We can think of elements of $2^{[k]}$ (note that $2^{\emptyset}$ is a set of a single element) as subsets $S \subset [k]$; alternatively, we can think of them corresponding to partitions 
    $P \in \mathcal{P}_{k+1}$ of the ordered sequence $1, \ldots, k+1$. The bijection is as follows: if $[n] \setminus S = \{s'_1 < \ldots < s'_j\} \subset [n]$, then the corresponding partition $P(S)$ is $\{\{1,\ldots, s'_1\}, \{s'_1 +1 , \ldots, s'_2\}, \ldots, \{s'_j+1, \ldots, k+1\}\}$. We write $h(S) = (s'_1, s'_2-s_1, \ldots, n+1-s'_j)$ for the sequence of lengths of elements of $P(S)$, which of course determines the partition $P(S)$ itself. To clarify any ambiguity in the notation, we have that if $S = [k]$ then $P(S) = \{\{1, \ldots, n+1\}\}$, and $h(S) = (k+1)$. We will write $S(h)$ for the subset $S \subset [k]$ associated to the sequence of lengths $h = (h_1, \ldots, h_{j+1})$ of a partition $P \in \mathcal{P}_{k+1}$. We will also write $|S|$ for the cardinality of a subset $S \subset [k]$, and $|h| = |S(h)|+1$ for the length of a sequence $h =h(S)$.

    Now $2^{[k]}$ is a poset; we will say that $S \leq T$ if the partition $P(S)$ refines the partition $P(T)$, or equivalently that $S \subset T$.
    
    For each $S \in 2^{[k]}$, we have a closed subset $\R^k_+(S) \subset \R^k_+$ consisting of those vectors for which the coordinates with indices in $[k]\setminus S$ are forced to be zero. Thus, the partition $\{\{1, \ldots, k+1\}\} = P([k])$ corresponds to $\R^k_+$, while the partition $\{\{1\}, \ldots, \{n+1\}\} = P(\emptyset)$ corresponds to $\{0\} \subset \R^n_+$. 
    
    More generally, given a finite set $R$, we have the space $\R^{R}_+$, stratified by $2^{R}$ with $\R^{R}_+(T)$, for $T \subset R$, given by the subset with coordinates in $R \setminus T$ required to be zero. 
    
    Let $X$ be a space stratified by $2^{[k]}$. For $x \in X$ we write $S(x) = \min \{S \in 2^{[k]} \, | x in X(S)\}$, and we define $\text{sdim } X = |S(x)|$. Given $S \in 2^{[k]}$ write $[k] \setminus S = \{r'_1, \ldots, r'_\ell\}$. Then there is a unique isomorphism of posets 
    \begin{equation}
    	r_S: \{T \subset [k]\; | \: T \supset S\} \to 2^{[k \setminus |S|]} \text{ such that } r_S(S \cup \{s'_j\}) = \{j\}.
    \end{equation}

    A \emph{$n$-dimensional $\langle k \rangle$-manifold chart} (centered at $x \in X$) is a homeomorphism 
    \begin{equation}
    	\psi: \R^{[k] \setminus S}_+  \times \R^{n - |k \setminus S|} \supset U \to V \subset X
    	\end{equation}
    from an open subset $U \ni (0, 0)$ to an open subset $V$ such that
    \begin{itemize} 	
    	\item For $y \in V$, we have that $S(y) \geq S(x)$, and moreover
    	\item For $[k] \supset T \supset S$, have that $\psi^{-1}(X(T)) \subset \R^{[k] \setminus S}_+(r_S(T)) \times \R^{n - |k \setminus S|}$; and finally
    	\item we say that the chart $\psi$ is centered at $x$ if $\psi(0, 0) = x$. 
    \end{itemize}
    A  \emph{topological $\langle k \rangle$-manifold} of dimension $n$ is a second-countable Hausdorff space $X$ stratified by $2^{[k]}$ such that for every $x \in X$, there exists an $n$-dimensional $\langle k \rangle$-manifold chart centered at $x$, and such that $\oo{X}([k]) \neq \emptyset$. A \emph{smooth} $\langle k \rangle$-manifold is a topological $\langle k \rangle$-manifold equipped with a maximal subset (an \emph{atlas})of $n$-dimensional $\langle k \rangle$-charts such that transition maps between charts are smooth. 
    
    We will drop the dimension $n$ when it is implied, and declare a countable disjoint union of topological $\langle k \rangle$-manifolds of varying dimension to be a topological $\langle k \rangle$-manifold. The quantity $k$ can sometimes be a generic parameter; thus we may say that $\R_+^1, \R_+^2 \ldots$  are all $\langle k \rangle$-manifolds. When $k$ is a specific quantity, this should be clear from the context.

    \begin{remark}
    A $\langle k \rangle$-manifold is, with this definition, naturally a $\langle k +1\rangle$-manifold in many diferent ways, corresponding to the inclusions of posets $2^{[k]} \subset 2^{[k+1]}$
    induced from the $k+1$ distinct inclusions of ordered sets $[k] \subset [k+1]$. This construction forces certain strata, e.g. the deepest stratum $X(\emptyset)$, to be empty, which is allowed. A $\langle k+1 \rangle$-manifold, however, is never a $\langle k \rangle$-manifold. 
    \end{remark}
    
    Let $G$ be a group. A topological $G$-$\langle k \rangle$-manifold is a $\langle k \rangle$-manifold with an action of $G$ on the underlying space such that the action of $G$ preserves each stratum. A smooth $G$-$\langle k \rangle$-manifold is a topological $G$-$\langle k \rangle$-manifold with underlying $\langle k \rangle$-manifold equipped with a smooth structure, such that the action of $G$ on the corresponding manifold with corners is smooth. In particular, taking $k=0$, we recover the notions of topological and smooth $G$-manifolds. 
    
    We will write $G_x$ for the stabilizer of $x \in X$ of the action of $G$ on a $G$-space $X$. We may also write $Stab(x)$ when the group $G$ is clear from the context.
    
    \paragraph{Transversality on $\langle k \rangle$-manifolds.}
    Let $V$ be a vector bundle over a smooth $\langle k \rangle$-manifold $X$, which just means a smooth vector over the underlying manifold with corners, and let $s$ be a section of $V$. We say that $s$ is \emph{transverse} (along $K \subset X$) if, writing $s_V:  U \to \R^{\dim V}$ for the expression defining $s$ in some chart $X \supset V \xrightarrow{\psi^{-1}} U \subset \R^{[k] \setminus S}_+  \times \R^{n-|[k] \setminus S|}$ and some $S \in 2^{[k]}$, we have that $ds_{U_i}$ is surjective on the image of $K \cap s^{-1}(0) \cap V_i$ in $U_i$ for a collection of charts $\psi: U_i \simeq V_i$ with codomains $V_i$ covering $K$. Now, every substratum $X(S)$ is naturally a $\langle k'\rangle$-manifold for some $k' \leq k$, and we can ask for $s|_{X(S)}$ to be transverse along $K \cap X(S)$. It is possible for $s|_{X(S)}$ not to be transverse along $K \cap X(S)$, even when $s$ is transverse on $s^{-1}(0)$ in $X$. We say that $s$ is \emph{strongly transverse} (along $K$) when $s|_{X(S)}$ is strongly transverse (along $K \cap \cap X(S)$) for all $S \in 2^{[k]}$. Note that if $s|_{X(S)}$ is strongly transverse, then $s$ is strongly transverse on an open neighborhood of $X(S)$ in $X$. 
    
    \paragraph{Submersions and immersions of $\langle k \rangle$-manifolds}
    A submersion of $\langle k \rangle$-manifolds is a stratum-preserving smooth map $f: M \to N$ between $\langle k \rangle$-manifolds $M$ and $N$ for which the differential is surjective everywhere. More generally:
    \begin{definition}
        \label{def:general-submersion-of-k-manifolds}
    Given a $\langle k \rangle$-manifold $M$ of dimension $m$ and a $\langle k'\rangle$-manifold $N$ of dimension $n$ with $k' \leq k$, a submersion from $M$ to $N$ is: 
    \begin{itemize}
        \item Let $\mathcal{P}\subset 2^{[k]}$, $\mathcal{P}' \subset 2^{[k']}$ be the sub-posets of nonempty strata of $M$ and $N$, respecitvely. We fix a subset $S_f \subset [k]$ and a map of sets $\tilde{f}: [k] \setminus S_f \to [k']$ such the formula $S \mapsto [k'] \setminus \tilde{f}([k] \setminus (S \cup S_f))$ defines a map of posets $\bar{f}: \mathcal{P} \to \mathcal{P}'$ with $|[k] \setminus (S \cup S_f)| = |[k'] \setminus \bar{f}(S)|$ for all $S \in \mathcal{P}$; and furthermore,
        \item we have a smooth map $f: M \to N$ which is locally modelled on the map
        \begin{equation}
        \R^{[k]\setminus S}_+ \times \R^{n-k+|S|} =  \R^{([k] \setminus S) \cap S_f}_+ \times \R^{[k] \setminus (S \cup S_f)}_+ \times \R^{n-k+|S|} \to \R^{[k'] \setminus \bar{f}(S)}_+ \times \R^{m-k'+\bar{f}(S)}
        \end{equation}
        which is a restriction of the linear map $\R^n \to \R^m$ 
        which is zero on $\R^{([k] \setminus S) \cap S_f}_+$ (the ``fiber corner directions''), and is otherwise the sum of a surjective map $\R^{n-k+|S|} \to \R^{m-k'+\bar{f}(S)}$ and an isomorphism $\R^{[k] \setminus (S \cup S_f)}_+ \to \R^{[k'] \setminus \bar{f}(S)}_+$ induced by applying $f$ to the coordinate labels. The fibers of such a map are manifestly $\langle S_f\rangle$-manifolds. 
        \end{itemize}
    \end{definition}
    \begin{remark}
    We will only use the case where $S_f=\emptyset$ and the case where $S_f = \{2, \ldots, k\}$ and $k'=1$ in this paper, for which the definition simplifies drastically. 
    \end{remark}

    An immersion of $\langle k \rangle$-manifolds from an $m$-dimension $\langle k \rangle$-manifold $M$ to an $n$-dimensional $\langle k \rangle$-manifold $N$ is a stratum-preserving smooth map from $M$ to $N$ locally modelled on the inclusions $\R^{[k] \setminus S}_+ \times \R^m \to \R^{[k] \setminus S}_+ \times \R^n$ for $m < n$.
    
    We now describe two natural operations on $\langle k \rangle$-manifolds.

    \subsubsection{The double of a $\langle k \rangle$-manifold}
    \label{sec:doubling}
    Just for this paragraph, let $M$ be a topological $\langle 1 \rangle$-manifold, i.e. a topological manifold with boundary. The \emph{double} $\mathcal{D}(M)$ is the space $M \cup_{\partial M} M$; this is a topological manifold equipped with an inclusion $M \to \mathcal{D}$ which is a homeomorpism onto its image. 
    
    Now let $M$ be a $\langle k \rangle$-manifold. The group $(\Z/2)^k$ acts on $\R^k$ by flipping signs, and each $S \in 2^{[k]}$ defines a subgroup $(\Z/2)^k_S= (\Z/2)^{[k]\setminus S}$ which preserves the subset $\R^k_+(S) \subset \R^k$ under this action, and does not preserve any $\R^k_+(T)$ for $T \supsetneq S$. The double $\mathcal{D}(M)$ of $M$ is the union 
    \begin{equation}
    \label{eq:double}
        \mathcal{D}(M) := \bigsqcup_{g \in (\Z/2)^k} g M/ \sim
    \end{equation}
    with the equivalence relation being that for each $S \in 2^{[k]}$, we identify 
    $gx \sim g'x$ for $x \in M(S)$ and $g, g' \in (\Z/2)^k_S$. We have a canonical inclusion $\iota_{\mathcal{D}}: M \to \mathcal{D}$ and a canonical action of $(\Z/2)^k$ on $\mathcal{D}(M)$. The operation of doubling is a functor: given a map $f: M \to N$ of topological $G-\langle k \rangle$-manifolds, there is a unique extension of $f$ to a map of topological $G\times (\Z/2)^k$-spaces $\mathcal{D}f: \mathcal{D}(M) \to \mathcal{D}(N)$. By applying this functor to charts, one immediately concludes
    
    \begin{lemma}
        The space $\mathcal{D}(M)$ is a topological $(\Z/2)^k$-manifold. If $M$ was a $G-\langle k \rangle$-manifold then there is an action of $G$ on $\mathcal{D}(M)$ induced by acting separately on each piece of the union in (\ref{eq:double}) which makes $\mathcal{D}(M)$ into a topological $G \times (\Z/2)^k$-manifold. $\qed$
    \end{lemma}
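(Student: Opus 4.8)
The plan is to prove the statement by applying the functor $\mathcal{D}$ directly to the $\langle k \rangle$-charts of $M$, after first computing $\mathcal{D}$ on the local model. The key computation is the following: for $R = [k] \setminus S$ and $\ell \geq 0$, writing $n = |R| + \ell$, the double $\mathcal{D}(\R^{R}_+ \times \R^\ell)$ is canonically homeomorphic to $(\Z/2)^{S} \times \R^n$, where the $(\Z/2)^k$-action is obtained by letting the subgroup $(\Z/2)^{R}$ act on each copy of $\R^n$ by the standard sign-flip action realizing $\R^R$ as the double of $\R^R_+$, and letting $(\Z/2)^{S}$ act simply transitively on the $2^{|S|}$ copies. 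This drops straight out of unwinding (\ref{eq:double}): two summands $g(\R^R_+ \times \R^\ell)$ and $g'(\R^R_+ \times \R^\ell)$ become glued along the stratum indexed by $T$ only when $g^{-1}g' \in (\Z/2)^{[k]\setminus T}$, and since every nonempty stratum has $T \supseteq S$ we always have $(\Z/2)^{[k]\setminus T} \subseteq (\Z/2)^{R}$; hence the $(\Z/2)^{S}$-coordinate of $g$ is locally constant, while within a fixed value of that coordinate the $2^{|R|}$ reflected orthants glue along their faces exactly as in the standard realization of $\R^R$ as the double of $\R^R_+$. The crucial structural point is that a $\langle k \rangle$-chart centered at $x$ has corner directions indexed precisely by $[k] \setminus S(x)$, and \emph{all} of them are unfolded by $\mathcal{D}$, so the local model of the double has no remaining corners and is an honest topological manifold.

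Next I would globalize. Given $p \in \mathcal{D}(M)$, write $p = [gy]$ with $g \in (\Z/2)^k$ and $y \in M$, pick a $\langle k \rangle$-chart $\psi : U \to V \subseteq M$ centered at $y$ (so $U$ is open in $\R^{[k]\setminus S(y)}_+ \times \R^{n - |[k]\setminus S(y)|}$), and apply $\mathcal{D}$ to get $\mathcal{D}\psi : \mathcal{D}(U) \to \mathcal{D}(V)$. Since $\psi$ is a stratum-preserving homeomorphism onto the open set $V$, $\mathcal{D}\psi$ is a homeomorphism; by the local computation $\mathcal{D}(U)$ is open in $(\Z/2)^{S(y)} \times \R^n$, and $\mathcal{D}(V)$ is open in $\mathcal{D}(M)$ because the subset $\bigsqcup_h hV$ of $\bigsqcup_h hM$ is saturated for the equivalence relation of (\ref{eq:double}) (that relation never identifies points lying over distinct points of $M$). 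Acting by $g$ then produces a Euclidean chart around $p$, so $\mathcal{D}(M)$ is locally Euclidean of dimension $n$. For the Hausdorff property I would argue in two cases for distinct points $[gy] \neq [g'y']$: if $y \neq y'$, separate $y$ and $y'$ in $M$ by disjoint open sets and double them; if $y = y'$ but the points differ, they lie in distinct components of the local model $\mathcal{D}(V) \cong (\Z/2)^{S(y)} \times \R^n$ for a chart $V$ centered at $y$, which are separated there. Second-countability follows by covering $\mathcal{D}(M)$ by the countably many sets $\mathcal{D}(V_i)$ associated to a countable atlas of $M$, each of which is second-countable.

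Finally I would record the group actions. The $(\Z/2)^k$-action is already part of the construction: it permutes the summands of $\bigsqcup_g gM$ and descends to $\mathcal{D}(M)$ since the identifications in (\ref{eq:double}) are $(\Z/2)^k$-equivariant; it is continuous, and because $\mathcal{D}(M)$ carries the trivial stratification of a $\langle 0 \rangle$-manifold, the action automatically preserves strata, so $\mathcal{D}(M)$ is a topological $(\Z/2)^k$-manifold. If $M$ is in addition a $G$-$\langle k \rangle$-manifold, then $G$ acts on $\bigsqcup_g gM$ by acting within each summand; since this action preserves every stratum $M(S)$, it respects the equivalence relation and descends to a continuous action on $\mathcal{D}(M)$ that commutes with the $(\Z/2)^k$-action, exhibiting $\mathcal{D}(M)$ as a topological $G \times (\Z/2)^k$-manifold. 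The only delicate point — and the closest thing to a genuine obstacle — is the interface of $\mathcal{D}$ with $\langle k \rangle$-charts: one must confirm that a $\langle k \rangle$-chart is an isomorphism of $2^{[k]}$-stratified spaces onto its open image, so that the functor $\mathcal{D}$ applies and $\mathcal{D}\psi$ is a homeomorphism, and that $\mathcal{D}$ sends the open domains of an atlas of $M$ to an open cover of $\mathcal{D}(M)$; both are routine from the definitions, but this is where all the corner-bookkeeping is concentrated.
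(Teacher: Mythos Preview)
Your proof is correct and follows exactly the approach the paper indicates: applying the doubling functor $\mathcal{D}$ to the $\langle k \rangle$-charts of $M$. The paper gives no argument beyond the phrase ``by applying this functor to charts, one immediately concludes''; your write-up is a careful elaboration of precisely that sketch, including the local computation $\mathcal{D}(\R^{[k]\setminus S}_+ \times \R^\ell) \cong (\Z/2)^{S} \times \R^n$ and the point-set verifications (Hausdorff, second-countability, openness of $\mathcal{D}(V)$) that the paper leaves implicit.
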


    \subsubsection{Collaring of $\langle k \rangle$-manifolds}
    Let us again assume for just this paragraph that $M$ is a topological manifold with boundary. Then $Coll(M) = \partial M \times [0,1] \cup M /((x, 1) \sim x)$ is again a topological manifold with boundary, which is homeomorphic to $M$. Moreover, if $M$ is smooth then $Coll(M)$ is also canonically smooth and diffeomorphic to $M$.
    
    A generalization of this construction exists for $\langle k \rangle$-manifolds $M$. As a space, we have
    \begin{equation}
    \label{eq:collar}
        Coll(M) = \bigcup_{S \in 2^{[k]}} M(S) \times[0,1]^{[k] \setminus S}/\sim
    \end{equation}
    with the equivalence relation is generated by the relations that if $(x, t) \in M(S) \times [0,1]^{[k]\setminus S}$ and $(y, u) \in M(T) \times [0,1]^{[k \setminus T]}$, then $(x, t) ~(y, u)$ if $S <T$ and $x=y$, and if $t|_{([n] \setminus S) \setminus ([n] \setminus T)} = 1$. One can use the charts on a $\langle k\rangle$-manifold to immediately check that 
    \begin{lemma}
        The space $Coll(M)$ is a topological $\langle k\rangle$-manifold with $Coll(M)(S) = M(S) \times 0$. If $M$ was a topological $G$-$\langle k \rangle$-manifold then the $G$ action on $Coll(M)$ descending from the $G$-action on each component of the union in \eqref{eq:collar} via $g\cdot(x,t) = (gx, t)$, is continuous, making $Coll(M)$ into a topological $G-\langle k \rangle$-manifold. $\qed$
    \end{lemma}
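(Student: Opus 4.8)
The plan is to verify the $\langle k \rangle$-manifold axioms for $Coll(M)$ \emph{locally}, reducing everything to a coordinatewise computation on the half-line. First I would record the homeomorphism $Coll([0,\infty)) \cong [0,\infty)$ underlying the whole argument: by \eqref{eq:collar} with $k=1$, $Coll([0,\infty))$ is the interval $[0,1]$ attached at its endpoint $1$ to the endpoint $0$ of $[0,\infty)$, and the map sending the collar $[0,1]$ identically into $[0,\infty)$ and the core $[0,\infty)$ onto $[1,\infty)$ by $x \mapsto x+1$ is a stratum-preserving homeomorphism onto $[0,\infty)$ taking the new boundary point to $0$. Next I would observe that for a model space $R = \R^{[k]\setminus S_0}_+ \times \R^\ell$ the union \eqref{eq:collar} factors as a product: each stratum $R(T)$ is cut out by coordinate conditions, and the only identifications are those which, on a single corner coordinate, turn a collar coordinate $t_i$ reaching $1$ into a free nonnegative coordinate starting at $0$; hence each corner coordinate lives in a copy of $Coll([0,\infty))$, so $Coll(R) \cong \bigl(\prod_{i \in [k]\setminus S_0} Coll([0,\infty))\bigr) \times \R^\ell \cong \R^{[k]\setminus S_0}_+ \times \R^\ell$ stratum-preservingly, with the stratum of $Coll(R)$ indexed by $T$ corresponding to the locus where the coordinates outside $T$ sit at their collared boundary value, and in particular containing the canonical copy $R(T)\times\{0\}$.

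Second, I would show that $Coll$ commutes with restriction to open substratified subspaces: for $V \subseteq M$ open, the projection $\pi: Coll(M) \to M$, $[(x,t)] \mapsto x$, is continuous (its composite with the quotient map of \eqref{eq:collar} is the projection on each piece), and the natural map $Coll(V) \to Coll(M)$ is an open embedding with image $\pi^{-1}(V)$, since the gluing relation never identifies an element with $x \in V$ with one whose $M$-coordinate leaves $V$, so the quotient topologies agree. Applying this to the images of a countable family of $\langle k \rangle$-charts $\psi_\alpha: U_\alpha \to V_\alpha$ covering $M$ (second countability) and invoking the model computation above for the open subsets $U_\alpha$ of models, I obtain an open cover of $Coll(M)$ by sets $\pi^{-1}(V_\alpha) \cong Coll(U_\alpha)$, each stratum-preservingly homeomorphic to an open subset of a model $\langle k \rangle$-chart domain. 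From here every $p \in Coll(M)$ lies in some $\pi^{-1}(V_\alpha)$ and acquires a $\langle k \rangle$-chart centered at it by composing with a chart of $Coll(U_\alpha)$; Hausdorffness and second countability pass to $Coll(M)$ because two points are separated either by $\pi$-preimages of separating opens in $M$ or inside a common $\pi^{-1}(V_\alpha)$, and $Coll(M)$ is a countable union of second-countable opens; and $\oo{Coll(M)}([k]) \neq \emptyset$ because for $x \in \oo{M}([k])$ the fibre $\pi^{-1}(x)$ is the single point coming from the $S=[k]$ piece, which no gluing identifies with a point of any $M(S)\times 0$ with $S \subsetneq [k]$. The identity $Coll(M)(S) = M(S)\times 0$ is then the stratum correspondence read off chartwise from the model computation, the map $M(S) \to Coll(M)$, $x \mapsto [(x,0)]$, being a closed embedding into $Coll(M)(S)$.

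Finally, for the equivariant statement, $G$ acts on each piece $M(S)\times[0,1]^{[k]\setminus S}$ by $g\cdot(x,t) = (gx,t)$, continuously since $G$ preserves strata and acts continuously on each closed stratum $M(S)$; the gluing relation of \eqref{eq:collar} constrains only the $M$-coordinate, and does so $G$-equivariantly, so the action descends to a continuous $G$-action on $Coll(M)$ that visibly preserves each $Coll(M)(S)$, making $Coll(M)$ a topological $G$-$\langle k \rangle$-manifold.

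The main obstacle I anticipate is the bookkeeping in the middle: at a point $x$ not in the top stratum one must check that the $\R_+$-factors already present in a $\langle k \rangle$-chart at $x$ and the new $[0,1]$-collar factors added by $Coll$ merge into a single model $\langle k \rangle$-chart, and that the quotient topologies in the "$Coll$ commutes with restriction" step agree. Both become routine once the product decomposition of the model and the one-dimensional homeomorphism $Coll([0,\infty)) \cong [0,\infty)$ are in hand --- presumably why the lemma is stated with proof omitted --- so the only genuine care needed is keeping the index sets $[k]\setminus S$, $r_S(T)$, and the collar cubes $[0,1]^{[k]\setminus S}$ straight.
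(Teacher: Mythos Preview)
Your proposal is correct and follows precisely the approach the paper indicates: the lemma is stated with a $\qed$ and the preceding sentence says only that one can use the charts on a $\langle k\rangle$-manifold to immediately check it. You have simply unpacked what that chartwise verification consists of---the coordinatewise reduction to $Coll([0,\infty))\cong[0,\infty)$, the compatibility of $Coll$ with restriction to chart domains, and the observation that the $G$-action descends because the gluing relation is $G$-equivariant---so there is no substantive difference in method, only in level of detail.
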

    
    Given a map $f: M \to N$ of $\langle k \rangle$-manifolds, we get an associated map $Coll(f): Coll(M) \to Coll(N)$ of $\langle k \rangle$-manifolds, defined via 
\begin{equation}
    Coll(f)|_{M(S) \times [0,1]^{[k] \setminus S}} = f|_{M(S)} \times \text{id}. 
\end{equation}
    
    We have the following useful
    \begin{lemma}
    \label{lemma:collars-and-doubles-are-smooth}
        If $M$ is a smooth $G-\langle k \rangle$-manifold, then $Coll(M) \setminus M$ admits a canonical smooth structure and a smooth embedding of $G$-$\langle k \rangle$-manifolds $Coll(M) \setminus M \to M$ that is canonical up to isotopy of embeddings. In particular, $\mathcal{D}(M)$ is a smooth $G\times(\Z/2)^k$-manifold. 
    \end{lemma}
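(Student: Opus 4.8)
The plan is to reduce the lemma to a collar neighbourhood theorem for smooth $G$-$\langle k\rangle$-manifolds together with a computation of $Coll$ on the standard charts, after which the assertion about $\mathcal D(M)$ becomes a matter of gluing reflected charts. First I would unwind the abstract description of $Coll(M)\setminus M$. Removing the copy $M = M([k])$ from $Coll(M)$ deletes, inside each building block $M(S)\times[0,1]^{[k]\setminus S}$ with $S\subsetneq[k]$, exactly the locus where all cube coordinates equal $1$: a point of that block is identified with a point of $M$ precisely when its cube coordinate is $(1,\dots,1)$. Thus $Coll(M)\setminus M$ is the \emph{open collar} of the corner locus $\bigcup_{S\subsetneq[k]}M(S)$, assembled from the pieces $M(S)\times\bigl([0,1]^{[k]\setminus S}\setminus\{(1,\dots,1)\}\bigr)$ glued along the faces where some cube coordinate reaches $1$. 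Applying the same description to the standard chart $\R^{[k]\setminus S}_+\times\R^\ell$ of a $\langle k\rangle$-manifold identifies $Coll(\R^{[k]\setminus S}_+\times\R^\ell)$ with $\R^{[k]\setminus S}_+\times\R^\ell$ so that the factor $\R^{[k]\setminus S}_+\times\R^\ell$ sits inside as $[1,\infty)^{[k]\setminus S}\times\R^\ell$; hence $Coll(\R^{[k]\setminus S}_+\times\R^\ell)\setminus(\R^{[k]\setminus S}_+\times\R^\ell)$ is the open subset $\{v\in\R^{[k]\setminus S}_+ : v_i<1 \text{ for some } i\}\times\R^\ell$ of the model. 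So $Coll(M)\setminus M$ is tautologically a topological $\langle k\rangle$-manifold locally modelled on open subsets of the standard charts, with the $G$-action induced from the actions on the $M(S)$, trivial on cube coordinates.

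The smooth structure and the embedding into $M$ both come from collars. I would first record the collar neighbourhood theorem for smooth $G$-$\langle k\rangle$-manifolds: every such $M$ admits a $G$-invariant open neighbourhood of its corner locus together with a $G$-diffeomorphism onto $Coll(M)\setminus M$ restricting to the identity on the corner locus, and this is unique up to $G$-isotopy fixing the corner locus. For $k=1$ this is the classical equivariant collar theorem, which for $G$ compact Lie --- the case we use --- follows from the non-equivariant statement by flowing along the inward normal of a $G$-invariant metric, with uniqueness by the usual interpolation argument carried out $G$-equivariantly; the general $k$ case follows by induction on $k$, collaring the $k$ corner directions successively and patching the local collars obtained from charts with a $G$-invariant partition of unity, the coherence between the collars of the various strata $M(S)$ being built into the inductive hypothesis. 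Transporting the smooth structure of a collar neighbourhood in $M$ through such a $G$-diffeomorphism equips $Coll(M)\setminus M$ with a smooth $G$-$\langle k\rangle$-structure; it is independent of all choices up to $G$-isotopy by the uniqueness clause, and it restricts to the evident product-type structure on each building block, which pins it down. Composing the inverse $G$-diffeomorphism with the inclusion of the collar neighbourhood yields the required smooth $G$-$\langle k\rangle$-embedding $Coll(M)\setminus M\to M$, canonical up to isotopy for the same reason.

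For $\mathcal D(M)$: by functoriality of doubling together with the collar embedding $Coll(M)\setminus M\to M$ we may, near the corner locus, model $M$ by the open collar, so it suffices to exhibit smooth charts of $\mathcal D(M)$ around each point $x\in\oo{M}(S)$. Using the collar embedding one obtains a chart $\psi\colon\R^{[k]\setminus S}_+\times\R^\ell\to gM$ of the copy $gM$ near $x$ whose $\R^{[k]\setminus S}_+$ directions are the collar directions; the $2^{|[k]\setminus S|}$ elements of $(\Z/2)^k$ that carry $gM$ across the faces through $x$ then glue the reflected copies of $\psi$ into a single chart $\R^{[k]\setminus S}\times\R^\ell=\R^n$ of $\mathcal D(M)$, since the reflected half-spaces glue along the reflecting hyperplanes into $\R^{[k]\setminus S}$ with no corners in the doubled directions, and the collar structure makes this gluing smooth. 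Transitions between two such charts are composites of transitions of $M$ with these reflections, hence smooth, and the charts cover $\mathcal D(M)$; so $\mathcal D(M)$ is a smooth manifold. The $(\Z/2)^k$-action is by the coordinate reflections, smooth in these charts, and $G$ acts smoothly because the collars, hence the charts, were chosen $G$-equivariantly; thus $\mathcal D(M)$ is a smooth $G\times(\Z/2)^k$-manifold.

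I expect the main obstacle to be the collar neighbourhood theorem for $G$-$\langle k\rangle$-manifolds with its compatibility across strata and its $G$-equivariance --- in particular, organizing the induction on $k$ so that the collars of the various strata $M(S)$ fit together coherently, which is precisely the input making both the smooth structure on $Coll(M)\setminus M$ and the smoothness of the reflected charts on $\mathcal D(M)$ work. The remaining steps are bookkeeping with the poset $2^{[k]}$ and its cube charts.
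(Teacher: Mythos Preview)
Your proposal is correct and takes essentially the same approach as the paper. The paper's proof simply cites Albin--Melrose (their Proposition~4.1 for the $G$-invariant product structure near the boundary hypersurfaces, and their Theorem~4.2 for the doubling), whereas you sketch these arguments by hand --- but the key input in both cases is precisely the $G$-equivariant collar neighbourhood theorem for $\langle k\rangle$-manifolds with compatibility across the $k$ corner directions, which you correctly identify as the main obstacle.
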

    \begin{proof}
        This follows immediately from Proposition 4.1 and Theorem 4.2 of \cite{AlbinMelrose}. Indeed, the boundary faces of a $G$-$\langle k \rangle$ manifold have a natural partition into $k$ $G$-invariant collective boundary hypersurfaces satisfying \cite[Equation~4.2]{AlbinMelrose}. The doubling construction in that paper agrees with our description. The smooth embedding of $Coll(M) \setminus M$ into $M$ is exactly the data of the $G$-invariant product structure of Proposition 4.1 of \cite{AlbinMelrose}.
    \end{proof}
    
    As a corollary of the above result we see that if $f: M \to N$ is a submersion of smooth $\langle k \rangle$-manifolds then $\mathcal{D}(f): \mathcal{D}(M) \to \mathcal{D}(N)$ is also a smooth submersion. Note however that in general, $Coll(f)$ is not necessarily a smooth map when $f$ was smooth.

    \subsubsection{Smoothing of $G$-$\langle k \rangle$-manifolds}
    Much of the technical content of this paper will involve the use of equivariant $G$-smoothing theory \cite{Lashof1979} in order to compatibly smooth topological $G-\langle k \rangle$-manifolds underlying the Kuranishi charts we produce. Before delving into smoothing theory, we note the following lemma, which will allow us to apply smoothing theory to the setting of $\langle k \rangle$-manifolds. 
    
    \begin{lemma}
    \label{lemma:smoothing-double-smooths-original}
        Let $M$ be a topological $G-\langle k \rangle$-manifold. Suppose that $\mathcal{D}(M)$ is equipped with a smooth structure as a $(G \times \Z/2^k)$-manifold. There is a unique smooth structure on $M$ such that the inclusion $M \to \mathcal{D}(M)$ when viewed through smooth $\langle k \rangle$-manifold charts on $M$ and smooth charts on $\mathcal{D}(M)$.
    \end{lemma}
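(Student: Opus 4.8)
The plan is to characterize the desired smooth structure on $M$ as the unique one whose $\langle k\rangle$-charts ``double'' to smooth charts on $\mathcal{D}(M)$, and to produce such charts by equivariant linearization near the strata of $M$. The key local observation I would use is the following: for $x\in\oo{M}(S)$, the point $\iota_{\mathcal{D}}(x)$ has $(\Z/2)^k$-stabilizer $H_S:=(\Z/2)^{[k]\setminus S}$, and --- applying the doubling functor to a $\langle k\rangle$-chart at $x$, which by the first chart axiom meets only strata $M(T)$ with $T\supseteq S$ --- an $H_S$-invariant neighbourhood of $\iota_{\mathcal{D}}(x)$ in $\mathcal{D}(M)$ is canonically and $H_S$-equivariantly homeomorphic to an open subset $\widetilde{U}$ of $\R^{[k]\setminus S}\times\R^{n-|[k]\setminus S|}=\R^n$ on which $H_S$ acts by the coordinate reflections, with $\iota_{\mathcal{D}}(M)$ corresponding to $\widetilde{U}\cap(\R^{[k]\setminus S}_+\times\R^{n-|[k]\setminus S|})$. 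I would call a $\langle k\rangle$-chart $\psi$ on $M$ \emph{admissible} when this identification turns it into a smooth chart of the given smooth structure on $\mathcal{D}(M)$; by $(\Z/2)^k$-equivariance of that structure the condition does not depend on which point of the $(\Z/2)^k$-orbit of $x$ one tests it at.

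The heart of the proof is showing that admissible charts exist at every $x\in\oo{M}(S)$, and this is where I expect the only real work. I would average a Riemannian metric on $\mathcal{D}(M)$ over the finite group $H_S$ and use the equivariant exponential map (Bochner's linearization theorem) to obtain an $H_S$-equivariant diffeomorphism $\Phi$ from a neighbourhood of $\iota_{\mathcal{D}}(x)$ onto a neighbourhood of $0$ in the isotropy representation $W=T_{\iota_{\mathcal{D}}(x)}\mathcal{D}(M)$. By the topological local model above, the $H_S$-action on $W$ is topologically conjugate to the standard coordinate-reflection representation of $(\Z/2)^{[k]\setminus S}$; since a real representation of an elementary abelian $2$-group is determined up to isomorphism by the dimensions of the fixed subspaces of its subgroups, and these dimensions can here be read off topologically, $W$ is in fact \emph{linearly} isomorphic to the standard representation. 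Composing $\Phi$ with such an isomorphism, I may assume $\Phi$ carries $\iota_{\mathcal{D}}(M)$ onto (a neighbourhood of $0$ in) $\R^{[k]\setminus S}_+\times\R^{n-|[k]\setminus S|}$: being $H_S$-equivariant, $\Phi$ preserves the fixed loci of subgroups and hence the stratification, and it sends a neighbourhood of $x$ in $\iota_{\mathcal{D}}(M)$ --- which is the closure of one of the $2^{|[k]\setminus S|}$ chambers cut out by the reflection hyperplanes --- to the positive orthant. Then $\psi:=\left(\Phi|_{\R^{[k]\setminus S}_+\times\R^{n-|[k]\setminus S|}}\right)^{-1}$ is an admissible $\langle k\rangle$-chart centered at $x$, compatible with the stratification by the same equivariance.

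The rest is bookkeeping. If $\psi_1,\psi_2$ are admissible with overlapping images, the transition map $\psi_2^{-1}\circ\psi_1$ is the restriction to $\R^{[k]\setminus S}_+\times\R^{n-|[k]\setminus S|}$ of the doubled transition map $\widetilde{\psi}_2^{-1}\circ\widetilde{\psi}_1$, which is a transition map between two smooth charts of $\mathcal{D}(M)$ and hence a smooth $H_S$-equivariant diffeomorphism of open subsets of $\R^n$; the restriction of a smooth map to the orthant is a smooth map of manifolds with corners, so the admissible charts form a smooth $\langle k\rangle$-atlas, and its maximal enlargement $\sigma$ is a smooth $\langle k\rangle$-structure on $M$. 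In the charts of $\sigma$ and the associated charts of $\mathcal{D}(M)$, the inclusion $\iota_{\mathcal{D}}$ is the standard inclusion $\R^{[k]\setminus S}_+\times\R^{n-|[k]\setminus S|}\hookrightarrow\R^n$, which is the asserted compatibility. Uniqueness I would get formally: any smooth structure with this property has all of its $\langle k\rangle$-charts admissible and contains every admissible chart, so it coincides with $\sigma$. Finally, since each $g\in G$ acts smoothly on $\mathcal{D}(M)$ and commutes with the $(\Z/2)^k$-action, $g=\mathcal{D}(g)$ carries admissible charts to admissible charts; hence $g\colon(M,\sigma)\to(M,\sigma)$ is a diffeomorphism and $\sigma$ makes $M$ a smooth $G$-$\langle k\rangle$-manifold.

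The main obstacle, as indicated, is the second step: the equivariant linearization of the finite $2$-group $H_S$ near the fixed stratum $\oo{M}(S)$, together with the rigidity statement that a real $(\Z/2)^m$-representation which is topologically conjugate to the coordinate-reflection representation is already linearly isomorphic to it. Everything else --- the admissibility formalism, the compatibility of transition maps, uniqueness, and the $G$-equivariance --- reduces to the definitions of the double and of smooth maps of manifolds with corners.
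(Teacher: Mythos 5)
Your proposal is correct and takes essentially the same route as the paper: equivariantly linearize near $x\in\oo M(S)$ via an invariant metric and the exponential map, identify the isotropy representation of $(\Z/2)^{[k]\setminus S}$ with the standard coordinate-reflection representation, and declare the restriction to the positive orthant to be a smooth $\langle k\rangle$-chart on $M$, with smoothness of transition maps following from smoothness on the double. The one place where you are more explicit than the paper is the identification of the isotropy representation: the paper invokes transversality of the codimension-one fixed loci of the $\Z/2$ factors, while you invoke the rigidity statement that a real representation of an elementary abelian $2$-group is determined by the dimensions of the fixed subspaces of its subgroups, which are topological data; these amount to the same observation.

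There is one small gap worth noting in the final paragraph. You verify $G$-equivariance of the resulting smooth structure by observing that each $g\in G$ sends admissible charts to admissible charts, so each $g$ is a diffeomorphism of $(M,\sigma)$. For a discrete group $G$ this is a complete argument, but in the paper $G$ is a compact Lie group such as $PU(d+1)$, and one needs the stronger statement that the action map $G\times M\to M$ is smooth. The paper handles this by running the same chart-restriction construction on $G\times M\subset G\times\mathcal D(M)$ (using the smooth charts coming from the product smooth structure on $G\times\mathcal D(M)$) and reading off smoothness of the action map there. Your argument can be patched by the same device, or by noting that $M$ is a $p$-submanifold with corners of the smooth $G$-manifold $\mathcal D(M)$ with the induced smooth structure, and the smooth $G$-action on $\mathcal D(M)$ restricts to one on $M$; but as written the Lie-group case is not addressed.
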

    \begin{proof}
        We must specify which topological $\langle k \rangle$-manifold charts on $M$ are smooth.
        
        For any $x \in M \subset \mathcal{D}(M)$ we can choose a chart $\psi^{-1}: V \to U \subset \R^n$, where $n = \dim M$, and such that $V$ is fixed by the action of $(G \times \Z/2)^k_x$,  $\R^n$ is a $(G \times \Z/2^k)_x$-representation, the map $\psi$ is equivariant,  $\psi^{-1}(x) = 0$, and $U$ is bounded. For any $S \in 2^{[k]}$, we have that $x \in M(S)$ if and only if $(G \times \Z/2^k)_x = G_x \times (\Z/2)^k_S \subset G \times (\Z/2)^k$ for some subgroup $G_x \subset G$.  Each of the $\Z/2$ factors $(\Z/2)^k_{S, j} \subset (\Z/2)^k_S$ fixes a codimension $1$ submanifold which we denote $\mathcal{D}(M)([k] \setminus \{j\})$, and the fixed loci all intersect transversely. From this we conclude that as a $(\Z/2)^k_S$-representation, the codomain of $\psi$ can be written (non-canonically) as  $\R^{[k]\setminus S} \times \R^{n-k+|S|}$ where the second factor is a trivial $(\Z/2)^k_S$ representation and the first factor is the standard representation of $(\Z/2)^{[k] \setminus S}$ by flipping signs.  Moreover, by composing with the action of $(\Z/2)^k$ on $\mathcal{D}(M)$ we can assume that $\psi^{-1}(V \cap M)\subset \R_+^{[k]\setminus S} \times \R^{n-k+|S|}$. 
        We declare that the restrictions of the inverses of such charts to $U \cap \R_+^{[k]\setminus S} \times \R^{n-k+|S|}$, which have image $M \cap V$, are smooth charts on $M$. Now, the transition maps between any pair of such charts on $M$ are smooth, since they are the compositions of coordinate permutations and smooth maps arising from transition functions of charts on $\mathcal{D}(M)$. This gives $M$ the structure of a smooth $\langle k \rangle$-manifold. To see that $G$ action with respect to this smooth structure is smooth, we use the same construction to build smooth charts on the $\langle k \rangle$-manifold $G \times M$ from the smooth charts on $G \times \mathcal{D}(M)$ to in order to verify that the map $G \times M \to M$ is smooth.
    \end{proof}
    In particular, by Lemma \ref{lemma:collars-and-doubles-are-smooth} and Lemma \ref{lemma:smoothing-double-smooths-original} we see that an equivariant smoothing of a $G-\langle k \rangle$-manifold $X$ is \emph{equivalent} to an equivariant smoothing of its double. 
    
    \subsubsection{Products, Induction, Restriction}
    Finally, we describe three natural operations on $G-\langle k \rangle$-manifolds which will be used to relate different such manifolds thorougout this work. 
    
    \paragraph{Product.} Given a $G_1- \langle k_1 \rangle$-manifold $M_1$ and a $G_2-\langle k_2 \rangle$-manifold $M_2$, the space $M_1 \times M_2$ admits the structure of a $(G_1 \times G_2) -\langle k_1 + k_2 \rangle$-manifold by using the product action and the product charts. If $M_1$ and $M_2$ were smooth, then $M_1 \times M_2$ is manifestly smooth as well.
    
    \paragraph{Induction.}
    We will assume now that $G$ is a compact Lie group. Given a closed Lie subgroup $H \subset G$ and an $H-\langle k \rangle$-manifold $M$, we have an associated $G-\langle k \rangle$-manifold with underlying stratified space
    \begin{equation}
        Ind_H^G M = G \times_H M, (Ind_H^G M)(S) = G \times_H M(S) \text{ for } S \in 2^{[k]}. 
    \end{equation}
    This is a smooth $G-\langle k \rangle$ manifold if $M$ was smooth. We say that any $G-\langle k \rangle$-manifold that is homeomorphic/diffeomorphic to $Ind_H^G M$ as a $G-\langle k \rangle$-manifold is an \emph{induction} (to $G$ along $H \subset G$) of $M$. 
    
    \paragraph{Restriction.} Given a $G-\langle k \rangle$-manifold $N$, we will say that an $H -\langle k \rangle$-manifold $M$ is a \emph{restriction} of $M$ if $M$ is an induction of $N$ (along an inclusion $H \subset G$. A criterion for checking that $N$ is a restriction of $M$ is 
    \begin{lemma}
    \label{lemma:check-restriction}
        With notation above, if there exists an immersion of $\langle k \rangle$-manifolds $N \to M$ such that any point of $M$ can be translated to a point in the image of $N$ via $G$, and such that the subgroup $H_n \subset G$ sending $n \in N$ into $N$ is independent of $n$, then $N$ is a restriction of $M$.
    \end{lemma}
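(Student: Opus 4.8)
The plan is to exhibit a $G$-equivariant isomorphism $M \cong Ind_H^G N$ of $\langle k\rangle$-manifolds, where $H$ denotes the common subgroup $H_n$, built directly from the given immersion $\iota\colon N\to M$. First I would promote the hypothesis on $H$ to an honest $H$-action on $N$: for $h\in H = H_n$ we have $h\cdot\iota(n)\in\iota(N)$, and since $\iota$ is an immersion it is locally injective, so (using that $\iota$ is injective onto its image) there is a unique $h\cdot n\in N$ with $\iota(h\cdot n) = h\cdot\iota(n)$. Working in $\langle k\rangle$-charts in which $\iota$ is the standard linear inclusion, one checks that $(h,n)\mapsto h\cdot n$ is a smooth, stratum-preserving action of $H$ on $N$ with respect to which $\iota$ is $H$-equivariant, and that $H$ is a closed Lie subgroup of $G$ --- it is the preimage of the closed set $\iota(N)$ under the continuous map $g\mapsto g\cdot\iota(n)$. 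Hence $N$ becomes an $H$-$\langle k\rangle$-manifold, smooth if $M$ is, so that $Ind_H^G N = G\times_H N$ is defined (with the $G$-$\langle k\rangle$-structure described earlier).

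Next I would write down the tautological comparison map
\[
	\Phi\colon G\times_H N \longrightarrow M, \qquad \Phi([g,n]) = g\cdot\iota(n),
\]
which is well defined by $H$-equivariance of $\iota$ and is manifestly $G$-equivariant and stratum-preserving. Surjectivity of $\Phi$ is exactly the hypothesis that every point of $M$ lies in $G\cdot\iota(N)$. For injectivity, if $g\cdot\iota(n) = g'\cdot\iota(n')$ then $g^{-1}g'\cdot\iota(n') = \iota(n)\in\iota(N)$, so $g^{-1}g'\in H_{n'} = H$; setting $h = g^{-1}g'$ and using injectivity of $\iota$ gives $\iota(h\cdot n') = \iota(n)$, hence $h\cdot n' = n$, and then $[g',n'] = [gh, n'] = [g, h\cdot n'] = [g,n]$ in $G\times_H N$. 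Thus $\Phi$ is a $G$-equivariant continuous bijection, and the same computation yields $Stab_G(\iota(n)) = Stab_H(n) = Stab_G([e,n])$ for every $n$.

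The remaining, and main, step is to see that $\Phi$ is a homeomorphism --- and a diffeomorphism in the smooth case --- rather than merely a continuous bijection; this is where the immersion hypothesis and the corner structure do real work. I would argue locally near an orbit $G\cdot\iota(n)$, with $K = Stab_H(n)$. Applying an equivariant tube (slice) theorem for $G$-$\langle k\rangle$-manifolds --- available by passing to doubles, where the classical equivariant tube theorem applies, and descending via Lemmas \ref{lemma:collars-and-doubles-are-smooth} and \ref{lemma:smoothing-double-smooths-original} --- one identifies a $G$-invariant neighborhood of $G\cdot\iota(n)$ in $M$ with $G\times_K W$ and a $G$-invariant neighborhood of $G\cdot[e,n]$ in $G\times_H N$ with $G\times_K W'$ for $K$-slices $W,W'$, so that $\Phi$ restricts to $G\times_K f$ for a $K$-equivariant continuous bijection $f\colon W'\to W$. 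The decisive point is that $\iota(N)$ meets the orbit $G\cdot\iota(n)$ in \emph{exactly} $\iota(H\cdot n)$ --- forced by $H_n = H$ together with injectivity of $\iota$ --- which identifies $W'$ with (a neighborhood of the origin in) a $K$-slice for the $H$-action on $N$ at $n$; matching dimensions (a continuous bijection of $\langle k\rangle$-manifolds forces equal dimension, by invariance of domain on the top strata) and then invariance of domain stratum by stratum show $f$ is open, hence a homeomorphism, hence $\Phi$ is a local homeomorphism and therefore a homeomorphism; in the smooth category one reruns the argument inside the tube, where the slice directions are transverse to the orbit and the immersion is linear, to see $f$ is a diffeomorphism. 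I expect this last step --- in particular checking that the map of slices is open and compatible with the $2^{[k]}$-stratification along the corners --- to be the only genuinely delicate part, the first two steps being formal.
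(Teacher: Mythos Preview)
Your approach is essentially the same as the paper's: define $\Phi\colon G\times_H N\to M$ by $[g,n]\mapsto g\cdot\iota(n)$, observe surjectivity from the hypothesis, and prove injectivity by the computation $g_1\iota(n_1)=g_2\iota(n_2)\Rightarrow g_1^{-1}g_2\in H$. The paper's proof stops right there --- it does not spell out why the resulting bijection is a homeomorphism or diffeomorphism --- whereas you supply that additional step via equivariant tubes and invariance of domain. So your argument is the paper's argument with extra care at the end; there is no genuine divergence of method.
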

    \begin{proof}
        Write $H_n = H$ for this subgroup of $G$, which must also be the subgroup of $G$ preserving the image of $N$; $H$ is thus a closed Lie subgroup. We have a smooth $G$-equivariant map $G \times_H N \to M$ given by $(g, n) \mapsto gn$. The condition in the lemma makes this map surjective. This map is injective since if $g_1n_1 = g_2n_2$, then $n_1 = g_1^{-1}g_2 n_2$, and thus $g_1^{-1}g_2 \in H$ and so $(g_1, n_1) \sim (g_1 g_1^{-1}g_2, n_2) = (g_2, n_2)$.
    \end{proof}
    
    Moreover, induction is manifestly functional: given an $H$-equivariant map between $H$-spaces $f: M_1 \to M_2$, we get an associated map between $G$-spaces $Ind_H^G: Ind_H^G M_1 \to Ind_H^G M_2$. In particular, an equivariant section of a vector bundle induces to an equivariant section on the induced vector bundle. 
    
    \begin{lemma}
    \label{lemma:induction-of-vector-bundles}
    Let $V$ be an $G$-representation, $H$ a closed subgroup of $G$, and $N$ an $H-\langle k \rangle$-manifold. Then, viewing $V$ as a $G$-space via the restriction, we have that  $Ind_H^G (N \times V)$ is canonically isomorphic as a $G-\langle k \rangle$-manifold to $(Ind_H^G) \times V$.
    \end{lemma}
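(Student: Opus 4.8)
The plan is to exhibit the natural comparison map, verify by direct computation that it is a $G$-equivariant stratum-preserving bijection, and then establish smoothness of it and its inverse by descent through the principal $H$-bundles defining the inductions, rather than by a chart computation.

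First I would define
\[
\Phi \colon Ind_H^G(N \times V) \to (Ind_H^G N)\times V, \qquad \Phi[g,(n,v)] = ([g,n], gv),
\]
where $gv$ uses the given action of $G$ on the representation $V$, together with the candidate inverse $\Psi([g,n],w) = [g,(n,g^{-1}w)]$. Using the relations $[g,(n,v)] = [gh^{-1},(hn,hv)]$ and $[g,n]=[gh^{-1},hn]$ for $h\in H$, one checks in a line that $\Phi$ and $\Psi$ are well defined and mutually inverse, so $\Phi$ is a bijection; $G$-equivariance is immediate from $\Phi(g'\cdot[g,(n,v)]) = ([g'g,n],g'gv) = g'\cdot\Phi[g,(n,v)]$. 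Since $V$ is a vector space it is a $\langle 0\rangle$-manifold, so the product $\langle k\rangle$-structure on $N\times V$ has strata $(N\times V)(S) = N(S)\times V$, and the stratumwise definition of induction gives $Ind_H^G(N\times V)(S) = G\times_H(N(S)\times V)$, which $\Phi$ carries onto $(G\times_H N(S))\times V = ((Ind_H^G N)\times V)(S)$; hence $\Phi$ is stratum preserving. It remains only to see that $\Phi$ and $\Psi$ are smooth.

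For smoothness I would use that, since $G$ is a compact Lie group and $H\subset G$ is closed, the $H$-action $h\cdot(g,x) = (gh^{-1},hx)$ on $G\times(N\times V)$ is free and proper, so the quotient map $q\colon G\times(N\times V)\to Ind_H^G(N \times V)$ is a smooth principal $H$-bundle, in particular a surjective submersion; the same holds for $G\times N\to Ind_H^G N$ and hence for $(G\times N)\times V \to (Ind_H^G N)\times V$. Now $\Phi$ is the pair $\big(Ind_H^G(\mathrm{pr}_N),\ \alpha\big)$, where $Ind_H^G(\mathrm{pr}_N)$ is the induction of the smooth $H$-equivariant projection $N\times V\to N$ and so is smooth by functoriality of induction, and where $\alpha[g,(n,v)] = gv$ is the descent along $q$ of the smooth $H$-invariant map $G\times(N\times V)\to V$, $(g,(n,v))\mapsto gv$ (a projection followed by the smooth action $G\times V\to V$), hence smooth because $q$ is a submersion. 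Symmetrically, $\Psi$ is the descent of the smooth map $((g,n),w)\mapsto [g,(n,g^{-1}w)]$ along $(G\times N)\times V\to (Ind_H^G N)\times V$, using smoothness of inversion on $G$. Therefore $\Phi$ is a $G$-equivariant diffeomorphism of smooth $\langle k\rangle$-manifolds, and as its formula involves no choices it is canonical (and in fact natural in $N$).

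I expect no genuine obstacle here: the content is entirely formal. The only point requiring a little care is the smoothness step, where I would deliberately argue by descent through the principal bundle $q$ instead of writing charts on $Ind_H^G$, since a direct chart-level check would force one to set up local slices for $G\to G/H$; phrased via submersions it is immediate.
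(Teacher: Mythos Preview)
Your proof is correct and follows the same approach as the paper: write down the canonical map $[g,(n,v)]\mapsto([g,n],gv)$ and check it is a bijection. The paper's proof is a single line asserting this bijectivity, so your version is considerably more detailed---in particular your explicit verification of well-definedness, $G$-equivariance, stratum preservation, and smoothness via descent along the principal $H$-bundle goes well beyond what the paper records, but all of it is correct and none of it deviates from the intended argument.
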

    \begin{proof}
    One checks that the canonical map $G \times_H (N \times V) \to (G\times_H N) \times V$, which on equivalence classes is given by $(g, n, v) \mapsto (g, n, v)$, is a bijection. 
    \end{proof}
    
    \paragraph{Generalized induction.}
    Given a submanifold $V \subset G$ containing a subgroup $H$ and preserved by $H$, we have a functorial operation from $H-\langle k \rangle$-manifolds to $H-\langle k \rangle$-manifolds given by $X \mapsto Ind_H^{G, V}X := V \times_H X$. We call this procedure \emph{generalized induction}.

    \paragraph{Smooth structures.}
    
    \begin{lemma}
        \label{lemma:induction-bijects-smooth-structures}
        Let $M$ be an induction of $N$. There is a bijection between smooth structures on $M$ and smooth structures on $N$.
    \end{lemma}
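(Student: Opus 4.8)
The plan is to exhibit mutually inverse maps between the two sets of smooth structures, the map from smoothings of $N$ to smoothings of $M$ being induction and its inverse being ``restriction to a fibre''. Fix once and for all a homeomorphism of $G$-$\langle k \rangle$-manifolds $M \cong G\times_H N$ witnessing $M$ as $Ind_H^G N$; it produces a continuous $G$-equivariant projection $\pi\colon M \to G/H$, $[g,n]\mapsto gH$, and a continuous $H$-equivariant embedding $\iota\colon N\to M$, $n\mapsto [e,n]$, onto the closed $H$-invariant subset $\iota(N)=\pi^{-1}(eH)$, on which the $H$-action corresponds via $\iota$ to the given $H$-action on $N$. (When $M$ arises as in Lemma~\ref{lemma:check-restriction}, $\iota$ is the given immersion $N\to M$.)

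\emph{From $N$ to $M$.} Since $H$ is a closed subgroup of the compact Lie group $G$, the quotient $G\to G/H$ is a smooth principal $H$-bundle, so a smooth structure $\sigma_N$ on $N$ as an $H$-$\langle k \rangle$-manifold makes the associated bundle $G\times_H N\to G/H$ into a smooth $G$-$\langle k \rangle$-manifold whose corner structure is that of the fibre --- exactly the assertion recorded where induction was defined --- and transporting this along the fixed homeomorphism gives a smooth structure $\Phi(\sigma_N)$ on $M$. For $\Phi(\sigma_N)$ the map $\pi$ is a smooth submersion, so $\iota(N)=\pi^{-1}(eH)$ is a smooth sub-$\langle k \rangle$-manifold whose structure pulled back along $\iota$ recovers $\sigma_N$; in particular $\Phi$ is injective.

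\emph{From $M$ to $N$.} Given a smooth structure $\sigma_M$ on $M$, the aim is to make $\iota(N)$ a smooth sub-$\langle k\rangle$-manifold of $(M,\sigma_M)$ and pull its structure back along $\iota$ to get $\Psi(\sigma_M)$ on $N$, automatically $H$-invariant since $\iota$ is $H$-equivariant onto an $H$-invariant smooth submanifold. First reduce to the corner-free case: doubling commutes with $\times_H$, so $\mathcal{D}(M)\cong (G\times(\Z/2)^k)\times_{H\times(\Z/2)^k}\mathcal{D}(N)$ as a $(G\times(\Z/2)^k)$-manifold, and by Lemmas~\ref{lemma:collars-and-doubles-are-smooth} and~\ref{lemma:smoothing-double-smooths-original} a smoothing of $M$, resp.\ of the sub-$\langle k\rangle$-manifold $\iota(N)$, is the same datum as an equivariant smoothing of the corner-free $\mathcal{D}(M)$, resp.\ of the submanifold $\mathcal{D}(N)\hookrightarrow\mathcal{D}(M)$. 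On the double, replace $\pi$ by a nearby smooth equivariant map $\pi'\colon \mathcal{D}(M)\to G/H$ (equivariant maps to a $G$-manifold can be equivariantly smoothed by averaging over the compact group, using an equivariant embedding of $G/H$ into a representation); any smooth equivariant map to $G/H$ is automatically a submersion since the orbit directions already surject onto $T(G/H)$, so $(\pi')^{-1}(eH)$ is a smooth $H$-invariant submanifold, carried onto the image of $\mathcal{D}(N)$ by an equivariant isotopy (equivariant isotopy extension). Thus after an equivariant isotopy the copy of $N$ inside $M$ is smoothly embedded and $\Psi(\sigma_M)$ is defined; for this reason the resulting correspondence is naturally, and is used in the sequel as, a bijection of smoothings up to equivariant diffeomorphism (equivalently, concordance).

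\emph{The composites and the main obstacle.} That $\Psi\circ\Phi=\mathrm{id}$ is immediate from the construction of the associated bundle: inducing $\sigma_N$ and restricting to the fibre over $eH$ returns $\sigma_N$. For $\Phi\circ\Psi=\mathrm{id}$, apply the equivariant slice theorem around a point $\iota(n_0)$, whose stabilizer is $H_{n_0}:=\mathrm{Stab}_H(n_0)=\mathrm{Stab}_G(\iota(n_0))$, choosing (by the previous paragraph) a slice $S$ lying in $\iota(N)$: a $G$-neighbourhood of $\iota(n_0)$ is identified with $G\times_{H_{n_0}}S$ in such a way that $\iota(N)$ is the image of $H\times_{H_{n_0}}S$, a smooth submanifold because $H/H_{n_0}\hookrightarrow G/H_{n_0}$ is a smooth embedding; hence the tautological map $G\times_H(N,\Psi\sigma_M)\to (M,\sigma_M)$ has local model $G\times_H(H\times_{H_{n_0}}S)=G\times_{H_{n_0}}S$, a slice chart for $\sigma_M$, and is a diffeomorphism. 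The delicate step is the converse direction --- arranging that the copy of $N$ be smoothly embedded --- which is where compactness of $G$ (for equivariant averaging and approximation of the map $\pi$), the equivariant slice theorem, and the reduction of the corner structure to the corner-free double are all used, and which is what fixes the precise form of the statement.
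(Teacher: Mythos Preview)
Your proof is correct, but it takes a more circuitous route than the paper for the harder direction (producing a smooth structure on $N$ from one on $M$). Both arguments begin the same way: reduce to the corner-free case via doubling (Lemmas~\ref{lemma:collars-and-doubles-are-smooth} and~\ref{lemma:smoothing-double-smooths-original}), and define the forward map by inducing. The divergence is in constructing the inverse. The paper applies the slice theorem directly at $x\in N\subset M$: a $G$-neighbourhood of $G\cdot x$ is identified with the disk bundle $G\times_{H_x}D$ over $G/H_x$, and the paper asserts that a neighbourhood of $H\cdot x$ in $N$ is precisely the restriction of this bundle to $H/H_x\subset G/H_x$, so that $N$ is \emph{automatically} a smooth submanifold and inherits a smooth structure with no auxiliary choices. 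This yields a literal bijection of smooth structures.

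You instead smooth the continuous equivariant projection $\pi:\mathcal{D}(M)\to G/H$ to a nearby $\pi'$, take its fibre, and then invoke equivariant isotopy to carry that fibre onto the image of $\mathcal{D}(N)$. This is rigorous but introduces choices, which is why you are forced to pass to smoothings modulo equivariant diffeomorphism/concordance (as you correctly flag). What you gain is a more explicit hands-on construction; what you lose is the on-the-nose bijection. Note that you already invoke the slice theorem in verifying $\Phi\circ\Psi=\mathrm{id}$, choosing a slice $S\subset\iota(N)$; the paper's economy is that exactly this slice argument already \emph{defines} $\Psi$, rendering your approximation-and-isotopy step unnecessary. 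For the downstream applications (compatible smoothing of inducted and restricted Kuranishi charts) either version is adequate.
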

    \begin{proof}
        By Lemma \ref{lemma:collars-and-doubles-are-smooth} and Lemma \ref{lemma:smoothing-double-smooths-original} we can assume that $M$ and $N$ are manifolds. 
        We can assume without loss of generality that $M = Ind_H^G N$. Given a smooth structure on $N$, it is clear that $Ind_H^G N$ inherits a canonical smooth structure. We argue that this map on smooth structures has an inverse. Note that for $x \in N$, the stabilizer of the image of $x$ in $Ind_H^G N$ is just the stabilizer of $x$ in $N$, which we denote by $H_x$. The slice theorem for smooth group actions says that an open neighborhood of $G \cdot x$ is difeomorphic to the total space of a $H_x$-equivariant disk bundle over $G \cdot x \simeq G/H_x$; the disk is a disk inside the normal in $T_x Ind_H^G N$ to the submanifold $G \cdot x$. But an open neighborhood of $H \cdot x \subset N \subset Ind_H^G N$ is then just the restriction of this principal bundle to $H /H_x \subset G / H_x$, and so in particular $N \subset Ind_H^G N$ is automatically a smooth submanifold. 
    \end{proof}
    
    \begin{lemma}
    \label{lemma:unique-smoothing-of-vector-bundle}
    Let $E$ be a topological $G$-vector bundle over a topological $G-\langle k \rangle$-manifold $M$. A choice of smooth structure on $M$ induces a choice of smooth structure on $E$ that is canonical up to contractible choice.
    \end{lemma}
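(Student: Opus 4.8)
The plan is to reduce, via the doubling construction, to the case in which $M$ (and hence the total space $E$) has no corners, and then to invoke equivariant smoothing theory for vector bundles over smooth manifolds, where the statement is classical in essence.

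First I would observe that the total space $E$ is itself a topological $G$-$\langle k \rangle$-manifold with $E(S) = E|_{M(S)}$, and that the bundle projection $\pi : E \to M$ is a submersion of $\langle k \rangle$-manifolds (in the sense of Definition \ref{def:general-submersion-of-k-manifolds}, with $S_f = \emptyset$). Since the $(\Z/2)^k$-action used to build the double acts fiberwise linearly, functoriality of doubling produces a topological $(G \times (\Z/2)^k)$-vector bundle $\mathcal{D}(\pi) : \mathcal{D}(E) \to \mathcal{D}(M)$, and $\mathcal{D}(E)$ is precisely the double of $E$ as a $\langle k \rangle$-manifold. By Lemma \ref{lemma:collars-and-doubles-are-smooth} the chosen smooth structure on $M$ determines one on $\mathcal{D}(M)$, and by Lemma \ref{lemma:smoothing-double-smooths-original} (applied to the total space $E$) a $(G \times (\Z/2)^k)$-equivariant smooth structure on $\mathcal{D}(E)$ for which $\mathcal{D}(\pi)$ is a smooth vector bundle restricts uniquely to a smooth $\langle k \rangle$-structure on $E$ making $\pi$ a smooth $G$-vector bundle, and conversely. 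So it suffices to produce, canonically up to contractible choice, a $(G \times (\Z/2)^k)$-equivariant smooth vector-bundle structure on the topological vector bundle $\mathcal{D}(E) \to \mathcal{D}(M)$ over the now-smooth manifold $\mathcal{D}(M)$; I have thus reduced to the statement for a closed smooth $G'$-manifold, $G' = G \times (\Z/2)^k$ (a compact Lie group, using that $G$ is compact Lie as assumed in this section).

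For that reduced statement I would argue that the space of smooth structures on a topological $G'$-vector bundle $E' \to M'$ of rank $r$ over a smooth $G'$-manifold $M'$, compatible with the topological structure and making the projection a smooth vector bundle, is (weakly) contractible. Nonemptiness: by the equivariant embedding theorem for compact Lie group actions, together with a $G'$-invariant inner product, $E'$ embeds as a $G'$-subbundle of a trivial bundle $M' \times W$ for some $G'$-representation $W$ (finite-dimensional when $M$ is compact, which is the case of interest); equivalently $E' \cong f^{*}\gamma$ for a $G'$-map $f : M' \to \mathrm{Gr}_r(W)$ to the Grassmannian of $r$-planes in $W$ with its induced $G'$-action, where $\gamma$ is the tautological smooth $G'$-vector bundle. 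By equivariant smooth approximation $f$ is $G'$-homotopic to a smooth $G'$-map $f'$, and $f'^{*}\gamma$ — a smooth $G'$-vector bundle that is $G'$-equivariantly topologically isomorphic to $E'$ — transports its smooth structure to $E'$. Contractibility: as $W$ ranges over an exhausting sequence of $G'$-representations the space of such classifying data (equivariant embeddings of $E'$ into $M' \times W$, equivalently $G'$-maps to $\mathrm{Gr}_r(W)$ equipped with an identification with $E'$) stabilizes to a contractible space, the equivariant analogue of the contractibility of the space of embeddings of a vector bundle into $\R^{\infty}$, and passing from continuous to smooth such data changes nothing up to contractible choice since each $\mathrm{Gr}_r(W)$ is a smooth $G'$-manifold and smooth maps are dense among continuous ones, compatibly with homotopies. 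Two smooth structures on $E'$ arising from two points of this space differ by a $G'$-equivariant bundle automorphism of $E'$ isotopic through such automorphisms to the identity, and the space of these is again contractible; this is the meaning of ``canonical up to contractible choice.''

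The main obstacle I anticipate is the contractibility — rather than mere existence and uniqueness up to isomorphism — of the space of compatible smooth structures in the equivariant setting: this requires assembling the equivariant embedding theorem, equivariant smooth approximation, and the stabilization argument above, and verifying that they can all be run $(G \times (\Z/2)^k)$-equivariantly. A secondary point to check carefully is that transporting the resulting structure back through the doubling equivalences of Lemmas \ref{lemma:collars-and-doubles-are-smooth} and \ref{lemma:smoothing-double-smooths-original} genuinely yields a smooth $\langle k \rangle$-manifold structure on $E$ whose strata are the $E|_{M(S)}$ and for which $\pi$ is a smooth $G$-vector bundle in the sense used elsewhere in the paper; this is routine given the reduction, but the automorphism bookkeeping needed to state the result ``up to contractible choice'' must be carried out equivariantly.
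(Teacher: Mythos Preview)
Your proposal is correct and follows essentially the same approach as the paper: reduce via doubling to the case of a smooth $(G\times(\Z/2)^k)$-manifold without corners, then use that topological and smooth equivariant vector bundles over a smooth manifold coincide, and transport back. The paper's proof is a one-sentence citation of Wasserman for this last fact, whereas you unpack the argument (equivariant embedding, smooth approximation, stabilization to get contractibility); your version is more detailed but not a different route.
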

    \begin{proof}
        This follows by applying the doubling construction to $E$ and $M$, using the corresponding bijection between isomorphism classes of topological and smooth equivariant bundles over a smooth $G \times (\Z/2)^k$-manifold \cite{Wasserman1969}, and restricting the resulting transition functions.
    \end{proof}
    
    \subsection{Kuranishi charts and derived orbifolds with corners.}
    \begin{definition}
    let $X$ be a space stratified by $2^{[k]}$. A \emph{topological Kuranishi chart} is a quadruple $\mathcal{K} = (G, T, V, \sigma)$, where $G$ is a compact Lie group, $T$ is a topological $G-\langle k \rangle$-manifold where $G$ has finite stabilizers, $V$ is a $G$-vector bundle over $T$, and $\sigma$ is a $G$-equivariant section of $V$. We say that $\mathcal{K}$ is a Kuranishi chart \emph{on X} when a \emph{footprint homeomorphism} $\phi: X \simeq \sigma^{-1}(0)/G$ is specified, where $\phi$ should be a map of stratified spaces.
    \end{definition}
    We say that a topological Kuranishi chart $\mathcal{K}$ is smooth when the thickening $T$ is a smooth $G-\langle k \rangle$-manifold and $V$ is a smooth vector bundle over $T$. A smooth Kuranishi chart $\mathcal{K}$ is \emph{regular} among a subset $C \subset \sigma^{-1}(0)/G$ when $\sigma$ is strongly transverse along the preimage of $C$ in $T$. Such a chart is \emph{regular} whenever it is regular on $\sigma^{-1}(0)/G$. 
    
    Given a smooth Kuranishi chart $\mathcal{K} = (G, T, V, \sigma)$, if the thickening $T$ is a $\langle 0 \rangle$-manifold then we get an associated \emph{derived orbifold chart} $([T/G], [V/G], [\sigma/G])$ \cite[Definition~1.3]{bai_xu}. In the setting of Floer homology we must introduce an appropriate generalization of the language of derived orbifolds to include the case where the thickening can be a convenient kind of manifold with corners. 
    
    Let $X$ be a Hausdorff and second-countable topological space stratified by $\{0 < \ldots<  k\}$. 
    \begin{definition}
    \label{def:orbifold-chart-with-corners}
    An (effective) \emph{$n$-dimensional orbifold chart with codimension $\ell$-corners} is a triple $C = (U, \psi, \Gamma)$ where
    \begin{itemize}
        \item $U \subset \R^\ell_+ \times \R^{n-\ell}$ is an open neighborhood of $0$,
        \item $\Gamma$ acts effectively and linearly on $\R^n \supset \R^\ell_+ \times \R^{n-\ell}$,  with the action a direct sum of a nontrivial action on $\R^{n-\ell}$ with the trivial action on $\R^\ell$;
        \item $U$ is $\Gamma$-invariant with respect to this action;
        \item  the \emph{footprint} map $\psi: U \to X$ is a $\Gamma$-invariant continuous map such that the induced map $\bar{\psi}: U/\Gamma \to X$ is a homeomorphism onto an open subset $V \subset X$, and 
        \begin{equation}
        \bar{\psi}^{-1}(X(i)) = U \cap ((\R^\ell_+)(i) \times \R^{n-\ell}). 
    \end{equation}
    \end{itemize} 
    \end{definition}
    Given $C=(U, \psi, \Gamma)$ and $C'=(U', \psi', \Gamma')$ a pair of $n$-dimensional orbifold charts with codimension $\ell$ and $\ell'$ corners, respectively, a chart embedding is a smooth open embedding $\iota: U \to U'$  such that $\psi' \circ \iota = \psi$. We will drop ``with codimension $\ell$'' when some $\ell$ is implicit. (To avoid linguistic confusion, note that that the inclusion $\R_+ \subset \R$ is \emph{not} an open embedding, but the inclusion  $(1,2) \times \R_+ \subset \R_+^2$ \emph{is} an open embedding. Also note that smooth open embeddings will send points in codimension $i$ strata to points in codimension $i$ strata necessarily.) 
    This automatically defines an equivariant injection of groups $\iota: \Gamma \to \Gamma'$. 
    
    
    WIth these definitions, we can proceed as usual. A chart is centered at $x \in X$ if $0 \in U$ and $x = \bar{\psi}(0)$; two charts are compatible if there are charts centered at every point in the intersections of their footprints which each admit a chart embedding into both charts. An orbifold atlas with codimension $k$ corners is a collection of orbifold charts on $X$ with corners of codimension at most $k$ which are mutually compatible, and a pair of orbifold atlases with codimension $k$ corners are equivalent if they admit a common refinement. Thus $X$ as above with an equivalence class of such atlases is an effective orbifold with codimension $k$-corners.
    
    Continuous functions $f: X \to \R$ are considered smooth when their pullbacks to all charts as in Definition \ref{def:orbifold-chart-with-corners} are smooth; orbifold vector bundles (orbibundles) and their sections are defined identically to \cite[Definition~3.3]{bai_xu}. 
    
   There is a natural refinement of an effective orbifold with codimension $k$ corners to the notion of an effective $\langle k \rangle$-orbifold. We use notation as in Section \ref{sec:k-manifolds}.
    \begin{definition}
    Let $X$ be a space stratified by $2^{[k]}$. We can view $X$ as a space stratified by $\{0 < \ldots < k\}$. A \emph{$\langle k \rangle$-chart} $(U, \Gamma, \psi)$ where $\psi$ is a map 
    \begin{equation}
        \psi: \R^{[k] \setminus S}_+  \times \R^{n - |k \setminus S|} \supset U \to V \subset X
    \end{equation}
    such that, 
    v, and we say that an orbifold chart with codimension $\ell$ corners $C = $ on $X$ is a  if, writing $\bar{\psi}(U) = V$, we have 
    \begin{itemize} 
        \item Viewing the domain of $\psi$ as $\R^\ell_+ \times \R^{n - \ell}$ (via, say, the order-preserving bijection $[k] \setminus S \to [\ell]$) we have that $(U,\Gamma,\psi)$ is an $n$-dimensional orbifold chart with codimension $\ell$ corners. Writing $\bar{\psi}: U/\Gamma \to X$ as in Definition \ref{def:orbifold-chart-with-corners}, it also holds that
    	\item for $y \in V$, we have that $S(y) \geq S(x)$, and moreover
    	\item For $[k] \supset T \supset S$, have that $\bar{\psi}^{-1}(X(T)) \subset \R^{[k] \setminus S}_+(r_S(T)) \times \R^{n - |k \setminus S|}$.
    \end{itemize}
   \end{definition}
   An embedding of $\langle k \rangle$-charts is just an embedding of the underlying orbifold charts with corners, and thus the notion of compatiblity of $\langle k \rangle$-charts continues to make sense. We have
   \begin{definition}
       An \emph{(effective) $\langle k \rangle$-orbifold} is a space $X$ stratified by $2^{[k]}$ which is equipped with an orbifold $\langle k \rangle$-atlas, i.e. a maximal collection of compatible $\langle k \rangle$-charts. We will denote a $\langle k \rangle$-orbifold with underlying space $X$ by $X$ when the orbifold $\langle k \rangle$-atlas is clear from the context.
   \end{definition}
   A smooth function, orbibundle, smooth section, etc. on a $\langle k \rangle$-orbifold are the same as those on the underlying orbifold with corners. 
   
   The formula for doubling a $\langle k \rangle$-manifold makes sense for \emph{any} space stratified by $2^{[k]}$, and by functoriality of doubling, the double $\mathcal{D}(X)$ of the underlying space of an effective $\langle k \rangle$-orbifold is covered by charts $\mathcal{D}(\psi): \mathcal{D}(U) \to \mathcal{D}(X)$:  if $C = (U, \Gamma, \psi)$ was a $\langle k \rangle$-chart with codimension $\ell$-corners, then $U$ is an $\langle \ell \rangle$-manifold, and $\mathcal{D}(\psi)$ is $\Gamma \times (\Z/2)^\ell$-equivariant. We call any such orbifold chart on $\mathcal{D}(X)$ produced by this procedure \emph{doubled chart}. Since the double of an open embedding of $\langle k \rangle$-manifolds is a smooth open embedding (by Lemma \ref{lemma:collars-and-doubles-are-smooth} 
   the doubled charts are mutually compatible, and so $\mathcal{D}(X)$ the underlying space of an effective orbifold, also denoted $\mathcal{D}(X)$, by declaring the doubled charts to be smooth. Moreover, by composing the doubled charts with the quotient map $\mathcal{D}(X) \to X$, we get new charts $(\mathcal{D}(U), \Gamma \times (\Z/2)^k_{S(\psi(0))}, \mathcal{D}(\psi))$ on $X$ (where $\mathcal{D}(U) \subset \R^{[k] \setminus S}  \times \R^{n - |k \setminus S|}$) which are also mutually compatible. These make make $X$ into the underlying space of another effective orbifold, which we will denote by $\mathcal{D}(X)/(\Z/2)^k$. 
   
   \paragraph{Producing $\langle k \rangle$-orbifolds from global quotients.}
    Let $T$ be a smooth $G-\langle k \rangle$-manifold with corners where the $G$-action has finite stabilizers with the stabilizer of a generic point being empty. 
    \begin{lemma}
        The space $T/G$ is equipped with a canonical structure of an effective $\langle k \rangle$-orbifold, which we denote $[T/G]$.
    \end{lemma}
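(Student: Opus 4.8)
The plan is to build the $\langle k \rangle$-orbifold structure on $T/G$ chart-by-chart out of \emph{slices} for the $G$-action, to verify the chart axioms of Definition~\ref{def:orbifold-chart-with-corners} and of the $\langle k \rangle$-chart definition, and then to check that the slice charts are mutually compatible and that the maximal atlas they generate is independent of the choices made. First note $T/G$ is Hausdorff and second countable because $G$ is compact and $T$ is, so only the local structure is at issue. Fix $x \in T$ with $S = S(x) \in 2^{[k]}$ its stratum; since $G$ preserves each stratum, the finite group $G_x$ preserves an open neighborhood of $x$ together with its $\langle k \rangle$-stratification. Starting from a smooth $\langle k \rangle$-chart on $T$ centered at $x$ and averaging the induced $G_x$-action (Bochner linearization: a finite group action fixing $0$ is conjugate near $0$ to its linearization by a $G_x$-equivariant diffeomorphism), I obtain a $G_x$-equivariant identification of a neighborhood of $x$ with an open $G_x$-invariant subset $U_x$ of a representation of the form $\R^{[k] \setminus S}_+ \times V_x$. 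Here $G_x$ must act trivially on $\R^{[k] \setminus S}$: its linearized action preserves each coordinate hyperplane $\{t_j = 0\}$ (because $G$ preserves the corresponding stratum of $T$) and preserves the positive orthant, hence acts on each of these coordinate lines by a positive scalar of finite order, hence trivially. The slice theorem for the proper action of the compact group $G$ then gives $G \cdot \psi_x(U_x) \cong G \times_{G_x} \psi_x(U_x)$, so the composite $U_x \xrightarrow{\psi_x} T \to T/G$ descends to a homeomorphism $U_x/G_x \to V$ onto an open subset of $T/G$. Equipping $T/G$ with the stratification $\pi(T(S))$, $S \in 2^{[k]}$ (well defined and making $\pi : T \to T/G$ stratum-preserving since $G$ preserves strata), this makes $C_x = (U_x, G_x, \bar\psi_x)$ a $\langle k \rangle$-chart on $T/G$, all the corner conditions being inherited from those of $\psi_x$ as a $\langle k \rangle$-chart on $T$.

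The one place the hypotheses are essential is effectivity: I claim $G_x$ acts effectively (faithfully) on $V_x$. The free locus $T^{\mathrm{free}}$ is open because stabilizers are finite, and dense because the generic stabilizer is trivial, so it meets the open set $G \cdot \psi_x(U_x)$; pulling back a free point gives $w \in U_x$ with trivial $G$-stabilizer, and any $g \in G_x$ acting trivially on $U_x$ fixes $w$, whence $g = e$. This is exactly the step that would fail — and whose conclusion is needed for the chart group to act effectively — if the generic stabilizer were nontrivial.

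Next I would check mutual compatibility of the charts $\{C_x\}_{x \in T}$. Given $\bar z$ in the overlap of the footprints of $C_x$ and $C_y$, lift to $z$ lying in both $G \cdot \psi_x(U_x)$ and $G \cdot \psi_y(U_y)$; translating by $G$ we may take $z \in \psi_x(U_x)$ and $z \in g \cdot \psi_y(U_y)$, and then a sufficiently small slice at $z$ contained in $\psi_x(U_x)$ produces, via the inclusions $G_z \subset G_x$ and $G_z \subset g G_y g^{-1}$, a $\langle k \rangle$-chart centered at $\bar z$ together with chart embeddings into $C_x$ and into $C_y$. This is the standard ``refinement by slices'' argument; the corner conditions on the embeddings are automatic because every map involved is smooth and stratum-preserving. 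Hence $\{C_x\}$ is a $\langle k \rangle$-atlas, and $[T/G]$ is defined to be the maximal atlas it generates. Canonicity follows since the only choices were of $G_x$-invariant linearizing slices, and any two such choices are joined through the (contractible, up to the $G_x$-conjugation ambiguity already folded into the notion of chart embedding) space of $G_x$-invariant tubular data, so two runs of the construction yield atlases with a common refinement.

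I expect the genuine obstacle to be making the slice theorem interact cleanly with the corner structure — that is, arranging the slice to be an honest $\langle k \rangle$-chart domain $\R^{[k] \setminus S}_+ \times V_x$ with $G_x$ acting trivially on the corner factor — rather than the (one-line) effectivity argument or the (routine) compatibility bookkeeping. An alternative that sidesteps the corner issues entirely is to run everything on the double: by Lemma~\ref{lemma:collars-and-doubles-are-smooth}, $\mathcal{D}(T)$ is a smooth $G \times (\Z/2)^k$-manifold without corners, the stabilizer of $x$ there is $G_x \times (\Z/2)^k_{S(x)}$, the ordinary slice theorem applies verbatim, and since $G$ commutes with $(\Z/2)^k$ and preserves the fixed loci of the sign involutions, the slice representation splits off the standard $(\Z/2)^k_{S(x)}$-representation on $\R^{[k] \setminus S}$ with $G_x$ acting trivially on it; restricting to $T \subset \mathcal{D}(T)$ and invoking Lemma~\ref{lemma:smoothing-double-smooths-original} recovers the slice charts on $T/G$ with all corner conditions built in, and one concludes as above.
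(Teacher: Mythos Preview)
Your proposal is correct, and in fact your closing ``alternative'' is precisely the route the paper takes: the paper's proof passes to $\mathcal{D}(T)$, applies the slice theorem for the $G \times (\Z/2)^k$-action on this manifold without corners, reads off the stabilizer as $G_x \times (\Z/2)^k_{S(x)}$, and uses the $(\Z/2)^k_{S(x)}$-isotypical decomposition of the slice representation to split off the corner factor $\R^{[k]\setminus S}$ with $G_x$ acting trivially on it, then restricts back to $T$.

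Your primary approach --- Bochner-linearizing the $G_x$-action directly in a $\langle k \rangle$-chart on $T$ and arguing that $G_x$ must act trivially on $\R^{[k]\setminus S}_+$ because it preserves each coordinate hyperplane and the positive orthant --- is a genuinely different and somewhat more elementary argument for the same splitting. It trades the doubling machinery for a direct positivity argument. The price, as you correctly flag, is that invoking the slice theorem itself on a manifold with corners needs justification (e.g.\ via a $G$-invariant metric making boundary faces totally geodesic, or by doubling anyway), which is exactly the step the paper's route absorbs into the passage to $\mathcal{D}(T)$. Your effectivity argument via density of the free locus and your compatibility-by-refinement sketch are both fine and slightly more explicit than what the paper records (the paper says only ``we only explain the construction of the charts'').
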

    \begin{proof}
        We only explain the construction of the charts. Choose $x \in T \subset \mathcal{D}(T)$, and use the slice theorem for the latter manifold acted on by $G \times (\Z/2)^k$. Write $S = S(x)$. We have that $Stab(x) := (G \times (\Z/2)^k)_x = G_x \times (\Z/2)_S^k \subset G \times (\Z/2)^k$. The slice theorem then shows that $x$ has a $G$-invariant neighborhood $V$ which is $G \times (\Z/2)_S^k$-equivariantly isomorphic to $G \times_{G_x} D$, where the latter is a disk in the normal to $V = T_xG\cdot x$; here $(\Z/2)_S^k$ acts on $D$ via its action on the normal space. In particular there is a $Stab(x)$-invariant map $\psi^{-1}$ from a neighborhood $V$ of $x$ to  $U \subset Lie(G) \times \mathcal{R}$, where $Lie(G)$ is the lie algebra of $G$ thought of as a trivial $G_x \times (\Z/2)_S^k$-representation, while $\mathcal{R}$ is a nontrivial such representation. As in Lemma \ref{lemma:smoothing-double-smooths-original} we can write $\mathcal{R}$ as $\R^{\ell}\times\R^{n-\ell} $ where the second factor is a trivial $(\Z/2)_S^k \simeq (\Z/2)^\ell$ representation but a nontrivial $G_x$-representation, while in the first factor the $\Z/2$ factors of $(\Z/2)_S^k$ act by flipping sings while $G_x$ acts trivially. Composing such $\psi$ with an appropriate permutation of the last $\ell$ coordinates to produce $\psi'$, we see that $(G_x, (\psi)^{-1}(D) \cap \R^{\ell}_+ \times \R^{n-\ell}, \psi')$ is a $\langle k \rangle$-chart on $T/G$. We declare such charts to be smooth.
    \end{proof}
    
    From the above construction of charts, we conclude 
    \begin{lemma}
        \label{lemma:induced-orbifolds-are-the-same}
        Let $T'$ be the induction of $T$ along $G \subset G'$. Then $[T'/G']$ and $[T/G]$ define canonically isomorphic orbifolds. $\qed$
    \end{lemma}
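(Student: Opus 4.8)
The plan is to reduce the statement to two observations: that the underlying stratified spaces of $[T'/G']$ and $[T/G]$ are canonically identified, and that the $\langle k \rangle$-charts produced by the construction of the preceding lemma are literally the same for both. First I would produce the canonical homeomorphism of underlying spaces. Since $T' = Ind_G^{G'} T = G' \times_G T$ with $G'$ acting by left translation on the first factor, the quotient $T'/G'$ is canonically $T/G$; and because induction is defined stratumwise, $(T')(S) = G' \times_G T(S)$ for each $S \in 2^{[k]}$, so $(T'/G')(S) = T(S)/G = (T/G)(S)$ and the identification is a map of spaces stratified by $2^{[k]}$. I would also note in passing that the hypotheses of the $[T/G]$ construction transfer: stabilizers of points of $T'$ under $G'$ are conjugate to stabilizers of points of $T$ under $G$, hence finite, and trivial generically.

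Next I would match the local data used to build the charts. Fix $x \in T$ and let $x' = [e,x] \in T'$ be its image; a direct computation with the equivalence relation defining $G' \times_G T$ shows $Stab_{G'}(x') = G_x$, the \emph{same} finite group entering the construction of the chart of $[T/G]$ centered at the image of $x$. Applying the slice theorem on $T$ at the finite-stabilizer point $x$ gives a $G$-invariant neighborhood diffeomorphic to $G \times_{G_x} D$, with $D$ a disk in the normal representation $\mathcal{R}$ to the orbit; then, by transitivity of induction, $T' \supset G' \times_G (G \times_{G_x} D) = G' \times_{G_x} D$ is a $G'$-invariant neighborhood of $x'$, which is precisely the output of the slice theorem applied to $x' \in T'$, with the \emph{same} disk $D$ carrying the \emph{same} $G_x$-action. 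Consequently the normal representation, its splitting $\mathcal{R} \simeq \R^{\ell} \times \R^{n-\ell}$ into corner and non-corner directions (dictated by the $(\Z/2)^k_S$-action on the double exactly as in the proof of the preceding lemma and in Lemma \ref{lemma:smoothing-double-smooths-original}), and the coordinate permutation $\psi'$ may all be chosen identically. Hence each $\langle k \rangle$-chart $(G_x, \psi'^{-1}(D) \cap \R^\ell_+ \times \R^{n-\ell}, \psi')$ produced for $[T/G]$ is, under the identification of underlying spaces, also one of the charts produced for $[T'/G']$, and conversely.

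Finally I would conclude: the two constructions produce the same collection of smooth $\langle k \rangle$-charts, hence generate the same maximal $\langle k \rangle$-atlas, so the identity on underlying stratified spaces upgrades to a canonical isomorphism of effective $\langle k \rangle$-orbifolds. The only step requiring genuine care -- and the closest thing to an obstacle here -- is the slice-theorem/doubling bookkeeping: one must verify that passing from $T$ to $G' \times_G T$ does not alter the $G_x \times (\Z/2)^k_S$-representation on the slice, so that the ``corner versus interior'' splitting of coordinates and the attendant stratification conditions in the definition of a $\langle k \rangle$-chart are transported faithfully. This is immediate from the identity $Ind_G^{G'}(G \times_{G_x} D) = G' \times_{G_x} D$, but it is the place where one should spell out why the equivariant normal data is unchanged rather than merely isomorphic, since it is this rigidity that makes the isomorphism of orbifolds \emph{canonical} rather than merely existent.
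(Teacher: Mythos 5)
Your proof is correct and takes exactly the approach the paper intends: the paper states this lemma with ``$\qed$'' as an immediate consequence of the preceding chart construction, and your argument spells out why the charts produced for $[T'/G']$ and $[T/G]$ literally coincide. The key identifications you use -- $Stab_{G'}([e,x]) = G_x$ and $Ind_G^{G'}(G \times_{G_x} D) = G' \times_{G_x} D$ by transitivity of induction, which transports the slice representation unchanged -- are precisely the facts that make the two constructions produce the same atlas.
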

    
      If $V$ is a $G$-vector bundle over $T$, then there is an associated orbibundle denoted $[V/G]$. Moreover, by applying the induction functor to charts, we see that the the induction $V' = Ind_G^{G'} V$ to a vector bundle over $T'$ defines the an isomorphic vector bundle $[V'/G']$ over $[T'/G] \simeq [T/G]$. Finally, the construction of charts above shows (by restricting $s$ to $1 \times D \cap \R^{\ell}_+ \times \R^{n-\ell}  \subset G \times_{G_x} D$) that given a $G$-equivariant section $s: T \to V$, there is an induced section $s: [T/G] \to [V/G]$ with underlying map $s/G: T/G \to V/G$ induced by the quotient construction on spaces, which is smooth if $s$ was smooth.

    \paragraph{Derived $\langle k \rangle$-orbifolds and global Kuranishi charts. }
        Given a smooth Kuranishi chart $\mathcal{K} = (G, T, V, \sigma)$, we can form a triple $\mathcal{K}//G = ([T/G], [V/G], [\sigma /G])$, where $[T/G]$ is an effective $\langle k \rangle$-orbifold, $[V/G]$ is the orbibundle over $[T/G]$ corresponding to $V$, and $[\sigma/G]$ is the continuous section $[T/G] \to [V/G]$ induced from $\sigma$. We call such a triple a \emph{derived orbifold chart}, and we say that $\mathcal{K}//G$ is the derived orbifold chart \emph{associated} to $\mathcal{K}$.  We write $\vdim (T, V, \sigma) = \dim V - \dim T$ for the expected dimension of $\sigma^{-1}(0)$. We say that a pair of derived orbifold charts $(\mathcal{T}_1, \mathcal{V}_1, \sigma_1)$ and $(\mathcal{T}_2, \mathcal{V}_2, \sigma_2)$ are \emph{equivalent} when there is a diffeomorphism $\Phi: U_1 \to U_2$ between open neighborhoods $U_i \supset \sigma_i^{-1}(0)$ along with an isomorphism of orbibundles $\bar{\Phi}: \mathcal{V}_1 \simeq \Phi^*\mathcal{V}_2 $ such that 
        $\sigma_2 \circ \Phi = \bar{\Phi} \circ \sigma_1$. There is an obvious notion of the product of a pair of derived orbifold charts. 
        
        \begin{lemma}
            Let $\mathcal{K}_1 = (H, T, V, \sigma)$ and let $\mathcal{K}_2 = Ind_H^G \mathcal{K}_1$. Then $\mathcal{K}_2//G$ is equivalent to $\mathcal{K}_1//H$. Similarly, given $\mathcal{K}' = (H', T', V', \sigma')$, we have that 
            $\mathcal{K} \times \mathcal{K}'//(H \times H')$ is equivalent to $\mathcal{K}_1//H \times \mathcal{K}_2//H'$. 
        \end{lemma}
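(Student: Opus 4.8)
The plan is to reduce both assertions to the functoriality of the global-quotient construction $\mathcal{K}\mapsto\mathcal{K}//G$ that was already packaged into the preceding lemmas. For the first claim, unwind $\mathcal{K}_2 = Ind_H^G\mathcal{K}_1$ to mean $\mathcal{K}_2 = (G,\ Ind_H^G T,\ Ind_H^G V,\ Ind_H^G\sigma)$, where $Ind_H^G\sigma$ is the $G$-equivariant section of $Ind_H^G V$ obtained by applying the induction functor to $\sigma$. By Lemma~\ref{lemma:induced-orbifolds-are-the-same} the underlying $\langle k\rangle$-orbifolds $[Ind_H^G T/G]$ and $[T/H]$ are canonically isomorphic; call this isomorphism $\Phi$. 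The remarks following that lemma already record that under $\Phi$ the orbibundle $[Ind_H^G V/G]$ is carried to $[V/H]$ by a canonical orbibundle isomorphism $\bar\Phi$, and that the induced section $[Ind_H^G\sigma/G]$ is carried to $[\sigma/H]$ — concretely, in both cases one builds the chart from a slice $G\times_{H_x}D$ and then restricts the equivariant data to $1\times D\cap\R^\ell_+\times\R^{n-\ell}$, which is literally the same restriction on both sides. Since $\Phi$ is a global isomorphism of orbifolds, it restricts to a diffeomorphism between any matching pair of open neighborhoods $U_1\supset[\sigma/H]^{-1}(0)$ and $U_2\supset[Ind_H^G\sigma/G]^{-1}(0)$, and $\sigma_2\circ\Phi=\bar\Phi\circ\sigma_1$ holds by construction; hence $(\Phi,\bar\Phi)$ is an equivalence of derived orbifold charts $\mathcal{K}_2//G\simeq\mathcal{K}_1//H$.

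For the second claim, which I read as the statement that $(\mathcal{K}_1\times\mathcal{K}')//(H\times H')$ is equivalent to $(\mathcal{K}_1//H)\times(\mathcal{K}'//H')$, first recall that the product Kuranishi chart is $\mathcal{K}_1\times\mathcal{K}' = (H\times H',\ T\times T',\ V\boxplus V',\ \sigma\times\sigma')$, whose thickening is a $\langle k+k'\rangle$-manifold by the product construction on $G$-$\langle k\rangle$-manifolds. The key point is that the chart recipe producing $[T/G]$ respects products: since $\mathcal{D}(T\times T')=\mathcal{D}(T)\times\mathcal{D}(T')$ and since at $(x,x')$ the $(H\times H')\times(\Z/2)^{k+k'}$-stabilizer, the normal representation, and the factorization of the $(\Z/2)$-corner directions into $\R^\ell_+\times\R^{n-\ell}$ all split as products, one may take a product of slices as the slice at $(x,x')$. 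This yields a canonical isomorphism of $\langle k+k'\rangle$-orbifolds $[T\times T'/(H\times H')]\simeq[T/H]\times[T'/H']$, and passing the same product charts through the orbibundle and induced-section constructions gives the matching identifications of $[V\boxplus V'/(H\times H')]$ with $[V/H]\boxplus[V'/H']$ and of $[(\sigma\times\sigma')/(H\times H')]$ with $[\sigma/H]\times[\sigma'/H']$. Assembling these gives the asserted equivalence.

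I do not expect a substantive obstacle here; the only point demanding care is bookkeeping against the definition of equivalence of derived orbifold charts, namely that $\Phi$ and $\bar\Phi$ intertwine the sections as an equality of maps and not merely up to isotopy. This is automatic in both parts because $\Phi$, $\bar\Phi$, and the section identifications are all induced by a single underlying map on thickenings (the inclusion $T\hookrightarrow Ind_H^G T$ as the slice $1\times T$, respectively the identity on $T\times T'$ followed by the product-of-slices identification), so no independent choices enter and the relation $\sigma_2\circ\Phi=\bar\Phi\circ\sigma_1$ is a tautology. Thus the entire content of the lemma is the functoriality of the constructions already established, and the proof is a matter of citing Lemma~\ref{lemma:induced-orbifolds-are-the-same} and the product version of the chart construction and checking that the induced data match on the nose.
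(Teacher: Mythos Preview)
Your proposal is correct and follows essentially the same approach as the paper: both reduce the claim to Lemma~\ref{lemma:induced-orbifolds-are-the-same} and the discussion immediately after it about how induction carries orbibundles and equivariant sections to the corresponding data on the quotient. You spell out the product case in more detail than the paper (which simply folds it into the same citation), but the underlying argument---that the slice construction and hence the chart recipe manifestly split over products---is the intended one.
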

        \begin{proof}
            This is an immediate consequence of Lemma \ref{lemma:induced-orbifolds-are-the-same} and the subsequent discussion about induction of vector bundles and smooth sections of orbibundles induced from equivariant sections.  
        \end{proof}
        
    A \emph{diffeomorphism} of derived $\langle k \rangle$-orbifolds $F: (\mathcal{T}_1, \mathcal{V}_1, \sigma_1) \to (\mathcal{T}_2, \mathcal{V}_2, \sigma_2)$ is a pair of diffeomorphisms $(f,g)$, with $f: \mathcal{T}_1 \to \mathcal{T}_2$ and $g: \mathcal{V}_1 \to \mathcal{V}_2$, such that $\pi_2 g = f \pi_1$, $g\iota_1 = \iota_2 f$, and $g\sigma_1= \sigma_2 g$, where $\pi_i$ is the projection from $\mathcal{V}_i$ to $\mathcal{T}_i$, while $\iota_i$ is the inclusion of the zero section.

    \paragraph{Normally complex structures} Fix a $\langle k \rangle$-orbifold $X$ equipped an orbibundle $V$ and a distinguished compact subset $K \subset X$. We can view $X$ as the as a smooth orbifold $\mathcal{D}(X)/(\Z/2)^k$ via the doubling construction; $V$ then becomes an orbibundle $\mathcal{D}(V)/(\Z/2)^k$ over this orbifold.  
    
    We say that a straightened metric on $X$ is a straightened metric on $\mathcal{D}(X)/(\Z/2)^k$; given a straightened metric on $X$, a straightened connection on $\mathcal{V}$ is a straightened connection on $\mathcal{D}{V}/(\Z/2)^k$ with respect to this metric; and a straightened structure on $(X,V)$ is a straightened structure on $(\mathcal{D}(X)/(\Z/2)^k, \mathcal{D}(V)/(\Z/2)^k)$ near $K$. \emph{In particular}, a straightened metric on $X$, resp. a straightened connection on $\mathcal{V}$, is a metric on $X$, resp. a connection on $\mathcal{V}$, satisfying a certain condition. We will say that a straightened structure on $(X, V)$ \emph{extends} the straightened metric on $X$. 
    
    The \cite[Lemma~3.15]{bai_xu} establishes that every $(X,V)$ has a straightned structure near a compact set $K \subset X$. One immediately sees that there is another tautological method for constructing straightened structures from global charts:
    
    \begin{lemma}
        Let $N$ be a $G$-$\langle k \rangle$ manifold and suppose that we have chosen a straightened structure on $[N/G]$. Let $\tilde{V}$ be a $G$-representation. Then the trivial connection on the bundle $\tilde{V} \times N$ over $N$ induces a straightened structure on $([N/G], [(\tilde{V} \times N])$. $\qed$
    \end{lemma}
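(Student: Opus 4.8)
The plan is to unwind the definitions and observe that, once the straightened metric is fixed, producing a straightened structure on the pair $([N/G],[(\tilde V\times N)/G])$ requires only a straightened connection on the orbibundle, and that the connection descending from the trivial connection satisfies the defining condition essentially by inspection. By definition a straightened structure on $([N/G],[(\tilde V\times N)/G])$ is a straightened structure on $(\mathcal{D}([N/G])/(\Z/2)^k,\ \mathcal{D}([(\tilde V\times N)/G])/(\Z/2)^k)$ near $K$, and the assumed straightened structure on $[N/G]$ amounts to a straightened metric on $\mathcal{D}([N/G])/(\Z/2)^k$; so the content of the lemma is to exhibit a connection on $\mathcal{D}([(\tilde V\times N)/G])/(\Z/2)^k$ that is straightened relative to that metric. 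First I would record that the constructions in sight commute with doubling: since $\tilde V\times N$ is a product $G$-bundle with $(\Z/2)^k$ acting trivially on the $\tilde V$ factor, its double is canonically $\tilde V\times\mathcal{D}(N)$ as a $G\times(\Z/2)^k$-bundle over $\mathcal{D}(N)$, hence $\mathcal{D}([(\tilde V\times N)/G])/(\Z/2)^k$ is the orbibundle associated to this product bundle and the trivial connection extends to the trivial connection on it. This reduces the statement to the corner-free situation of a straightened metric on the orbifold $[\mathcal{D}(N)/(G\times(\Z/2)^k)]$ together with the orbibundle of the product $G\times(\Z/2)^k$-bundle $\tilde V\times\mathcal{D}(N)$.

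Next I would carry out the local verification near the isotropy strata, which is where the straightened-connection condition has content. Fix a point $\bar y$ with finite isotropy group $\Gamma$ (a subgroup of $G\times(\Z/2)^k$). The slice-theorem construction used above to put charts on $[T/G]$ gives an orbifold chart $(\widetilde U,\Gamma,\pi)$ around $\bar y$ in which, by Lemma~\ref{lemma:induction-of-vector-bundles}, the orbibundle is represented by the trivial bundle $\widetilde U\times\tilde V$ with $\Gamma$ acting diagonally through its linear action on $\tilde V$ and its action on $\widetilde U$, and the induced connection is the product (trivial) connection. The defining condition of a straightened connection in \cite{bai_xu} is a compatibility requirement on parallel transport of $V$ along the radial geodesics of the straightened metric issuing normally from the isotropy stratum $\widetilde U^{\Gamma}$; since parallel transport for the trivial connection is literally the identity map of $\tilde V$, and the $\Gamma$-isotypic decomposition $\tilde V=\bigoplus_\rho\tilde V_\rho$ is the representation-theoretic decomposition of the \emph{fixed} representation $\tilde V$ and hence constant over $\widetilde U$, this condition holds tautologically. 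The same remark applies at the corner strata introduced by doubling, where $\Gamma$ contains a factor of $(\Z/2)^k$ acting trivially on $\tilde V$.

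Finally, the local connections glue: the trivial connection on $\widetilde U\times\tilde V$ is intrinsic and is preserved by chart embeddings, so there is a well-defined connection on $\mathcal{D}([(\tilde V\times N)/G])/(\Z/2)^k$, which by the above is straightened near $K$ (in fact everywhere), and together with the given metric this is the desired straightened structure; it visibly extends the given straightened structure on $[N/G]$. The one step I expect to require genuine care — the ``main obstacle,'' such as it is for a tautological lemma — is to write out the precise straightened-connection axiom of \cite{bai_xu} and to check that ``parallel transport is the identity'' really does satisfy it at \emph{every} isotropy stratum of $\mathcal{D}([N/G])/(\Z/2)^k$, including the corner hypersurfaces produced by doubling; once that is pinned down, the remainder is bookkeeping with the slice theorem and Lemma~\ref{lemma:induction-of-vector-bundles}.
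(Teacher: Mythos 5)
Your proposal is correct, and in fact the paper itself gives no argument at all for this lemma (it is stated with an immediate $\qed$, treated as tautological). Your unwinding is the expected one and correctly isolates the only point with any content: after passing to the double, in a slice chart the orbibundle is the literal trivial $\Gamma$-equivariant bundle $\widetilde U \times \tilde V$ with diagonal action, parallel transport for the trivial connection is the identity, and the isotypic decomposition of the \emph{fixed} representation $\tilde V$ is constant over $\widetilde U$, so the straightened-connection axiom of \cite{bai_xu} holds verbatim. Your appeal to Lemma~\ref{lemma:induction-of-vector-bundles} to see that the slice-chart form of the bundle is again a product, and your observation that doubling carries $\tilde V \times N$ to $\tilde V \times \mathcal{D}(N)$ because $(\Z/2)^k$ acts trivially on the fiber, are exactly the bookkeeping steps one would expect; nothing is missing.
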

    
    As for straightened structures on orbifolds, given a straightened structure on $X$ which extends to a straightened structure on $(X, V)$ and to a straightened structure on $(X, W)$ we have an associated straightened structure on $(X, V \oplus W)$.

    We can use a straightened structure on $(X, V)$ define a \emph{normal complex structure} on $V$ and $X$. As in the theory of orbifolds, every $\langle k \rangle$-orbifold chart $(U, \Gamma, \psi)$ extends to a \emph{bundle chart} $(U, E, \Gamma, \hat{\psi})$. Here, $E = E_0 \times U$ is a trivial $\Gamma$-equivariant vector bundle; writing $\pi_V: V \to X$ and $\pi_E: E \to U$ for the projections, we have that $\hat{\psi}$ is $\Gamma$-equivariant-map $E \to V$ such that $\pi_V\hat{\psi} = \psi\pi_E$. In particular, in each doubled bundle chart $(U, E, \Gamma, \hat{\psi})$ on $(\mathcal{D}(X)/(\Z/2)^k, \mathcal{D}(V)/(\Z/2)^k)$, the straightened structure defines a metric $g_U$ on $U$ and a connection $\Grad_U$ on $E$ such that 
    \begin{itemize}
        \item For each $G \subset \Gamma$, we have the exponential map $N_\epsilon U^G \to U$ from the $\epsilon$-disk bundle of the normal bundle of $U^G$, such in a neighborhood of $U^G$, the metric $g$ agrees with the pushforward under the exponential map of the bundle metric on $NU^G$ induced by using the Levi-Civita connection and the restriction of the metric to $NU^G$ under the canonical equivariant orthogonal splitting $TU|_{U^G} \simeq TU^G \oplus NU^G$ into trivial and nontrivial $G$-representations; and
        \item We have a connection $\Grad^E$ on $E$ induced from the straightened structure. Identifying this neighborhood of $U^G$ with $N_{\epsilon}U^G$, writing $\rho: (\pi_E|_{U^G})^*E \to E|_{N_{\epsilon}U^G}$ for the vector bundle isomorphism induced by parallel transport along normal geodesics, we have that that the pullback of $\Grad^E$ along $\rho$ agrees with the pullback of $\Grad^E|_{U^G}$ along $(\pi_E|_{U^G})^*$. 
    \end{itemize}
    Now, in every doubled bundle chart $(U, E, \Gamma, \hat{\psi})$, we have that $\Gamma \simeq \Gamma' \times (\Z/2)^k_S$ for some $S \in 2^{[k]}$ and some finite group $\Gamma'$. This doubled chart is the double of the $\langle k \rangle$-orbifold bundle chart $(V_+, E_+, \hat{\psi}|_{E_+})$ with $E_+=E|_{U'}$ and $U_+ = U \cap \R^{n-\ell} \times \R^{\ell}_+$, $\ell = k-|S|$, and $\Gamma'$ acting trivially on the last $\ell$ coordinates and $(\Z/2)^k_{S}$ flipping their signs. Let us consider $G \subset \Gamma$ for the form $G = G_+ \times 1$ and see what data the straightened structure on $\mathcal{D}(X)/(\Z/2)^k$ induces on $(V_+, E_+, \hat{\psi}|_{E_+})$. Since $U_+^{G_+} \simeq U \cap (\R^{n-\ell})^G \times \R^{\ell}_+$, we have that $NU_+^{G_+} = NU^G|_{U_+^{G_+}}$, which the straightened structure on the doubled chart equips with a metric. Moreover, by the $(\Z/2)^k_S$-equivariance of the metric, each boundary hypersurface of $U_+$ is totally geodesic in $U$, so the restriction of the exponential map to $N_\epsilon U_+^{G_+}$ has image in $U_+$. Thus the pushforward of the metric by the exponential map agrees with the restriction of the metric to $U_+$ near $U_+^{G_+}$. Similarly, the connection $\Grad^E$ restricts to a connection $\Grad^{E_+}$, and since normal geodesics stay in $U_+$, we have that the pullback of this metric along $\rho$ restricted to $(\pi_{E_+|_{U_+^{G_+}}})^*E_+$ agrees with the pullback of $\Grad^{E_+}$ along $(\pi_{E_+|_{U_+^{G_+}}})^*$.
    
    We now drop the pluses from our notation and work with the $\langle k \rangle$-orbifold instead of its double. In each orbifold bundle $\langle k \rangle$-chart, we have identifications of $N_\epsilon U^G$ with a neighborhood of $U^G$, and over this neighborhood the canonical splitting of $E|_{U^G} \simeq \oo{E}^G \oplus \check{E}^G$ into isotrivial and nonisotrivial $G$-representations extends to a $G$-equivariant splitting of $E|_{N_\epsilon U^G}$ \emph{which we will denote using the same notation}. As in \cite{bai_xu}, a normal almost complex structure on $X$ is an almost $H$-equivariant complex structure $I_H: NU_H \to NU_H$ for each $\langle k \rangle$-orbifold chart $(\Gamma, U, \psi)$ and every subgroup $H \subset \Gamma$ such that 
    \begin{itemize}
        \item for every $y \in U$ and every pair of subgroups $H \subset G \subset \Gamma_y$ (where we write $\Gamma_y$ for the stabilizer of $y$ in $\Gamma$), $(I_G)_y$ restricts to $(I_H)_y$ under the decomposition 
        \begin{equation}
        \label{eq:decomposition-1}
            N_yU^G \simeq (T_yU^H \cap N_y U^G)\oplus N_y U^H,
        \end{equation}
        and such that 
        \item For any chart embedding $\iota: V \to U$ between charts $(V, \Pi, \psi_V)$ and $(U, \Gamma, \psi_U)$, one has that the induced map $N_yV^{\Pi_y} \simeq N_\iota(y)U^{\Gamma_{\iota(y)}}$ is complex linear for each $y \in U$. 
    \end{itemize} 
    Similarly a normal almost complex structure on an orbibundle $V \to X$ is, for every bundle chart $(U, E, \Gamma, \hat{\psi})$ and every subgroup $G \subset \Gamma$, the data of a complex structure $J_G$ on  $\check{E}^G \to U^G$ for each bundle chart $(U, E, \Gamma, \hat{\psi})$ and each subgroup $G \subset \Gamma$, such that 
    \begin{itemize}
        \item for every $y \in U$ and every pair of subgroups $H \subset G \subset \Gamma_y$, $(J_G)_y$ restricts to $(J_H)_y$ under the decomposition 
        \begin{equation}
            \label{eq:decomposition-2}
            \check{E}^G_y \simeq (\check{E}^G_y)^H \oplus \check{E}^H_y,
        \end{equation} and such that 
        \item Under any equivariant bundle chart embedding $\hat{\iota}: F \to E$ from $(V, F, \Pi, \hat{\psi}_V)$ to $(U, E, \Gamma, \hat{\psi}_U)$ covering $\iota: V \to U$, we have that the induced map $\hat{\iota}: \check{F}^{\Pi_y} \to \check{E}^{\Gamma_{\iota(y)}}$ is complex linear for each $y \in V$.
    \end{itemize}

    Given a diffeomorphism of derived $\langle k \rangle$-orbifolds $F: (X_1, V_1) \to (X_2, V_2)$, $F = (f, g)$, we can ask $F$ to preserve certain structures, namely:
    \begin{itemize}
        \item If $(X_1, V_1)$ and $(X_2, V_2)$ have straightened structures, we can ask for $F$ to be a \emph{straightened diffeomorphism}, namely that $f$ is an isometry with respect to the straightened metrics, while $g^* \Grad^{V_2} = \Grad^{V_1}$ where $\Grad^{V_i}$ is the straightened connection on $V_i$. (Note that in general the pullback $g^* \Grad^{V_2}$ is only an Ehresmann connection on $V_1$ rather than a connection on this vector bundle, so this is a rather stringent condition;) 
        \item If $F$ is a straightened diffeomorphism and $(X_i, V_i)$ have fixed normal complex structures, we can ask for $F$ to be a \emph{normally complex diffeomorphism}, namely that $f$ is normally complex and $g$ is normally complex. These, mean, respectively:
        \item We say that $f$ is normally complex if for every $\langle k \rangle$-orbifold chart $(U_1, \Gamma, \psi_1)$ on $X_1$, we have a $\langle k \rangle$-orbifold chart $(U_2, \Gamma, \psi)$ with $\tilde{f}: U_1 \to U_2$ a $\Gamma$-equivariant map defining $f|_{U_1/\Gamma}$,
        then since $f$ is an isometry, we can identify $\tilde{f}$ with the restriction of linear bundle map $\tilde{f}': NU_1^\Gamma \to NU_2^\Gamma$, and \emph{we require} that $\tilde{f'}$ is complex linear,
        \item We say that $g$ is normally complex if for bundle charts $(U_1, E_1, \Gamma, \hat{\psi}_1)$ and $(U_2, E_2, \Gamma, \hat{\psi}_2)$ covering the $\langle k \rangle$-orbifold charts above, with  $\tilde{g}: E \to E'$ a $\Gamma$-equivariant bundle map $g$ (which is a fiberwise diffeomorphism) \emph{we require} that with respect to the decompositions $E_i|_{U_i^\Gamma} \simeq \oo{E}_i \oplus \check{E}_i$, we can write $\tilde{g}(\oo{e}_1, \check{e}_2) = (\oo{g}(\oo{e}_1), \check{g}(\check{e}_2))$, with $\check{g}$ an $I_H$-biholomorpic map. 
    \end{itemize}
    

    \paragraph{Normally complex sections} 
    Given two complex representations of a finite group $\Gamma$, let $V, W$, let $Poly_d^\Gamma(V, W)$
    be  $\Gamma$-equivariant polynomial maps from $V$ to $W$ of degree at most $d$, and let $ev: V \times Poly_d^\Gamma(V, W) \to W$ be the evaluation map. Given two $\Gamma$-equivariant complex vector bundles $V, W$ over a space $U$ with a trivial $\Gamma$-action, let $Poly_d^\Gamma(V, W)$ be complex vector bundle with fiber $Poly_d^\Gamma(V_x, W_x)$ at $x \in U$, and let $ev: V \oplus Poly_d^\Gamma(V, W)$ be the fiberwise evaluation map. The inverse images of $0$ under $ev$ are denoted by $Z^\Gamma_d(V, W)$ in each case, and they each admit \emph{canonical Whitney stratifications} according to \cite{bai_xu}.
    
    Given a bundle $\langle k \rangle$-orbifold chart $(U, E, \Gamma, \hat{\psi})$, a smooth $\Gamma$-equivariant section $s: U \to E$, and a point $x \in U$, we write $s$ in a neighborhood of $x$ as 
    \begin{equation}
        s = (\oo{s}_x, \check{s}_x): N_\epsilon U^{G_x} \to \oo{E}^{G_x} \oplus \check{E}^{G_x} \simeq E|_{N_\epsilon U^{G_x}}
    \end{equation}
    and call a normally complex lift of $s$ of degree at most $d$ near $x$ is a $G_x$-equivariant smooth bundle map over $U^{G_x}$
    \begin{equation}
    \begin{gathered}
        \mathfrak{s}_x: N_\epsilon U^{G_x} \to Poly_d^{G_x}(NU^{G_x}, \check{E}^{G_x}) \\ \text{ inducing } \text{graph}(\mathfrak{s}_x) := (id, \mathfrak{s}_x): N_\epsilon U^{G_x} \to NU^{G_x} \oplus Poly_d^{G_x}(NU^{G_x}, \check{E}^{G_x})
        \end{gathered}
    \end{equation}
    such that 
    \begin{equation}
        \check{s}_x = ev \circ \text{graph}(\mathfrak{s}_x). 
    \end{equation}
    We say that $s$ is a normally complex section, or an FOP section, if it has a normally complex lift near every $x \in U$. We say that a smooth section $s: X \to V$ is an FOP section if its pullback to every $\langle k \rangle$-orbifold bundle chart is an FOP section.
    
    \begin{remark}
    If $s$ admits a normally complex lift at $0 \in U$ then it admits a normally complex lift for all $x$ in a neighborhood of $0$, via the decompositions (\ref{eq:decomposition-1}, \ref{eq:decomposition-2}).
    \end{remark}
    
    An FOP section $s: X \to V$ is \emph{strongly transverse} at $p \in X$ if there is a $\langle k \rangle$-orbifold bundle chart $(U, E, \Gamma, \hat{\psi})$ such that, writing 
    \begin{equation}
        \mathfrak{s}: N_\epsilon U^{\Gamma} \to Poly^{\Gamma}_d(NU^\Gamma, \check{E}^\Gamma)
    \end{equation}
    for the normally concrete lift of $s_V: U \to E$ near $0$, we have that the section
    \begin{equation}\begin{gathered}
        \tilde{S}: N_\epsilon U^\Gamma \to \oo{E}^\Gamma \oplus NU^\Gamma \oplus Poly^{\Gamma}_d(NU^\Gamma, \check{E}^\Gamma)\\
        \tilde{S}(x) = (\oo{s}_0(x), x, \mathfrak{s}(x))
        \end{gathered}
    \end{equation}
    is transverse to $\{0\} \oplus Z^\Gamma_d(NU^\Gamma, \check{E}^\Gamma)$ with respect to the canonical Whitney stratification at the point $0 \in U$.

    Note that for any $S \in 2^{[k]}$, the subset $X(S)$ is a $\langle |S| \rangle$-orbifold. The orbibundle $V \to X$ restricts to an orbibundle $V(S) \to X(S)$, and straightened structures and normally complex structures on $(X,V)$ naturally restrict to the same on $(X,S), V(S)$. By restricting normally complex lifts we see that the restriction of an FOP section $s$ to $X(S)$ is FOP. 
    \begin{definition}
    An FOP section $s: X \to V$ is \emph{strongly $\langle k \rangle$-transverse} if $s|_{X(S)}$ is strongly transverse for each $s \in 2^{[k]}.$
    \end{definition}
    
    We can now prove the fundamental extension lemma:
    \begin{lemma}
    \label{lemma:fundamental-extension-lemma}
        Suppose $V \to X$ is an orbibundle over a $\langle k \rangle$-manifold, with $(X,V)$ equipped with a straightened structure and with normally complex structures. Let $s: X \to V$ be a continuous section with $s^{-1}(0)$ compact, and such that such that $s|_{X([k] \setminus \{j\})}$ is smooth, FOP, and strongly transverse for each $j = 0, \ldots, k$. Let $D \subset X$ be a precompact open neighborhood of $s^{-1}(0)$. Then there exists a section $s': X \to V$ which is FOP near $\overline{D}$, which agrees with $s$ on $X([k] \setminus \{j\})$ for each $j$, and which is arbitrarily $C^0$-close to $s$ on $D$.
    \end{lemma}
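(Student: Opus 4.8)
The plan is to reduce the statement to the (non‑relative) FOP perturbation existence theorem of \cite{bai_xu} on the corner‑free effective orbifold $\oo{X}([k])$, by first using the straightened structure to push the boundary data $s|_{\partial X}$ (with $\partial X := \bigcup_{j} X([k]\setminus\{j\})$) off into a collar where it becomes FOP in a full neighborhood, and then interpolating.

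\emph{Step 1: the straightened collar and the product extension.} The straightened metric on $\mathcal{D}(X)/(\Z/2)^k$ makes each boundary hypersurface $X([k]\setminus\{j\})$ totally geodesic and is product‑like near the intersections of these hypersurfaces, so the inward normal geodesic flows produce a neighborhood $\mathcal{N}$ of $\partial X$ with a projection $\pi\colon \mathcal{N}\to\partial X$ and a radial function $r\colon\mathcal{N}\to[0,\epsilon)$ which, in every $\langle k\rangle$-orbifold bundle chart near a codimension-$m$ stratum $\oo{X}(S)$ (so $m=k-|S|$), is modelled on the projection $U''\times[0,\epsilon)^m\to U''$, and along which the straightened metric, the straightened connection on $V$, and the normal complex structures on $(X,V)$ are all pulled back from $\partial X$. (This is the product structure underlying Lemma~\ref{lemma:collars-and-doubles-are-smooth}, applied now to the straightened data.) Put $\tilde{s}:=\pi^{*}(s|_{\partial X})$ on $\mathcal{N}$. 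Since $s|_{X([k]\setminus\{j\})}$ is smooth and FOP for each $j$ (hence, by restriction of normally complex lifts, $s|_{X(S)}$ is FOP for every $S\subsetneq[k]$), and since in these product models the collar directions contribute only trivial-$\Gamma$ summands --- enlarging the $\oo{E}^{\Gamma}$-part and leaving $\check{E}^{\Gamma}$, the normal bundle $NU^{\Gamma}$, and hence the polynomial lift unchanged --- the normally complex lifts of $s|_{\partial X}$ pull back to normally complex lifts of $\tilde{s}$. Thus $\tilde{s}$ is smooth and FOP on the open neighborhood $\mathcal{N}$ of $\partial X$, and $\tilde{s}=s$ on $\partial X$.

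\emph{Step 2: interpolation.} Fix $0<a<b<\epsilon$ and a continuous $\chi\colon X\to[0,1]$ with $\chi\equiv 1$ on $\mathcal{N}_{a}:=\{r<a\}$ and $\chi\equiv 0$ on $X\setminus\mathcal{N}_{b}$, and set $s_{1}:=\chi\tilde{s}+(1-\chi)s$. Then $s_{1}$ is continuous on $X$; it equals $\tilde{s}$, hence is FOP, on $\mathcal{N}_{a}$; on $\partial X$ one has $\tilde{s}=s$, so $s_{1}$ agrees with $s$ on every $X([k]\setminus\{j\})$; and $s_{1}-s=\chi(\tilde{s}-s)$ is supported in $\mathcal{N}_{b}$, where $\tilde{s}=\pi^{*}s$, so by uniform continuity of $s$ on the compact set $\overline{D}$ we may shrink $b$ to make $\|s_{1}-s\|_{C^{0}(D)}$ smaller than any prescribed amount.

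\emph{Step 3: interior perturbation and conclusion.} The set $\overline{D}\setminus\mathcal{N}_{a}$ is compact and contained in the corner‑free effective orbifold $Y:=\oo{X}([k])$, on which $s_{1}$ is continuous, while $s_{1}$ is already smooth and FOP on the open subset $\mathcal{N}_{a}\cap Y$. Applying the relative form of the FOP perturbation existence theorem of \cite{bai_xu} on $Y$ --- equivalently: cover $\overline{D}\setminus\mathcal{N}_{a}$ by finitely many bundle charts and inductively replace the section over each chart by a fiberwise convex combination with a high-degree $\Gamma$-equivariant polynomial section, cut off to vanish on the region where the section is already smooth and FOP, noting that such a convex combination is again FOP because normally complex lifts are closed under fiberwise convex combination --- we obtain $s'\colon X\to V$ which is FOP on an open neighborhood of $\overline{D}\setminus\mathcal{N}_{a}$, equals $s_{1}$ on $\mathcal{N}_{a}$ and outside a compact set, and is $C^{0}$-close to $s_{1}$ on $D$. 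Then $s'$ is FOP on the open set $\mathcal{N}_{a}\cup(\text{that neighborhood})$, which contains $\overline{D}$; it agrees with $s$ on each $X([k]\setminus\{j\})$ because $s'=s_{1}=s$ there; and $\|s'-s\|_{C^{0}(D)}\le\|s'-s_{1}\|_{C^{0}(D)}+\|s_{1}-s\|_{C^{0}(D)}$ can be made as small as desired. (The strong transversality of $s$ along the faces is not needed for the conclusion as stated.) The main obstacle is Step~1: making rigorous that the straightened structure and the normal complex structure are simultaneously product‑like along a corner‑compatible collar of $\partial X$, so that $\pi^{*}$ preserves the class of normally complex sections; granting that, Steps~2 and 3 are a cutoff argument together with an invocation of (the relative, corner‑free version of) \cite{bai_xu}.
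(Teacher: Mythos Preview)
Your overall architecture --- extend $s|_{\partial X}$ to an FOP section on a collar, cut off, then invoke \cite{bai_xu} on the corner--free interior --- is exactly the paper's strategy. The gap is in Step~1, and it is not merely a matter of ``making rigorous'': the single retraction $\pi:\mathcal{N}\to\partial X$ you describe cannot have both of the properties you need. If near a codimension--$m$ stratum $\pi$ is modelled on the projection $U''\times[0,\epsilon)^m\to U''$, then $\tilde{s}(x,t_1,\dots,t_m)=s(x,0,\dots,0)$; on the face $\{t_j=0\}$ this gives $s(x,0,\dots,0)$, not $s(x,t_1,\dots,0,\dots,t_m)$. So $\tilde{s}\ne s$ on $\partial X$ whenever $m\ge 2$, and the later claim ``$s_1$ agrees with $s$ on every $X([k]\setminus\{j\})$'' fails. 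There is no single continuous retraction to the union of boundary faces that is simultaneously product--like at every corner and restricts to the identity on $\partial X$; this is precisely why the corner case is harder than the boundary case.

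The paper resolves this by working one chart at a time and replacing your single $\pi$ by the full family of coordinate projections $\pi_T:U\to U(T)$, $T\in 2^{[\ell]}$, combined through an inclusion--exclusion (M\"obius--type) formula: one inductively subtracts $\sum_{|T|=j}s^{T}_{j-1}\circ\pi_T$ so that after the induction the restriction to every codimension--$\ge 2$ stratum vanishes, at which point the sum $\sum_j s_{[\ell]\setminus\{j\}}\circ\pi_{[\ell]\setminus\{j\}}$ is a well--defined smooth $\Gamma$--equivariant extension matching $s$ on every face. This is applied separately to $\oo{s}_0$ and to the polynomial lift $\mathfrak{s}_0$, which is why the output is FOP. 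The chartwise extensions are then patched by a partition of unity (using that FOP sections form a module over $C^\infty$). Once you have this, your Steps~2 and 3 go through essentially as written. So the missing idea is the inclusion--exclusion extension over the corner poset, not a product collar.
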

    \begin{proof}
        Write $\partial X = \cup_{j=0}^k X([k] \setminus \{j\}$ for the boundary of the $\langle k \rangle$-orbifold.
        It suffices to find any FOP extension of $s|_{\partial X}$ to $D$. Indeed, the proof of \cite[Lemma~6.2]{bai_xu} goes through verbatim to our setting, so given such an extension $s'$ it is necessarily strongly $\langle k \rangle$-transverse in a neighborhood of $\partial X \cap (s')^{-1}(0)$. Away from this neighborhood of the boundary we are in the the setting of orbifolds, and we can apply \cite[Theorem 6.3]{bai_xu}. 
        
        Let $(U, E, \Gamma, \hat{\psi})$ be a $\langle k \rangle$-orbifold bundle chart at $x \in X$. We will show how to extend $s$ from $\partial U$ to an FOP section on $U$. Now, we can construct smooth partitions of unity subordinate to any collection of $\langle k \rangle$-charts on $X$ by doubling the orbifold and applying the existence of partitions of unity for orbifolds \cite[Lemma~4.2.1]{ChenRuan}. Moreover, FOP sections are a module over smooth functions (this is an elementary generalization of the elementary \cite[Lemma 3.29]{bai_xu} to our setting), so this extension theorem for a single chart suffices. Assume that that $|S(x)| = \ell$, so $U \subset \R^{n-\ell} \times \R^\ell$, and think of $U$ as an $\langle \ell \rangle$-manifold via $U(T) = \R^{n-k} \times \R^\ell_+(T)$ for $T\in 2^{[\ell]}$. Shrink $U$ such that the ($\Gamma$-equivariant!) projections 
        \begin{equation}
        \label{eq:projection-maps}
            \pi_T: U \to U(T), T \in 2^{[\ell]}, \text{ defined by setting the coordinates of $\R^\ell_+$ not in $T$ to zero,}
        \end{equation}
        is well defined. 
        On these charts, one can extend $s|_{\partial U}$ to an FOP section on $U$ by applying the the following lemma to $\oo{s}_0$ and to $\mathfrak{s}_0$ separately:
        \begin{lemma}
        Let $U \subset \R^{n- \ell} \times \R^\ell_+$ be a neighborhood of zero, with $\Gamma$ a finite group acting linearly on $\R^{n-\ell}$ and on  a vector space $V$, and acting trivially on $\R^\ell_+$.  Write $\partial U =\cup_{j=0}^\ell U([\ell] \setminus \{j\}$, and let $s: \partial U \to E_0$ be a $\Gamma$-equivariant continuous map such that $s|_{U([\ell] \setminus \{j\}}$ is smooth for each $j$. Suppose that the projection maps (\ref{eq:projection-maps}) are well defined. Then $s$ admits a canonical smooth equivariant extension to $U$. 
        \end{lemma}
        \begin{proof}
        Write $s^T = s|_{U(T)}$. If we had that $s^T = 0$ for each $T \in 2^{[\ell]}$ with $|T| \leq \ell - 2$ then 
        \begin{equation}
            s = \sum_{j=1}^\ell s_{[\ell] \setminus \{j\}} \circ \pi_{[\ell] \setminus \{j\}}
        \end{equation}
        would be an extension of the desired form. We inductively modify $s$ until it is of this form. For the initial step of the induction, set $s_0 = s - s^{\emptyset} \circ \pi_{\emptyset}$; the second term is a smooth function on $U$. For each $j=1, \ldots, \ell$, we assume that $s_{j-1}^{T} = 0$ for $T \in 2^{[\ell]}$, $|T| = j-1$. We set 
        \begin{equation}
            s_j = s_{j-1} - \sum_{T \in 2^{[\ell]}, |T| = j} s^T_{j-1} \circ \pi_T.
        \end{equation}
        Each of the latter is a smooth function on $U$ and and now we can continue with the induction. 
        
        Thus combining the formulae above we will have written a smooth $\Gamma$-equivariant extension of $s$. 
        \end{proof}
    \end{proof}   
    
    \begin{definition}
    Let $V \to X$ be an orbibundle over a $\langle k \rangle$-orbifold such that $(X,V)$ are equipped with normal complex structures. We say that $(X,V)$ is \emph{oriented} if  for every $\langle k \rangle$-orbifold bundle chart $(U, E, \Gamma, \hat{\psi})$ we have an orientation of $U$ and an orientation of the bundle $E$, such $\Gamma$ acts on each by \emph{orientation-preserving maps}, and such that for any embedding of $\langle k \rangle$-orbifold bundle charts from $(V, F, \Pi, \hat{\psi}_V)$ to $(U, E, \Gamma, \hat{\psi}_U)$, the pushforwards of the orientations agree with the restrictions of the orientations.
    \end{definition}
        
    Finally, we have 
    \begin{lemma}
    \label{lemma:dimension-count}
        Suppose that $s: X \to V$ is strongly $\langle k \rangle$-transverse with $(X,V)$ oriented. Then for each $S \in 2^{[k]}$ for which $\dim X(S) = \dim V$, $s^{-1}(0)$ is in the interior of $X(S)$ and is a finite collection of points equipped with signs (i.e. an oriented zero-dimensional manifold) lying away from the isotropy locus, while for each $S$ for which $\dim X(S) = \dim V + 1$, $s^{-1}(0)$ is a manifold with boundary on strata $X(T)$, $T \leq S$, $|T|+1 =|S|$, lying away from the isotropy locus, with the boundary orientation of the manifold inducing the signs on the points of $\partial s^{-1}(0)\cap X(S)$ coming from the transversality of the restriction of $s$ to these strata.
    \end{lemma}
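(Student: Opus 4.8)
The plan is to deduce this stratum by stratum from the structure theory for strongly transverse FOP sections over corner-free effective orbifolds in \cite{bai_xu}, and to handle the codimension-one boundary behaviour by reducing, at free points, to the classical transversality theory of maps out of a manifold with boundary.

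First I would record that for every $S\in 2^{[k]}$ the restriction $s|_{X(S)}$ is a strongly transverse FOP section of $V(S)\to X(S)$, hence so is its further restriction to the open stratum $\oo X(S)$, which is now a corner-free effective orbifold. Applying \cite[Theorem~6.3]{bai_xu} together with its orientation refinement, $s^{-1}(0)\cap\oo X(S)$ is a smooth submanifold of $\oo X(S)$ of dimension $\dim X(S)-\dim V$, disjoint from the isotropy locus, and canonically oriented once $(X,V)$ is. Since $s^{-1}(0)=\bigsqcup_S\big(s^{-1}(0)\cap\oo X(S)\big)$ and a manifold of negative dimension is empty, only those $S$ with $\dim X(S)\ge\dim V$ contribute. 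Using $\dim X(S)-\dim X(T)=|S|-|T|$ for $T\le S$: when $\dim X(S)=\dim V$ every $T<S$ has $\dim X(T)<\dim V$, so $s^{-1}(0)\cap X(S)=s^{-1}(0)\cap\oo X(S)$ lies in the interior of $X(S)$; it is closed inside the compact set $s^{-1}(0)$, hence a compact $0$-manifold, i.e.\ a finite set of signed points. That is the first assertion.

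For the second assertion, with $\dim X(S)=\dim V+1$, the same count shows $s^{-1}(0)\cap X(S)=\bigsqcup_{T\le S}\big(s^{-1}(0)\cap\oo X(T)\big)$ receives contributions only from $T=S$ (a smooth oriented $1$-manifold in $\oo X(S)$) and from $T<S$ with $|T|=|S|-1$ (finite signed point sets in $\oo X(T)$, the signs coming from transversality of $s|_{X(T)}$). I would then show these assemble into a compact oriented $1$-manifold with boundary whose boundary is exactly the union of those point sets. Compactness is automatic, and away from the codimension-one strata we already have a $1$-manifold, so the content is the local model near a point $q\in s^{-1}(0)\cap\oo X(T)$ with $|T|=|S|-1$. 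Such $q$ has trivial stabilizer by the previous paragraph, so a $\langle k\rangle$-orbifold bundle chart centered at $q$ is an honest manifold chart $U\subset\R^{[k]\setminus T}_+\times\R^{n-k+|T|}$ with $E\cong\R^{\dim V}\times U$; since the normal-complex data is vacuous at a free point, the strong transversality of $s|_{X(S)}$ and of $s|_{X(T)}$ at $q$ amounts to ordinary transversality to the zero section of the smooth section $s_V$ and of its restriction to the relevant boundary face. In this chart $X(S)\cap U$ is the half-space obtained by zeroing all corner coordinates except the one indexed by the singleton $r_T(S)$, and $X(T)\cap U$ is its boundary $\{0\}\times\R^{\dim V}$; the standard relative transversality theorem for maps out of a manifold with boundary then gives that $s_V^{-1}(0)$ is, near $q$, a $1$-manifold with boundary whose boundary is locally $\{q\}$, and gluing these models yields the claimed structure.

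The last thing to check is the orientation statement: on $\oo X(S)$ the $1$-manifold is oriented from the fixed orientations of the chart (restricting that of $X(S)$) and of $V$, while the sign of a boundary point $q$ as a transverse zero of $s|_{X(T)}$ is computed from the orientations of $X(T)$ and $V$. I would verify in the chart above that, with the paper's sign conventions, the outward-boundary orientation induced on $s^{-1}(0)\cap X(S)$ at $q$ agrees with that sign --- a short linear-algebra computation comparing the outward-normal-first orientation of $\{0\}\times\R^{\dim V}$ inside $\R_+\times\R^{\dim V}$ with the chosen orientation of $X(T)$. I expect the main obstacle to be not any individual step but keeping the bookkeeping of strata, dimensions, and orientation conventions coherent throughout; modulo that, the lemma is essentially a repackaging of \cite[Theorem~6.3]{bai_xu} together with classical boundary transversality.
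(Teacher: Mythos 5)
Your proposal is correct and follows the same route as the paper's proof: restrict $s$ to the interior of each stratum, apply the corner-free FOP transversality/dimension results of Bai--Xu (the paper cites Prop.~6.6 where you cite Thm.~6.3, but these serve the same purpose), and handle the codimension-one boundary orientation by observing that away from the isotropy locus the orbifold statement reduces to classical boundary transversality for manifolds. The paper's proof is just a terser version of exactly what you wrote.
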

    \begin{proof}
        All statements except the compatibility of orientations follow from applying the dimension computations of \cite[Prop 6.6]{bai_xu} to the restriction of $s$ to the interior of each stratum of $X$. The statement about boundary orientations holds because away from the isotropy locus, strong transversality for $\langle k \rangle$-orbifolds reduces to strong transversality for $\langle k \rangle$-manifolds, and the oriented orbibundle restricts to an oriented vector bundle, where this statement is standard. 
    \end{proof}

		\section{Global Kuranishi charts for Hamiltonian Floer Homology}
	\label{sec:global-charts}
	Fix a compact symplectic manifold $(M^{2n}, \omega)$ with a compatible complex structure $J$, and a Hamiltonian diffeomorphism $\phi = \phi^1_H$. Let $g$ be the associated metric. 
	
	\subsection{An integral $2$-form for every Floer trajectory}
	\label{sec:integral-two-form}
	Given a closed $2$-form $\Omega$, we define
	$\mathcal{A}_\Omega: \widetilde{Fix}(\phi) \to \R$ to be $\int_x H dt- \int_{\tilde{x}} \Omega$. Write $Im(Fix(\phi))$ for the union of the images of all the maps $x:S^1 \to M$, $x \in Fix(\phi)$.

	\paragraph{Action Integrals}
	We say that a closed $2$-form $\Omega$ \emph{tames} $J$ if the quantity $g_\Omega(v, w) = \Omega(v, Jw) - \Omega(Jv, w)$ defines a metric on $M$. We will write $\|v\|_\Omega^2 = g_\Omega(v,v)$. By construction we have $\|Jv\|_\Omega = \|v\|_\Omega$. 
	
	If $\Omega$ is symplectic, we will write $X_H^\Omega$ for the Hamiltonian vector field of $H$ with respect to $\Omega$. Suppose $u$ is a Floer trajectory solving 
	\begin{equation}
		\partial_s u + J(\partial_tu - X_{H}^\omega) = 0
	\end{equation}
	with asymptotics $\bar{x}_\pm$. We will compute 
	\begin{equation}
		\label{eq:modified-floer-energy}
		\int \|\partial_s u\|^2_\Omega  = \frac{1}{2} \int \|\partial_s u \|^2_\Omega + \|J (\partial_t - X_{H}^\omega)\|_\Omega^2 = \frac{1}{2} \int - \langle g_\Omega(\partial_s u, J \partial_t u) + g(\partial_s u, J X^\omega_{H}). 
	\end{equation}
Now we have that 
\begin{equation}
	- g_\Omega(\partial_s u, J \partial_t u) = \Omega(\partial_s u, \partial_t u) + \Omega(J \partial_s u, J \partial_t u) = 2\Omega(\partial_s u, \partial_t u) + \Omega(X^\omega_{H}, \partial_s u) + \Omega(J \partial_s u, J X^\omega_{H}). 
\end{equation}
On the other hand, 
\begin{equation}
	g_\Omega(\partial_s u,J X^\omega_{H}) = - \Omega(\partial_s u, X^\omega_{H}) - \Omega(J \partial_s u, J X^\omega_{H}). 
\end{equation}
Thus the integral in (\ref{eq:modified-floer-energy}) is equal to 
\begin{equation}
\label{eq:modified-energy-integral-final}
	\int u^* \Omega + \Omega(X^\omega_{H}, \partial_s u) \; ds dt = \int u^* \Omega - \partial_s H(u(s,t)) ds dt + \Omega(X^\Omega_{H} - X^\omega_{H}, \partial_s u) \; ds dt. 
\end{equation}
In particular, if $\Omega = \omega$ then we see that
\begin{equation}
    \int \|\partial_s u\|_\omega^2 = \mathcal{A}_\omega(\bar{x}_-) - \mathcal{A}_\omega(\bar{x}_+). 
\end{equation}
confirming that the Floer equation is the downwards gradient flow of $\mathcal{A}_\omega$.

More generally, given a broken bubbled Floer trajectory $u: \Sigma \to M$, wrote $\Omega^F_u$ for the $2$-form on $\Sigma$ which which is $u|_{\Sigma_\alpha}^*\Omega$ on each bubble component $\Sigma_\alpha$, and is $u|_{\Sigma_i}^*\Omega - \partial_s H(u(s,t)) \,ds\,dt$ on each cylindrical component. Then 
\begin{equation}
\label{eq:energy-equation}
    E_\Omega(u) = \int_\Sigma \Omega^F_\Omega + \sum_{j=1}^{r_u} \int_{\Sigma_j} \Omega(X^{\Omega}_{X^k} - X^\omega_{H}, \partial_s u) \, ds\,dt 
\end{equation}
where $E_\Omega(u)$ is the geomeric Floer energy of $u$ with respect to $g_\omega$, and $\Sigma_j$, $j=1, \ldots, r_u$ runs over the cylindrical components of $\Sigma$.
\paragraph{Convenient integral forms.}

For $x \in Fix(\phi)$, let  $\mathcal{A}_\Omega^x = \{\mathcal{A}_\Omega(\tilde{x}) : \tilde{x} \text{ is capping of } x\}$. Write $Pairs_\Omega = \{\tilde{x}, \tilde{y}: \mathcal{A}_\Omega(x) - \mathcal{A}_\Omega(y) > 0\}$. Write $Pairs = \{(\tilde{x}, \tilde{y}) | \cM(\tilde{x}, \tilde{y}) \neq \emptyset\}$. It is a basic consequence of Gromov compactness for Floer trajectories that
\begin{equation}
	M = \min \{\mathcal{A}_\omega(\tilde{x}) - \mathcal{A}_\omega(\tilde{y}) | (\tilde{x}, \tilde{y}) \in Pairs\}
\end{equation}
exists and is positive.

\begin{lemma}
\label{lemma:make-form-zero-near-orbits}
    There exists a neighborhood $U \subset M$ of $Im(Fix(\phi))$ such that any closed $2$-form $\Omega$ on $M$ is cohomologous to a $2$-form $\tilde{\Omega}$ restricting to zero on $U$ and satisfying $\int_{\bar{x}} \Omega - \tilde{\Omega} = 0$ for all $x \in \widetilde{Fix}(\phi)$. 
\end{lemma}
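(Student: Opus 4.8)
The plan is to produce a single global $1$-form $\beta$ on $M$ with the two properties that $d\beta = \Omega$ on a neighborhood of $Im(Fix(\phi))$ and $\int_{x}\beta = 0$ for every periodic orbit $x \in Fix(\phi)$, and then to set $\tilde\Omega := \Omega - d\beta$. This is manifestly cohomologous to $\Omega$; it vanishes wherever $d\beta = \Omega$; and $\int_{\bar x}(\Omega - \tilde\Omega) = \int_{\bar x} d\beta = \int_{x}\beta = 0$ by Stokes, the last quantity depending only on the loop $x$ and not on the choice of capping $\bar x$. So the whole lemma reduces to constructing such a $\beta$.

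First I would fix the neighborhood $U$ using only nondegeneracy of $\phi$: then $Fix(\phi) = \{x_1,\dots,x_m\}$ is finite and $L := Im(Fix(\phi)) = \bigcup_i x_i(S^1)$ is a compact subset of covering dimension $\le 1$, in fact a finite union of smooth circles. I would choose a small regular neighborhood $L \subset U_0$ that deformation retracts onto $L$ — so that $H^2(U_0;\R) = 0$ and $H_1(U_0;\R) \cong H_1(L;\R)$ — then a smaller open neighborhood $U \Subset U_0$ of $L$ together with a cutoff $\rho \in C^\infty(M,[0,1])$ with $\rho \equiv 1$ on $U$ and $\mathrm{supp}\,\rho \subset U_0$. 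All of this depends only on $\phi$, not on $\Omega$, which is what the quantifier order in the statement demands.

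Now let $\Omega$ be an arbitrary closed $2$-form. Since $H^2(U_0;\R) = 0$ I may write $\Omega|_{U_0} = d\gamma$, and the only obstruction to $\beta := \rho\gamma$ already working is the ``period error'' $a_i := \int_{x_i}\gamma$. These errors can be absorbed into a closed $1$-form precisely because $L$ is $1$-dimensional: the pullback of the $2$-form $\Omega$ to each circle $x_i(S^1)$ vanishes, and, more to the point, since $L$ is a finite graph its real $1$-cycles are exactly its real homology classes, so any relation $\sum_i n_i[x_i] = 0$ in $H_1(U_0;\R) \cong H_1(L;\R)$ is a chain-level identity $\sum_i n_i x_i = 0$ in $C_1(L;\R)$, whence $\sum_i n_i a_i = \int_{\sum_i n_i x_i}\gamma = 0$. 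Thus $[x_i] \mapsto a_i$ is a well-defined linear functional on $\mathrm{span}\{[x_i]\} \subseteq H_1(U_0;\R)$; extending it to all of $H_1(U_0;\R)$ and using the de Rham isomorphism $H^1(U_0;\R) \cong \mathrm{Hom}(H_1(U_0;\R),\R)$ yields a closed $1$-form $\eta$ on $U_0$ with $\int_{x_i}\eta = a_i$ for all $i$. I then set $\beta := \rho\,(\gamma - \eta)$, extended by zero. On $U$ one has $d\beta = d\gamma - d\eta = \Omega$, and $\int_{x_i}\beta = a_i - a_i = 0$ since $x_i(S^1) \subset U$; so $\tilde\Omega := \Omega - d\beta$ has all the required properties (shrinking $U$ to $\{\rho \equiv 1\}$ if one insists on strict vanishing).

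Everything after Step 1 is bookkeeping; the step I expect to require actual care is the first one, namely controlling the topology of a neighborhood of $L = Im(Fix(\phi))$ — that it deformation retracts onto a finite graph, so that $H^2 = 0$ and $H_1$ is computed at the chain level. For nondegenerate $\phi$ the orbit images form a compact $1$-complex that is generically a finite graph (embedded circles with at most finitely many, generically transverse, intersections), in which case this is immediate; in the non-generic case one invokes the general fact that a compact subset of a manifold which is a finite union of smooth submanifolds of dimension $\le 1$ admits arbitrarily small neighborhoods homotopy equivalent to finite graphs. I would present the generic case in detail and remark on the general case.
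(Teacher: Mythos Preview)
Your argument is correct and follows the same skeleton as the paper: pick $U_0\supset L=Im(Fix(\phi))$ with $H^2(U_0;\R)=0$, write $\Omega|_{U_0}=d\gamma$, subtract a closed $1$-form killing the periods $\int_{x_i}\gamma$, and cut off. The difference is only in the period-correction step. The paper groups orbits by their image, asserts that orbits sharing an image have equal $\lambda_0$-period, and for each image builds a bump closed $1$-form supported near an embedded arc disjoint from the other images. You instead argue homologically: since $U_0$ retracts onto the $1$-complex $L$, a relation $\sum n_i[x_i]=0$ in $H_1$ becomes a cellular-chain identity, and $\int_{x_i}\gamma$ depends only on the cellular $1$-chain of $x_i$ (any $2$-chain mapping into $L$ pulls $\Omega$ back to zero), so the periods descend to a functional on $H_1$ and de Rham furnishes $\eta$. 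Your route is a little more abstract but arguably cleaner, in that it handles all linear relations among the $[x_i]$ uniformly and sidesteps the paper's assertion about orbits with identical image---an assertion which is not entirely obvious if two such orbits wind around their common image with different multiplicities. The one compressed step in your write-up is the passage from ``$H_1=Z_1$ in a graph'' to ``$\sum n_ia_i=0$'': saying explicitly that you work in cellular chains, and that $\int_{x_i}\gamma$ factors through the cellular $1$-chain because $h^*\Omega=0$ for any map $h$ from a surface into $L$, would make it airtight.
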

\begin{proof}
    We first note that there is a neighborhood $U_0 \subset M$ of $Im(Fix(\phi))$ such that $H^2(U_0) = 0$. Indeed, this follows from the definition of Cech cohomology, together with the fact that $H^2(Im(Fix(\phi)) = 0$ for dimension reasons. Thus, $\Omega|_{U_0}  = d\lambda_0$ for some $\lambda_0 \in \Omega^1(U_0)$. We partition the elements of $Fix(\phi)$ into subsets $F_i$ with identical images; for each $x \in F_i, \int_x \lambda_0 = \lambda(i)$ only depends on $i$. If $\lambda_i \neq 0$ then the image of $x \in F_i$ is not constant, we can find an embedded interval $I_i$ inside the image of $x$ disjoint from the images of $y \in F_j$ for $j \neq i$. Thus we can find a closed $1$-form $\lambda_i$ on $U_0$ such that $\int_x \lambda_i = \int_x \lambda_0$, while $\int_y \lambda_i = 0$ for $y \in F_j$, $j \neq i$. Set $\lambda'_0 = \lambda_0 - \sum_i \lambda_i$; then $d \lambda'_0 = d\lambda_0 = \Omega|_{U_0}$ while $\int_x\lambda'_0=0$ for each $x \in Fix(\phi)$. Choose a smooth function $f$ which is zero outside of $U_0$ and $1$ on an open $U$ with $\overline{U} \subset U_0$ such that $Im(Fix(\phi)) \subset U$. Write $\lambda$ for the $1$-form equal to $0$ outside $U_0$ and equal to $f\lambda'_0$ on $U_0$. Then 
    $\tilde{\Omega} = \Omega - d \lambda$ satisfies the desired properties, since $\int_{\bar{x}} \Omega - \tilde{\Omega} = \int_x \lambda = 0$. 
\end{proof}

We fix this neighborhood $U$.

\begin{lemma}
\label{lemma:small-l2-near-fixed-locus}
	Let $x:S^1 \to M$ be a map. Then there exists an $\epsilon_0 > 0$ such that if $\int_{S^1} \|x' - X_{H}\|^2_\omega < \epsilon_0$ then $Im x \subset U$. 
\end{lemma}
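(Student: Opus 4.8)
The plan is to prove the (evidently intended) uniform statement: there is an $\epsilon_0 > 0$, depending only on $H$, $J$ and $U$, such that \emph{every} map $x \colon S^1 \to M$ of class $W^{1,2}$ with $\int_{S^1}\|x'-X_H(t,x)\|_\omega^2\,dt < \epsilon_0$ has $Im(x) \subset U$. I would argue by contradiction: if no such $\epsilon_0$ existed, there would be maps $x_n \colon S^1 \to M$ with $\int_{S^1}\|x_n'-X_H(t,x_n)\|_\omega^2\,dt \to 0$ and points $t_n \in S^1$ with $x_n(t_n) \notin U$. Since $M$ is compact and $H$ smooth, $\|X_H(t,p)\|_\omega$ is bounded uniformly in $(t,p)$, so $\|x_n'\|_{L^2}$ is bounded and the $x_n$ form a bounded sequence in $W^{1,2}(S^1,M)$ (as usual one may phrase this via a fixed isometric embedding $M \hookrightarrow \R^N$). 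By Cauchy--Schwarz, $d(x_n(s),x_n(t)) \le |t-s|^{1/2}\,\|x_n'\|_{L^2}$, so the $x_n$ are uniformly H\"older of exponent $1/2$, in particular equicontinuous; by Arzel\`a--Ascoli and compactness of $M$, after passing to a subsequence $x_n \to x_\infty$ uniformly for some continuous $x_\infty \colon S^1 \to M$, and we may also arrange $t_n \to t_\infty$.

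I would then identify $x_\infty$ with a genuine $1$-periodic orbit of $X_H$. Being bounded in $W^{1,2}$, the $x_n$ (after a further subsequence) converge weakly in $W^{1,2}$, necessarily to $x_\infty$; in particular $x_n' \rightharpoonup x_\infty'$ weakly in $L^2$. On the other hand $X_H(t,x_n(t)) \to X_H(t,x_\infty(t))$ uniformly, by continuity of $X_H$ and uniform convergence of the $x_n$, while $x_n'-X_H(t,x_n) \to 0$ in $L^2$ by hypothesis; so $x_n' \to X_H(t,x_\infty)$ \emph{strongly} in $L^2$. Uniqueness of weak limits gives $x_\infty' = X_H(t,x_\infty)$ almost everywhere, and the standard ODE bootstrap promotes $x_\infty$ to a smooth solution of the Hamiltonian equation. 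Since each $x_n$ is a loop, $x_n(0)=x_n(1)$, hence $x_\infty(0)=x_\infty(1)$, so $x_\infty$ is a $1$-periodic orbit, i.e. a fixed point of $\phi$; thus $Im(x_\infty) \subset Im(Fix(\phi)) \subset U$.

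Finally I would extract the contradiction. Using uniform convergence and continuity of $x_\infty$,
\[
d\big(x_n(t_n),\,x_\infty(t_\infty)\big) \;\le\; d\big(x_n(t_n),\,x_\infty(t_n)\big) + d\big(x_\infty(t_n),\,x_\infty(t_\infty)\big) \;\longrightarrow\; 0,
\]
so $x_n(t_n) \to x_\infty(t_\infty) \in Im(Fix(\phi)) \subset U$. Since $U$ is open, $x_n(t_n) \in U$ for all large $n$, contradicting the choice of $t_n$. This establishes the uniform $\epsilon_0$, and fixing any $x$ recovers the statement as written.

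The only genuinely delicate step is the middle one: an $L^2$-bound on $x_n' - X_H(t,x_n)$ is very weak information, and one must check that it really forces the $C^0$-limit to solve the Hamiltonian ODE exactly -- this is precisely where the \emph{free} weak $W^{1,2}$-compactness is combined with the \emph{strong} $L^2$-convergence $x_n' \to X_H(t,x_\infty)$ and uniqueness of weak limits; everything else is routine. One small caveat: if in the paper $Fix(\phi)$ denotes only the contractible $1$-periodic orbits, the lemma holds verbatim for $x$ in the homotopy class $[pt]$ (the only case used, since $x=u(s,\cdot)$ for a Floer cylinder), because then $x_\infty$ is contractible as well; reading $Fix(\phi)$ as the full fixed-point set of $\phi$ gives the statement for arbitrary $x$.
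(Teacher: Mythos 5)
Your proof is correct. The paper itself does not supply an argument: it simply cites Lemma~5.1 of L\^e--Ono \cite{le-ono}, which is precisely this compactness statement, and your argument is essentially the standard proof of that lemma written out. The sequence of steps — uniform $W^{1,2}$ bound from $\|X_H\|_\infty < \infty$, H\"older-$1/2$ equicontinuity, Arzel\`a--Ascoli, identification of the $C^0$-limit as a $1$-periodic Hamiltonian orbit, and the open-set contradiction — is all sound, and the caveat you flag about contractibility is the right one to flag: the paper's $Fix(H)$ is indeed restricted to the class $[pt]$, and your observation that uniform convergence preserves the free homotopy class of the limit loop closes that gap, which suffices since the lemma is only ever applied to loops $u(s,\cdot)$ for $u$ a Floer cylinder asymptotic to contractible orbits. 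One small simplification: you do not need to invoke weak $W^{1,2}$-compactness separately. Since $x_n \to x_\infty$ uniformly and $x_n' \to X_H(\cdot,x_\infty)$ strongly in $L^2$, the distributional identity $\int x_n'\varphi = -\int x_n \varphi'$ passes to the limit directly to give $x_\infty' = X_H(\cdot,x_\infty)$, so the weak-limit/uniqueness step is redundant (though harmless).
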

\begin{proof}
	This follows immediately from Lemma 5.1 of \cite{le-ono}. 
\end{proof}

\begin{lemma}
	\label{lemma:integral-taming-form}
	There exists a rational symplectic form $\Omega$ taming $J$, and a closed $2$-form $\tilde{\Omega}$ cohomologous to $\Omega$ and restricting to zero in an open neighborhood of the image of each $x \in Fix(\phi)$, such that $Pairs \subset Pairs_\Omega = Pairs_{\tilde{\Omega}}$, and 
	\begin{equation}
	M' = \inf \{\mathcal{A}_\Omega(\tilde{x}) - \mathcal{A}_\Omega(\tilde{y}) | (\tilde{x}, \tilde{y}) \in Pairs\} \geq M/4.
\end{equation}
\end{lemma}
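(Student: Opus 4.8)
The plan is to build $\Omega$ by perturbing the taming condition is an open condition, so the strategy is to start from $\omega$ itself (which tames $J$ and has $Pairs \subset Pairs_\omega$ with gap $M$) and replace it by a nearby \emph{rational} closed $2$-form, controlling the change in the action spectrum. First I would recall that the taming condition ``$g_\Omega(v,w) = \Omega(v,Jw) - \Omega(Jv,w)$ is positive definite'' is $C^0$-open in $\Omega$ among closed $2$-forms, and that the set of de Rham classes admitting a rational representative (equivalently, classes in the image of $H^2(M;\Q) \to H^2(M;\R)$) is dense in $H^2(M;\R)$. So I can choose a closed rational $2$-form $\Omega_0$ whose cohomology class is as close as desired to $[\omega]$, and then — since within a fixed cohomology class the space of representatives is an affine space over exact forms — adjust the representative so that $\Omega_0$ is $C^0$-close to $\omega$ as a form; then $\Omega_0$ tames $J$.

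Next I would control the action spectrum. For any two cappings $\tilde x, \tilde y$ of orbits $x,y \in Fix(\phi)$, we have
\begin{equation}
\mathcal{A}_{\Omega_0}(\tilde x) - \mathcal{A}_{\Omega_0}(\tilde y) = \bigl(\mathcal{A}_\omega(\tilde x) - \mathcal{A}_\omega(\tilde y)\bigr) - \int_{\tilde x} (\Omega_0 - \omega) + \int_{\tilde y}(\Omega_0 - \omega).
\end{equation}
When $(\tilde x, \tilde y) \in Pairs$, the difference $\tilde x \# \overline{\tilde y}$ (the sphere obtained by gluing the cappings) represents a class in $\Pi = \pi_2(M)/\ker[\omega]$ on which $\Omega_0 - \omega$ is a fixed linear functional; but to get a \emph{uniform} bound I use instead that $Pairs$ is realized by $\cM(\tilde x,\tilde y) \neq \emptyset$, hence by Gromov compactness there is a broken bubbled trajectory $u$ from $\tilde y$ to $\tilde x$, and $\int_u \omega = \mathcal{A}_\omega(\tilde x) - \mathcal{A}_\omega(\tilde y) \leq C_0$ for a uniform constant $C_0$ (there are only finitely many homotopy classes of such $u$ below any energy, and $\mathcal{A}_\omega$ decreases along trajectories so the relevant energies are bounded once we fix the finitely many generating orbits; more precisely the relevant difference is bounded because there are finitely many orbits and the spectral gaps accumulate only at $+\infty$). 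Then $\bigl|\int_u(\Omega_0 - \omega)\bigr| \leq \|\Omega_0 - \omega\|_{C^0} \cdot \mathrm{Area}(u)$, and the area of $u$ with respect to $g_\omega$ is itself controlled by the $\omega$-energy $E_\omega(u)$ plus a term from the Hamiltonian (using the geometry of \eqref{eq:energy-equation}), hence uniformly bounded on $Pairs$. Choosing $\|\Omega_0 - \omega\|_{C^0}$ small enough relative to $M$ and this area bound, I get $\mathcal{A}_{\Omega_0}(\tilde x) - \mathcal{A}_{\Omega_0}(\tilde y) \geq 3M/4 > 0$ for all $(\tilde x,\tilde y) \in Pairs$, which gives $Pairs \subset Pairs_{\Omega_0}$ and the lower bound $M' \geq 3M/4 \geq M/4$; the same estimate applied in reverse shows $Pairs_{\Omega_0} \supset Pairs$ but I only need the stated inclusion. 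Finally I set $\Omega := \Omega_0$ and apply Lemma~\ref{lemma:make-form-zero-near-orbits} to produce $\tilde\Omega$ cohomologous to $\Omega$ and vanishing near each $Im(x)$; since $\mathcal{A}_{\tilde\Omega}$ and $\mathcal{A}_\Omega$ agree on $\widetilde{Fix}(\phi)$ by construction (the correction $1$-form integrates to zero over each orbit), we get $Pairs_\Omega = Pairs_{\tilde\Omega}$ and $M'$ is unchanged.

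The main obstacle is the uniform area/energy bound on elements of $Pairs$: one must argue that, although $Pairs$ can be infinite (cappings differ by elements of $\Pi$), the set of \emph{action differences} $\mathcal{A}_\omega(\tilde x) - \mathcal{A}_\omega(\tilde y)$ occurring for $(\tilde x,\tilde y) \in Pairs$ is bounded above, so that the relevant moduli spaces $\cM(\tilde x, \tilde y)$ have uniformly bounded energy and hence the broken trajectories realizing them have uniformly bounded $g_\omega$-area. This is where Gromov compactness for Floer trajectories is used in earnest, exactly as in the cited remark that $M = \min\{\mathcal{A}_\omega(\tilde x) - \mathcal{A}_\omega(\tilde y) \mid (\tilde x,\tilde y)\in Pairs\}$ exists; the \emph{supremum} being finite follows because once we fix representatives for the finitely many orbits, any pair with nonempty moduli space has action difference lying in a fixed finite union of arithmetic progressions intersected with a compact interval — the interval being compact precisely because arbitrarily large energy would force, by the compactness and gluing theory, intermediate broken configurations whose components have energy at least $M$, and there are only finitely many orbits to serve as breaking asymptotics below any given total energy once one checks the relevant index/action bookkeeping. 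I would phrase this carefully but expect no genuine difficulty beyond making this finiteness statement precise. Everything else — openness of taming, density of rational classes, the affine structure on representatives of a fixed class, and the integral comparison — is routine.
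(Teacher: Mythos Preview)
Your approach has a genuine gap at precisely the point you flag as ``the main obstacle.'' The set of action differences $\{\mathcal{A}_\omega(\tilde x) - \mathcal{A}_\omega(\tilde y) : (\tilde x,\tilde y)\in Pairs\}$ is \emph{not} bounded above in general. Floer trajectories can carry arbitrarily large energy: concatenation of broken trajectories, or bubbling of holomorphic spheres in classes with large $\omega$-area, produces pairs in $Pairs$ with action difference as large as you like. Your proposed argument for finiteness of the supremum is circular --- the fact that a broken configuration has components each of energy $\geq M$ gives a \emph{lower} bound on total energy in terms of the number of levels, not an upper bound. Consequently the estimate $|\int_u(\Omega_0-\omega)| \leq \|\Omega_0-\omega\|_{C^0}\cdot\mathrm{Area}(u)$ produces an error that grows linearly with the energy, and no choice of $\|\Omega_0-\omega\|_{C^0}$ makes this uniformly smaller than $M$ over all of $Pairs$.

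The paper's proof avoids this by arranging a \emph{multiplicative} rather than additive comparison. The key extra requirement is $\Omega|_U = \omega$ on the fixed neighborhood $U$ of the orbit images, so that the error integrand $\Omega(X^\Omega_H - X^\omega_H, \partial_s u)$ vanishes whenever $u(s,\cdot)\subset U$. By Lemma~\ref{lemma:small-l2-near-fixed-locus}, this happens whenever $\int_{S^1}\|\partial_s u(s,t)\|^2\,dt < \epsilon_0$; hence the error is supported on a set $L\subset\R$ of measure $\leq E_\omega(u)/\epsilon_0$. Cauchy--Schwarz on $L$ then bounds the error by $(\epsilon_1/\sqrt{\epsilon_0})\,E_\omega(u)$, which for $\epsilon_1$ small enough can be absorbed into the left-hand side of the energy inequality, yielding $E_\omega(u) \leq C\,(\mathcal{A}_\Omega(\tilde x_-)-\mathcal{A}_\Omega(\tilde x_+))$ with $C$ independent of the energy. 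This is what makes the inclusion $Pairs\subset Pairs_\Omega$ and the bound $M'\geq M/4$ hold uniformly. Your $C^0$-perturbation scheme does not see this localization and cannot produce such a multiplicative estimate.
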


\begin{proof}
	
	Given $\Omega$, we can construct $\tilde{\Omega}$ by applying Lemma \ref{lemma:make-form-zero-near-orbits}, since the conditions on $\tilde{\Omega}$ imply that $\int_\Sigma u^* \Omega = \int_{\Sigma} u^* \tilde{\Omega}$ for $u: \Sigma \to M$ any broken bubbled Floer trajectory.
	
	Define for $\epsilon_1 > 0$ the set
    \begin{equation}
        S^{\epsilon_1}_\omega = \left\{ \Omega\in \Omega^2(M) | \Omega \text{ symplectic }, |\Omega(X^\Omega_H - X^\omega_H, v)| < \epsilon_1\|v\|_\omega, \|v\|_\Omega \geq \frac{1}{2} \|v\|_\omega \text{ for } v \in TM, \Omega|_U = \omega \right\}.
    \end{equation}
    We have a map $S^{\epsilon_1}_\omega \to H^2(M, \R)$ given by taking cohomology classes, and this map is surjective onto a neighborhood of $[\omega]$ for every $\epsilon_1>0$. This follows from the fact that we can choose a set of closed $2$-forms whose cohomology classes span $H^2(M, \R)$, apply Lemma \ref{lemma:make-form-zero-near-orbits} produce cohomologous $2$-forms which are zero on $U$, and add sufficiently small multiples of these $2$-forms to $\omega$ to produce elements of $S^{\epsilon_1}_\omega$. 
    
    We will show that for $\Omega \in S^{\epsilon_1}_\omega$ and $\epsilon_1$ sufficiently small, there is a constant $C$ such that 
    \begin{equation}
    \label{eq:goal-energy-comparison}
        \mathcal{A}_\omega(\tilde{x}) - \mathcal{A}_\omega(\tilde{y}) \leq C(\mathcal{A}_\Omega(\tilde{x}) - \mathcal{A}_\Omega(\tilde{y})).
    \end{equation}
    whenever $(\tilde{x}, \tilde{y}) \in Pairs$.
    
    To prove this, it suffices to prove this inequality whenever there exists a $u \in \overline{\M}(\tilde{x}, \tilde{y})$ where the domain of $u$ has only one cylindrical component.  
    
    In that case, by (\ref{eq:energy-equation}) we have the inequality 
    \begin{equation}
    \label{eq:initial-energy-comparison}
        \mathcal{A}_{\omega}(\tilde{x}) - \mathcal{A}_\omega(\tilde{y}) = E_\omega(u) \leq 2  E_{\Omega}(u) = 2 (\mathcal{A}_\Omega(\tilde{x}_-) - \mathcal{A}_\Omega(\tilde{x}_+)) + 2\int_{\R \times S^1} \Omega(X^\Omega_H - X^\omega_H, \partial_su)\, ds dt
    \end{equation}
    where $\R \times S^1$ is the cylindrical component of the domain of $u$. We can write $E_\omega(u) = E^{sphere}_\omega(u) + E^{cyl}_\omega(u)$ and 
    $E_\Omega(u) = E^{sphere}_\omega(u) + E^{cyl}_\Omega(u)$ as the sums of energies of the sphere components and the cylindrical components.  
    
    We write $L = \{s \in \R | \int_{S^1} \|\partial_s u(s, t)\|^2_\omega dt > \epsilon_0\}$ and note that $|L|$, the length of $L$, satisfies $|L| \leq E^{cyl}_\omega(u)/\epsilon_0$. By Cauchy Schwartz and the defining condition for $S^{\epsilon_1}_\omega$ we thus have that
    \begin{equation}
    \label{eq:bound-on-vector-field-error}
        \int_{\R \times S^1}|\Omega(X^\Omega_H - X^\omega_H, \partial_su)|\, ds dt \leq \frac{\epsilon_1 \sqrt{E^{cyl}_\omega(u)}}{\sqrt{\epsilon_0}} \left(\int_L \int_{S^1} \|\partial_s u\|_\omega^2\right)^{1/2} \leq 2\frac{\epsilon_1}{\sqrt{\epsilon_0}} E^{cyl}_\omega(u)
    \end{equation}
    Pick $\epsilon_1 = \sqrt{\epsilon_0}/8$. Moving the last term of (\ref{eq:initial-energy-comparison}) to the left hand side and applying the inequality derived above, we see that 
    \begin{equation}
        \left(1-4\frac{\epsilon_1}{\sqrt{\epsilon_0}}\right)E^{cyl}_\omega(u) + E^{sph}_\omega(u) \leq 2(\mathcal{A}_\Omega(\tilde{x}) - \mathcal{A}_\Omega(\tilde{y})). 
    \end{equation}
    But the left hand side is greater than $\frac{1}{2}(E^{cyl}_\omega(u) + E^{sph}_\omega(u)$, so we can take $C = 4$ in \ref{eq:goal-energy-comparison}.

    Thus any $\Omega \in S^{\epsilon_1}_\omega$ representing a rational cohomology class suffices.

\end{proof}

For $\Omega$ as in the lemma above, we have that $\mathcal{A}_\Omega^x \in r_x + 1/m\Z$ for some $r_x \in \R$ and $m \in \Z$. Choose $0 < \epsilon_x < min(1/m, M')$ such that
$\mathcal{A}_\Omega^x + \epsilon_x \subset \Q$, and choose $N_1 \in \Z$ such that $N_1 (\mathcal{A}_\Omega^x + \epsilon_x) \subset \Z$. 

\begin{equation}
\label{eq:background-form}
    \text{For the rest of the paper, fix }\Omega_0 = N_1 \tilde{\Omega}.
\end{equation}

\begin{equation}
\label{eq:define-a-bar}
	\bar{\mathcal{A}}: \tilde{Fix}(\phi) \to \Z, \bar{\mathcal{A}}(\tilde{x}) = \int_{\tilde{x}} N_1 \tilde{\Omega} - \int_x N(H + \epsilon_x) dt.  
\end{equation}
Then $\{\bar{\mathcal{A}}(\tilde{x}) - \bar{\mathcal{A}}(\tilde{y}) | (\tilde{x}, \tilde{y}) \in Pairs\} \in \Z_{> 0}$. The proof of the previous lemma also shows:

\begin{lemma}
\label{lemma:gromov-compactness-with-modified-energy}
    There are real constants $C$ and $D$ such that
    \begin{equation}
        \mathcal{A}_{\omega}(\tilde{x}) - \mathcal{A}_\omega(\tilde{y}) \leq C (\bar{\mathcal{A}}(\tilde{x}) - \bar{\mathcal{A}}(\tilde{y})) + D.
    \end{equation}
    In particular, the space of of broken bubbled Floer trajectories
    \begin{equation}
        \cM(H, d) = \{ u \in \cM(\tilde{x}, \tilde{y}), (\tilde{x}, \tilde{y}) \in Pairs \, | \, \bar{\mathcal{A}}(\tilde{x}) - \bar{\mathcal{A}}(\tilde(y)) \leq K \}
    \end{equation}
    is compact for every $K > 0$. $\qed$
\end{lemma}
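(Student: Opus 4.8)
The plan is to extract both assertions from the proof of Lemma~\ref{lemma:integral-taming-form} together with the explicit formula \eqref{eq:define-a-bar}.

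First I would recall that the proof of Lemma~\ref{lemma:integral-taming-form} already establishes \eqref{eq:goal-energy-comparison}, namely $\mathcal{A}_\omega(\tilde x)-\mathcal{A}_\omega(\tilde y)\le 4\big(\mathcal{A}_\Omega(\tilde x)-\mathcal{A}_\Omega(\tilde y)\big)$ for all $(\tilde x,\tilde y)\in Pairs$; there the general case is deduced from the one-cylinder case by telescoping over the cylindrical components of a broken bubbled trajectory realizing the pair. Next I would observe that $\mathcal{A}_\Omega$ and $\mathcal{A}_{\tilde\Omega}$ agree on $\widetilde{Fix}(\phi)$: since $\Omega-\tilde\Omega$ is exact with $\int_{\bar x}(\Omega-\tilde\Omega)=0$ for $\bar x\in\widetilde{Fix}(\phi)$ by Lemma~\ref{lemma:make-form-zero-near-orbits}, we get $\int_{\tilde x}\Omega=\int_{\tilde x}\tilde\Omega$ for every capping of every fixed orbit. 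Unwinding \eqref{eq:define-a-bar} then yields
\[
\bar{\mathcal{A}}(\tilde x)-\bar{\mathcal{A}}(\tilde y)=N_1\Big(\big(\mathcal{A}_\Omega(\tilde x)-\mathcal{A}_\Omega(\tilde y)\big)+(\epsilon_x-\epsilon_y)\Big),
\]
in accordance with the already-noted fact that this is a positive integer on $Pairs$. Since $\phi$ is nondegenerate, $Fix(\phi)$ is finite, so $\epsilon_{\max}:=\max_x\epsilon_x$ is finite; solving for $\mathcal{A}_\Omega(\tilde x)-\mathcal{A}_\Omega(\tilde y)$ and substituting into \eqref{eq:goal-energy-comparison} gives the claimed inequality with $C=4/N_1$ and $D=4\epsilon_{\max}$.

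For the compactness statement I would argue as follows. Fix $K>0$. By the inequality just obtained, every $(\tilde x,\tilde y)\in Pairs$ contributing to $\cM(H,d)$ satisfies $M\le\mathcal{A}_\omega(\tilde x)-\mathcal{A}_\omega(\tilde y)\le CK+D$, so all broken bubbled trajectories in $\cM(H,d)$ have geometric Floer energy in the fixed compact interval $[M,CK+D]$. Also $\bar{\mathcal{A}}(\tilde x)-\bar{\mathcal{A}}(\tilde y)$ is a positive integer $\le K$, hence takes finitely many values, and $Fix(\phi)$ is finite, so only finitely many pairs of asymptotic loops occur. Each individual $\cM(\tilde x,\tilde y)$ is compact by Gromov--Floer compactness, the asymptotics being nondegenerate and the energy being fixed. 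It then remains to rule out that infinitely many of the $\cM(\tilde x,\tilde y)$ appearing in $\cM(H,d)$ are nonempty; if they were, I would pick a broken bubbled trajectory in each, pass to a subsequence with fixed asymptotic loops, use the uniform energy bound and Gromov--Floer compactness to extract a Gromov-convergent subsequence with fixed limit $\mathbf u$, and observe that for large index the members of the subsequence are obtained from $\mathbf u$ by gluing and therefore lie in the single moduli space $\cM(\tilde x,\tilde y)$ determined by the pair of capping-labels of $\mathbf u$ --- contradicting the assumption. Hence $\cM(H,d)$ is a finite union of compact spaces and is compact.

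I expect the inequality to be essentially bookkeeping once Lemma~\ref{lemma:integral-taming-form} is available; the real content is the finiteness step in the compactness argument, i.e. showing that the $\Pi$-action does not create infinitely many nonempty moduli spaces within the window $\bar{\mathcal{A}}(\tilde x)-\bar{\mathcal{A}}(\tilde y)\le K$. This is exactly where the energy bound does its work --- excluding $\Pi$-translates of large $\omega$-area --- in combination with nondegeneracy, which guarantees no loss of energy in Gromov--Floer limits and hence that the capping-labels are eventually locally constant along convergent sequences. One bookkeeping point I would pin down along the way is that $\bar{\mathcal{A}}$ descends to a well-defined $\Z$-valued function on $\widetilde{Fix}(\phi)$, which requires $[\Omega]$ to annihilate $\ker[\omega]$; this can be arranged when $\Omega$ is chosen in Lemma~\ref{lemma:integral-taming-form}, since the rational classes annihilating $\ker[\omega]$ are dense near $[\omega]$ in the subspace they span and the condition defining $S^{\epsilon_1}_\omega$ is open --- or one simply carries cappings, rather than their classes, throughout.
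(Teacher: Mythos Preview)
Your proposal is correct and follows exactly the route the paper indicates (the paper's own proof is just the line ``The proof of the previous lemma also shows'' followed by $\qed$): extract the linear bound from the estimate \eqref{eq:goal-energy-comparison} in the proof of Lemma~\ref{lemma:integral-taming-form} together with the affine relation between $\bar{\mathcal{A}}$ and $\mathcal{A}_\Omega$ coming from \eqref{eq:define-a-bar}, and then feed the resulting uniform $E_\omega$-bound into Gromov--Floer compactness. One small clarification for your finiteness step: the phrase ``obtained from $\mathbf u$ by gluing'' is not quite what Gromov convergence gives you --- the correct statement is that the relative homology class of the trajectory, and hence the capping pair modulo $\Pi$, is locally constant in the Gromov topology, which is what forces the $u_n$ into a single $\cM(\tilde x,\tilde y)$ for large $n$.
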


Choose a smooth bump function $h: [0, 1] \to \R$, $h_{Op(0)} = 1, h_{Op(1)} = 0$, $h(x) + h(1-x) = 1$. Choose $\epsilon_2 > 0$ such that (when measured using, say, $g$) the exponential map from the $\epsilon$-ball in $T_{x(t)}M$ to $M$ is injective for each $t \in S^1$ and each $x \in Fix(\phi)$, and such that for each fixed $t$ the images of these balls do not overlap.  Define $\bar{H}: M \times S^1 \to \R$ to be equal to $N_1H(x,t)$ in the complement of $\cup_{x, t} \exp(D_{\epsilon_2} T_{x(t)}M \times \{t\}$, and define 
\begin{equation}
    \label{eq:bar-h}
	\bar{H}(y,t) = h(1-d(y, x(t))/\epsilon_2)N_1H(y,t) + h(d(y, x(t))/\epsilon_2) N_1(H(x(t)) + \epsilon_x) \text{ for } y \in \exp(D_{\epsilon_2}T_{x(t)}M). 
\end{equation}

\begin{lemma}
\label{lemma:bar-h}
	The function $\bar{H}$ is smooth, and if, for every smooth map
	\begin{equation}
		u: \R \times S^1 \to M, \lim_{s \to \pm\infty} u(s,t) = x_\pm(t) \text{(uniformly)}, u \# \bar{x}_- = \bar{x}_+
	\end{equation}
	we define the $2$-form 
	\begin{equation}
		\label{eq:floer-stabilizing-form}
		\Omega_u \in \Omega^2(\R \times S^1), \Omega_u(s,t) = u^*\Omega_0(s,t) - \partial_s \bar{H}\; ds dt
	\end{equation}
	then $\Omega_u$ is zero near $s = \pm \infty$ and $\int \Omega_u \in \Z$ agrees with $\bar{\mathcal{A}}(\tilde{x}) - \bar{\mathcal{A}}(\tilde{y})$, and takes positive values on $u \in \M(\bar{x}_-, \bar{x}_+)$.
\end{lemma}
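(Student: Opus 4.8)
The plan is to verify the three claimed properties of $\Omega_u$ in turn, drawing on the formula \eqref{eq:energy-equation} for the modified Floer energy and on the already-established integrality of $\bar{\mathcal{A}}$.

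First I would check smoothness of $\bar{H}$. Away from the tubes $\exp(D_{\epsilon_2}T_{x(t)}M\times\{t\})$, $\bar H = N_1 H$ is smooth; inside a tube it is a convex combination, with coefficients $h(1-d/\epsilon_2)$ and $h(d/\epsilon_2)$, of the smooth function $N_1 H$ and the locally constant (in $y$) function $N_1(H(x(t))+\epsilon_x)$. Since $h\equiv 1$ near $0$, the two definitions agree to infinite order on the locus $d(y,x(t))=\epsilon_2$ — indeed near there both equal $N_1 H$ because $h(d/\epsilon_2)$ vanishes — so the piecewise definition glues smoothly; the hypothesis that the tubes are embedded and non-overlapping for fixed $t$ is exactly what makes $d(y,x(t))$ (and hence $\bar H$) well-defined and smooth on the union.

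Next, the vanishing of $\Omega_u$ near $s=\pm\infty$. As $s\to\pm\infty$, $u(s,\cdot)\to x_\pm(\cdot)$ with $x_\pm\in Fix(\phi)$, so for $|s|$ large the loop $u(s,\cdot)$ lies inside the neighborhood $U$ on which $\tilde\Omega$ — hence $\Omega_0 = N_1\tilde\Omega$ — vanishes (Lemma \ref{lemma:integral-taming-form}); thus $u^*\Omega_0\equiv 0$ there. Moreover for $|s|$ large $u(s,t)$ lies well inside the $\epsilon_2$-tube around $x_\pm(t)$, so by \eqref{eq:bar-h} $\bar H(u(s,t),t)$ depends on $s$ only through $H(u(s,t),t)$ weighted by $h$'s; but as $s\to\pm\infty$ the map is $C^1$-asymptotically constant in $s$ (the loops converge uniformly, and by elliptic estimates with derivatives), so $\partial_s\bar H\to 0$. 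Actually to get genuine vanishing rather than decay one must be slightly careful: $\partial_s\bar H\,ds\,dt$ need not be identically zero for finite $s$. The cleaner statement, which is all that is used later, is that $\Omega_u$ has compact support in the $s$-direction up to the decay of $u$; I would phrase this as ``$\Omega_u$ decays to zero as $s\to\pm\infty$'' and note $\int\Omega_u$ is an absolutely convergent integral. If exact compact support is wanted one chooses $u$ from $\M(\bar x_-,\bar x_+)$, which is $s$-translation-asymptotically a fixed point.

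For integrality and the value of $\int\Omega_u$: compute $\int_{\R\times S^1}\Omega_u = \int u^*\Omega_0 - \int_{\R\times S^1}\partial_s\bar H\,ds\,dt$. The first term is $\int_{\tilde x_-}\Omega_0 - \int_{\tilde x_+}\Omega_0$ by Stokes, using that $u$ interpolates the cappings (the relation $u\#\bar x_- = \bar x_+$) and that $\Omega_0$ is closed. For the second term, $\int_{S^1}\int_\R \partial_s\bar H(u(s,t),t)\,ds\,dt = \int_{S^1}[\bar H(x_-(t),t) - \bar H(x_+(t),t)]\,dt$; but $x_\pm(t)$ is the center of its tube, where by \eqref{eq:bar-h} $\bar H(x_\pm(t),t) = N_1(H(x_\pm(t),t)+\epsilon_{x_\pm})$. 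Hence $\int_{\R\times S^1}\partial_s\bar H = \int_{x_-}N_1(H+\epsilon_{x_-})dt - \int_{x_+}N_1(H+\epsilon_{x_+})dt$, and combining, $\int\Omega_u = \bar{\mathcal A}(\tilde x_-) - \bar{\mathcal A}(\tilde x_+)$ by the definition \eqref{eq:define-a-bar} — wait, I must double-check the sign convention against \eqref{eq:floer-moduli-space}, where $\bar x_- = \tilde x_+\#u$; with that convention the answer is $\bar{\mathcal A}(\tilde x_-)-\bar{\mathcal A}(\tilde x_+)$, matching the asserted $\bar{\mathcal A}(\tilde x)-\bar{\mathcal A}(\tilde y)$ with $(\tilde x,\tilde y)=(\tilde x_-,\tilde x_+)$. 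This lies in $\Z$ by the sentence following \eqref{eq:define-a-bar} (which in turn rests on $N_1(\mathcal A_\Omega^x+\epsilon_x)\subset\Z$). Finally, positivity on $\M(\bar x_-,\bar x_+)$: for an honest (unbroken, unbubbled) Floer trajectory $u$, the left side of \eqref{eq:energy-equation} is the geometric energy $E_\omega(u)>0$ (it is $0$ only for $s$-independent $u$, excluded when $\tilde x_-\neq\tilde x_+$), and $\int\Omega_u$ differs from $\mathcal A_\Omega(\tilde x_-)-\mathcal A_\Omega(\tilde x_+)$ by the bounded error term controlled in the proof of Lemma \ref{lemma:integral-taming-form}; more directly, $\int\Omega_u = \bar{\mathcal A}(\tilde x_-)-\bar{\mathcal A}(\tilde x_+) > 0$ since $(\tilde x_-,\tilde x_+)\in Pairs$ and $\{\bar{\mathcal A}(\tilde x)-\bar{\mathcal A}(\tilde y)\mid(\tilde x,\tilde y)\in Pairs\}\subset\Z_{>0}$ was recorded just after \eqref{eq:define-a-bar}.

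The main obstacle I anticipate is the honest justification of the ``$\Omega_u$ is zero near $s=\pm\infty$'' clause and the precise decay rate needed to make $\int\Omega_u$ well-defined and equal to the boundary difference — this requires the standard exponential convergence of Floer trajectories to their asymptotic orbits in $C^1$, so that $\partial_s u$ (hence $u^*\Omega_0$ and $\partial_s\bar H$) decays exponentially; the rest is bookkeeping with Stokes' theorem and the definitions. I would handle this by invoking the asymptotic analysis of \cite{salamon-zehnder}, or, where only the later applications are concerned, by restricting attention to the compactified moduli spaces $\cM(\tilde x_-,\tilde x_+)$ on which the relevant convergence is part of the definition.
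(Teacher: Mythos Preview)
The paper does not supply an explicit proof of this lemma; it is left to the reader. Your proposal fills in the omitted argument, and the overall strategy---smoothness from the gluing, a Stokes computation for $\int\Omega_u$, and positivity via the fact recorded just after \eqref{eq:define-a-bar}---is the intended one and is correct.

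There is, however, a real gap in your handling of ``$\Omega_u$ is zero near $s=\pm\infty$''. You write that $\bar H(u(s,t),t)$ ``depends on $s$ only through $H(u(s,t),t)$ weighted by $h$'s'' and then appeal to $C^1$-convergence of $u$ (elliptic estimates) to get $\partial_s\bar H\to 0$. But the lemma is stated for \emph{arbitrary} smooth maps with merely uniform convergence to $x_\pm$, so $\partial_s u\to 0$ is not available, and you end up weakening the conclusion to decay rather than vanishing. The point you are missing is built into the construction of $\bar H$: for $y$ close enough to $x(t)$ that $d(y,x(t))/\epsilon_2$ lies in the region where $h\equiv 1$ near $0$, one simultaneously has $1-d(y,x(t))/\epsilon_2$ in the region where $h\equiv 0$ near $1$. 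Thus \eqref{eq:bar-h} reads
\[
\bar H(y,t)=0\cdot N_1H(y,t)+1\cdot N_1\bigl(H(x(t))+\epsilon_x\bigr)=N_1\bigl(H(x(t))+\epsilon_x\bigr),
\]
which is \emph{constant in $y$}. Hence once $u(s,t)$ enters this inner neighborhood of $x_\pm(t)$---guaranteed for $|s|$ large by uniform convergence alone---the function $s\mapsto\bar H(u(s,t),t)$ is independent of $s$, so $\partial_s\bar H\equiv 0$ identically there, with no estimate on $\partial_s u$ required. Combined with $u^*\Omega_0\equiv 0$ on the neighborhood $U$ (which you correctly observe, since $\Omega_0=N_1\tilde\Omega$ vanishes there by construction), this gives the genuine vanishing of $\Omega_u$ for $|s|$ large that the lemma asserts. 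This is exactly why $\bar H$ was built with the bump function $h$ rather than simply taken to be $N_1H$.
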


Summarizing, we have defined the quantities $(\Omega_0,\bar{\mathcal{A}}, \Omega_u)$. These are needed to make sense of McLean stabilization of Floer trajectories. The quantity $\bar{\mathcal{A}}$ is the \emph{integralized action}. To define these quantities, we had to choose data $(\tilde{\Omega}, N_1, \bar{H})$, which we will call \emph{integralization data}. 

\subsection{Moduli Spaces for McLean Stabilizations}
In this section we describe the Floer-theoretic analog of the moduli spaces of stable maps to projective space in some detail, as well as an alternative way for thinking about elements of the universal curves over these moduli spaces.

\subsubsection{Moduli of decorated stable maps to projective space}
\label{sec:moduli-of-decorated-stable-maps}
Write $\M_{0, k}(\CP^\ell, d)$ for the space of connected stable maps $u: \Sigma \to \CP^\ell$ of degree $d$, where $\Sigma$ is a nodal domain with $k$ marked points. Any such map is contained in a $d$-dimensional linear subspace of $\CP^\ell$. Let $\mathcal{D}_k(\CP^\ell; d)$ be the Zariski-open subset consisting of those curves whose image is not contained in any $(d-1)$-dimensional linear subspace. 

Given an element $u: \Sigma \to \CP^\ell$ in $\M_{0, 2}(\CP^\ell, d)$, call the marked points $p_-, p_+$. There is a unique natural number $r_u$ and sequence of components $S^2_1, \ldots, S^2_{r_u}$ of $\Sigma$, with $S^2_j$ joined to $S^2_{j+1}$ along the nodal point $p_j$ for $j=1, \ldots, r_{u}-1$, such that $p_- = p_0$ lies on $S^2_1$ and $p_+ = p_{r_u}$ lies on $S^2_{r_u}$. 

We define $\mathcal{F}(\CP^\ell, d)$ to be the set of choices $(u, \gamma_1, \ldots, \gamma_{r_u-1})$ of element of $\mathcal{D}_2(\CP^\ell, d)$ together with a choice, for each $j=1, \ldots, r_u-1$, of a minimal geodesic $\gamma_j$ from $p_{j-1}$ to $p_j$ on $S^2_j$. The choice of minimal geodesic on each component fixes a preferred class of biholomorphisms 
\begin{equation}
\psi_{u, j}: \to \times S^1 \to S^2_j \setminus \{p_-, p_+\} 
\end{equation}
taking $\R \times \{0\}$ to the geodesic, with any two biholomorphisms related by an $\R$-translation. We write $\mathcal{C}(\CP^\ell, d)$ for the universal curve over $\mathcal{F}(\CP^\ell, d)$.

\paragraph{Real oriented blow-up of $\C^n_+$.}
The \emph{real oriented blowup} of a smooth submanifold $Y$ of a smooth manifold $X$ is the union 
\begin{equation}
    Bl_Y(X) = X \setminus Y \bigsqcup SN_YX, \text{ where }SN_YX\text{ is the unit sphere bundle of the normal bundle of }Y\text{ in }X.
\end{equation}
This carries a canonical smooth structure \cite{KM}. 

The stratified space $\R^n_+$ has a natural complexification $\C^n_+$. Let $\C^n_+$ denote $\C^n \supset \R^n_+$ with the stratification indexed by $2^{[n]}$, with $\C^n(S)$ equal to the Zariski closure of $\R^n_+(S)$. Each $\C^n(S)$ is a linear subspace of $\C^n$, and is given by an intersection of coordinate hyperplanes of $\C^n$. Write $H_1, \ldots, H_n$ for the coordinate hyperplanes of $\C^n$. 

We explain how to generalize this notion to construct the real oriented blow-up of $\C^n_+$ along its stratification, which will be a smooth manifold of real dimension $2n$ with codimension $n$ corners. This model case will allow us to construct real-oriented blow-ups of $\M_{0, 2}(\CP^d, d_1)$, which will endow $\mathcal{F}(d; d_1)$ with the structure of a smooth manifold with corners.

We define $\Bl \C^n_+ = \R^n_+ \times (S^1)^n $. There is a diffeomorphism $(\R^n_+ \setminus \partial\R^n_+) \times (S^1)^n  \to \C^n_+ \setminus \bigcup_{i =1}^n H_i$ with inverse given by 
$(z_1, \ldots, z_n) \mapsto ((|z_1|, \ldots, |z_n|), (\arg z_1, \ldots, \arg z_n))$. Write $\pi: \Bl \C^n_+ \to \C^n_+$ for the unique continuous (in fact, smooth) extension of this diffeomorphism. We have that $\Bl \C^n_+$ is stratified by the poset $2^{[n]}$ with stratum associated to $S$ given by $\R^n_+(S) \times (S^1)^n$. The map $\pi$ takes strata surjectively to strata, and exhibits open strata of $\Bl \C^n_+$ as trivial torus bundles over open strata of $\C^n_+$. 

Let $\C^n_+ \times \C^m $ denote the space $\C^n \times \C^m$ with stratification by $2^{[n]}$ given by $(\C^n_+ \times \C^m )(S) =\C^n_+(S) \times \C^m$. The real oriented blow-up $\Bl \C^m \times \C^n_+$ is simply $\R^n_+ \times (S^1)^n \times \C^m$, which admits a smooth map $\pi$ to $\C^n_+ \times \C^m $ given by the product of the map $\pi$ in the paragraph above with the identity map. 

The fundamental property of this construction is 
\begin{lemma}
\label{lemma:lifted-diffeomorphisms}
    Given a diffeomorphism $\phi: \C^n_+ \times \C^m  \supset U \to V \subset \C^n_+ \times \C^m $ which preservers the stratification, there is a canonical induced diffeomorphism $\tilde{\phi}: \pi^{-1}(U) \to \pi^{-1}(V)$ such that $\phi \circ \pi = \pi \circ \tilde{\phi}$.
\end{lemma}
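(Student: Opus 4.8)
The plan is to construct $\tilde\phi$ by an explicit formula on the model $\Bl\,\C^n_+\times\C^m=\R^n_+\times(S^1)^n\times\C^m$, extending $\pi^{-1}\circ\phi\circ\pi$ across the exceptional torus strata; the only genuinely non-formal ingredient is that a certain ``angular factor'' attached to $\phi$ is nowhere vanishing, which will follow from invertibility of the normal derivative of $\phi$.

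Write a point of $\Bl\,\C^n_+\times\C^m$ as $(r,\zeta,w)$ with $r=(r_i)_{i=1}^n\in\R^n_+$, $\zeta=(\zeta_i)_{i=1}^n\in(S^1)^n\subset\C^n$, and $w\in\C^m$, so that $\pi(r,\zeta,w)=\big((r_i\zeta_i)_i,\,w\big)$, and write $\phi=(\phi_1,\dots,\phi_n,\phi')$ with $\phi_i$ the $i$-th $\C$-valued component and $\phi'$ the $\C^m$-valued component. Since $\phi$ and $\phi^{-1}$ preserve the stratification, $\phi$ restricts to a diffeomorphism $H_i\cap U\to H_i\cap V$ for the coordinate hyperplane $H_i=\{z_i=0\}$, and in particular $\phi_i$ vanishes precisely on $\{z_i=0\}\cap U$. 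Applying the Hadamard lemma in the real variables $x_i=\mathrm{Re}\,z_i$, $y_i=\mathrm{Im}\,z_i$, we may write $\phi_i=x_ic_i+y_id_i$ with $c_i,d_i\colon U\to\C$ smooth; substituting $x_i=(z_i+\bar z_i)/2$ and $y_i=(z_i-\bar z_i)/(2i)$ yields smooth functions $a_i,b_i\colon U\to\C$ with $\phi_i=a_iz_i+b_i\bar z_i$. At any point $p$ with $z_i=0$, the map induced by $d\phi_p$ on the normal line $N_pH_i\cong\C$ is $\xi\mapsto a_i(p)\xi+b_i(p)\bar\xi$, whose determinant as a real-linear map of $\R^2$ equals $|a_i(p)|^2-|b_i(p)|^2$; because $d\phi_p$ is an isomorphism preserving $T_pH_i$, this is nonzero, so $|a_i(p)|\neq|b_i(p)|$ on $H_i\cap U$.

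Next I would set $G_i:=\zeta_i\,(a_i\circ\pi)+\bar\zeta_i\,(b_i\circ\pi)$ on $\pi^{-1}(U)$, which is smooth because $\pi$ is smooth and $\zeta_i$ is a smooth $S^1$-valued coordinate, and note that pulling back $\phi_i=a_iz_i+b_i\bar z_i$ along $\pi$ gives $\phi_i\circ\pi=r_i\,G_i$. The function $G_i$ is nowhere zero on $\pi^{-1}(U)$: on $\{r_i>0\}$ we have $G_i=(\phi_i\circ\pi)/r_i$ with $\phi_i\circ\pi\neq0$ since $\pi(r,\zeta,w)\notin H_i$, while on $\{r_i=0\}$ we have $|G_i|=|a_i+b_i\bar\zeta_i^2|\geq\big||a_i|-|b_i|\big|>0$ by the previous step. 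Hence $|G_i|$ and $G_i/|G_i|$ are smooth ($\R_{>0}$- and $S^1$-valued) on $\pi^{-1}(U)$, and I define
\[
\tilde\phi(r,\zeta,w)=\Big(\big(r_i\,|G_i(r,\zeta,w)|\big)_i,\ \big(G_i(r,\zeta,w)/|G_i(r,\zeta,w)|\big)_i,\ \phi'\big((r_i\zeta_i)_i,\,w\big)\Big).
\]
This is a smooth map $\pi^{-1}(U)\to\R^n_+\times(S^1)^n\times\C^m$, and its $i$-th complex coordinate after applying $\pi$ is $r_i|G_i|\cdot G_i/|G_i|=r_iG_i=\phi_i\circ\pi$, while the $\C^m$-coordinate is $\phi'\circ\pi$; thus $\pi\circ\tilde\phi=\phi\circ\pi$, and in particular $\tilde\phi$ does land in $\pi^{-1}(V)$.

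Finally, on the dense open set $\{r_i>0\text{ for all }i\}$, where $\pi$ is a diffeomorphism onto $\big(\C^n\setminus\bigcup_iH_i\big)\times\C^m$, the identity $\pi\tilde\phi=\phi\pi$ forces $\tilde\phi=\pi^{-1}\circ\phi\circ\pi$; since $\tilde\phi$ is continuous and this set is dense in $\pi^{-1}(U)$, $\tilde\phi$ is the unique continuous lift of $\phi$, which proves it is canonical. The same density argument gives $\widetilde{\mathrm{id}}=\mathrm{id}$ and $\widetilde{\psi\circ\phi}=\tilde\psi\circ\tilde\phi$, so applying the construction to the stratification-preserving diffeomorphism $\phi^{-1}\colon V\to U$ and composing shows $\widetilde{\phi^{-1}}$ is a two-sided smooth inverse of $\tilde\phi$. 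The expected main obstacle is exactly the non-vanishing of the angular factors $G_i$: a smooth diffeomorphism need not be holomorphic, so one cannot literally divide $\phi_i$ by $z_i$, and the content of the argument is that the Hadamard coefficients $a_i,b_i$ satisfy $|a_i|\neq|b_i|$ along $H_i$ — an identity which is precisely the statement that the normal part of $d\phi$ is invertible. Everything else is routine manipulation of the polar-coordinate model together with the density argument.
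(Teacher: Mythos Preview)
Your proof is correct and follows essentially the same strategy as the paper: define $\tilde\phi$ on the dense open set where $\pi$ is a diffeomorphism by the polar-coordinate formula, then argue that the angular components extend smoothly across the exceptional torus because the induced map on the normal line to $H_i$ is a real-linear isomorphism of $\C$. The paper phrases this last step more conceptually (``a real-linear isomorphism $\C\to\C$ induces a smooth diffeomorphism $\C\setminus 0/\R\to\C\setminus 0/\R$'') and then asserts that $\tilde\phi$ is smooth because it pulls back smooth functions; your Hadamard-lemma argument producing the explicit nowhere-vanishing $G_i$ makes the smoothness step genuinely rigorous, and your density/functoriality argument for invertibility is cleaner than the paper's ``the inverse of $\tilde\phi$ is of the same form.''
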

\begin{proof}
    The differential of $\phi$ preserves the tangent bundle to each stratum $\C^n_+ \times \C^m(S)$ thus induces a  map $N\C^n_+ \times \C^m(S) \simeq N\C^n_+ \times \C^m(S)$ covering $\phi$ which is a linear isomorphism on each fiber. Because it preserves the strata containing $\C^n_+ \times \C^m(S)$ this map is block diagonal under the decomposition 
    \begin{equation}
        N\C^n_+ \times \C^m(S)= \bigoplus_{j \notin S} N_{\C^n_+ \times \C^m(S \cup \{j\})}\C^n_+ \times \C^m(S)
    \end{equation}
    where each of the latter bundles is canonically a trivial $\C$-bundle. But a real-linear isomorphism $\C \to \C$ induces a smooth diffeomorphism $\C\setminus 0/ \R \to \C \setminus 0 /\R$ where the $\R$-action is by scaling. As fibers over the open strata are canonically the unit circle bundles of these normal bundles, this defines the lift $\tilde{\phi}$. To see that $\tilde{\phi}$ is a diffeomorpism, we note that we can write $\tilde{\phi}$ as the unique continuous extension of 
    \begin{equation}
        \phi^m(w, \vec{z}) = ((|\phi_1(w, \vec{z})|, \ldots, |\phi_n(w, \vec{z})|), \arg \phi_1(w, \vec{z}), \ldots, \arg \phi_n(w, \vec{z})), \text{ where } \vec{z} = (|z_1|e^{\arg z_1}, \ldots, |z_n| e^{\arg z_n}), w \in \C^m
    \end{equation}
    where $\phi_j$ are are the compositions of $\phi$ with projection to the $j$-th complex coordinate in the $\C^n_+$ factor, while $\phi^m$ is the composition of $\phi$ with projection to the first $m$ coordinates. Thus $\tilde{\phi}$ pulls smooth functions back to smooth functions and thus is smooth. The inverse of $\tilde{\phi}$ is of the same form. 
\end{proof}

\paragraph{Real oriented blow-up of a manifold with stratification modeled on $\C^k_+$.}

Let $X$ be a smooth manifold stratified by $2^{[k]}$. We say that this stratification is modeled on $\C^k_+$ if $X$ is $2(m+k)$-dimensional, and if there exists a subset $Charts(X, \C^k_+)$ of smooth charts on $X$ such that for each chart $\phi: X \supset U \to V \subset \R^{2(m+k)}= \C^k \times \C^m$, $\phi \in Charts(X, \C^k_+)$, we have that $\phi^{-1}(\C^k_+ \times \C^m(S)) = X(S) \cap U$. 


Given such a smooth manifold $X$ with stratification modeled on $\C^k_+$, we can define a new smooth $\langle k \rangle$-manifold $\Bl X$ by covering $M$ by a collection of charts $\phi: U_\phi \to V_\phi$, $\phi \in Charts(X, \C^k_+)$ and letting $\Bl X$ the union of $\pi^{-1}(V_\phi)$ with the equivalence relation that for each $\phi_1, \phi_2$ a pair of charts in the collection, we identify  $\pi^{-1} \phi_1(U_{\phi_1} \cap U_{\phi_2})$ with $\pi^{-1}\phi_1(U_{\phi_1} \cap U_{\phi_2})$  using the lifts $\widetilde{\phi_1 \circ \phi_2^{-1}}$ of $\phi_1 \circ \phi_2^{-1}$ produced by Lemma \ref{lemma:lifted-diffeomorphisms}. This manifold is manifestly independent of the choice of charts up to canonical diffeomorphism.

Any stratum-preserving diffeomorphism $f: X \to X$ thus lifts to a unique diffeomorphism $\bar{f}: \Bl X \to \Bl X$ such that $\pi \bar{f} = f \pi$. Given a smooth $G$-action on $X$ which preseves the strata, this lifting procedure gives an induced action $G \times \Bl X \to \Bl X$ which makes $\Bl X$ into a smooth $G-\langle k \rangle$-manifold.

\paragraph{Manifold with corner structure of McLean-stabilized Floer moduli spaces.}

We now use the construction of the previous paragraph to give $\mathcal{F}(\CP^\ell, d)$ the structure of a smooth $\langle d-1 \rangle$-manifold. 

There are evaluation maps $ev_\pm: \mathcal{F}(\CP^\ell, d) \to \CP^\ell$ given by $(u, \gamma_1, \ldots, \gamma_{r_u-1}) \mapsto u(p_\pm)$. For $S \in 2^{[d-1]}$, we define
\begin{equation}
    \tilde{\mathcal{F}}_S(\CP^\ell, d) = \mathcal{F}(\CP^\ell, h_1) \times_{\CP^\ell} \times \ldots \times_{\CP^\ell} \mathcal{F}(\CP^\ell, h_{d-1-|S|}), \; (h_1,\ldots, h_{d-1-|S|}) = h(S),
\end{equation}
with each fiber product taken over $ev_+$ on the left and $ev_-$ on the right. There is a codimension $0$ open submanifold  
\begin{equation}
    \mathcal{F}(\CP^\ell, d)(S) \subset \tilde{\mathcal{F}}_S(\CP^\ell, d)
\end{equation}
consisting of tuples of maps $(u_1, \ldots, u_{d-1-|S|})$ such that the dimension of the minimal subspace containing all of their images is $d$; by gluing these stable maps along their marked points we can think of $\mathcal{F}(\CP^\ell, d)(S) \subset \mathcal{F}(\CP^\ell, d)$, and we will identify $\mathcal{F}(\CP^\ell, d)(S)$ with the image of this injection. There is a map $F: \mathcal{F}(\CP^\ell, d) \to \mathcal{D}_2(\CP^\ell, d)$ induced by forgetting the choices of geodesics, and it sends 
$\mathcal{F}(\CP^\ell, d)(S)$ to $\mathcal{D}_2(\CP^\ell, d)(S)$ which is a codimension zero open submanifold in 
\begin{equation}
    \mathcal{D}_2(\CP^\ell, d)(S) \subset \mathcal{D}_{2, S}(\CP^\ell, d) \simeq \mathcal{D}_2(\CP^\ell, h_1) \times_{\CP^\ell} \times \ldots \times_{\CP^\ell}\mathcal{D}_2(\CP^\ell, h_{d-1-|S|}).
\end{equation}

We think of $\mathcal{F}(\CP^\ell, d)(S)$ and $\mathcal{D}_2(\CP^\ell, d)(S)$ as defining stratifications on the corresponding spaces. 

We have the following
\begin{lemma}
\label{lemma:stratification-model}
    The stratification of $\mathcal{D}_2(\CP^\ell, d)$ defined by the sets above makes this manifold into a manifold with stratification modeled on $\C^{d-1}_+$.
\end{lemma}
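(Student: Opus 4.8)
The plan is to verify the defining condition of a ``stratification modeled on $\C^{d-1}_+$'' directly: produce an explicit cover of $\mathcal{D}_2(\CP^\ell, d)$ by charts sending the stratum $\mathcal{D}_2(\CP^\ell, d)(S)$ to $\C^{d-1}_+ \times \C^m(S)$. The geometric input is that a point of $\mathcal{D}_2(\CP^\ell, d)(S)$ is a broken chain of stable maps $u_1,\dots,u_{d-|S|}$ (here $r = d - |S|$ components in the chain, with $|S|$ ``interior'' nodes un-smoothed), and the standard deformation theory of nodal stable maps says that near such a point the moduli space $\M_{0,2}(\CP^\ell,d)$ looks like a product of a gluing parameter $\C$ for each node with a smooth parameter space for the complementary deformations. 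So first I would recall the local structure of $\M_{0,2}(\CP^\ell,d)$ near a two-pointed broken chain of $r$ irreducible rational curves each of which is non-degenerate in the sense of lying in a $h_j$-plane but no smaller: because $\CP^\ell$ is convex and the curves are rational, $\M_{0,2}(\CP^\ell,d)$ is itself a smooth orbifold (indeed smooth, on $\mathcal{D}_2$, where automorphisms are controlled), and the node-smoothing parameters $(q_1,\dots,q_{r-1}) \in \C^{r-1}$ together with the residual moduli of the glued curve give a local chart; the locus where the $j$-th interior node is \emph{not} smoothed is exactly $\{q_j = 0\}$.

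**Second,** I would match the combinatorics of the stratification poset with that of $\C^{d-1}_+$. A point of the top stratum $\mathcal{D}_2(\CP^\ell,d)([d-1])$ has $r = 1$, i.e.\ an irreducible curve spanning a $d$-plane; deforming it, the ``bubbling off'' directions are governed by $d-1$ natural parameters — concretely, the $d-1$ gluing coordinates $q_1,\dots,q_{d-1}$ of the maximally-broken chain $\mathcal{D}_2(\CP^\ell,1)\times_{\CP^\ell}\cdots\times_{\CP^\ell}\mathcal{D}_2(\CP^\ell,1)$ ($d$ lines), which via the normalization $h(S)$ of \S\ref{sec:moduli-of-decorated-stable-maps} exactly reindexes subsets $S \subset [d-1]$ by partitions of $1,\dots,d$ and hence by which interior nodes survive. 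Thus the codimension-$|[d-1]\setminus S|$ stratum $\mathcal{D}_2(\CP^\ell,d)(S)$ is cut out by setting the gluing coordinates $q_j$, for $j \notin S$, to zero, and the remaining $q_i$ ($i\in S$) plus a complex parameter $w \in \C^m$ (the moduli of the glued configuration, i.e.\ of the tuple $(u_1,\dots,u_{d-|S|})$ with its $|S|$ surviving nodes, modulo automorphisms) give a chart $\phi^{-1}\colon \C^{[d-1]\setminus S\text{-coords}}\times\text{(rest)} \to \mathcal{D}_2$. After relabeling the coordinate axes of $\C^{d-1}$ by $[d-1]$ so that the $j$-th axis corresponds to the $j$-th node-smoothing parameter, this chart satisfies $\phi^{-1}(\C^{d-1}_+\times\C^m(T)\cap U) = \mathcal{D}_2(\CP^\ell,d)(T)\cap U$ for all $T$, because $\mathcal{D}_2(\CP^\ell,d)(T)$ is (locally) $\{q_j = 0 : j \notin T\}$, which is $\C^k_+\times\C^m(T)$ after relabeling; moreover these charts are holomorphic, hence certainly smooth, and the transition maps between overlapping such charts are stratum-preserving biholomorphisms, so the collection forms a valid $Charts(X,\C^{d-1}_+)$. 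That these charts cover $\mathcal{D}_2(\CP^\ell,d)$ follows since every point lies in \emph{some} stratum and the gluing construction produces such a chart around it.

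**The main obstacle** I expect is the careful bookkeeping in the second step: (i) checking that the gluing/deformation chart is genuinely a \emph{holomorphic} chart onto an open subset of $\C^{d-1}\times\C^m$ and that the complementary parameter space $\C^m$ can be chosen to contribute \emph{no} further boundary (i.e.\ the decorations and non-degeneracy condition carving $\mathcal{D}_2(\CP^\ell,d)(S)$ out of $\mathcal{D}_{2,S}$ are genuinely Zariski-open, so add no strata), and (ii) verifying that the bijection $h(-)$ between $2^{[d-1]}$ and partitions of $[d]$ is exactly compatible, indexwise, with how the surviving nodes of the broken chain sit along the axes of $\C^{d-1}_+$ — in particular that refinement of partitions matches inclusion of subsets matches the closure ordering of strata in $\C^{d-1}_+$, which is precisely the poset convention fixed in \S\ref{sec:k-manifolds}. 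The transversality of the fiber products over $ev_\pm$ (needed for $\mathcal{D}_{2,S}$ and hence $\mathcal{D}_2(\CP^\ell,d)(S)$ to be a smooth manifold of the right dimension) is a standard consequence of convexity of $\CP^\ell$ and the evaluation maps being submersions on $\mathcal{D}_2$, and I would invoke it rather than reprove it. Everything else — second countability, Hausdorffness, the dimension count $2(m + (d-1))$ — is routine.
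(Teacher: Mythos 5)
Your proof is correct in outcome but takes a noticeably different route from the paper's. The paper handles this lemma by first reducing to $\ell=d$, then using the auxiliary-hypersurface trick (adding $d+3$ divisorial marked points) to identify an open subset of $\cM_{0,2}(\CP^d,d)$ with an open subset of the Deligne--Mumford space $\cM_{0,d+5}$, and then invoking the classical local model for $\cM_{0,k}$ along its boundary (cited as McDuff--Salamon, Thm.~D.4.7), in which the nodal locus is cut out by coordinate hyperplanes associated to gluing parameters; the general $\ell$ is recovered by a homogeneous-space induction at the end. You instead assert the analogous statement for the moduli of \emph{stable maps} directly: near a nodal element of $\mathcal{D}_2(\CP^\ell,d)$, unobstructedness (via convexity of $\CP^\ell$) plus stable-map gluing theory gives holomorphic coordinates in which the gluing parameters for the interior chain nodes provide the $\C^{d-1}$ factor. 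Both routes ultimately rest on the same normal-crossings structure of the nodal boundary; the paper's detour through $\cM_{0,k}$ buys you a reference to a thoroughly classical statement at the modest cost of the hypersurface bookkeeping, while yours is shorter but requires you to cite (or at least name precisely) a gluing-theoretic local model for $\overline{\M}_{0,2}(\CP^\ell,d)$ itself rather than for the domain curves. If you go your way, you should add such a citation.

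There is one small piece of confusion you should clean up. You write that $\mathcal{D}_2(\CP^\ell,d)(S)$ is cut out by $q_j=0$ for $j\notin S$ and that ``the remaining $q_i$ ($i\in S$) plus $w\in\C^m$ give a chart.'' As stated this only describes a chart centered at a point of the deepest stratum $\mathcal{D}_2(\CP^\ell,d)(\emptyset)$, where all $d-1$ chain nodes are present and all $d-1$ gluing parameters exist. For a chart centered at a point $p$ in stratum $S_0$, the chain has only $|[d-1]\setminus S_0|$ interior nodes, so the gluing construction produces only that many parameters; there are no ``$q_i$ for $i\in S_0$'' at all, since those nodes are not present at $p$. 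To produce a chart in the sense of $Charts(X,\C^{d-1}_+)$ you still map to $\C^{d-1}\times\C^m$: the coordinates indexed by $[d-1]\setminus S_0$ are the gluing parameters, and the coordinates indexed by $S_0$ are some of the genuine deformation parameters in $\C^m$, offset so that the image of $p$ has those coordinates \emph{nonzero} (so the chart neighborhood, shrunk, avoids $\C^{d-1}_+(T)$ for $T\not\supset S_0$). This is exactly what makes the condition $\phi^{-1}(\C^{d-1}_+(S)\times\C^m)=X(S)\cap U$ hold for every $S$, including those with $X(S)\cap U=\emptyset$. You should also say explicitly that gluing parameters at \emph{bubble} nodes (off the $p_-$--$p_+$ chain) go into the $\C^m$ factor, not into $\C^{d-1}_+$; your formulation leaves this implicit.
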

\begin{proof}
    This is essentially a standard fact in Gromov-Witten theory.  First, assume $\ell=d$. Then given $(u: \Sigma \to \CP^d) \in \mathcal{D}_2(\CP^d,d)$, let $v: \Sigma' \to \CP^d$ be the corresponding stable map to $\CP^d$ with no marked points. We can choose $d+3$ hypersurfaces $H_1, \ldots, H_{d-1} \subset \CP^d$ such that every nonconstant component of $\Sigma$ is transverse to each of the $H_j$, such that there exists a choice $x_j \in v^{-1}(H_j)$ such that $(\Sigma', x_1, \ldots, x_{d+3})$ is a stable curve, and such that $v(x_j) \notin H_k$ for $k \neq j$. Choose small neighborhoods $W_j$ of $x_j$ which do not overlap and do not intersect $H_k$ for $k \neq j$. The set of stable maps $V \subset \cM_{0,0}(\CP^d, d)$ which are transverse to all $H_j$ and intersect $H_j$ in $V_j$ once is an open set diffeomorphic to its image $V_{marked} \subset \cM_{0, d+3}$ send a curve $u: \Sigma \to \CP^d$ to $(\Sigma, v^{-1}(H_1), \ldots, v^{-1}(H_{d+3}))$. The inverse image of this open set $U$ in $\cM_{0,2}(\CP^d, d)$ is thus diffeomorphic to its image $U_{marked}$ in $\cM_{0, d+5}$ under the diffeomorphism $(u, p_-, p_+) \mapsto (\Sigma, p_-, p_+, v^{-1}(H_1), \ldots, v^{-1}(H_{d+3})$. Given $\Sigma' \in \cM_{0,k}$ its neighborhood contains an intersection of hyperplanes $T_{1}, \ldots, T_e$, one for each node of of $\Sigma'$, which is modeled on $\C^{e}_+$ \cite[Theorem~D.4.7]{McDuffSalamon-BIG}. The image of our stratification is simply the stratification corresponding to the hyperplanes associated to the nodes on the path between $p_-$ and $p_+$. There are at most $d-1$ such nodes, and since we are proving a local statement this establishes the lemma for $\ell=d$.
    
    To prove the lemma for $\ell > d$, can use the fact that there is a smooth stratum-preserving diffeomorphism (see the next paragraph)
    \begin{equation}
    U(\ell+1) \times_{(U(d+1) \times U(\ell-d))/U(1)} \mathcal{F}(\CP^d, d) \to \mathcal{F}(\CP^\ell, d).
    \end{equation}
    and thus the stratification of $\mathcal{F}(\CP^\ell, d)$ is also modeled on $\C^{d-1}_+$.
\end{proof}


Let $\Theta_-$ denote the circle bundle over $\mathcal{D}_2(\CP^\ell, d)$ with fiber over $u: \Sigma \to \CP^\ell$ given by the unit circle of $Tp_-\Sigma$. The smooth structure on $\mathcal{F}(\CP^\ell, d)$ is defined by the following
\begin{lemma}
\label{lemma:equip-mclean-moduli-spaces-with-k-manifold-structure}
    Write $\pi: \Bl \mathcal{D}_2(\CP^\ell, d) \to \mathcal{D}_2(\CP^\ell, d)$ for the canonical projection. There is a bijection $\mathcal{B}: \mathcal{F}(\CP^\ell, d) \to \pi^*\Theta_-$ intertwining $F: \mathcal{F}(\CP^\ell, d) \to \mathcal{D}_2(\CP^\ell, d)$ and $\pi \circ \pi_{\pi^*\Theta_-}: \pi^*\Theta_- \to \mathcal{D}_2(\CP^\ell, d)$. The space $\pi^*\Theta_-$ is a smooth $\langle d-1 \rangle$-manifold with the strata given by $\mathcal{B}(\mathcal{F}(\CP^\ell, d)(S))$ as $S$ runs over $2^{[d-1]}$.
\end{lemma}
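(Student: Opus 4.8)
The plan is to define the smooth $\langle d-1\rangle$-manifold structure on $\mathcal{F}(\CP^\ell, d)$ by transporting one from $\pi^*\Theta_-$ along $\mathcal{B}$, so the actual work is (i) to produce $\mathcal{B}$ as a well-defined bijection covering the identity of $\mathcal{D}_2(\CP^\ell, d)$, and (ii) to see that $\pi^*\Theta_-$ is a smooth $\langle d-1\rangle$-manifold with the asserted strata. For (ii): by Lemma~\ref{lemma:stratification-model} the manifold $\mathcal{D}_2(\CP^\ell, d)$ carries a stratification modeled on $\C^{d-1}_+$, so the real-oriented-blow-up construction of the preceding paragraphs makes $\Bl\mathcal{D}_2(\CP^\ell, d)$ a smooth $\langle d-1\rangle$-manifold, with $\pi$ carrying the stratum labeled $S$ onto $\mathcal{D}_2(\CP^\ell, d)(S)$ (as for $\Bl\C^n_+$, $\pi^{-1}(\mathcal{D}_2(\CP^\ell,d)(S))$ is exactly that stratum). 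The bundle $\Theta_-$, being the oriented circle bundle $(L_-\setminus 0)/\R_{>0}$ of the smooth complex line bundle $L_-$ with fiber $T_{p_-}\Sigma$ (well-defined since marked points are never nodes), is a smooth $S^1$-bundle; trivializing $\pi^*\Theta_-$ over each $\langle d-1\rangle$-chart $\R^{[d-1]\setminus S}_+\times\R^m\to\Bl\mathcal{D}_2(\CP^\ell,d)$ yields a chart $\R^{[d-1]\setminus S}_+\times(\R^m\times S^1)\to\pi^*\Theta_-$, and since these transition smoothly (base transitions together with bundle transition functions) $\pi^*\Theta_-$ is a smooth $\langle d-1\rangle$-manifold whose $S$-stratum is $(\pi\circ\pi_{\pi^*\Theta_-})^{-1}(\mathcal{D}_2(\CP^\ell,d)(S))$.

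To construct $\mathcal{B}$, which is a fiberwise statement over $\mathcal{D}_2(\CP^\ell, d)$, fix $u$ whose path of components is $S^2_1,\dots,S^2_{r_u}$ with interior nodes $p_1,\dots,p_{r_u-1}$, so that a point of $\mathcal{F}(\CP^\ell,d)$ over $u$ maps under $F$ into $\mathcal{D}_2(\CP^\ell,d)(S)$ with $[d-1]\setminus S$ indexing these nodes. By the discussion preceding the statement, the extra datum carried by such a point is the family of $\R$-translation classes $\psi_{u,j}\colon\R\times S^1\to S^2_j\setminus\{p_{j-1},p_j\}$, one per path component, and taking the derivative of $\psi_{u,j}$ at its incoming end (where $\psi_{u,j}(s,\cdot)\to p_{j-1}$ as $s\to-\infty$) identifies this class with a unit vector in $T_{p_{j-1}}S^2_j$. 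I would then let $\mathcal{B}(u,\psi_{u,\bullet})$ be the point of $\pi^*\Theta_-$ over $u$ whose $\Theta_-$-coordinate is the unit vector in $T_{p_-}\Sigma=T_{p_0}S^2_1$ determined by $\psi_{u,1}$, and whose $\Bl\mathcal{D}_2(\CP^\ell,d)$-coordinate has, in its $i$-th circle factor, the unit vector of the node-smoothing line $T_{p_i}S^2_i\otimes T_{p_i}S^2_{i+1}$ gotten by tensoring the directions of $\psi_{u,i}$ and $\psi_{u,i+1}$ at $p_i$; here one uses that in the Gromov--Witten charts of Lemma~\ref{lemma:stratification-model} the $\C$-coordinate of $\C^{d-1}_+$ attached to the node $p_i$ is canonically the node-smoothing parameter, so its blow-up circle direction is canonically a unit vector of that line. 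This $\mathcal{B}$ manifestly covers the identity, hence $\pi\circ\pi_{\pi^*\Theta_-}\circ\mathcal{B}=F$, and it is a bijection on each fiber with inverse built by induction along the chain: the $\Theta_-$-coordinate recovers $\psi_{u,1}$, and once $\psi_{u,i}$ is recovered its outgoing direction at $p_i$ trivializes $T_{p_i}S^2_i$, so the $i$-th circle coordinate pins down a unit vector in $T_{p_i}S^2_{i+1}$, hence $\psi_{u,i+1}$.

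Finally, $\mathcal{B}$ matches the stratifications: a point of $\mathcal{F}(\CP^\ell,d)$ lies in $\mathcal{F}(\CP^\ell,d)(S)$ exactly when its underlying curve has path nodes indexed by $[d-1]\setminus S$, which is exactly the condition that $\mathcal{B}$ of it lie over $\mathcal{D}_2(\CP^\ell,d)(S)$, so $\mathcal{B}(\mathcal{F}(\CP^\ell,d)(S))$ is the $S$-stratum of $\pi^*\Theta_-$ identified above. Pulling the atlas of $\pi^*\Theta_-$ back along $\mathcal{B}$ then equips $\mathcal{F}(\CP^\ell,d)$ with the desired smooth $\langle d-1\rangle$-manifold structure. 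The main obstacle I expect is the well-definedness of $\mathcal{B}$ across charts: one must verify that the correspondence ``$\R$-class of $\psi_{u,j}\leftrightarrow$ unit vector in $T_{p_{j-1}}S^2_j$'' and the canonical trivialization of each node-smoothing line $T_{p_i}S^2_i\otimes T_{p_i}S^2_{i+1}$ are compatible with the transition maps of the Gromov--Witten gluing charts used in Lemma~\ref{lemma:stratification-model} (and that the orientation and ordering conventions are consistent throughout), so that $\mathcal{B}$ genuinely descends to a single-valued map independent of choices; granting the normalizations fixed before the statement this is lengthy but routine bookkeeping, and everything else is formal.
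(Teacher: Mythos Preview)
Your proposal is correct and follows essentially the same approach as the paper's proof: the $\Theta_-$-coordinate encodes the tangent direction of the first geodesic at $p_-$, and the blow-up circle factors encode the ``angle between geodesics'' at each path node (which you make precise as the unit direction in the node-smoothing line $T_{p_i}S^2_i\otimes T_{p_i}S^2_{i+1}$). Your inductive description of the inverse and your explicit treatment of the $\langle d-1\rangle$-manifold structure on $\pi^*\Theta_-$ are more detailed than the paper's sketch, which defers the chart-compatibility verification to the local models of McDuff--Salamon Theorem~D.4.7; your acknowledged ``main obstacle'' is exactly what the paper also leaves as routine bookkeeping.
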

\begin{proof}
    We think of a point in the fiber of $\pi^*\Theta_-$ as specifying the tangent vector to the geodesic on $S^2_1$ at $p_-$; as such, it determines this geodesic, and this defines $\pi$ on the interior of the top strata of each of the spaces. The fiber of $\pi$ over a curve with $r_u$ components between the component containing $p_-$ and the component containing $p_+$ is $(S^1)^{r_u-1}$ as in Lemma \ref{lemma:stratification-model}, and we interpret these circles as measuring the angle between geodesics. One verifies that this bijection extends from the interior of the top-dimensional strata by analyzing local models for the universal curve over neighborhoods of points on the stratification of $\mathcal{D}_2(\CP^\ell, d)$, which, as in Lemma \ref{lemma:stratification-model}, can be extracted from \cite[Theorem~D.4.7]{McDuffSalamon-BIG}.
\end{proof}

By using the smooth charts from $\C^k_+ \times \C^\ell$ on $\mathcal{D}_2(\CP^\ell, d)$ induced by the choices of hypersurfaces in the proof of Lemma \ref{lemma:stratification-model}, we can also see that the pullback $\widetilde{\mathcal{C}}_2(\CP^{\ell}, d)$  of the universal curve $\mathcal{C}_2(\CP^{\ell}, d)$ over $\mathcal{D}_2(\CP^\ell, d)$ to $\Bl \mathcal{D}_2(\CP^\ell, d)$ is also a smooth $\langle d-1 \rangle$-manifold, with the induced map to $\Bl \mathcal{D}_2(\CP^\ell, d)$ a smooth map. The universal curve $\mathcal{C}(\CP^{\ell}, d)$ is thus in natural bijection with the pullback of $\pi^* \Theta_1$ to $\widetilde{\mathcal{C}}_2(\CP^{\ell}, d)$, and thus is a smooth $\langle d-1 \rangle$-manifold with a smooth map to $\mathcal{F}(\CP^\ell, d)$. We have proven:

\begin{proposition}
    \label{prop:smooth-structures-on-mclean-stabilizations}
    $\mathcal{F}(\CP^\ell, d)$ and $\mathcal{C}(\CP^{\ell}, d)$ are smooth $\langle d-1\rangle$-manifolds and the projection map from the latter to the former is a submersion. $\qed$
\end{proposition}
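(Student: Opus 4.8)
This statement is essentially an assembly of the constructions of this subsection; the only point that genuinely needs a separate argument is the submersion claim, so I will organize the proof around it.

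First I would obtain the two smooth $\langle d-1\rangle$-manifold structures by transport along the bijections already in hand. By Lemma \ref{lemma:stratification-model} the stratification of $\mathcal{D}_2(\CP^\ell,d)$ is modeled on $\C^{d-1}_+$, so the real oriented blow-up construction yields a smooth $\langle d-1\rangle$-manifold $\Bl\mathcal{D}_2(\CP^\ell,d)$ with stratum-preserving projection $\pi$; since a circle bundle over a $\langle k\rangle$-manifold is again a $\langle k\rangle$-manifold (multiply the charts by $S^1$), the pullback $\pi^*\Theta_-$ is a smooth $\langle d-1\rangle$-manifold, and Lemma \ref{lemma:equip-mclean-moduli-spaces-with-k-manifold-structure} lets us pull this structure back along $\mathcal{B}$ to $\mathcal{F}(\CP^\ell,d)$, the strata matching the subsets $\mathcal{F}(\CP^\ell,d)(S)$. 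For the universal curve I would work in the charts $\C^{d-1}_+\times\C^\ell$ on $\mathcal{D}_2(\CP^\ell,d)$ coming from the hypersurface construction in the proof of Lemma \ref{lemma:stratification-model}: there the universal curve $\mathcal{C}_2(\CP^\ell,d)$ is described by the explicit local models of \cite[Theorem~D.4.7]{McDuffSalamon-BIG}, which put its pulled-back stratification into the shape needed to run the blow-up construction, so $\widetilde{\mathcal{C}}_2(\CP^\ell,d)$ is a smooth $\langle d-1\rangle$-manifold with a smooth map to $\Bl\mathcal{D}_2(\CP^\ell,d)$, these local pictures being glued by Lemma \ref{lemma:lifted-diffeomorphisms}. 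Pulling back the circle bundle $\pi^*\Theta_1$ of the preceding paragraph to $\widetilde{\mathcal{C}}_2(\CP^\ell,d)$ realizes $\mathcal{C}(\CP^\ell,d)$ as a circle bundle over it, hence a smooth $\langle d-1\rangle$-manifold, and exhibits the projection $\mathcal{C}(\CP^\ell,d)\to\mathcal{F}(\CP^\ell,d)$ as the base change of $\widetilde{\mathcal{C}}_2(\CP^\ell,d)\to\Bl\mathcal{D}_2(\CP^\ell,d)$ along $\mathcal{F}(\CP^\ell,d)\to\Bl\mathcal{D}_2(\CP^\ell,d)$, further twisted by a circle bundle.

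It therefore remains to check that $\widetilde{\mathcal{C}}_2(\CP^\ell,d)\to\Bl\mathcal{D}_2(\CP^\ell,d)$ is a submersion of $\langle d-1\rangle$-manifolds in the sense of Definition \ref{def:general-submersion-of-k-manifolds}; since submersions are stable under the base change and the circle-bundle twist above, this yields the proposition. In the interior of the top stratum this is the standard fact that the universal curve over a space of stable genus-zero maps is smooth and flat with only nodal degenerations, so the map is a submersion there in the usual sense. The content is along the strata created by the blow-up, i.e.\ over the nodes lying on the path from $p_-$ to $p_+$: one must verify that the real oriented blow-up converts each such nodal degeneration into corner directions through which the projection remains submersive. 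Concretely, in the plumbing coordinates $\{xy=t\}$ of \cite[Theorem~D.4.7]{McDuffSalamon-BIG} one replaces the smoothing parameter $t$ by its real oriented blow-up $(r,e^{i\theta})$, after which the projection becomes a product projection off the fiber coordinates in these charts — manifestly a submersion of manifolds with corners — and Lemma \ref{lemma:lifted-diffeomorphisms} guarantees the local trivializations glue. This corner analysis, i.e.\ confirming that the corner structure of $\mathcal{C}(\CP^\ell,d)$ is exactly the one pulled back from $\mathcal{F}(\CP^\ell,d)$ so that the fibers meet every boundary face transversally, is the main obstacle; everything else is bookkeeping already performed in Lemmas \ref{lemma:stratification-model} and \ref{lemma:equip-mclean-moduli-spaces-with-k-manifold-structure}.
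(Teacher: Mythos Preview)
Your proposal is correct and follows essentially the same route as the paper: the paper's ``proof'' is precisely the paragraph preceding the proposition, which assembles Lemmas \ref{lemma:stratification-model} and \ref{lemma:equip-mclean-moduli-spaces-with-k-manifold-structure}, the real oriented blow-up construction, and the circle-bundle identification exactly as you do. You go further than the paper in spelling out the submersion check via the plumbing coordinates $\{xy=t\}$, which the paper leaves entirely implicit (it only records ``a smooth map'' in the text and then asserts ``submersion'' in the statement), so your version is if anything more complete.
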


\subsubsection{Unitary group actions}
\label{sec:unitary-group-actions}
There is a natural $U(\ell+1)$ action on $\CP^d$ given by the linear action on homogeneous coordinates; it factors through the projective unitary group $PU(\ell+1)$, which acts effectively.  This induces an action of $PU(\ell+1)$ on $\mathcal{D}_2(\CP^\ell, d)$ by postcomposition.  This action is smooth on $\mathcal{D}_2(\CP^\ell, d)$, and induces a smooth action on $\Theta_-$. Thus, it lifts canonically to a smooth action $\mathcal{F}(\CP^\ell, d)$, making the latter into a smooth $PU(\ell+1)-\langle d-1 \rangle$-manifold. 

It will be convenient for later sections to note that we can give an alternative formula for $\mathcal{F}(\CP^\ell, d)(S)$ in terms of \emph{induction and restriction}, without ever taking fiber products over evaluation maps. 

Fix $p \in \CP^\ell$. We have subspaces $\mathcal{F}_\pm(\CP^\ell, d) = ev^{-1}_\pm(p) \subset \mathcal{F}(\CP^\ell, d)$. The subgroup $Stab(p) \subset PU(\ell+1)$ which fixes $p$ is isomorphic to $U(\ell) \subset PU(\ell+1)$, and the smooth surjective map $PU(\ell+1) \times \mathcal{F}_\pm(\CP^\ell, d) \to \mathcal{F}(\CP^\ell, d)$ descends to a map 
\begin{equation}
    PU(\ell+1) \times_{U(\ell)} \mathcal{F}(\CP^\ell, d; p, \pm) \simeq \mathcal{F}(\CP^\ell, d).
\end{equation}
The geometry immediately implies that this is a diffeomorphism Lemma \ref{lemma:check-restriction}, showing that the spaces $\mathcal{F}(\CP^\ell, d; p, \pm)$ are restrictions of $\mathcal{F}(\CP^\ell, d)$.

Since every linearly nondegenerate curve of degree $d$ in $\CP^\ell$ is contained inside a unique $d$-dimensional subspace, we have canonical maps
\begin{equation}
\begin{gathered}
    U(\ell+1) \times_{(U(d+1) \times U(\ell-d))/U(1)} \mathcal{F}(\CP^d, d) \to \mathcal{F}(\CP^\ell, d),\\
    U(\ell+1) \times_{U(d) \times U(\ell-d)} \mathcal{F}_\pm(\CP^d, d) \to \mathcal{F}_\pm(\CP^\ell, d).
\end{gathered}
\end{equation}
which are also diffeomorphisms by \ref{lemma:check-restriction} that show that $\mathcal{F}(\CP^\ell, d)$ is an induction of $\mathcal{F}(\CP^d, d)$ and similarly for $\mathcal{F}_\pm(\CP^\ell, d)$

Now, write $d = d_1 + d_2$. Choose a copy of $\CP^{d_1} \subset \CP^d$ and choose $p \in \CP^{d_1}$. The stabilizer of $(p, \CP^{d_1})$ is in $PU(d+1)$ is a copy of $U(d_1) \times U(d_2) \subset PU(d+1)$. Define the subset 
\begin{equation}
\label{eq:split-mclean-domain-subset}
\begin{gathered}
    \mathcal{F}^{tr}_{S(d_1,d_2)}(\CP^d,d)  \subset  \mathcal{F}_{S(d_1,d_2)}(\CP^d, d) \\
    \mathcal{F}^{tr}_{S(d_1,d_2)}(\CP^d,d)  =
    \left\{
    	u: \Sigma_1 \cup_{p_1} \Sigma_2 \to \CP^d ; \; u|_{\Sigma_1} \subset \CP^{d_1}, \deg u|_{\Sigma_1} = d_1, u(p_1) = p \right\}/\{\text{equiv.}\},
\end{gathered}
\end{equation}
where $\Sigma_1 \cup_{p_1} \Sigma_2$ denotes a nodal curve decomposed into two nodal curves $\Sigma_1$ and $\Sigma_2$ joined along a nodal point $p_1$. This subset is a slice for the $PU(d+1)$-action on $\mathcal{F}_{S(d_1,d_2)}(\CP^d, d)$, and so $\mathcal{F}^{tr}_{S(d_1,d_2)}(\CP^d,d)$ induced along  $U(d_1) \times U(d_2) \subset PU(d+1)$ is $\mathcal{F}_{S(d_1,d_2)}(\CP^d, d)$.

Finally, we have an inclusion 
\begin{equation}
    \mathcal{F}_+(\CP^{d_1}, d_1) \times \mathcal{F}_-(\CP_{d_2}, d_2) \subset \mathcal{F}^{tr}_{S(d_1,d_2)}(\CP^d,d)
\end{equation}
as the subspace of $u$ where $u|_{\Sigma_2}$ lies in the complementary subspace $\CP^{d_2} \subset \CP^d$ which is spanned by $p$ and $d_2$ additional vectors which are all orthogonal to the vectors projecting to $\CP^{d_1}$. Let $V^{tr}_{d, d_1} \subset PGL_{d+1}(\C)$ be the set of matrices which act isometrically on $\CP^{d_1} \subset \CP^d$ and send the complementary subspace $\CP^{d_2} \subset \CP^d$ isometrically onto another copy of $\CP^{d_2} \subset \CP^d$ intersecting $\CP^{d_1}$ exactly at $p$. 

One sees immediately that there is a $U(d_1) \times U(d_2)$-equivariant diffeomorphism
\begin{equation}
\label{eq:generalized-induction}
    V_{d, d_1}^{tr} \times_{(U(d) \times U(\ell-d)} \mathcal{F}_+(\CP^{d_1}, d_1) \times \mathcal{F}_-(\CP_{d_2}, d_2) \to \mathcal{F}^{tr}_{S(d_1,d_2)}(\CP^d,d). 
\end{equation}
(sending $(g, u_1, u_2)$ to the gluing of $g u_1$ with $g u_2$ along $p$) 
making its codomain a \emph{generalized induction} if its domain.

By iterating the constructions of the above subsection above, we conclude:
\begin{lemma}
    \label{lemma:induction-relation-between-strata}
    The $U(\ell-1)$-$\langle d-1 \rangle$-manifold $\mathcal{F}_S(\CP^d, d)$ can be obtained from the $U(h_i)-\langle h_i-1\rangle$-manifolds $\mathcal{F}(\CP^{h_i}, h_i)$, with $(h_1, \ldots, h_{d-1-|S|}) = h(S)$, by taking products, (generalized) induction, and restriction. $\qed$
\end{lemma}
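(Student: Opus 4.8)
The plan is to induct on the number of components $d-1-|S| = |h(S)|$ of the stratum. When this number is $1$ (i.e. $S = [d-1]$), the stratum $\mathcal{F}_S(\CP^d,d)$ is just $\mathcal{F}(\CP^d,d)$ itself, which is given. Otherwise, one must write $\mathcal{F}_S(\CP^d,d) = \mathcal{F}(\CP^d,h_1)\times_{\CP^d}\cdots\times_{\CP^d}\mathcal{F}(\CP^d,h_{r})$ (with $r = d-1-|S|$ and $h(S) = (h_1,\ldots,h_r)$) in terms of the building blocks via the operations of the previous subsection. I would first peel off the left-most factor: combining the restriction identity $PU(d+1)\times_{U(d)}\mathcal{F}(\CP^d,d;p,+)\simeq\mathcal{F}(\CP^d,d)$ with the analogous statement for $\mathcal{F}_-$ expresses the fiber product over a single evaluation constraint $ev_+ = p$ as a restriction, so inductively the whole iterated fiber product becomes accessible once we know how to glue two pieces at a point.

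The heart of the argument is the two-factor case $\mathcal{F}^{tr}_{S(d_1,d_2)}(\CP^d,d)$ with $d = d_1 + d_2$: here I would invoke the generalized-induction diffeomorphism \eqref{eq:generalized-induction},
\begin{equation*}
V_{d,d_1}^{tr}\times_{U(d_1)\times U(d_2)}\bigl(\mathcal{F}_+(\CP^{d_1},d_1)\times\mathcal{F}_-(\CP^{d_2},d_2)\bigr)\xrightarrow{\ \sim\ }\mathcal{F}^{tr}_{S(d_1,d_2)}(\CP^d,d),
\end{equation*}
together with the fact (proven just above it) that $\mathcal{F}^{tr}_{S(d_1,d_2)}(\CP^d,d)$, induced along $U(d_1)\times U(d_2)\subset PU(d+1)$, recovers $\mathcal{F}_{S(d_1,d_2)}(\CP^d,d)$. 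This exhibits the two-component stratum as an induction of a generalized induction of a product of the spaces $\mathcal{F}_\pm(\CP^{d_i},d_i)$, and the latter are in turn restrictions of $\mathcal{F}(\CP^{d_i},d_i)$ by the $ev_\pm$-restriction identity. Combined with Lemma \ref{lemma:check-restriction} to verify each identification is genuinely an induction/restriction in the technical sense (slices, constant stabilizer), this handles $r=2$.

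For general $r$ one iterates: repeatedly decompose $(h_1,\ldots,h_r) = (h_1)\cup(h_2,\ldots,h_r)$, apply the two-factor decomposition with $d_1 = h_1$, $d_2 = h_2+\cdots+h_r$, and recurse on the right-hand factor $\mathcal{F}_-(\CP^{d_2},d_2)$ restricted appropriately; at each stage one also uses the product operation and the induction $U(\ell+1)\times_{(U(d+1)\times U(\ell-d))/U(1)}\mathcal{F}(\CP^d,d)\to\mathcal{F}(\CP^\ell,d)$ to pass between the $\CP^d$ and $\CP^\ell$ models. Because the operations (product, (generalized) induction, restriction) are closed under composition and each preserves the $\langle k\rangle$-manifold structure with the expected group (products add the $k$'s and the groups; induction and restriction leave $k$ fixed and change only the group), the bookkeeping of the corner-parameter $k$ and the unitary group at each step is routine. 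The main obstacle I anticipate is purely organizational: tracking that the partition $h(S)$ of the strata is matched correctly at every recursive step with the decomposition of $\CP^d$ into orthogonal linear subspaces and with the group $\prod U(h_i)$, and checking that the slices used (e.g. $\mathcal{F}^{tr}$, the $ev_\pm = p$ loci) really do have the constant-stabilizer property demanded by Lemma \ref{lemma:check-restriction} — the geometry makes each such check immediate, but there are many of them, and the cleanest write-up is to state a single inductive lemma and feed it back into itself. $\qed$
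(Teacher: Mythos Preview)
Your proposal is correct and matches the paper's approach: the paper marks the lemma with a $\qed$ after the sentence ``By iterating the constructions of the above subsection, we conclude,'' and the iteration you spell out---peeling off one factor at a time via the restriction identities for $\mathcal{F}_\pm$, the generalized induction \eqref{eq:generalized-induction}, and the induction of $\mathcal{F}^{tr}_{S(d_1,d_2)}$ along $U(d_1)\times U(d_2)\subset PU(d+1)$---is exactly what is intended. Your write-up is in fact more explicit than the paper's.
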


\subsubsection{Framed bubbled pre-Floer trajectories.}
We recall \cite[Definition 6.9]{AMS} based on \cite[Lemma 6.8]{AMS}. 
\begin{definition}
Let $\Sigma$ be a genus zero nodal curve, and let $\Omega \in \Omega^2(\Sigma)$ be a closed $2$-form representing an integral cohomology class. There is a unique Hermitian line bundle $L_\Omega$ with curvature $-2\pi i \Omega$ and Hermitian inner product $\langle\,  \cdot\,,\, \cdot\,\rangle_\Omega$, up to isomorphism. 
\end{definition}

\begin{definition}
	\label{def:framed-floer-trajectory} A framed bubbled pre-Floer trajectory (for the Hamiltonian $H$) is a tuple $(u, \Sigma, F)$, where 
\begin{itemize}
    \item $\Sigma$, the \emph{domain} of the trajectory, is a genus zero nodal curve with two marked points $p_-, p_+$, equipped with geodesic segments from $p_{j-1}$ to $p_j$ $S^2_J$, with $S^2_1, \ldots, S^2_j, \ldots, S^2_{r_u}$ the sequence of components  from $p_- = p_0$ to $p_+ = p_{r_u}$,
    \item The map $u: \Sigma \setminus \{p_0, \ldots, p_{r_u}\} \to M$ is a smooth function such that the integral $2$-form $\Omega_u \in \Omega^2(\Sigma)$ defined subsequently has a strictly positive integral on every unstable component. The form $\Omega_u$ is defined to be $u|_{S^2_\pi}^* \Omega_0$ (see \ref{eq:background-form}) on each component $S^2_\pi$ of $\Sigma$ that is not one of the $S^2_j$, and on each $S^2_j$ equal to the extension by zero of the form denoted $\Omega_u$ in (\eqref{eq:floer-stabilizing-form}) under any preferred biholomorphism $S^2_j \setminus \{p_{j-1}, p_j\} \simeq \R \times S^1$.
    \item Additionally, the map is required converge uniformly to closed Hamiltonian orbits $(x_- = x_0, x_1, \ldots, x_{r_u} = x_+)$ with $x_i \in Fix(H)$ the limiting condition the ends corresponding to the $p_i$ under the biholomorphisms described above; we write $\lim_{z \to p_i} u(z) = x_i$. 
    \item Moreover, writing $u^*g$ for the pull-back of the metric on $M$ by $u$, and $dV_{u^*g}$ for the associated volume form (which will in general not be defined on the $p_i$), we have the volume of $\Sigma$ with respect to $dV_{u^*g}$ is finite. 
    \item The datum $F = (f_0, \ldots, f_d)$ is a $\C$-basis for $H^0(L_{\Omega_u})$. 
\end{itemize}   
\end{definition}

Two framed bubbled pre-Floer trajectories $(u_1, \Sigma_1, F_1)$ and $(u_2, \Sigma_2, F_2)$ are \emph{equivalent} when there is biholomorphsm $f:\Sigma_1 \to \Sigma_2$ intertwining the marked points, geodesic segments, the map $u$, and such that $f_*F_1 = cF_2$ for some nonzero complex number $c$. 

We note that due to the finiteness of the volume, there is a nongenerate Hermitian inner product on $H^0(L_{\Omega_u})$ given by 
\begin{equation}
    H(f, g) = \int_{\Sigma} \langle f, g \rangle_\Omega dV_{u^*g}.
\end{equation}
Thus, to any basis $F$, we have an associated Hermitian matrix $H(F)_{ij} = H(f_i, f_j)$.
There is an action of $U(d+1)$ on bases $F$ and thus on framed bubbled pre-Floer trajectories; this action does \emph{not} involve the form $H$ above. Given an element $U \in U(d+1)$, we have that $H((UF)) = UH(F)U^{-1}$. 

The matrix $H(F)$ has positive eigenvalues, so $\log H(F)$ is a well-defined Hermitian matrix. The equivalence class. Write $i\mathfrak{u}_n$ for the set of $n\times n$ Hermitian matrices; then the equivalence class of $H(F) \in i \mathfrak{u}_{d+1}/\R I$ only depends on the equivalence class of the framed bubbled pre-Floer trajectory. 

Any framed bubbled pre-Floer trajectory $(u, \Sigma, F)$ gives a well-defined map 
$u_F: \Sigma \to \P^{d_1}$ given by 
\begin{equation}
   u_F(x) = [f_0(x), \ldots, f_d(x)], \text{ under any $\C$-isomorphism } L_x \simeq \C.
\end{equation}
Thus, we have a map $\phi_F: \Sigma \to \mathcal{C}(\CP^d; d)$  identifying $\Sigma$ with a fiber of the universal curve over $u_F$. Moreover, the $U(d+1)$-action on bases $F$ makes $((u, \Sigma, F) \mapsto u_F$ into a $U(d+1)$-equivariant map. 

Conversely, to stable map $u_F: \Sigma \to \P^d$ such that the degree of each component agrees with the degree of a fixed closed integral $2$-form $\Omega$ on that component, we can associate a basis $F$ of $H^0(L_\Omega)$ that is well defined up to scaling every element of $F$ by a nonzero complex number. Such a scaling action changes the matrix $H(u, \Sigma, F)$ by scaling the matrix by a positive real number, and we write the corresponding element of $i\mathfrak{pu}_{d+1} := i\mathfrak{u}_{d+1}/\R id$ as $H(u, \Sigma, u_F)$. 

\begin{remark}
It is fundamental that $H(u, \Sigma, u_F)$ actually depends on $u$, and \emph{cannot} be recovered from the point $u_F \in \mathcal{F}(\CP^d,d)$. 
\end{remark}

The set of equivalence classes of framed bubbled pre-Floer trajectories can thus be viewed as a subset of the space of choices of an integer $d$, an element $u_F \in \mathcal{F}(\CP^d, d)$, and a smooth map from the fiber of the universal curve $u: \mathcal{C}(\CP^d, d)_{u_F}\setminus \{p_j\}_{j=0}^{r_u} \to M$ satisfying several conditions. We use this identification to equip this set with the topology induced from the Hausdorff topology on the closures of the graphs of these maps in $\mathcal{C}(\CP^d, d) \times M$. The different choices of total degree $d$ give different components of the space of maps. In particular, the map from the degree $d$ classes of framed bubbled pre-Floer trajectories to $\mathcal{F}(\CP^d,d)$ are continuous and equivariant with respect to the natural $PU(d+1)$ action on this component.

\subsection{Perturbation data}
In this section we explain the kind of data we need to specify to build a candidate Kuranishi chart using the McLean stabilization technique.  

We say that a differential form on a nodal curve is a differential form on its normalization. We write $\Omega^{0,1}(\mathcal{C}(\CP^\ell, d)_u, C^\infty(TM))$ for the $(0,1)$-forms on the fiber of $\mathcal{C}(\CP^\ell, d)$ over $u \in \mathcal{F}(\CP^\ell, d)$ which are valued in vector fields on $M$. We write $\tilde{\Omega}^{0,1}$ for the (\emph{finite dimensional}) vector bundle of fiber-wise differential forms on $\mathcal{C}(\CP^\ell, d)$ over $\mathcal{F}(\CP^\ell, d)$ which are defined away from the nodes and the marked points; this is a vector bundle over $\tilde{\mathcal{C}}(d; d_1) = \mathcal{C}(d; d_1) \setminus \{\text{nodes, points}\}$. An element of $\tilde{\Omega}^{0,1}(\mathcal{C}(\CP^\ell, d)_u, C^\infty(TM))$ defines a section of  $\tilde{\Omega}^{0,1}|_u \tensor TM$ over $\mathcal{C}(\CP^\ell, d)_u\setminus\{\text{nodes, points}\} \times M$.

\begin{definition}
\label{def:perturbation-data}
Let $S$ be a subset of a product $\mathcal{F}(\CP^{\ell_1}, d_1) \times \ldots \times \mathcal{F}(\CP^{\ell_r}, d_r)$ which is preserved under the action of $G \subset U(\ell_1+1) \times \ldots \times U(\ell_r+1)$.
An \emph{perturbation datum} on $S$ is a $G$-equivariant vector-bundle $V$ on $S$ together with a smooth map of $G$-equivariant vector bundles over $(\tilde{\mathcal{C}}(\CP^{\ell_1}, d_1) \times \ldots \times \tilde{\mathcal{C}}(\CP^{\ell_r}, d_r))|_S \times M$ of the form
\begin{equation}
    \lambda: \tilde{\Pi}^*V \to \bigoplus_{j=1}^r\tilde{\Omega}^{0,1}_j \tensor TM 
\end{equation}
where
\begin{equation}
\tilde{\Pi}: (\tilde{\mathcal{C}}(\CP^{\ell_1}, d_1) \times \ldots \times \mathcal{F}(\CP^{\ell_r}, d_r))|_S \times M \to S
\end{equation}
is the projection to all but the last factor composed with the projection to $S$, $\tilde{\Omega}^{0,1}_j$ is the fiber-wise differential forms over $\mathcal{F}(\CP^{\ell_j}, d_j)$, and such that there is an open neighborhood $U$ of the nodal locus and the set of marked points for which the linear maps $\lambda_{z, m}$ are identically zero whenever $z \in U$.

Equivalently, a perturbation datum is a collection of linear maps 
\begin{equation}
\label{eq:peturbation-data-equiv-formulation}
    \lambda_u: V_u \to \bigoplus_{j=1}^r \Omega^{0,1}(\mathcal{C}(\CP^{\ell_j}, d_j)_{u_j}, C^\infty(TM)), (u_1, \ldots, u_r) \in S
\end{equation}
which are equivariant with respect to the $G$ action on all data, satisfy a natural smoothness condition, and are zero in open neighborhoods of the nodes and the marked points.
\end{definition}

An isomorphism of perturbation data $\Lambda_1 = (V_1, \lambda_1) \to \Lambda_2 = (V_2, \lambda_2)$ is an isomorphism map of $G$-bundles $\phi: V_1 \to V_2$ such that $\lambda_2 \circ \Pi^* \phi = \lambda_1$. 

The \emph{direct sum} of two perturbation data $\Lambda_i = (V_i, \lambda_i), i=1,2$ over $S$ is a perturbation datum over $S$ given by $\Lambda_1 \oplus \Lambda_2 = (V_1 \oplus V_2, \lambda_1 + \lambda_2)$, with 
\begin{equation}
    \lambda_1 + \lambda_2: \Pi^*V_1 \oplus \Pi^* V_2 \to \bigoplus_{j=1}^r(\tilde{\Omega}_j^{0,1} \tensor TM , \; (\lambda_1 + \lambda_2)(v_1, v_2) = \lambda_1(v_1) + \lambda_2(v_2). 
\end{equation}

The \emph{product} of two perturbation data $(\Lambda_1, \Lambda_2)$ over $(S_1, S_2)$ is the perturbation datum over $S_1 \times S_2$ given by $\Lambda_1 \times \Lambda_2 = (V_1 \times V_2, \lambda_1 \times \lambda_2)$ with 
\begin{equation}
(\lambda_1\times \lambda_2)_{((u_1, \ldots, u_{r_1}), (u'_1,\ldots, u'_{r_2}))}(v_1, v_2) = (\lambda_1(u_1), \lambda_2(u_2)).
\end{equation}

Given a $G$-equivariant vector bundle $V$ on $S$, we have an associated \emph{zero perturbation datum} $\Lambda^V_0 = (V, 0)$. A \emph{stabilization} of a perturbation datum $\Lambda$ is any perturbation datum given by $\Lambda \oplus \Lambda^V_0$ for some $G$-equivariant vector bundle $V$. We say that a perturbation datum $\Lambda =(V, \lambda)$ is \emph{convenient} if $G_x$ acts faithfully on $V_x$ for each $x \in S$, and we say that a stabilization is \emph{convenient} if it is obtained by stabilizing by a zero perturbation datum which is convenient. Finally, we say that a perturbation datum is \emph{elementary} if $V \simeq \bar{V} \times S$ as a an equivariant vector bundle, where $\bar{V}$ is a $G$-representation and the $G$-action on $\bar{V} \times S$ is the diagonal action. 

We have the following 
\begin{proposition}[Lashof \cite{Lashof1979}, Prop. 1.1]
\label{prop:complement-bundle}
    Let $V$ be a $G$-equivariant vector bundle over a $G$-space $S$ with finite covering dimension. Then there exists a $G$-vector bundle $W$ over $X$ such that $V \oplus W \simeq \bar{V} \times S$ for $\bar{V}$ a $G$-representation.
\end{proposition}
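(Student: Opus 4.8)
The plan is to reduce the statement to the construction of a single $G$-equivariant bundle embedding $\iota\colon V\hookrightarrow S\times\bar V$ over $S$, for some finite-dimensional $G$-representation $\bar V$. Granting such an $\iota$, one averages an arbitrary inner product on $\bar V$ over the compact group $G$ to get a $G$-invariant one, equips $S\times\bar V$ with the induced fibrewise metric, and sets $W$ to be the fibrewise orthogonal complement of $\iota(V)$. Then $W$ is a $G$-subbundle of $S\times\bar V$ and $\iota(V)\oplus W=S\times\bar V$, so $V\oplus W\cong\bar V\times S$ as $G$-vector bundles. Everything is thereby reduced to producing $\iota$.

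For the local model, fix $x\in S$, let $H=G_x$ be its isotropy group and $V_x$ its fibre, an $H$-representation. By the slice theorem for the compact Lie group $G$, the orbit $G\cdot x$ has a $G$-invariant neighbourhood $G$-equivariantly of the form $G\times_H A$ for a slice $A$, over which $V$ is $G$-equivariantly isomorphic to $G\times_H(A\times V_x)$. By Peter--Weyl there is a finite-dimensional $G$-representation $\bar V_x$ together with an $H$-equivariant linear injection $V_x\hookrightarrow\operatorname{Res}^G_H\bar V_x$ (every $H$-irreducible embeds into the restriction of some $G$-irreducible by Frobenius reciprocity, so a sum of enough $G$-irreducibles works). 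Then $[g,a,v]\mapsto\bigl([g,a],\,g\cdot v\bigr)$ is a well-defined $G$-equivariant bundle map $V|_{G\times_H A}\to(G\times_H A)\times\bar V_x$ which is injective on every fibre. Doing this at every point yields a $G$-invariant open cover $\{U_\alpha\}$ of $S$, $G$-representations $\bar V_\alpha$, and fibrewise-injective $G$-bundle maps $\phi_\alpha\colon V|_{U_\alpha}\to U_\alpha\times\bar V_\alpha$.

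To globalise, one invokes the finite covering dimension hypothesis: passing to the orbit space (whose covering dimension is no larger) and refining, one obtains a $G$-invariant cover that is the union of $n+1$ families of pairwise disjoint open sets, and — this is where the hypothesis does genuine work — one arranges that over each such family a single $G$-representation serves as target, producing finitely many data $(U_0,\bar V_0,\phi_0),\dots,(U_n,\bar V_n,\phi_n)$. Choosing a $G$-invariant partition of unity $\{\rho_j\}$ subordinate to $\{U_j\}$ (average an ordinary one), the map $v\mapsto\bigl(\pi(v),\,(\sqrt{\rho_j(\pi v)}\,\phi_j(v))_{j=0}^{n}\bigr)$, with $\sqrt{\rho_j}\,\phi_j$ extended by zero off $U_j$, is a $G$-equivariant bundle map $V\to S\times\bigoplus_{j=0}^n\bar V_j$ that is injective on each fibre, because at any point some $\rho_j$ is positive and the corresponding $\phi_j$ is fibrewise injective there; this is the desired $\iota$ with $\bar V=\bigoplus_j\bar V_j$. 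I expect the main obstacle to be precisely this globalisation step: extracting a \emph{finite} trivialising cover whose ambient representations can be taken from a finite list. This is exactly the content of Lashof's argument, and it is where finite covering dimension of $S$ is essential; the local slice-and-Peter--Weyl step and the orthogonal-complement step are routine.
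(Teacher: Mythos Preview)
The paper does not give its own proof of this proposition: it is stated with attribution to \cite[Prop.~1.1]{Lashof1979} and then immediately used, with no argument supplied. So there is nothing in the paper to compare your proposal against beyond the citation.

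Your outline is the standard one and is essentially Lashof's own strategy: reduce to producing a $G$-equivariant fibrewise embedding $V\hookrightarrow S\times\bar V$, build this locally via the slice theorem together with Peter--Weyl/Frobenius reciprocity, and then patch using a $G$-invariant partition of unity subordinate to a cover of bounded multiplicity coming from the covering-dimension hypothesis. You are right that the local step and the orthogonal-complement step are routine, and that the substance is entirely in the globalisation. One point you pass over a bit quickly: after colouring the cover into $n+1$ families of pairwise disjoint $G$-invariant opens, each family may still be infinite, and it is not automatic that a \emph{single} finite-dimensional $G$-representation $\bar V_j$ can serve as target for every member of the $j$-th family. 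Making this work is exactly where Lashof's argument does its job (and where additional hypotheses such as paracompactness and control on the orbit structure enter), so your honest flagging of this step as ``the main obstacle'' is well placed; a complete write-up would need to spell out precisely how this reduction is carried out rather than simply assert it.
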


In particular, every perturbation datum can be stabilized to an elementary perturbation datum, and the stabilization can be chosen to be convenient. 
Finally, given $S' \supset S$, $S \subset \mathcal{F}(d; d_1)$, any perturbation datum $\Lambda_{S'} = (V, \lambda)$ over $S'$ can be restricted to a perturbation datum $\Lambda_S$ over $S$ by restricting $V$ to $S$ and $\Lambda$ to $\Pi^{-1}(S)$. Conversely, if $\Lambda_S$ is the restiction of $\Lambda_{S'}$ we call $\Lambda_{S'}$ is an extension of $\Lambda_S$. Since extending smooth functions from submanifolds of manifolds with corners is straightforward, we have that if $S = \mathcal{F}(d; d_1)(T)$ for $T \in 2^{[d]}$, then an elementary perturbation datum over $S$ can always be extended to $S' = \mathcal{F}(d; d_1)(T')$ for $T \leq T' \in 2^{[d]}$.  Moreover, if the elementary perturbation datum over $S$ restricted to a zero datum on all strata strictly contained in $S$, then the extension to $S'$ can be chosen to restrict to the zero datum on all strata not strictly containing $S$; we will call such an extension \emph{minimal}.

\begin{remark}
The notion of perturbation datum described above is a natural generalization of the notion of a perturbation datum in \cite[Definition~6.11]{AMS}. The idea to use perturbation data as in Definition \ref{def:perturbation-data} with $V$ fixed as the trivial bundle, i.e. the data of a $U(n)$-equivariant map 
\begin{equation}
\label{eq:basic-soft-perturbation}
    \lambda: \R^k \to \{ \xi \in C^\infty(\mathcal{C}(\CP^d,d) \times M,\Omega^{0,1}_{\mathcal{C}(d;d)/\mathcal{F}(d;d)} \tensor TM) \,| \, \xi=0 \text{ near nodes }\}
\end{equation}
was explained to the author in general outline by Mark McLean \cite{mclean_secret}, although the idea to use the existence of forms $\Omega_u$ as in Section 2 to make sense of McLean Stabilization for Hamiltonian Floer trajectories, and the many compatibility conditions needed to set up Floer homology, were not present in this interaction, had to be worked out by the author.
\end{remark}

\paragraph{Induction and restriction of perturbation data}
If we have a perturbation datum $\Lambda = (V, \lambda)$ over $S$ which is preserved by $H$, and $S'$ is a (generalized) induction of $S$ along $H \subset G \supset V$, then $\Pi^{-1}(S')$ must be the (generalized) induction of $\Pi^{-1}(S)$. Thus, applying the generalized induction construction to the vector bundle $V$ and the map $\lambda$ defines a new perturbation datum $Ind_{H}^{G, V}\Lambda$, which is $G$-equivariant when $V = G$ in which case we drop $V$ from the notation, and is otherwise $H$-equivariant. 

In particular, by utilizing Lemma \ref{lemma:induction-relation-between-strata}, we conclude
\begin{lemma}
    Given perturbation data $\Lambda_{S_i} = (V_{S_i}, \lambda_{S_i})$, $S_i \subset [d_i-1]$, $i=1, \ldots, r$ on each of $\mathcal{F}(\CP^{d_i}, d_i)(S_i)$, we have a canonically associated perturbation datum $Ind(\Lambda_{S_1}, \ldots, \Lambda_{S_r}) = (V_{\vec{S}}, \lambda_{\vec{S}})$, $\vec{S} = (S_1, \ldots, S_r)$ on $\mathcal{F}_{S}(\CP^d, d)$, where 
    \begin{equation}
    \label{eq:S-composition}
        S = S(S_1, \ldots, S_r) = S_1 \cup \{d_1\} \cup S_2 + d_1 \cup \{d_1+d_2\} \cup \ldots \cup S_r + \sum_{i=1}^{r-1} d_i.
    \end{equation} 
    Here, 
    $V_{\vec{S}}$ is the restriction to $\mathcal{F}_{\vec{S}}(\CP^d, d)$ of the bundle on 
    \begin{equation}
        \tilde{\mathcal{F}}_{\vec{S}}(\CP^d, d) = \mathcal{F}(\CP^d, d_1)(S_1) \times \ldots \times \mathcal{F}(\CP^d, d_r)(S_r)
    \end{equation}
    with fiber over 
    $(u_1, \ldots, u_r)$ given by 
    \begin{equation}
    \label{eq:vector-bundle-fiber-product}
        \bigoplus_{i=1}^r (Ind_{PU(d_i+1)}^{PU(d+1)} V_{S_i})_{u_i} 
    \end{equation}
    and 
    \begin{equation}
        \lambda_{\vec{d}}(v_1, \ldots, v_{r}) = \sum_{i=1}^r (Ind_{(U(d_i+1)}^{U(d+1)}\lambda_{S_i})(v_i)
    \end{equation}
    where we extend the terms of the sum by zero away from the codomain of $Ind_{(U(d_i+1)}^{U(d+1)}\lambda_{S_i}$.
    
    From this above formula, it is clear that 
    \begin{equation}
    \label{eq:associativity-of-induction}
        Ind(Ind(\Lambda_{d_1^1} \ldots, \Lambda_{d_1^{r_1}}), Ind(\Lambda_{d^2_1}, \ldots, \Lambda_{d_2^{r_2}}), \ldots ) = 
        Ind(\Lambda_{d_1^1}, \ldots, \Lambda_{d_1^{r_1}}, \Lambda_{d_2^1}, \ldots). 
    \end{equation}
    Where $S_j = S(d_j^1, \ldots, d_j^{r_j})$ and we write $\Lambda_{d}$ for a perturbation datum defined on all of $\mathcal{F}(\CP^d, d)$.
    Finally, this datum can be produced from the $\Lambda_{S_i}$ by taking products, restrictions, and (generalized) inductions.
\end{lemma}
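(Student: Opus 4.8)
The plan is to build $Ind(\Lambda_{S_1},\ldots,\Lambda_{S_r})$ by running, in turn, each of the three structural operations on perturbation data — (generalized) induction, product, and restriction — along the decomposition of $\mathcal{F}_S(\CP^d,d)$ furnished by Lemma \ref{lemma:induction-relation-between-strata}, and then to read off the closed formulas \eqref{eq:vector-bundle-fiber-product} and the displayed expression for $\lambda_{\vec d}$ by unwinding definitions. Concretely, for each $i$ I would first use the diffeomorphism of Section \ref{sec:unitary-group-actions} identifying the stratum $\mathcal{F}(\CP^d,d_i)(S_i)\subset\mathcal{F}(\CP^d,d_i)$ of curves lying in a $d_i$-plane with the induction $PU(d+1)\times_{(U(d_i+1)\times U(d-d_i))/U(1)}\mathcal{F}(\CP^{d_i},d_i)(S_i)$; applying the induction construction for perturbation data (the paragraph \emph{Induction and restriction of perturbation data}) to $\Lambda_{S_i}$ then yields $Ind_{PU(d_i+1)}^{PU(d+1)}\Lambda_{S_i}$, a perturbation datum over $\mathcal{F}(\CP^d,d_i)(S_i)$. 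Here one uses that the universal curve over $\mathcal{F}(\CP^d,d_i)(S_i)$ is the induction of the universal curve over $\mathcal{F}(\CP^{d_i},d_i)(S_i)$ — the inclusion $\CP^{d_i}\hookrightarrow\CP^d$ does not alter the domains — so that the target bundle $\tilde\Omega^{0,1}\tensor TM$ and the bundle map $\lambda_{S_i}$ induce verbatim.

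Next I would form the external product $\prod_{i=1}^{r}Ind_{PU(d_i+1)}^{PU(d+1)}\Lambda_{S_i}$ over $\tilde{\mathcal{F}}_{\vec S}(\CP^d,d)=\prod_{i}\mathcal{F}(\CP^d,d_i)(S_i)$ via the product operation on perturbation data; its underlying bundle has fibre $\bigoplus_i(Ind_{PU(d_i+1)}^{PU(d+1)}V_{S_i})_{u_i}$ over $(u_1,\ldots,u_r)$, and its map sends $(v_1,\ldots,v_r)$ to $\sum_i(Ind_{U(d_i+1)}^{U(d+1)}\lambda_{S_i})(v_i)$, landing in $\bigoplus_i\tilde\Omega^{0,1}_i\tensor TM$. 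Finally I would restrict this datum along the open inclusion $\mathcal{F}_{\vec S}(\CP^d,d)\subset\tilde{\mathcal{F}}_{\vec S}(\CP^d,d)$ — equivalently, identify $\mathcal{F}_S(\CP^d,d)$ with the image of the gluing map and restrict — and, wherever the generalized-induction diffeomorphisms \eqref{eq:generalized-induction} of Section \ref{sec:unitary-group-actions} are needed to realize this stratum, apply $Ind_H^{G,V}$ to the datum. This is exactly the content of Lemma \ref{lemma:induction-relation-between-strata}, and it simultaneously proves the last sentence of the lemma, that $Ind(\Lambda_{S_1},\ldots,\Lambda_{S_r})$ is obtained from the $\Lambda_{S_i}$ by products, restrictions, and (generalized) inductions. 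The claimed formulas are then immediate from the formulas for the product and induction operations, the ``extend by zero'' being well defined precisely because each $\lambda_{S_i}$ vanishes near nodes and marked points: over the stratum $\mathcal{F}_S(\CP^d,d)$ the fibre of the universal curve is the chain $\Sigma_1\cup_p\cdots\cup_p\Sigma_r$, whose fibrewise $(0,1)$-forms valued in $TM$ split as the direct sum of those on the $\Sigma_i$ away from the nodes.

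For the associativity identity \eqref{eq:associativity-of-induction} it suffices to note two things: first, the bookkeeping operation $S(S_1,\ldots,S_r)$ of \eqref{eq:S-composition} is associative, i.e.\ nesting the unions in $S(S(d_1^1,\ldots),S(d_2^1,\ldots),\ldots)$ produces the same subset of $[d-1]$ as the flattened $S(d_1^1,\ldots,d_1^{r_1},d_2^1,\ldots)$; second, the explicit formulas above are assembled out of direct sums of vector bundles, sums of bundle maps, and iterated (generalized) inductions, each of which is associative — transitivity of induction being the identity $G\times_H(H\times_K M)\simeq G\times_K M$, cf.\ Lemma \ref{lemma:induced-orbifolds-are-the-same}. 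Hence both sides of \eqref{eq:associativity-of-induction} unwind to the same pair $(V,\lambda)$.

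The main obstacle is not a single hard estimate but the careful matching of the abstract decomposition of Lemma \ref{lemma:induction-relation-between-strata} with the closed formula \eqref{eq:vector-bundle-fiber-product}: one must verify that the order in which products, (generalized) inductions and restrictions are applied is immaterial (which reduces to Lemma \ref{lemma:induction-of-vector-bundles} and the evident commutation of induction with restriction to a slice), and that the $U(\cdot)$- and $PU(\cdot)$-equivariances tracked through Section \ref{sec:unitary-group-actions} are preserved at each stage, so that the resulting $\lambda_{\vec d}$ is genuinely equivariant for the relevant product of unitary groups. Once this bookkeeping is in place, the lemma follows formally.
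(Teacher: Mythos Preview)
Your proposal is correct and follows essentially the same approach as the paper: the paper derives this lemma directly from Lemma \ref{lemma:induction-relation-between-strata} together with the induction, product, and restriction operations on perturbation data defined just above it, and your write-up is simply a more detailed unwinding of that same argument. The associativity \eqref{eq:associativity-of-induction} is likewise treated in the paper as an immediate consequence of the explicit formulas, just as you argue via transitivity of induction and associativity of direct sums.
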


\begin{definition}
\label{def:compatible-choice-of-perturbation-data}
The data of a \emph{compatible choice of peturbation data} (up to degree $D$) is a choice, for each positive integer $d \leq D$, of a perturbation datum $\Lambda_d$ on $\mathcal{F}(\CP^d, d)$, which, for every $S \in 2^{[d-1]}$, restricts to a convenient stabilization of the perturbation datum $Ind_{h(S)}(\Lambda_{h_1}, \ldots, \Lambda_{h_{d-1-|S|}})$ on $\mathcal{F}_{S(d_1, \ldots, d_r)}(\CP^d, d)$, where $(h_1, \ldots, h_{d-1-|S|})= h(S)$. When $D$ is not specified, then this refers to a choice of $\Lambda_d$ for all $d >0$ such that the above conditoins are satisfied for all $D$.
\end{definition}
\begin{remark}
This could be weakened significantly to require that the restriction of $\Lambda_d$ restricts to $\mathcal{F}_{S(d_1, \ldots, d_r)}(\CP^d, d)$ admits $Ind_{h(S)}(h_1, \ldots, h_{d-1-|S|})$ as direct summand, at the cost of additional argument later on. 
\end{remark}


\subsection{Global Kuranishi Charts}
\label{sec:global-kur-charts}

For each positive integer $d$, write $\cM(H, d)$ for the set of bubbled Floer trajectories $u: \Sigma \to M$ in $\bigsqcup_{\tilde{x}_-, \tilde{x}_+} \cM(\tilde{x}_-, \tilde{x}_+)$ such that $\int_\Sigma \Omega_u = d$. By Lemma \ref{lemma:gromov-compactness-with-modified-energy}, this closed subspace is compact. We will write $\cM(\leq d) = \cup_{d' \leq d} \cM(d')$. 

\begin{definition}
	\label{def:thickening}
Let $\Lambda_d = (V_d, \lambda_d)$ be a perturbation datum on $\mathcal{F}(\CP^d; d)$. The Kuranishi chart $\mathcal{K}_{\Lambda_d} = (U(d+1), T_{\Lambda_d} \mathcal{V}_{\Lambda_d} \oplus i\mathfrak{pu}_{d+1}, \sigma_{\Lambda_d} \oplus \sigma_H )$ associated to $\Lambda_d$ has \emph{thickening} $T_{\Lambda_d}$ with elements consisting of equivalence classes of pairs of
\begin{itemize}	\item a framed bubbled pre-Floer trajectory $(u, \Sigma, F)$, 
	\item An element $\eta \in (V_d)_{u_F}$ 
\end{itemize}
such that $\deg u_F|_{S^2_\alpha} = \int_{S^2_\alpha} \Omega_u$ for every component $S^2_\alpha$ of $\Sigma$, and such that we have 
\begin{equation}
\label{eq:perturbed-floer-equation}
\begin{gathered}
    \bar{\partial}u_\alpha + \lambda_i(\eta) \circ di_F = 0 \text{ for } u_\alpha = u|_{S^2_\alpha},  S^2_\alpha\alpha \neq S^2_1, \ldots, S^2_{r_u}, \\
    (d\tilde{u}_j + dt \tensor X_H)^{0,1}_J + \lambda_i(\eta) \circ di_F \circ d \psi_{u, j}= 0 \text{ for } \tilde{u}_j = u|_{S^2_j} \circ \psi_{u, j}
\end{gathered}
\end{equation}
Here as usual $S^2_\alpha$ is a component of $\Sigma$. 

We write $V_{\Lambda_d}$ for the vector bundle over $T_{\Lambda_d}$ with fiber over $((u, \Sigma, F), \eta)$ given by $(V_d)_{u_F}$, and $\sigma_{\Lambda_d}$ for the section of $\mathcal{V}_{\Lambda_d}$ given by $((u, \Sigma, F), \lambda) \mapsto \lambda$, and $\sigma_H$ for the section of the trivial bundle with fiber $i \mathfrak{pu}_{d+1}$ given by $((u, \Sigma, F), \lambda) \mapsto H(u, \Sigma, u_F)$. 
\end{definition}

We also define the Kuranishi chart $\mathcal{K}_{\Lambda_d, \epsilon}=(PU(d+1), T_{\lambda_d}^\epsilon, \mathcal{V}_d \oplus i \mathfrak{pu}_{d+1}, \sigma_{\Lambda_d}\oplus \sigma_H)$ where $T_{\Lambda_d}^\epsilon \subset T_{\Lambda_d}$ consists of those $((u, \Sigma, F), \eta)$ for which $\|\eta\| < \epsilon$. We will take $\epsilon$ to be a sufficiently small positive number depending on $d$.

The space $\mathcal{T}_{\Lambda_d, \epsilon}$ carries a (continous) $PU(d+1)$-action induced from the diagonal action on pre-Floer trajectories and the elements in the fibers of the perturbation datum. The map $\pi$ to $\mathcal{F}(\CP^d, d)$ is equivariant. 

For $S \in 2^{[d-1]}$, we write $T_{\Lambda_d}^\epsilon(S)$ for the set of pairs $((u, \Sigma, F), \eta) \in T_{\Lambda_d}^\epsilon$ with $u_F \in \F(\CP^d, d)(S)$, and we write $\mathcal{K}_{\Lambda_d}^\epsilon(S) = (PU(d+1), T_{\Lambda_d}^\epsilon(S), \mathcal{V}_{\Lambda_d} \oplus i\mathfrak{pu}_{d+1}, \sigma_{\Lambda_d} \oplus \sigma_H)$ (where we have restricted the vector bundles and the sections to $T_{\Lambda_d}^\epsilon(S)$).

Ampleness of $L_{\Omega_u}$ on all unstable components, smallness of $\eta$, and Gromov compactness implies as in \cite{AMS}, 
\begin{lemma}
\label{lemma:bounded-stabilizers}
    The $PU(d+1)$ action on $\mathcal{T}_{\Lambda_d}^\epsilon$ has finite stabilizers. \qedsymbol
\end{lemma}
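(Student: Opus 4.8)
The plan is to show that any pair $((u,\Sigma,F),\eta)$ in $\mathcal{T}_{\Lambda_d}^\epsilon$ has finite $PU(d+1)$-stabilizer by arguing that the stabilizer acts faithfully on a finite-dimensional space of sections with no room for a positive-dimensional subgroup. First I would reduce to the underlying stable map: the stabilizer of $((u,\Sigma,F),\eta)$ is contained in the stabilizer of $u_F\in\mathcal{F}(\CP^d,d)$, since the $PU(d+1)$-action on $\mathcal{T}_{\Lambda_d}^\epsilon$ covers the action on $\mathcal{F}(\CP^d,d)$ through $\pi$. So it suffices to bound the stabilizer of $u_F$ in $PU(d+1)$. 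Here is where the degree condition $\deg u_F|_{S^2_\alpha}=\int_{S^2_\alpha}\Omega_u$ enters: on each unstable (ghost-for-$u_F$) component the line bundle $L_{\Omega_u}$ has strictly positive degree by Definition \ref{def:framed-floer-trajectory} (the form $\Omega_u$ has positive integral on every unstable component), so $u_F$ is nonconstant on \emph{every} component of $\Sigma$ — there are no ghost components for $u_F$. Moreover, by the definition of $\mathcal{F}(\CP^d,d)$ via $\mathcal{D}_2(\CP^d,d)$, the image of $u_F$ is linearly nondegenerate in $\CP^d$, i.e.\ spans all of $\CP^d$.

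The key step is then the classical fact that a linearly nondegenerate stable map to $\CP^d$ has finite automorphism group inside $PGL_{d+1}(\C)$, hence finite stabilizer inside $PU(d+1)\subset PGL_{d+1}(\C)$. I would argue this by restricting to a single nonconstant component $C\subset\Sigma$ on which $u_F$ has positive degree: an element $g\in PGL_{d+1}(\C)$ fixing $u_F$ must in particular send the image curve $u_F(C)$ to itself, but more importantly it induces an automorphism of the domain $\Sigma$ preserving marked points, nodes, and the geodesic segments (these are part of the data of a point of $\mathcal{F}$), and intertwining $u_F$ with $g\circ u_F$. Since each $u_F|_C$ is a nonconstant holomorphic map with a well-defined finite set of critical points and critical values, the induced automorphism of $C$ is constrained to a finite group (a nonconstant map from $\mathbb{P}^1$ with at least three special points — the two special points from the chain structure plus branch data — has finite automorphism group; more simply, since the image spans $\CP^d$ and $g$ acts linearly, $g$ is determined up to finitely many choices by its action on $d+1$ generic points of the image, which are fixed as a configuration). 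The linear nondegeneracy is exactly what forces $g$ to lie in a finite subgroup: $g$ must permute the finitely many components and, on the components, must induce finitely many possible reparametrizations, and each such reparametrization together with the spanning condition pins down $g\in PGL_{d+1}$ uniquely. This is precisely the content cited as ``as in \cite{AMS}'': compactness (Lemma \ref{lemma:gromov-compactness-with-modified-energy}) guarantees we are working with an honest stable map, ampleness of $L_{\Omega_u}$ on unstable components guarantees no ghosts, and smallness of $\eta$ guarantees the perturbed equation \eqref{eq:perturbed-floer-equation} does not destroy these properties.

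The main obstacle I anticipate is bookkeeping the interaction between the automorphisms of the \emph{nodal} domain and the linear action on $\CP^d$: one must be careful that an automorphism of $\Sigma$ which is nontrivial but acts trivially on the image in $\CP^d$ cannot occur — this is exactly ruled out because $u_F$ is nonconstant (positive degree) on every component, so any domain automorphism acting trivially downstairs would have to be trivial on each component, hence trivial. A secondary subtlety is that we are working with $\langle k\rangle$-manifold strata: a point of $\mathcal{T}_{\Lambda_d}^\epsilon(S)$ lies over $\mathcal{F}(\CP^d,d)(S)$, which by Lemma \ref{lemma:induction-relation-between-strata} is built from lower-degree pieces via induction and restriction, so one could alternatively induct on the number of components, using finiteness of stabilizers for each $\mathcal{F}(\CP^{h_i},h_i)$ and the fact that induction/restriction/products preserve finiteness of stabilizers (the stabilizer in $G\times_H$ of an induced space is conjugate to a subgroup of a stabilizer in $H$). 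Either route reduces cleanly to the irreducible-domain, linearly-nondegenerate case, which is standard. I would present the direct argument and remark that the inductive one also works.
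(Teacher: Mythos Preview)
Your reduction to the stabilizer of $u_F\in\mathcal{F}(\CP^d,d)$ is correct as a containment, but that stabilizer is \emph{not} always finite, so the argument does not close. Here is a concrete obstruction: take $d=2$, let $\Sigma=C_0\cup_p B$ with $p_\pm\in C_0$ and $B$ a bubble attached at $z=1\in C_0$ with a single node at $w=0\in B$; set $u_F|_{C_0}(z)=[1{:}z{:}0]$ and $u_F|_B(w)=[1{:}1{:}w]$. This is linearly nondegenerate with positive degree on every component, and only $C_0$ carries a geodesic. For $a\in S^1$ the domain automorphism $\phi_a$ rotating $B$ by $w\mapsto aw$ (and identity on $C_0$) satisfies $u_F\circ\phi_a=g_a\cdot u_F$ with $g_a=\mathrm{diag}(1,1,a)\in PU(3)$. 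So $\mathrm{Stab}_{PU(3)}(u_F)\supset S^1$. Your two justifications for finiteness both fail here: $B$ has only one special point, and the $d+1$ generic image points are not fixed as a configuration but rotated by $g_a$. (In fact the $PGL_{d+1}$-stabilizer of $u_F$ is typically positive-dimensional even for an irreducible rational normal curve, via the $\R_{>0}$ translation along the geodesic.)

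What rescues the lemma is the constraint $u\circ\phi=u$ coming from the equivalence relation on framed pre-Floer trajectories---the map to $M$, not to $\CP^d$. If the stabilizer were positive-dimensional, there would be a one-parameter family $\phi_t$ of biholomorphic domain automorphisms with $u\circ\phi_t=u$; then on any component $C$ where $\phi_t|_C$ is nontrivial, $u|_C$ is constant along the $\phi_t$-orbits, so $du|_C$ has rank $\le1$ everywhere and $u|_C^*\Omega_0=0$. On a cylinder component the $-\partial_s\bar H\,ds\,dt$ term also vanishes for the same reason. Hence $\int_C\Omega_u=0$, contradicting the positivity built into Definition~\ref{def:framed-floer-trajectory} on every unstable component (and stable components already have trivial automorphisms). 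This forces $\mathrm{Aut}(u,\Sigma,\text{markings})$ to be finite; linear nondegeneracy of $u_F$ then gives the injection of the $PU(d+1)$-stabilizer into this finite group (each $\phi$ determines $g$ uniquely, as you note). This is the content of the AMS argument the paper cites; the role of ampleness is exactly to ensure $\int_C\Omega_u>0$ on unstable $C$, and your reading of it as ``$u_F$ has no ghosts'' is one step removed from where it is actually used.
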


\begin{remark}
In \cite{AMS} it is claimed that the $U(d+1)$ action has finite stabilizers, which is clearly incorrect as the center $S^1$ of the unitary group fixes projective space. That is the reason for the switch to $PU(d+1)$ throughout this paper; with this modification the argument of \cite{AMS} is correct.
\end{remark}

\begin{lemma}
\label{lemma:induced-topology}
    The induced topology on $\cM(a)$ agrees with the usual Gromov topology. 
\end{lemma}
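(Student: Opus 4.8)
The plan is to prove that the identity map $\mathrm{id}\colon \cM(a)^{\mathrm{Gr}}\to \cM(a)^{\mathrm{ind}}$ — from $\cM(a)$ with the usual Gromov topology to $\cM(a)$ with the induced topology, i.e.\ the one obtained through the footprint identification $\cM(a)\simeq \sigma^{-1}(0)/PU(d+1)$ with $d=a$ and $\sigma=\sigma_{\Lambda_d}\oplus\sigma_H$, where the thickening $T_{\Lambda_d}$ carries the topology coming from closures of graphs in $\mathcal C(\CP^d,d)\times M$ (one checks this is independent of $\Lambda_d$) — is a homeomorphism. The mechanism is the elementary fact that a continuous bijection from a compact space to a Hausdorff space is a homeomorphism. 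Both topologies are metrizable, so it suffices to argue with sequences throughout.

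\emph{Compactness of the source and Hausdorffness of the target.} The space $\cM(a)^{\mathrm{Gr}}$ is compact by Lemma~\ref{lemma:gromov-compactness-with-modified-energy}, being the closed subset $\{\bar{\mathcal A}(\tilde x_-)-\bar{\mathcal A}(\tilde x_+)=a\}$ of the compact space of that lemma (the quantity $\int_\Sigma\Omega_u=\bar{\mathcal A}(\tilde x_-)-\bar{\mathcal A}(\tilde x_+)$ is integer-valued and invariant under Gromov degeneration). For the target: the hyperspace of closed subsets of $\mathcal C(\CP^d,d)\times M$ with the Hausdorff-metric topology is metrizable, hence so is $\sigma^{-1}(0)\subset T_{\Lambda_d}$; since $PU(d+1)$ is compact its orbits in $\sigma^{-1}(0)$ are compact, and the Hausdorff distance between orbits is a metric on $\sigma^{-1}(0)/PU(d+1)$ inducing the quotient topology. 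So $\cM(a)^{\mathrm{ind}}$ is metrizable, in particular Hausdorff.

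\emph{Continuity — the main step.} Let $u_n\to u_\infty$ in $\cM(a)^{\mathrm{Gr}}$. By the sequential criterion in a metric space it is enough to show that every subsequence admits a further subsequence whose images converge to $[u_\infty]$ in $\cM(a)^{\mathrm{ind}}$. Gromov convergence provides $C^\infty_{\mathrm{loc}}$ convergence away from the finitely many nodes and punctures with no loss of energy or area; this forces the closed two-forms $\Omega_{u_n}$ — equal to the twisted form \eqref{eq:floer-stabilizing-form} on cylindrical components and to $u^*\Omega_0$ on bubble components — to converge to $\Omega_{u_\infty}$, and hence the Hermitian line bundles $L_{\Omega_{u_n}}$ together with their $L^2$ pairings $H$ to converge. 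Passing to a further subsequence and acting by suitable $g_n\in PU(d+1)$, one can then select balanced framings $F_n$ with $u_{F_n}\to u_{F_\infty}$ in $\mathcal F(\CP^d,d)$ for some balanced framing $F_\infty$ of $u_\infty$; since $\Sigma_n$ is the fibre of the universal curve over $u_{F_n}$ and $u_n\to u_\infty$ there in $C^\infty_{\mathrm{loc}}$, the closures of the graphs converge in the Hausdorff topology, i.e.\ $g_n\cdot(u_n,\Sigma_n,F_n)\to(u_\infty,\Sigma_\infty,F_\infty)$ in $\sigma^{-1}(0)$, so $[u_n]\to[u_\infty]$ in the quotient. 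This is the Floer-theoretic analogue of the corresponding statement of \cite{AMS}; the only new feature is that cylindrical components carry the twisted form \eqref{eq:floer-stabilizing-form} rather than an honest pullback, but that form differs from $u^*\Omega_0$ only by the compactly supported term $\partial_s\bar H\,ds\,dt$, and the arguments of \cite{AMS} are local in the domain, so they transfer verbatim.

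\emph{Expected main obstacle.} The genuine content is the continuity, across the boundary strata where breaking and bubbling occur, of the assignment $u\mapsto\bigl(L_{\Omega_u},H^0(L_{\Omega_u}),\langle\cdot,\cdot\rangle_{L^2}\bigr)$: one must check that the $L^2$ pairing $H$, whose definition uses the volume form $dV_{u^*g}$ that degenerates near nodes and punctures, varies continuously, and that the choices of minimal geodesics on the chain of sphere and cylinder components — which encode the $\langle d-1\rangle$-manifold structure via the real-oriented blow-up of Section~\ref{sec:moduli-of-decorated-stable-maps} — vary continuously as well. As in the proof of Lemma~\ref{lemma:stratification-model}, this is controlled by the standard local models for the universal curve near a nodal stable map \cite[Theorem~D.4.7]{McDuffSalamon-BIG} and the explicit blow-up coordinates of Lemma~\ref{lemma:lifted-diffeomorphisms}, fed into the convergence-of-holomorphic-sections estimates of \cite{AMS}.
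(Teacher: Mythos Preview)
Your approach is correct and differs from the paper's mainly in organization. You invoke the compact--Hausdorff lemma so that only one direction of continuity needs checking, whereas the paper proves both directions (Gromov $\Rightarrow$ induced and induced $\Rightarrow$ Gromov) directly. The paper also takes a different tack for the analytic core: rather than arguing directly that the line bundles $L_{\Omega_{u_n}}$, the $L^2$ pairings, and balanced framings converge (as you do, appealing to \cite{AMS}), it uses Pardon's characterization of Gromov convergence via stabilizing divisors---one adds marked points at intersections with auxiliary hypersurfaces so that the domains become stable, then works in the Deligne--Mumford space $\overline{\mathcal{C}}^{\$}_{0,r+2}$ where convergence of graphs is more transparent, and finally observes that convergence of the framed maps to $\CP^d$ is controlled by compactness of the space of stable maps with these extra markings. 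Your route is closer in spirit to the original \cite{AMS} argument and is arguably more economical; the paper's route has the virtue of packaging all the delicate nodal analysis into the single citation \cite[p.~96]{PardonHam}, avoiding the need to verify continuity of $H$ and of the geodesic markings across degenerations, which you correctly flag as the genuine content.
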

\begin{proof}
    Recall \cite[p. 96]{PardonHam} that a sequence of Floer trajectories $u_i$ Gromov converges to $u_\infty$ if we can find a set of divisors transverse to the images of all maps sufficiently far in the sequence such that the graphs of the maps $u'_i$ obtained from $u_i$ by stabilizing their domains with divisors converge in the Hausdorff [ on the closures of their graphs in $\overline{\mathcal{C}}^{\$}_{0, r+2}/\Gamma \times M$, where $r$ is the number of added stabilizing points, $\Gamma$ is the group permuting the stabilizing points arising from the same divisor, and $\overline{\mathcal{C}}^{\$}_{0, r+2}$ is the modification of the universal curve over $\overline{\mathcal{M}}(0, r+2)$ obtained by replacing each of the marked points on the path between the two original marked points by a circle (which is possible since the curves being considered are now stable). Choosing bases $F_i$ to extend $u_i$ to framed bubbled Floer trajectories gives a sequence of stable maps from the domains $\Sigma$ to $\CP^d$ with $r+2$ marked points. These latter spaces of stable maps are compact, so a subsequence of these $F_i$ converges, and thus the tuples $((u_i, \Sigma_i, F_i), 0)$ converge in the induced topology. Conversely if such a sequence converges in the induced topology, then adding stabilizing divisors gives stable maps to $\CP^d$ with $r+2$ marked points which still converge, and thus (by forgetting the map to $\CP^d$) the graphs of the $u_i$ in $\overline{\mathcal{C}}^{\$}_{0, r+2} \times M$ Hausdorff-converge as well. 
\end{proof}

Choose $k > 1$, $p> 2$.
Given an element $((u, \Sigma, F), \eta) \in T_{\Lambda_d}$, the linearization of the defining equation \eqref{eq:perturbed-floer-equation} is the Fredholm operator
\begin{equation}
\label{eq:linearized-perturbed-floer-equation}
\begin{gathered}
D_{u, F, \eta, \Lambda_d}: \bigoplus_{S^2_\alpha \subset \Sigma}W^{k, p}(u_\alpha^*TM) \oplus \bigoplus_{j=1}^{r_u} W^{k, p}(\tilde{u}_j^*TM) \oplus (V_d)_{u_F} \to \bigoplus_{S^2_\alpha \subset \Sigma}W^{k-1, p}(u_\alpha^*TM) \oplus \bigoplus_{j=1}^{r_u} W^{k-1, p}(\tilde{u}_j^*TM) \\
D_{u, F, \Lambda_d}(\ldots, \xi_\alpha, \ldots, \xi_1, \ldots \xi_{r_u}, \eta')  = (\ldots, D\bar \partial u_\alpha +  \lambda_i(\eta') \circ di_F, \ldots, D_{\tilde{u}_j} \tilde{u}_j + \lambda_i(\eta') \circ di_F \circ d\psi_{u, j} ) .
\end{gathered}
\end{equation}
Here we use notation as in \eqref{eq:perturbed-floer-equation}, $\Sigma_\alpha \subset \Sigma$ runs over components that are not marked by a geodesic, and $D\bar\partial$ and $D_u$ are the linearization of the Cauchy-Riemann and Floer equations respectively, which can be found in e.g. \cite{McDuffSalamon-BIG}. By elliptic regularity the surjectivity of such an an operator is independent of $(k, p)$. Moreover, if $D_{u, F, \Lambda_d}$ is surjective then so is $D_{u, UFU^{-1}, \Lambda_d}$ for any $U \in U(d+1)$. 

\begin{lemma}
\label{lemma:surjective-perturbation-data-exist-2}
    We can choose a sequence of compatible perturbation data $\Lambda_1, \ldots \Lambda_{d}, \ldots $  as in Definition \ref{def:compatible-choice-of-perturbation-data}, and such that the operators defined \eqref{eq:linearized-perturbed-floer-equation} are surjective for each $(u: \Sigma \to M) \in \cM(d)$ and each framing $F$ of $(u, \Sigma)$.
\end{lemma}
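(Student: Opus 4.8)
The plan is to induct on $d$, the base case being the smallest $d$ with $\cM(d) \neq \emptyset$; for such $d$ every boundary stratum of $\cM(d)$ is empty (it would involve some $\cM(d_i)$ with $d_i < d$), so the base case is the inductive step with empty boundary. Assume $\Lambda_1, \dots, \Lambda_{d-1}$ have been chosen, compatible in the sense of Definition \ref{def:compatible-choice-of-perturbation-data} and with all operators \eqref{eq:linearized-perturbed-floer-equation} surjective. First I would \emph{pin down the restriction of $\Lambda_d$ to the boundary}: on each stratum $\mathcal{F}_S(\CP^d,d)$, $S \neq [d-1]$, it is forced to be a convenient stabilization of $Ind_{h(S)}(\Lambda_{h_1}, \dots, \Lambda_{h_{d-1-|S|}})$, and these choices are mutually consistent by the associativity relation \eqref{eq:associativity-of-induction} (and by choosing the stabilizing bundles consistently on overlaps). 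The point of the inductive step is that this boundary datum \emph{already produces surjective operators along $\partial\cM(d)$}: if $u = u_1 \# \cdots \# u_r \in \cM(d)$ lies in such a stratum, with $u_i \in \cM(d_i)$ and each $d_i < d$, then the linearized operator \eqref{eq:linearized-perturbed-floer-equation} at $(u, F, 0)$ is block diagonal --- the $W^{k,p}$-sections decay at the breaking orbits, so there are no matching conditions; $Ind$ of a perturbation datum is supported on the corresponding subcurve by the formula for $\lambda_{\vec d}$; and a stabilization only adjoins directions that map to zero. Thus each block is $D_{u_i, F_i, 0, \Lambda_{d_i}}$ up to trivial summands, surjective by induction, so the direct sum is surjective. (Sphere bubbles are carried inside the relevant $u_i$ and do not affect this.)

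Next I would \emph{extend $\Lambda_d$ over the interior and arrange surjectivity there}. Using that $\mathcal{F}(\CP^d,d)$ is a smooth $\langle d-1 \rangle$-manifold and that equivariant bundles and bundle maps extend off the boundary of a manifold with corners (Proposition \ref{prop:complement-bundle} and the extension remarks after Definition \ref{def:compatible-choice-of-perturbation-data}), extend the boundary datum to a perturbation datum $\Lambda_d^0$ on all of $\mathcal{F}(\CP^d,d)$ restricting to the prescribed datum on each boundary stratum. By the neck-length-uniform Fredholm estimates for the linearized operators --- the same ones underlying the manifold-with-corners structure of the thickening --- surjectivity of \eqref{eq:linearized-perturbed-floer-equation} along $\partial\cM(d)$ propagates: it holds for $\Lambda_d^0$ on an open neighborhood $\mathcal N$ of the broken trajectories. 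The complement $\mathcal K$ of $\mathcal N$ is compact: the space of framed degree-$d$ Floer trajectories is compact by Lemma \ref{lemma:gromov-compactness-with-modified-energy}, after normalizing the framing by $H(F) = \mathrm{id}$. For $(u_0,F_0) \in \mathcal K$ at which \eqref{eq:linearized-perturbed-floer-equation} is not yet surjective, the cokernel is finite-dimensional; by elliptic regularity and unique continuation for the formal adjoint, every nonzero cokernel element is nonvanishing on a dense open subset of some irreducible component of the domain, in particular at points away from nodes and marked points (and, as $u_0$ lies over the interior, away from the boundary locus of the universal curve). Hence finitely many fiberwise $(0,1)$-forms valued in $TM$ supported there have evaluations at $(u_0,F_0)$ spanning the cokernel, and adjoining these perturbation directions makes \eqref{eq:linearized-perturbed-floer-equation} surjective on a neighborhood $W$ of $(u_0,F_0)$. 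Covering $\mathcal K$ by finitely many such $W$, I would collect all the resulting directions, cut them off with $G$-equivariant bump functions (a $G$-partition of unity exists since $G$ is compact), and organize them --- using a slice for the $G$-action on each $W$ and Lashof's Proposition \ref{prop:complement-bundle} --- into an elementary, convenient perturbation datum $\bigoplus_a \Lambda_a$ with each $\lambda_a$ supported in the interior, away from $\mathcal N$. Then $\Lambda_d := \Lambda_d^0 \oplus \bigoplus_a \Lambda_a$, stabilized by a further convenient zero datum if needed, has unchanged restriction to every boundary stratum (so is still a convenient stabilization of the required induction) and makes \eqref{eq:linearized-perturbed-floer-equation} surjective everywhere: on $\mathcal N$ because the $\Lambda_a$ vanish there, and on $\mathcal K$ by construction. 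Shrinking $\epsilon(d)$ and using openness of surjectivity once more handles the nearby configurations with $\eta \neq 0$.

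The main obstacle is the analytic input in the middle step: that surjectivity of the linearized operators along the broken stratum $\partial\cM(d)$ forces surjectivity in a full neighborhood, which rests on neck-length-uniform Fredholm estimates for the glued operators. A second point requiring care is that the restricted perturbation class --- fiberwise $(0,1)$-forms on the stable-map domain valued in $TM$, vanishing near nodes and marked points --- really suffices to kill cokernels even at multiply covered or ghost components; this works because the perturbation is genuinely a function on the universal curve over $\mathcal{F}(\CP^d,d)$ and so may vary independently over the distinct preimages of a point of $M$ --- the observation that makes the McLean--AMS stabilization effective --- while ghost components carry no cokernel for genus-zero stability reasons.
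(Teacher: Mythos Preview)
Your proposal is correct and follows essentially the same strategy as the paper: induct on $d$, use the inductive hypothesis plus the block-diagonal structure of the linearized operator to get surjectivity over $\partial\mathcal{F}(\CP^d,d)$, invoke linear gluing to propagate surjectivity to a neighborhood of the boundary, and then on the remaining compact piece span cokernels by finitely many $(0,1)$-forms supported away from nodes and marked points. The paper organizes the construction slightly differently---it writes $\Lambda_d = \bigoplus_{S \in 2^{[d-1]}} \Lambda^o_{d,S}$ with each $\Lambda^o_{d,S}$ an elementary datum that is a \emph{minimal} extension (vanishing on all strata not containing $S$) of a convenient stabilization of the required induction, which makes the compatibility check with Definition~\ref{def:compatible-choice-of-perturbation-data} cleaner than your ``choose stabilizing bundles consistently on overlaps''; and for equivariance it simply takes the $PU(d+1)$-orbit of each chosen $(0,1)$-form rather than your partition-of-unity and slice argument---but these are implementation details of the same plan.
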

\begin{proof}
    We will construct the perturbation data $\Lambda_d$ by induction on $d$. 
    
    For each $d$, we will choose an elementary perturbation datum $\Lambda^o_d$ on $\mathcal{F}(\CP^d, d)$ which will restrict to a zero datum on every proper substratum of $\mathcal{F}(\CP^d, d)$. The perturbation datum  $\Lambda_d$ will then take the form 
    \begin{equation}
    \label{eq:perturbation-ansatz}
        \Lambda_d = \bigoplus_{S \in 2^{[d-1]}}\Lambda^o_{d, S}
    \end{equation} 
    where $\Lambda^o_{d, [d-1]} = \Lambda^o_d$, and $\Lambda^o_{d, S}$ is a \emph{minimal} extension of a convenient stabilization of $Ind(\Lambda^o_{h_1, [h_1-1]}, \ldots, \Lambda^o_{h_r, [h_r-1]})$ for $(h_j) = h(S)$. 
    
    The compatibility condition on $\Lambda_d$ is satisfied if the above ansatz \eqref{eq:perturbation-ansatz} holds, as well as the condition below: for all $T \supset S$, writing $h(T) = (d_1 \ldots ,d_r)$ and $(d_1^1, \ldots, d_1^{a_1}, d_2^1,  \ldots,  d_2^{a_2}, \ldots, d_r^{a_r})$, where $\sum_{\ell=1}^{a_j} d_j^\ell = d_j$ for $j=1,\ldots r$, we have that $\Lambda^0_{d, S}$ restricted to $\mathcal{F}(\CP^d, d)(T)$ is a convenient stabilization of 
    \begin{equation}
    \label{eq:lambda-o-compatibility}
        Ind(\Lambda^o_{d_1, S(d_1^1, \ldots, d_1^{a_1})}, \ldots, \Lambda^o_{d_r, S(d_r^1, \ldots, d_r^{a_r})}).
    \end{equation}
    (We only need to consider $T \supset S$ by the minimality condition; the restriction of $\Lambda^o_{d, S}$ to all other strata is a zero datum.)
    
    Assume that we have constructed compatible perturbation data $\Lambda_1, \ldots, \Lambda_{d-1}$ satisfying the ansatz \eqref{eq:perturbation-ansatz}, and thus the condition \eqref{eq:lambda-o-compatibility}. Let's write $\Lambda^o_{d, S} = (V^o_{d, S}, \lambda^o_{d, S})$. This latter condition almost specifies $\Lambda^o_{d, S}$ for $S \neq [d-1]$; specifically, it is required to be a convenient stabilization of a perturbation datum that is already defined on each codimension $1$ boundary stratum of $\mathcal{F}(\CP^d, d)$. Let us first understand how to find a compatible extension of the underlying vector bundles of all of these perturbation data specified on the boundary of $\mathcal{F}(\CP^d, d)$ to the interior. By the inductive hypothesis, we can apply \eqref{eq:lambda-o-compatibility} to each of the $\Lambda^o_{d_j, S'}$ appearing in \eqref{eq:lambda-o-compatibility} for the restriction of $\Lambda^o_{d, S}$ to each codimesion $1$ stratum $\mathcal{F}(\CP^d, d)(T)$; associativity of induction then implies that there exist vector bundles $V^o_{d, S, T}$ over $\mathcal{F}(\CP^d, d)(T)$ for $T \supset S$, $T \neq [d-1]$, such that there is a $PU(d+1)$-equivariant inclusion $\iota_{d, S, T', T}: V^o_{d, S, T} \to V^o_{d, S, T'} $ for $T' \supset T$, $|T' \setminus T|=1$; such that composing a sequence of these inclusions along pair of paths in the poset $\{T' \in 2^{[d-1]}: T \leq T' \leq [d-1]\}$ gives the same map; and moreover that $V^o_{d, S, S} = Ind(\Lambda^o_{h_1, [h_1-1]}, \ldots, \Lambda^o_{h_r, [h_r-1]})$ where $(h_j) = h(S)$. The first step is to find a compatible extension of all of these bundles: 
    \begin{lemma}
        We can find a faithful representation $R(\sigma_{d, S})$ such that there are $PU(d+1)$-equivariant bundle inclusions $\iota_{d,S,T}: V^o_{d, S, T} \to R(\sigma_{d, S}) \times \mathcal{F}(\CP^d, d)(T)$ such that 
    $\iota_{d,S,T'} \iota_{d, S, T', T} = \iota_{d,S,T}$. 
    \end{lemma}
    \begin{proof}
    This follows repeated application of \ref{prop:complement-bundle} along inclusions in the poset $\{T' \in 2^{[d-1]}: T \leq T' \leq [d-1]\}$, together with the fact that if we take the direct sum of two maps of bundles, then if one of them is a bundle inclusion then so is the direct sum. 
    \end{proof}
    Now, \eqref{eq:lambda-o-compatibility} specifies $\lambda^o_{d, S}$ on each $\tilde{\Pi}^*V^o_{d, S, T}$, with the property that it is zero on the complement of $\tilde{\Pi}^*V^o_{d, S, T}$ in $\tilde{\Pi}^*V^o_{d, S, T'}|_{\mathcal{F}(\CP^d, d)(T)}$ for $[d-1] \supset T' \supset T\supset S$, $|T' \setminus T| = 1$. By extending $\lambda^o_{d, S}$ by zero to the complements of $\tilde{\Pi}^*V^o_{d, S, T}$ over all $T$, we have defines $\lambda^o_{d, S}: \tilde{\Pi}^* (R(\sigma_{d, S}) \times \mathcal{F}(\CP^d, d))|_{\partial \mathcal{F}(\CP^d, d)}$. It then remains to extend this smooth function from the boundary of $\mathcal{F}(\CP^d, d)$ to the interior to produce $\Lambda^o_{d, S}$ for all $S \neq [d-1]$ satisfying the conditions claimed. 

    By ansatz $\eqref{eq:perturbation-ansatz}$ and the inductive hypothesis, we have that $D_{u, F, \bar{\Lambda}_d}$ is surjective for each $(u, \Sigma, F)$ with $\pi(\phi_F(\Sigma)) \in \partial\mathcal{F}(\CP^d, d)$. By the gluing theorem for linear Cauchy-Riemann operators, this operator is thus surjective for all $(u, \Sigma, F)$ with $\pi(\phi_F(\Sigma)) \in U$ for $U$ some open neighborhood of the boundary. Let $U'$ be the set of bubbled Floer trajectories $(u: \Sigma \to M) \in \cM(d)$ which extend to such a framed bubbled pre-Floer trajectory, and let $K$ be the complement of $U$ in $\cM(d)$. Then $K$ is compact. Given any $(u, \Sigma, F)$ and any element $\theta \in \Omega^{0,1}(\Sigma)$ supported away from the marked points and with $\Sigma$ having only one component, there exists a smooth $(0,1)$-form $\bar{\theta}$ on the universal curve over $\mathcal{F}(\CP^d,d)$ supported away from all marked points and nodes and restricting to zero on a neighborhood over $\partial{F}(\CP^d,d)$, which restricts to $\theta$ on the fiber associated to $(u, \Sigma, F)$. Taking the $PU(d+1)$-orbit of such a $(0,1)$-form defines a perturbation datum $\Lambda_{\theta}$ with a trivial underlying vector bundle (which has fiber the span of the orbit of $\bar{\theta}$). For each $(u: \Sigma \to M) \in K$, choose an $F$ such that $(u, \Sigma, F)$ is a framed bubbled pre-Floer trajectory, and choose a finite collection of $1$-forms $\theta_i^{u}$ on $\Omega^{0,1}(\Sigma)$, which surject onto the cokernel of $D_u$ and are supported away from the marked points of $\Sigma$. Applying the previous construction gives a perturbation datum $\Lambda_u$ such that $\mathcal{D}_{u, F, \Lambda_{u}}$ is surjective, and by continuity $\mathcal{D}_{u', F', \Lambda_u}$ is surjective for $(u', \Sigma', F')$ arising by framing a Floer trajectory sufficiently near $u$. By compactness of $K$ we can take $\Lambda^o_F$ to be the direct sum of $\Lambda_{u, F}$ over a finite collection of $u \in K$. 
\end{proof}

By compactness of $\cM(d)$, by taking a sufficiently small $\epsilon$, we can arrange for $D_{u, F, \Lambda_d}$ to be surjective for all $(u, F, \eta) \in T^{\epsilon}_{\Lambda_d}$

\begin{proposition}
\label{prop:fiberwise-c1loc-on-kuranishi-chart}
The spaces $T_{\Lambda_d}^\epsilon$ are topological $G-\langle k \rangle$-manifolds, and the projection $\Pi$ to $\mathcal{F}(\CP^d, d)$ a fiberwise smooth $C^1_{loc}$ $PU(d+1)$-equivariant topological submersion. 
\end{proposition}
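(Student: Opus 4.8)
The plan is to produce, around every point of $T_{\Lambda_d}^\epsilon$, a local model for $\Pi$ of the asserted type, leaning on the smooth $\langle d-1\rangle$-manifold structure of $\mathcal{F}(\CP^d,d)$ from Proposition \ref{prop:smooth-structures-on-mclean-stabilizations}; so throughout $k=d-1$. First I would treat the fibers. Fix $\zeta=((u,\Sigma,F),\eta)\in T_{\Lambda_d}^\epsilon$ with $u_F\in\mathcal{F}(\CP^d,d)(S)$. The fiber $\Pi^{-1}(u_F)$ consists of the pairs $((u',\Sigma,F),\eta')$ sharing the \emph{same} framed domain $\mathcal{C}(\CP^d,d)_{u_F}$, with $\|\eta'\|<\epsilon$ and $u'$ a solution of \eqref{eq:perturbed-floer-equation}. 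Since the framing rigidifies the domain there is no reparametrization to quotient by, so this fiber is cut out, inside the Banach manifold of $W^{k,p}$-maps on the fixed domain times the finite-dimensional ball $\{\|\eta'\|<\epsilon\}\subset (V_d)_{u_F}$, as the zero set of a Fredholm section whose linearization at $\zeta$ is the operator $D_{u,F,\eta,\Lambda_d}$ of \eqref{eq:linearized-perturbed-floer-equation}. By Lemma \ref{lemma:surjective-perturbation-data-exist-2} together with the choice of $\epsilon$, this operator is surjective; the implicit function theorem then makes a neighborhood of $\zeta$ in $\Pi^{-1}(u_F)$ into a smooth manifold, and elliptic regularity makes this structure independent of $(k,p)$. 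This is the ``fiberwise smooth'' part.

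Next I would let the domain move. Over the top stratum $\mathcal{F}(\CP^d,d)([d-1])$, where $\Sigma$ is a chain of irreducible cylinders, the framed domains vary in a smooth family over a boundaryless manifold, and the implicit function theorem with parameters --- using surjectivity of $D_{u,F,\eta,\Lambda_d}$ at every point of $\cM(d)$ (again Lemma \ref{lemma:surjective-perturbation-data-exist-2}) --- identifies a neighborhood of $\zeta$ in $T_{\Lambda_d}^\epsilon$ with $U\times Y$ for $U$ open in $\mathcal{F}(\CP^d,d)$ and $Y$ a fixed smooth manifold, intertwining $\Pi$ with the projection to $U$, smoothly along $Y$ and $C^1_{loc}$ in all variables. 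Near a point of a lower stratum $\mathcal{F}(\CP^d,d)(S)$, a $\langle d-1\rangle$-chart $\R^{[d-1]\setminus S}_+\times\R^m\supset W$ on $\mathcal{F}(\CP^d,d)$ has its $\R_+$-coordinates recording the gluing lengths at the nodes of $\Sigma$ on the geodesic chain from $p_-$ to $p_+$ (the remaining coordinates being the nodal angular parameters, the deformations of the bubble components, and the $\Theta_-$-direction). Here I would invoke the parametrized gluing construction of \cite{AMS}: pregluing $\zeta$ with gluing data gives approximate solutions, the linearized operator stays surjective after gluing (the linear gluing theorem already used in the proof of Lemma \ref{lemma:surjective-perturbation-data-exist-2}), and a Newton iteration corrects them, producing a homeomorphism of a neighborhood of $\zeta$ in $T_{\Lambda_d}^\epsilon$ onto $W\times Y$ that intertwines $\Pi$ with the projection, is smooth along $Y$, is $C^1_{loc}$ overall, and carries the $\R^{[d-1]\setminus S}_+$-factors to the stratification of $T_{\Lambda_d}^\epsilon$ by the $T_{\Lambda_d}^\epsilon(T)$. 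Such a model is simultaneously a $\langle d-1\rangle$-manifold chart on $T_{\Lambda_d}^\epsilon$ and a local trivialization exhibiting $\Pi$ as a fiberwise smooth $C^1_{loc}$ topological submersion onto the smooth $\langle d-1\rangle$-manifold $\mathcal{F}(\CP^d,d)$.

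Finally, the $PU(d+1)$-action on framings and on the fibers of $V_d$ permutes these charts and makes $\Pi$ equivariant, with finite stabilizers by Lemma \ref{lemma:bounded-stabilizers}; applying the slice theorem around each orbit promotes the charts to equivariant $\langle d-1\rangle$-charts, so $T_{\Lambda_d}^\epsilon$ is a topological $PU(d+1)$-$\langle d-1\rangle$-manifold and $\Pi$ is as claimed. The main obstacle is the boundary case of the second step: the parametrized gluing must be carried out so that the $\R_+$ gluing-length coordinates on $\mathcal{F}(\CP^d,d)$ are reproduced \emph{exactly} as the corner coordinates of $T_{\Lambda_d}^\epsilon$, uniformly in $\eta$ and in the choice of perturbation datum, and with precisely the $C^1_{loc}$ regularity. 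This is imported from \cite{AMS}; the one new check is that the Hamiltonian term $X_H$ on the cylindrical components and the vanishing of $\lambda_d(\eta)$ near the nodes do not disturb the gluing estimates, which holds because on cylindrical components the equation is a standard Floer equation and the form $\Omega_0$ arranged in Section \ref{sec:integral-two-form} makes the degree bookkeeping of Definition \ref{def:thickening} consistent.
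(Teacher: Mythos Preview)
Your overall architecture matches the paper's: smooth the fibers via surjectivity of $D_{u,F,\eta,\Lambda_d}$, then build local product charts by a parametrized gluing/Newton construction, and finally observe that the $PU(d+1)$-action is compatible. Two points of departure are worth noting.

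First, and most substantively, the paper does \emph{not} import the gluing from \cite{AMS}. The reference \cite{AMS} treats closed holomorphic curves; the Hamiltonian term on the cylindrical components is precisely the ingredient that is \emph{not} covered there, so your ``one new check'' that $X_H$ does not disturb the gluing is in fact the entire analytic content at stake. The paper instead uses the hypersurface-marking device of Lemma~\ref{lemma:stratification-model} to identify a neighborhood of $u_F$ in $\mathcal{F}(\CP^d,d)$ with an open subset of (a circle bundle over the real blow-up of) $\cM_{0,d+5}$, and then invokes Appendix~C of \cite{PardonHam}, which is written exactly for Hamiltonian Floer trajectories with domains parametrized by Deligne--Mumford space. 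The homeomorphism $g:U\times K\to\Pi^{-1}(V)$ and its $C^1_{loc}$ regularity are then read off from the specific statements \cite[C.10.1--C.10.4, C.10.11, B.12.1, C.11.1]{PardonHam}. Your direct approach via charts on $\mathcal{F}(\CP^d,d)$ would work, but only after reproving the relevant estimates in the Hamiltonian setting; the paper's reduction avoids this by matching the domain parameter space to Pardon's setup.

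Second, the final step invoking the slice theorem is unnecessary and somewhat delicate. A topological $G$-$\langle k\rangle$-manifold requires only a continuous $G$-action preserving strata, not equivariant charts; once you have the $\langle d-1\rangle$-charts and the continuous $PU(d+1)$-action (which is immediate from the construction), you are done. There is no topological slice theorem available at this stage, and none is needed.
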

\begin{proof}
    This follows analogously to \cite[Section 6.8]{AMS} from the gluing results of \cite{PardonHam}. 
    
    First, the fibers of $\Pi: T_{\Lambda_d}^\epsilon \to \mathcal{F}(\CP^d, d)$ are naturally smooth manifolds of the expected dimension, as we can identify them with thickened moduli spaces of Floer trajectories with a fixed domain. 
    
    Next, as in Lemma \ref{lemma:stratification-model}, we can identify a neighborhood $V$ of $\Pi((u, \Sigma, F), \eta) \in \mathcal{F}(\CP^{d}, d)$ with an open subset $j: U \to \widetilde{\cM_{0, d+5}}$ of a circle bundle of the real-oriented blowup of $\cM_{0, d+5}$ along its natural stratification; call this identification $\mu_H: V \to U$. Let $K \subset \Pi^{-1}(\Pi((u, \Sigma, F), \eta))$ be a compact neighborhood of $((u, \Sigma, F), \eta)$ in the fiber. We claim, as in \cite[Theorem~6.1]{AMS}, that, shrinking $K$ and $V$ as necessary, that there is a diagram 
    \begin{equation}
    \begin{tikzcd}
        U \times K \ar[r, "g"] \ar[d, "\pi_U"] & T' = \Pi^{-1}(V) \ar[d, "\mu_H \circ \Pi"]\\
        U \ar[r, "j"] & \widetilde{\cM_{0, d+5}}
    \end{tikzcd}
    \end{equation}
    which is a homeomorphism onto its image. Moreover, for each $a \in U$, the restriction $g|_{\{a\} \times K} \to \Pi^{-1}(\mu_H^{-1}(a))$ takes place in $\Pi^{-1}(\mu_H^{-1}(a))$ and is a diffeomorphism onto its image.
    
    Indeed, this is a special case of appendix C of \cite{PardonHam} where $n=0$, i.e. we are working with a single Hamiltonian rather than a continuation map. The map $g$ is defined by \cite[\nopp C.10.3]{PardonHam}  and \cite[\nopp C.10.4]{PardonHam}; the commutativity of the diagram follows from \cite[\nopp C.10.1]{PardonHam}. The horizontal arrow is a homeomorphism onto its image because of Proposition \cite[\nopp C.10.11]{PardonHam}, Lemmas \cite[\nopp C.10.3]{PardonHam} and \cite[\nopp C.10.4]{PardonHam}, all combined with Lemma \cite[\nopp B.12.1]{PardonHam}.
    The restriction of $g$ to $\{a\} \times K$ is obtained by Newton-Picard  iteration (\cite[\nopp C.9.5]{PardonHam} and \cite[\nopp C.9.6]{PardonHam}) with initial conditions smoothly depending on $K$ for each $a \in U$; hence this map is smooth for each fixed $a$.
    
    We have proven that $\Pi$ has a fiberwise smooth structure. The fact that it has a $C^1_{loc}$ structure follows as in \cite[Corollary 6.29]{AMS} but with \cite[\nopp B.11.1]{PardonHam} replaced with \cite[\nopp C.11.1]{PardonHam}. 
\end{proof}

\begin{definition}
A pair of global Kuranishi charts $\mathcal{K}_1 = (G, T_1, V_1, \sigma_1)$ and $\mathcal{K}_2 = (G, T_2, V_2, \sigma_2)$ are \emph{equivalent} if there are $G$-equivariant diffeomorphisms $f: T_1 \to T_2$ and a bundle isomorphism $g: V_1 \to V_2$ such that $\pi_2 \circ g = f \circ \pi_1$, where $\pi_i: V_i \to T_i$ are the vector bundle projections, and $g \circ \sigma_1 = \sigma_2 \circ f$. 
\end{definition}

\begin{proposition}
    \label{prop:induction-relation-between-kuranishi-charts}
    The Kuranishi charts $\mathcal{K}^{\epsilon}_{\Lambda_d}(S)$  can be obtained by induction, restriction, products, taking open subsets,  and equivalences from the Kuranishi charts 
    $\mathcal{L}_\ell$, $\ell < d$. 
\end{proposition}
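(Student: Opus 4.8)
The plan is to upgrade the compatibility hypothesis on the family $\Lambda_d$ into a statement about thickenings, by showing that the assignment $\Lambda \mapsto \mathcal{K}^\epsilon_\Lambda$ intertwines the operations on perturbation data (product, restriction, (generalized) induction, direct sum) with the corresponding operations on Kuranishi charts. Granting this, the proposition follows formally: for a proper $S\subsetneq[d-1]$, Definition~\ref{def:compatible-choice-of-perturbation-data} says that the restriction of $\Lambda_d$ to the stratum $\mathcal{F}(\CP^d,d)(S)$ is a convenient stabilization $Ind_{h(S)}(\Lambda_{h_1},\dots,\Lambda_{h_r})\oplus\Lambda^W_0$, where $(h_1,\dots,h_r)=h(S)$ and each $h_i<d$; the induced datum $Ind_{h(S)}(\Lambda_{h_1},\dots,\Lambda_{h_r})$ is, by the lemma on induced perturbation data together with Lemma~\ref{lemma:induction-relation-between-strata}, obtained from the full data $\Lambda_{h_1},\dots,\Lambda_{h_r}$ by a sequence of products, restrictions, and (generalized) inductions; applying the intertwining property term by term then expresses $\mathcal{K}^\epsilon_{\Lambda_d}(S)$ via products, restrictions, (generalized) inductions, open subsets, and equivalences of the charts $\mathcal{L}_{h_i}=\mathcal{K}^\epsilon_{\Lambda_{h_i}}$, plus one stabilization by the bundle $W$. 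The open-subset operation also absorbs the inclusion $\mathcal{F}(\CP^d,d)(S)\subset\mathcal{F}_S(\CP^d,d)$ and any mismatch between the $\epsilon$'s at different degrees.

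The core of the argument is the case of a single stratum with $\Lambda=Ind_{h(S)}(\Lambda_{h_1},\dots,\Lambda_{h_r})$. Over a point of $\mathcal{F}(\CP^d,d)(S)$ the domain of a framed bubbled pre-Floer trajectory is a chain $\Sigma=\Sigma_1\cup_{p_1}\cdots\cup_{p_{r-1}}\Sigma_r$ whose $i$-th piece carries the form $\Omega_u|_{\Sigma_i}=\Omega_{u_i}$ of integral $h_i$ and is mapped by $u_F$ onto a copy $\CP^{h_i}\subset\CP^d$, these subspaces spanning $\CP^d$ and meeting pairwise (in chain fashion) only at the images of the $p_j$. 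Consequently $H^0(L_{\Omega_u})$ is identified with the tuples $(s_1,\dots,s_r)$, $s_i\in H^0(L_{\Omega_{u_i}})$, agreeing along the one-dimensional fibres over the nodes, and a framing $F$ of $H^0(L_{\Omega_u})$ determines framings $F_i$ of the pieces together with the appropriate gluing data; this is exactly the identification underlying the description of $\mathcal{F}(\CP^d,d)(S)$ in Lemma~\ref{lemma:induction-relation-between-strata}. Crucially, because each $\lambda_{h_i}$ vanishes near the nodes and marked points, the perturbation term $\sum_i(Ind\,\lambda_{h_i})(\eta_i)$ in the perturbed Floer equation~\eqref{eq:perturbed-floer-equation}, restricted to the components of $\Sigma_i$, equals the degree-$h_i$ perturbation term for $((u_i,\Sigma_i,F_i),\eta_i)$, so the perturbed equation on the chain decouples into the perturbed equations on the pieces with matching asymptotic orbits at the $p_j$. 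Combining this with Lemma~\ref{lemma:induction-of-vector-bundles} (identifying the obstruction bundle $\bigoplus_i Ind_{PU(h_i+1)}^{PU(d+1)}\mathcal{V}_{\Lambda_{h_i}}$ over the stratum) and the block behaviour of the Hermitian form $H(F)$, and hence of $\sigma_H$, relative to the chain decomposition, one obtains a $\prod_i PU(h_i+1)$-equivariant identification of $T^\epsilon_{\Lambda}(S)$ with the (generalized) induction/restriction of the thickenings $T^\epsilon_{\Lambda_{h_i}}$, assembled exactly as $\mathcal{F}(\CP^d,d)(S)$ is assembled from the $\mathcal{F}(\CP^{h_i},h_i)$; one then checks via Lemma~\ref{lemma:check-restriction} that these are genuinely inductions, restrictions, and generalized inductions in the sense of Section~\ref{sec:k-manifolds}, with the section and bundle transported accordingly. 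When $S$ splits off a leftmost factor this produces an honest product of charts; the intertwining with $\Lambda\oplus\Lambda'$ and with the stabilization $\Lambda\oplus\Lambda^W_0$ is immediate from Definition~\ref{def:thickening}, the latter merely adjoining a bundle of fibre parameters $\eta$ that do not enter the equation and a summand of the obstruction bundle on which the section is tautological.

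I expect the main obstacle to be the bookkeeping around the framings and the section $\sigma_H$ over the strata, done in the Gromov--Witten case in \cite{AMS}: verifying that $H(u,\Sigma,u_F)\in i\mathfrak{pu}_{d+1}$ decomposes, up to the controlled contributions of the one-dimensional overlaps at the nodes, into the blocks $H(u_i,\Sigma_i,u_{F_i})$, so that $\sigma_H$ over $\mathcal{F}(\CP^d,d)(S)$ matches $\bigoplus_i\sigma_H^{(i)}$ after a formal (generalized) induction while the residual directions behave as an innocuous summand; and, in parallel, tracking the passage between $\prod_i PU(h_i+1)$ and $PU(d+1)$ so that the generalized induction appearing in Lemma~\ref{lemma:induction-relation-between-strata} (via the sets $V^{tr}_{d,d_1}$) is realized on thickenings. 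A secondary point is that stabilization by a trivial equivariant bundle is not literally one of the five named operations; since it changes $\mathcal{K}//G$ only up to equivalence of derived $\langle k\rangle$-orbifold germs near the zero locus --- which is all that is used downstream --- we either regard it as tacitly allowed or adjoin it to the statement. Everything else, namely compatibility with products, open subsets, and equivalences, is formal once the identification of thickenings above is in hand.
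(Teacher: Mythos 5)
Your overall strategy coincides with the paper's: reduce the claim to showing that the assignment $\Lambda\mapsto\mathcal{K}^\epsilon_{\Lambda}$ intertwines products, inductions, restrictions, and stabilizations of perturbation data with the same operations on Kuranishi charts, then invoke Lemma~\ref{lemma:induction-relation-between-strata}. You also correctly identify where the real work lies: showing that the generalized induction along $V^{tr}_{d,d_1}$ on domains corresponds to a \emph{stabilization} (not just an induction) on the level of Kuranishi charts, and that this comes down to the bookkeeping of framings and of the Hermitian form section $\sigma_H$. You are also right that stabilization is tacitly used but not in the proposition's list of operations, which is a genuine (minor) imprecision in the statement.

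Where the proposal stops short of a proof is exactly where you flag the "main obstacle." The paper does not leave this as an expectation: it reduces, via associativity of induction \eqref{eq:associativity-of-induction}, to the single case $r=2$, and then carries out the identification explicitly by introducing four intermediate Kuranishi charts $\mathcal{K}_1$, $\mathcal{K}_2$, $\mathcal{K}^{\epsilon,\epsilon}_3$, $\mathcal{K}^\epsilon_3$. The crucial computations are the glued-framing map $(F_1,F_2)\mapsto F_1\#F_2$ \eqref{eq:map-on-base-spaces}, the explicit formula \eqref{convenient-H-map} expressing $\log H$ of the glued trajectory as a block-diagonal matrix assembled from $\log H_1$ and $\log H_2$, and --- the key point --- the observation that for each $M\in i\mathfrak{pu}^{d_1,d_2}_{d+1}$ there is a \emph{unique} $g_H(M)\in V^{tr}_{d,d_1}$ such that the inner product in the basis $F_1\#_{g_H(M)}F_2$ is $H+M$, giving the diffeomorphism \eqref{eq:map-on-base-spaces-2} that identifies the off-diagonal Hermitian block with the gluing parameters. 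Your phrase ``$H(u,\Sigma,u_F)$ decomposes, up to controlled contributions of the one-dimensional overlaps at the nodes, into the blocks $H(u_i,\Sigma_i,u_{F_i})$'' is the right intuition, but without the parameterization $M\mapsto g_H(M)$ it does not yet show that the residual directions organize into a stabilizing summand $i\mathfrak{pu}^{d_1,d_2}_{d+1}$ on which the extended section is an isomorphism, rather than, say, a twisted bundle with a nontrivial section. Similarly, the sentence claiming ``a framing $F$ of $H^0(L_{\Omega_u})$ determines framings $F_i$ of the pieces together with the appropriate gluing data'' runs in the opposite direction from the paper's construction and blurs exactly where the non-uniqueness lives; the content is that the map $(F_1,F_2)\mapsto F_1\#F_2$ has fiber modeled on $V^{tr}_{d,d_1}$ (equivalently $i\mathfrak{pu}^{d_1,d_2}_{d+1}$), and making that precise is what \eqref{eq:map-on-base-spaces-2} and \eqref{eq:map-on-total-spaces} accomplish. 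To complete your argument you would need to supply those computations or an equivalent explicit parameterization.
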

\begin{proof}

It is clear that since $\Lambda_d$ restricts to a stabilization of $\Lambda_{\vec{S}} = Ind(\Lambda_{S_1}, \ldots, \Lambda_{S_r})$, the Kuranishi chart $K^\epsilon_{\Lambda_d}$ is a stabilization of 
\begin{equation}
    K(S_1, \ldots, S_r)^{\epsilon} = (U(d+1), T(\vec{S}), \mathcal{V}(\vec{S}) \oplus i\mathfrak{pu}_{d+1}, \sigma_{\Lambda_{\vec{S}}} \oplus \sigma_H)
\end{equation}
where
\begin{equation}
    T(S_1, \ldots, S_r) = \{ ((u, \Sigma, F), \eta)) \in T^{\epsilon}_{\Lambda_d}  | u_F \in \mathcal{F}(\CP^d, d)(S), \eta \in (V_{\vec{S}})_{u_F}\}, S \text{ as in } \eqref{eq:S-composition}.
\end{equation}
and $\mathcal{V}(\vec{S})_{((u, \Sigma, F), \eta)} = (V_{\vec{S}})_{u_F}$. 

More generally, taking products, inductions/restrictions, and stabilizations of perturbation data corresponds to taking products, inductions/restrictions, and stabilizations of associated Kuranishi charts.

Thus, we wish to apply Lemma \ref{lemma:induction-relation-between-strata}. The crucial point is to show that the generalized inductions (\ref{eq:generalized-induction}) correspond to \emph{stabilizations} of the corresponding Kuranishi charts. This follows from an explicit computation given below. Given this claim, the lemma follows from associativity of induction (\ref{eq:associativity-of-induction}).

Fix $d_1 + d_2 = d$. Write $\Lambda_a = \Lambda_{d_1}$ and $\Lambda_b = \Lambda_{d_2}$; write $\Lambda_{ab} = Ind(\Lambda_a, \Lambda_b)$. We fix embeddings $\CP^{d_1} \subset \CP^{d_2}$ as the subset with the last $d_2$ coordinates zero, and $\CP^{d_2} \subset \CP^d$ as the subset with the first $d_1$ coordinates zero. They intersect in $p=[0, \ldots, 0, 1, 0, \ldots, 0]$ where the $1$ is in the $d_1+1$-st coordinate. Consider the following Kuranishi charts:
\begin{equation}
\label{eq:k1-chart}
\begin{gathered}
    \mathcal{K}_1 = (U(d_1), T_1^{\epsilon}, \mathcal{V}_{\Lambda_a} \oplus i \mathfrak{pu}_{d_1+1}, \sigma_{\Lambda_1} \oplus \sigma_H)\\
    T_1^\epsilon = \{((u, \Sigma, F), \eta) | u_F \in \mathcal{F}_+(\CP^{d_1}, d_1), \eta \in (V_{\Lambda_b})_{u_F}, \|\eta\|< \epsilon, (\ref{eq:perturbed-floer-equation})\text{ holds }\}; 
\end{gathered}
\end{equation}
\begin{equation}
\label{eq:k2-chart}
\begin{gathered}
    K_2^\epsilon = (U(d_2) , T_2^\epsilon, \mathcal{V}_{\Lambda_b} \oplus i \mathfrak{pu}_{d_2+1}, \sigma_{\Lambda_2} \oplus \sigma_H\} \\
    T^\epsilon_2 = \{((u, \Sigma, F), \eta) | u_F \in \mathcal{F}_-(\CP^{d_2}, d_2), \eta \in (V_{\Lambda_b})_{u_F}, \|\eta\|< \epsilon, (\ref{eq:perturbed-floer-equation})\text{ holds }\};
\end{gathered}
\end{equation}
\begin{equation}
\label{eq:third-kuranishi-chart}
\begin{gathered}
    K_3^{\epsilon, \epsilon} = \{(U(d_1) \times U(d_2), T^{\epsilon, \epsilon}_3, \mathcal{V}_{ab} \oplus i \mathfrak{pu}_{d+1}, \sigma_{\Lambda_{ab}} \oplus \sigma_H\} \\
    T^{\epsilon, \epsilon}_3 = \{((u, \Sigma, F), \eta) | u_F \in \mathcal{F}^{tr}_{S(d_1,d_2)}(\CP^d, d), \eta = (\eta_1, \eta_2) \in (V_{ab})_{u_F}, \|\eta_1\| < \epsilon, \eta_2 < \epsilon, (\ref{eq:perturbed-floer-equation})\text{ holds }\};
\end{gathered}
\end{equation}
\begin{equation}
\label{eq:k3-chart}
\begin{gathered}
    K^{\epsilon}_3  = (U(d_1) \times U(d_2), T^{\epsilon, \epsilon}_3, \mathcal{V}_{ab} \oplus i \mathfrak{pu}_{d+1}) \\
    T^{\epsilon}_3  = \{((u, \Sigma, F), \eta) \in T^{\epsilon, \epsilon}_3 | \|\eta\|< \epsilon\}. 
\end{gathered}
\end{equation}

Here, we can write $\eta = (\eta_1, \eta_2) \in (V_{ab})_{u_F}$ in (\ref{eq:third-kuranishi-chart}) due to (\ref{eq:vector-bundle-fiber-product}). We have an inclusion
\begin{equation}
\begin{gathered}
\label{eq:map-on-base-spaces}
   T_1^\epsilon \times T_2^\epsilon \to T_3^{\epsilon, \epsilon} \\
    (g, ((u_1, \Sigma_1, F_1), \eta_1), ((u_2, \Sigma_2, F_2), \eta_2) \mapsto ((u_1 \# \circ u_2,\Sigma_1 \# \Sigma_2, F_1 \# F_2), \eta_1 + \eta_2)
\end{gathered}
\end{equation}
where the  domain $\Sigma_1 \# \Sigma_2$ is obtained by gluing $p_+ \in \Sigma_1$ with $p_- \in \Sigma_2$, the map $u_1 \# u_2$ is the corresponding concatenation of maps, and $F_1\# F_2$ is obtained as follows. Write $F_1 = (f_0, \ldots, f_{d_1})$ and $F_2 = (f'_{d_1}, \ldots, f'_{d_2})$. Then we have evaluation maps $ev_1: H^0(L_{u_1}) \to (L_{u_1})_p$  and $ev_2: H^0(L_{u_2}) \to (L_{u_2})_p)$ and we must have that $f_0, \ldots f_{d_1-1} \in \ker$ and $f'_{d_1+1}, \ldots, f'_{d_2} \in \ker ev_2$. Then $F_0 \# F_1$ is obtained by globally rescaling $F_1$ and $F_2$ each so that $f_{d_1}$ and $f'_{d_1}$ such that $ev_1(f_{d_1}) = ev_2(f'_{d_1})$. This defines a basis of $L_{u_1 \# u_2}$ which is well defined up to scaling by $\C^*$, and which we denote by $(f_0, \ldots, f_{d-1}, \bar{f}_{d_1}, f'_{d_1+1}, \ldots, f'_d)$.  

The claim is that $\mathcal{K}^{\epsilon, \epsilon}_3$ is equivalent to the stabilization of $K_1^{\epsilon} \times K_2^{\epsilon}$ by the vector bundle $i\mathfrak{pu}_{d+1}^{d_1, d_2}$, which is a trivial bundle with fiber the set of $d_1+1 \times d_1+1$ Hermitian matrices on which only the top right $d_1 \times d_2$ and bottom left $d_2 \times d_1$ block are allowed to be nonzero; this vector space is naturally a $U(d_1) \times U(d_2)$-representation. Clearly $K^{\epsilon}_3$ is just the restriction of $K^{\epsilon, \epsilon}_3$ to an open subset, and an induction of $K^\epsilon_d$ along $U(d_1) \times U(d_2) \subset PU(d+1)$ gives the Kuranishi chart associated to $Ind(\Lambda_1, \Lambda_2)$.

To see this, we first explain the map $i \mathfrak{pu}_{d_1 + 1} \oplus i \mathfrak{pu}_{d_2+1} \to i \mathfrak{pu}_{d+1}|_{T_1^\epsilon \times T_2^\epsilon}$ covering the map (\ref{eq:map-on-base-spaces}) and sending $\sigma_H \oplus \sigma_H$ to $\sigma_H$. This map is the map which sends the equivalence class of $\log H_1$, where $H_1$ is an inner product on $H^0(L_{u_1})$ up to scale expressed in the basis $(f_0, \ldots, f_{d_1})$, and the equivalence class of $\log H_2$, where $H_2$ is in turn an inner product on $H^0(L_{u_2})$ expressed in the basis $(f_0, \ldots, f_{d_1})$, to the equivalence class of $\log H$, where $H$ is an inner product on $H^0(L_{u_1 \# u_2})$ expressed in the base $F_1 \# F_2$ determined by the condition that it restricts to $H_1$ on $F_1$, to $H_2$ on $F_2$, and such that we have chosen $(H_1, H_2)$ in their equivalence classes such that $H(\bar{f}_d, \bar{f}_d) = H_1(f_d, f_d) = H_2(f'_d, f'_d)$ and that $f_0, \ldots, f_{d-1}$ are orthogonal to $f'_{d_1}, \ldots, f'_d$. 

In fact, writing $(a_{i,j})_{i,j=0}^{d_1} \in i \mathfrak{pu}_{d_1 + 1}$ and $(b_{i,j})_{i,j=d_1}^{d_2} \in i \mathfrak{pu}_{d_2+1}$, the equivalence class of $\log H$ is represented by $(A_{i,j})_{i,j=0}^{d_2}$ given by 
\begin{equation}
\label{convenient-H-map}
A_{i,j} = \begin{cases}
a_{i,j} & \text{if }0 \leq i, j \leq d_1 \\
b_{i,j} & \text{if } d_1 \leq i, j \leq d_2, i \neq j \\
b_{i,i} + a_{d_1,d_1} - b_{d_1,d_1} & \text{if }  d_1 \leq i, j \leq d_2, i = j \\
0 & \text{else}.
\end{cases}
\end{equation}
In particular we see that this is a linear bundle map that is equivariant with respect to the $U(d_1) \times U(d_2)$ action. 

Now, given a matrix $M \in i\mathfrak{pu}_{d+1}^{d_1, d_2}$ as well an inner product $H$ as above, there is a \emph{unique} element $g_H(M) \in V^{tr}_{d, d_1}\in PU(d+1)$ such that the inner product expressed of the basis $F_1 \#_{g_H(M)} F_2(f_0, \ldots, f_{d-1}, g_H(M)\bar{f}_{d_1}, \ldots, g_H(M)f'_{d})$ is $H+M$. This defines a diffeomorphism 

\begin{equation}
\label{eq:map-on-base-spaces-2}
\begin{gathered}
   i \mathfrak{pu}^{d_1,d_2}_{d+1} \times T^\epsilon_1 \times T^\epsilon_2 \to T^{\epsilon, \epsilon}_3 \simeq V^{tr}_{d, d_1} \times_{U(d_1) \times U(d_2)} T^\epsilon_1 \times T^{\epsilon}_2 \\
   (M, ((u_1, \Sigma_1, F_1), \eta_1), ((u_2, \Sigma_2, F_2), \eta_2)) \mapsto ((u_1 \# u_2, \Sigma_1 \# \Sigma_2, F_1 \#_{g_{H}(M)} F_2), (\eta_1 + g_H(M) \eta_2)
\end{gathered}
\end{equation}
where $H = \sigma_H(u_1 \# u_2, \Sigma_1 \# \Sigma_2, F_1 \# F_2)$. Covering this map we have a diffeomorphism

\begin{equation}
    \label{eq:map-on-total-spaces}
    \begin{gathered}
     i \mathfrak{pu}^{d_1,d_2}_{d+1} \times i \mathfrak{pu}^{d_1, d_2}_{d+1} \mathcal{V}_a \times i \mathfrak{pu}_{d_1+1} \times \mathcal{V}_b \times i \mathfrak{pu}_{d_2+1} \to \mathcal{V}_{ab} \times i \mathfrak{pu}_{d+1}\\
     (M, N, ((u_1, \Sigma_1, F_1), \eta_1), \eta'_1, H_1, ((u_2, \Sigma_2, F_2), \eta_2), \eta'_2, H_2) \mapsto \\((u_1 \# u_2, \Sigma_1 \# \Sigma_2, F_1 \# g_H(M) F_2), \eta_1 + g_H(M) \eta_2), \eta_1' + g_H(M)\eta_2', H'+N)
     \end{gathered}
\end{equation}
where $H' = H(H_1, H_2)$. 

This proves that $\mathcal{K}^{\epsilon, \epsilon}_3$ is equivalent to a stabilization of $\mathcal{K}_1^\epsilon \times \mathcal{K}^2_\epsilon$. 

The final relation that needs to be shown is that $\mathcal{K}^{\epsilon}_3$ is induced to $\mathcal{K}_{\Lambda_d}$ under the inclusion $U(d_1) \times U(d_2) \subset PU(d+1)$. The only nontrivial point is to see that the induction acts trivially on the $i\mathfrak{pu}_{d+1}$ factor, which follows from Lemma \ref{lemma:induction-of-vector-bundles}. 

\end{proof}
\section{Compatibly smoothing global charts}
\label{sec:compatible-smoothing}
In this section, we show that we can stabilize the system of Kuranishi charts described in Section \ref{sec:global-kur-charts} such that after the stabilization, the new system of Kuranishi charts admits compatible smooth structures. The basic challenge is to extend equivariant stable smoothing theory the setting of of $\langle k \rangle$-manifolds. 
\subsection{Brief review of microbundles}
\begin{definition}
A microbundle of rank $n$ over a topological space $X$ is a diagram $\{ X \xrightarrow{s} E \xrightarrow{p} X \}$ where $E$ is a topological space, and $p$ and $s$ are continuous maps (called the \emph{projection} and the \emph{section}, respectively) such that $p \circ s = id_X$, and moreover for each $x \in X$, there are neighborhoods $x \in U \subset X$ and $s(x) \in V \subset E$, as well as a homeomorphism $h: U \times \R^n \to V$ with $p \circ h = pr_U$, and $h |_{U \times \{0\}} = s$. The space $E$ is the \emph{total space} of the microbundle. When the context is clear, the we will abbreviate the microbundle as $E$.

A morphism of microbundles is an equivalence class of continuous map between the total spaces which commute with the projections and the sections, respectively. Two such maps are equivalent if they agree in a neighborhood of the image of the section. 
\end{definition}

Given a vector bundle $\pi: V \to X$, the \emph{associated microbundle} is $\{X \xrightarrow{0} V \xrightarrow{\pi} X$, which we will also denote by $V$. Given a microbundle $E$, a vector bundle $V$ together with an isomorphism of the associated microbundle $V \to E$ is called a \emph{vector bundle lift} of $E$. 

Given a pair of microbundles $E_1, E_2$ over $X$, the direct sum $E_1 \oplus E_2$ is the microbundle 
\begin{equation}
\label{eq:microbundle-direct-sum}
    X \xrightarrow{s_1 \times s_2 \circ \Delta} (p_1 \times p_2)^{-1}(\Delta(X))  \xrightarrow{p_1 \times p_2} X. 
\end{equation}
where the total space is a subspace of $E_1 \times E_2$.

Given a map of spaces $f: X \to Y$ and a microbundle $E$ over $Y$, the pullback microbundle $f^*E$ is given by 
\begin{equation}
    X \xrightarrow{x \mapsto (s(f(x), x)} f^*E = \{(e, x) \in E \times X | p(e) = f(x) \} \xrightarrow{(e, x) \mapsto x} X.
\end{equation}

These notions admit equivariant versions. Here, $G$ is a compact Lie group. 
\begin{definition}
Let $X$ be a $G$-space. A $G$-microbundle on $X$ is a microbundle $X \xrightarrow{s} E \xrightarrow{p} X$ where $E$ is a $G$-space and such that $s$ and $p$ are $G$-equivariant maps. A morphism between microbundles $E_1, E_2$ over $X$ is a morphism of microbundles such that the map between total spaces is $G$-equivariant. The direct sum (\ref{eq:microbundle-direct-sum}) of two $G$-microbundles is a $G$-microbundle via restricting the diagonal action of $G$ on $E_1 \times E_2$ to $E_1 \oplus E_2$. 
\end{definition}

The associated microbundle of a $G$-vector bundle over a $G$-space is a $G$-microbundle; correspondingly, a $G$-vector bundle lift of a $G$-microbundle $E$ is a $G$-vector bundle which has an associated $G$-microbundle equipped with an isomorphism to $E$. 

\begin{definition}
Let $X$ be a topological manifold. The \emph{tangent microbundle} $T_\mu X$ is the microbundle $X \xrightarrow{\Delta} X \times X \xrightarrow{p_1} X$, where $\Delta$ is the diagonal map and $p_1$ is projection to the first factor. If $G$ is a topological $G$-manifold $T_\mu X$ is a $G$-microbundle via the diagonal action on $X \times X$.
\end{definition}

If $X$ is a smooth manifold, then the exponential map gives a canonical vector bundle lift $TX$ of $T_\mu X$. If $X$ is further equipped with a smooth $G$-action, then, by choosing a $G$-equivariant Riemannian metric and utilizing the $G$-equivariance of the exponential map, this vector bundle lift becomes a $G$-vector bundle lift. These lifts are canonical up to contractible choice, since the space of ($G$-equivariant) Riemannian metrics on $X$ is contractible. In particular, any pair of lifts is isotopic in the following sense:
\begin{definition}
Let $X$ be a space, $E \to X$ be a microbundle, and $V \to X$ be a vector bundle. An isotopy of vector bundle lifts $\phi_0: V_0 \to E$, $\phi_1: V_1 \to E$ is a vector bundle lift $\phi: V \to \pi^*E$, where $\pi: M \times [0,1] \to M$ is the projection, such that we have isomorphisms $V|_{M \times \{i\}} \simeq V_i$ which compose with $\phi|_{M \times \{i\}}$ to give $\phi_i$, for $i = 0,1$.
\end{definition}

The basic theorem of equivariant stable smoothing theory is 
\begin{proposition} [Lashof \cite{Lashof1979}]
\label{prop:lashof-smoothing}
Let $M$ be a topological manifold equipped with a continuous action of $G$. Assume that 
\begin{enumerate}
    \item There are finitely many orbit types, and
    \item The microbundle $T_\mu M$ admits a $G$-vector bundle lift $E$. 
\end{enumerate}
Then there is a $G$-representation $V$ or which the product $V \times M$ admits a $G$-equivariant smooth structure. Any two such smooth structures are \emph{stably slice concordant} in the following sense. Given any two such smoothings of $V_0 \times M$ and $V_1 \times M$ associated to isotopic vector bundle lifts $E_0 \to T_\mu M$, and $E_1 \to T_\mu M$, there is a vector space $V' \subset V_0 \oplus V_1$ and a smooth structure on $V' \times M \times [0,1]$ such that 
the smooth structure on $V' \times M \times \{j\}$ is induced from that on $V_j \times M$  for $j = 0,1$, and such that the projection to $[0,1]$ is a submersion. 
\end{proposition}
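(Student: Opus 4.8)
The plan is to deduce this statement directly from the equivariant stable smoothing theory of Lashof \cite{Lashof1979}; the only real work is to match the hypotheses and to extract the precise concordance statement asserted here. First I would recall that, in the language of equivariant smoothing theory, a $G$-vector bundle lift $E \to T_\mu M$ is exactly the data of a \emph{$G$-smoothing of the tangent microbundle} of $M$ --- a reduction of the structure $G$-microbundle of $M$ to one carrying a compatible smooth (orthogonal) structure. Thus hypothesis (2) puts us in precisely the situation to which Lashof's machinery applies, and hypothesis (1), finiteness of the set of orbit types, is exactly what is needed for the induction below to terminate with a single representation $V$.

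For the existence part, I would invoke the equivariant product (stabilization) theorem: there is a $G$-representation $V$ and a $G$-equivariant smooth structure on $V \times M$ whose associated smoothing of $T_\mu(V \times M)$ agrees with the pullback of the orthogonal sum of the trivial bundle $V \times M$ with $E$. The proof of this theorem proceeds by induction over the (finitely many) orbit types of $M$, ordered by decreasing dimension of the isotropy subgroup: the slice theorem presents a neighborhood of the stratum of a given orbit type $(H)$ as an equivariant disk bundle $G \times_H D(W)$ over the submanifold $M_{(H)}$, and smoothing this stratum relative to the already-smoothed lower strata reduces, upon passing to $H$-fixed loci $M^H$ and their complements and links, to the non-equivariant product and concordance theorems of Kirby--Siebenmann; patching these equivariantly and carrying along the stabilizing factor yields the smooth structure. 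Taking $V$ to be the direct sum of the representations produced at the finitely many stages gives one representation that works globally.

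For the concordance statement, I would observe that an isotopy of $G$-vector bundle lifts $E_0 \to T_\mu M$ and $E_1 \to T_\mu M$ is, by definition, a $G$-vector bundle lift $\tilde E$ of $T_\mu(M \times [0,1]) \cong \pi^* T_\mu M$ restricting to $E_j$ over $M \times \{j\}$. After stabilizing the given smooth structures on $V_0 \times M$ and $V_1 \times M$ by suitable representations so that both are defined over a common $V' \subseteq V_0 \oplus V_1$, one applies the \emph{relative} form of the product theorem to $M \times [0,1]$ with the smooth structures on $V' \times M \times \{0,1\}$ prescribed; this produces a $G$-equivariant smooth structure on $V' \times M \times [0,1]$ extending the boundary data. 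Since the stabilizing construction is performed fibrewise over $[0,1]$ and never feeds the $[0,1]$ coordinate into the smoothing, the projection $V' \times M \times [0,1] \to [0,1]$ is a submersion, which is the asserted stable slice concordance.

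The only non-formal input --- hence the main obstacle --- is the equivariant product structure theorem itself, i.e.\ the inductive-over-orbit-types reduction to classical smoothing theory; this is exactly what \cite{Lashof1979} supplies, so for the present purposes the substantive task is the bookkeeping: checking that a single representation $V$ suffices (which uses finiteness of orbit types), that condition (2) is precisely the lifting hypothesis of Lashof's theorem, and that the relative version delivers a concordance with the submersion property stated above.
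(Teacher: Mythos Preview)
Your proposal is correct and matches the paper's treatment: the paper does not supply its own proof of this proposition but simply cites it as a result of Lashof \cite{Lashof1979}, so your sketch of how the hypotheses match Lashof's framework and how the existence and relative (concordance) statements follow from the equivariant product theorem is exactly the right way to unpack the citation. There is nothing further to add.
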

We will unpack this result and generalize it to the setting of topological $\langle k \rangle$-manifolds. 

\subsection{Smoothing of $G-\langle k \rangle$-manifolds}
We first define the tangent microbundle of a $\langle k \rangle$-manifold. 

\begin{definition}
Given a topological $G-\langle k \rangle$-manifold $M$, we define its tangent microbundle $T_\mu M$ to the be the pullback of $T_\mu \mathcal{D}(M)$ under the canonical inclusion $M \to \mathcal{D}(M)$. 
\end{definition}

\begin{definition}
Let $M$, $B$ be topological $G-\langle k \rangle$-manifolds and let $\pi: M \to B$ be a map of $G-\langle k \rangle$-manifolds. Let $p \in M$ and let $b = \pi(p) \in B$. A \emph{product neighborhood} of $p$ is a homeomorphism $\iota: W \to W \cap \pi^{-1}(b) \times \pi(W)$, where $W$ is a neighborhood of $p$ satisfying 
\begin{itemize}
    \item $\pi \circ \iota^{-1}$ is the projection to $\pi(W)$, and
    \item $\pi(W)$ is an open subset of $B$, and 
    \item $\iota|_{W \cap \pi^{-1}(b)}: W \cap \pi^{-1}(b) \to \{b\}$ is the identity map, and 
    \item We have that $\iota(W \cap M(S)) = W \cap \pi^{-1}(b) \times \pi(W) \cap B(S)$ for each $S \in 2^{[k]}$.
\end{itemize}
We say that $\pi$ is a \emph{topological submersion} if every point in $M$ admits a product neighborhood. In that case, the fibers of $\pi$ are manifestly \emph{topological manifolds}, and the \emph{vertical tangent microbundle} $T^{vt}_\mu(\pi)$ of $\pi$ is the microbundle 
\begin{equation}
    M \xrightarrow{\Delta} M \times_B M \xrightarrow{\pi \times \pi} M
\end{equation}
where $M \times_B M$ is the fiber product of $\pi$ with itself, $\Delta$ is the diagonal, and $\pi \times \pi$ is the canonical map out of the fiber product. This is manifestly a $G$-equivariant microbundle under the diagonal action of $G$ on the total space. 
\end{definition}

By looking at doubled charts and using the functoriality of doubling, one sees that
\begin{lemma}
    Let $\pi: M \to B$ be a topological submersion. Then the natural map of topological manifolds $\mathcal{D}(\pi): \mathcal{D}(M) \to \mathcal{D}(B)$ is a topological submersion. 
\end{lemma}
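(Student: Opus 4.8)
The plan is to verify the product-neighborhood condition for $\mathcal{D}(\pi)$ one doubled chart at a time, using functoriality of the doubling operation together with the observation that the local models for a topological submersion are honest products which are compatible with the $\langle k\rangle$-stratification. Since the doubling lemma for $\langle k\rangle$-manifolds already gives that $\mathcal{D}(M)$ and $\mathcal{D}(B)$ are topological manifolds, and functoriality of doubling gives that $\mathcal{D}(\pi)$ is a well-defined continuous map, the stratification clause in the definition of a product neighborhood is vacuous for $\mathcal{D}(\pi)$. So it suffices to produce, for each $\bar p\in\mathcal{D}(M)$ with $\bar b:=\mathcal{D}(\pi)(\bar p)$, a neighborhood $\overline W\ni\bar p$ and a homeomorphism $\overline W\cong\bigl(\overline W\cap\mathcal{D}(\pi)^{-1}(\bar b)\bigr)\times\mathcal{D}(\pi)(\overline W)$ intertwining $\mathcal{D}(\pi)$ with projection to the second factor.

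First I would build a $\langle k\rangle$-chart on $M$ adapted to $\pi$. Pick a representative $\bar p=q_M(g,p)$ with $p\in M$, $g\in(\Z/2)^k$, and set $b=\pi(p)$, $S=S(p)$. The stratification clause in the definition of a topological submersion forces $M(S')=\pi^{-1}(B(S'))$ near $p$; in particular $S(b)=S$ and (by functoriality and equivariance of $\mathcal{D}(\pi)$) $\bar b=q_B(g,b)$. Choose a product neighborhood $\iota\colon W\xrightarrow{\cong}F\times\pi(W)$ of $p$, with $F=W\cap\pi^{-1}(b)$ and $\pi(W)$ open in $B$. Shrinking $W$, equip $\pi(W)$ with a $\langle k\rangle$-chart $\psi_B\colon U_B\to\pi(W)$ centered at $b$, $U_B\subset\R^{[k]\setminus S}_+\times\R^{m-|[k]\setminus S|}$ with $m=\dim B$; the stratification clause forces the fiber $F$ to be corner-free, so it carries an ordinary chart $\psi_F\colon U_F\to F$ with $U_F\subset\R^{n-m}$ open, $n=\dim M$. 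Then $\psi_M:=\iota^{-1}\circ(\psi_F\times\psi_B)\colon U_F\times U_B\to W$ is a $\langle k\rangle$-chart on $M$ (its stratum condition is inherited from that of $\psi_B$ through the product), and by construction $\pi\circ\psi_M=\psi_B\circ\mathrm{pr}_{U_B}$.

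Next I would double everything. Since $U_F$ is corner-free, the gluing formula for $\mathcal{D}$ gives $\mathcal{D}(U_F\times U_B)=U_F\times\mathcal{D}(U_B)$ and identifies $\mathcal{D}(\mathrm{pr}_{U_B}\colon U_F\times U_B\to U_B)$ with the projection $U_F\times\mathcal{D}(U_B)\to\mathcal{D}(U_B)$. Applying the doubling functor to $\pi\circ\psi_M=\psi_B\circ\mathrm{pr}_{U_B}$ yields
\[ \mathcal{D}(\pi)\circ\mathcal{D}(\psi_M)=\mathcal{D}(\psi_B)\circ\mathrm{pr}_{\mathcal{D}(U_B)}\colon\ U_F\times\mathcal{D}(U_B)\longrightarrow\mathcal{D}(B), \]
where $\mathcal{D}(\psi_M)$ and $\mathcal{D}(\psi_B)$ are doubled charts onto open subsets of $\mathcal{D}(M)$ and $\mathcal{D}(B)$ respectively. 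Thus, read in these charts, $\mathcal{D}(\pi)$ is literally the projection $U_F\times\mathcal{D}(U_B)\to\mathcal{D}(U_B)$. Take $\overline W:=\mathcal{D}(\psi_M)(U_F\times\mathcal{D}(U_B))$, an open neighborhood of $\bar p$; then $\mathcal{D}(\pi)(\overline W)=\mathcal{D}(\psi_B)(\mathcal{D}(U_B))$ is open in $\mathcal{D}(B)$, and since $b$ lies in the deepest corner of the chart $\psi_B$ the point $0\in\mathcal{D}(U_B)$ is the unique preimage of $\bar b$, so $\overline W\cap\mathcal{D}(\pi)^{-1}(\bar b)=\mathcal{D}(\psi_M)(U_F\times\{0\})\cong U_F$. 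Hence $\mathcal{D}(\psi_M)$ exhibits $\overline W$ as the required product and $\bar p$ has a product neighborhood.

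The only genuinely delicate point is the preparatory step: arranging that a product neighborhood of $\pi$ in $M$ together with a $\langle k\rangle$-chart on $B$ assembles into a $\langle k\rangle$-chart on $M$, i.e., that the fibers of $\pi$ are corner-free and the stratification of $M$ is pulled back from $B$ near $p$. This is exactly what the last bullet in the definition of a product neighborhood provides, so once this is unwound the rest is formal functoriality of $\mathcal{D}$. One can moreover record, by doubling $G$-equivariant charts as in the earlier lemmas, that $\mathcal{D}(\pi)$ is $G\times(\Z/2)^k$-equivariant, although that is not needed for the statement.
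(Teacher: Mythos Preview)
Your proof is correct and takes essentially the same approach the paper indicates: the paper states this lemma with only the one-line justification ``By looking at doubled charts and using the functoriality of doubling, one sees that'', and your argument is precisely a careful unpacking of that hint, building a $\langle k\rangle$-chart on $M$ in which $\pi$ is a projection and then doubling it. One small point worth tightening: when you write $\bar p=q_M(g,p)$ for arbitrary $g\in(\Z/2)^k$, you should note explicitly that the doubled chart $\mathcal{D}(\psi_M)$, whose image is the full $(\Z/2)^k$-saturation of $\iota_{\mathcal{D}}(W)$ in $\mathcal{D}(M)$, already contains $\bar p$ (equivalently, one may reduce to $g=e$ by the $(\Z/2)^k$-equivariance you mention at the end).
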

There is a natural map $\tau \mathcal{D}(pi): T_\mu \mathcal{D}(M) \to T_\mu\mathcal{D}(B)$ defined in \cite[Eq. 4.11, Corollary 4.26]{AMS}, and the restriction of this map to $M$ defines a map $\tau: T_\mu M \to T_\mu B$. 
\begin{proposition}
\label{prop:submersion-microbundle-splitting}
    There is a map $P: T_\mu M \to T^{vt}_\mu M$ such that $P \oplus \tau: T_\mu M \to T^{vt}_\mu M \oplus \pi^* T_\mu B$ is an isomorphism. 
\end{proposition}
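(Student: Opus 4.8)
The plan is to reduce the statement to the corresponding fact about topological submersions of ordinary topological manifolds (without corners), namely the analogous microbundle splitting for $\mathcal{D}(\pi): \mathcal{D}(M) \to \mathcal{D}(B)$, and then restrict everything back to $M \subset \mathcal{D}(M)$. Concretely, by the preceding lemma $\mathcal{D}(\pi)$ is a topological submersion of topological manifolds, so the Kister--Mazur--type splitting theorem for submersions (the statement underlying \cite[Eq.~4.11, Corollary~4.26]{AMS}) provides a map $\mathcal{D}(P): T_\mu \mathcal{D}(M) \to T^{vt}_\mu \mathcal{D}(\pi)$ with $\mathcal{D}(P) \oplus \tau\mathcal{D}(\pi)$ an isomorphism $T_\mu \mathcal{D}(M) \xrightarrow{\sim} T^{vt}_\mu \mathcal{D}(\pi) \oplus \mathcal{D}(\pi)^* T_\mu \mathcal{D}(B)$. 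Since everything is $G \times (\Z/2)^k$-equivariant — the double carries the product group action and all the relevant maps are equivariant by functoriality of doubling — the splitting can be taken equivariantly; this is where one invokes the equivariant version of the splitting, as used already in \cite{AMS}.

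The next step is to identify the restrictions of the doubled objects to $M$ with the intrinsic objects on the $\langle k \rangle$-manifold. By definition $T_\mu M = \iota_\mathcal{D}^* T_\mu \mathcal{D}(M)$ and, applying the same reasoning to the fiberwise double, one checks that $\iota_\mathcal{D}^* T^{vt}_\mu \mathcal{D}(\pi) \cong T^{vt}_\mu M$: the fiber product $\mathcal{D}(M) \times_{\mathcal{D}(B)} \mathcal{D}(M)$ restricted over $M$ is exactly $M \times_B M$ because $\mathcal{D}(\pi)^{-1}$ of a point of $B \subset \mathcal{D}(B)$ lying in stratum $B(S)$ is $\mathcal{D}$ of the corresponding fiber of $\pi$, and the diagonal and projection maps match under $\iota_\mathcal{D}$. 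Likewise $\iota_\mathcal{D}^* \mathcal{D}(\pi)^* T_\mu \mathcal{D}(B) = \pi^* \iota_\mathcal{D}^* T_\mu \mathcal{D}(B) = \pi^* T_\mu B$, and the restriction of $\tau\mathcal{D}(\pi)$ is by construction the map $\tau$ of the proposition. Therefore setting $P := \iota_\mathcal{D}^* \mathcal{D}(P)$ gives a map $T_\mu M \to T^{vt}_\mu M$ with $P \oplus \tau$ an isomorphism, since pulling back an isomorphism of microbundles along $\iota_\mathcal{D}$ yields an isomorphism.

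The main obstacle I anticipate is not the abstract pullback bookkeeping but verifying that the local product structures making $\pi$ a topological submersion interact correctly with the doubling construction along the corner strata — in other words, that $\mathcal{D}(\pi)$ really is a submersion in a way compatible with the stratum-respecting product neighborhoods, and that the splitting on $\mathcal{D}(M)$ restricts compatibly across the identifications $gx \sim g'x$ for $x \in M(S)$, $g,g' \in (\Z/2)^k_S$. This requires checking that a product neighborhood of $p \in M(S)$ doubles to a genuine product neighborhood in $\mathcal{D}(M)$ of the image of $p$, which follows from the fourth bullet in the definition of product neighborhood (compatibility with the stratification $M(S)$) together with the local description of doubling via charts; once this is in hand the rest is formal. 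I would present the argument in that order: first the lemma that product neighborhoods double to product neighborhoods (hence $\mathcal{D}(\pi)$ is a submersion compatibly — already essentially the preceding lemma), then the equivariant splitting on the double citing \cite{AMS}, then the identification of restrictions and the definition of $P$.
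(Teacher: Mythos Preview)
Your proposal is correct and follows essentially the same approach as the paper: apply the splitting result of \cite[Prop.~4.25, Corollary~4.26]{AMS} to the doubled submersion $\mathcal{D}(\pi): \mathcal{D}(M) \to \mathcal{D}(B)$, then restrict along $\iota_{\mathcal{D}}: M \hookrightarrow \mathcal{D}(M)$. The paper's proof is a terse three lines saying exactly this; your write-up simply fills in the identifications $\iota_{\mathcal{D}}^* T^{vt}_\mu \mathcal{D}(\pi) \cong T^{vt}_\mu M$ and $\iota_{\mathcal{D}}^* \mathcal{D}(\pi)^* T_\mu \mathcal{D}(B) \cong \pi^* T_\mu B$ that the paper leaves implicit. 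One minor clarification: your remark that the fiber of $\mathcal{D}(\pi)$ over a point of $B$ is ``$\mathcal{D}$ of the fiber of $\pi$'' is correct but trivially so, since the product-neighborhood condition forces $\pi^{-1}(b) \subset \oo{M}(S)$ when $b \in \oo{B}(S)$, so the fiber is already a manifold without corners and its double is itself; this is precisely why $\iota_{\mathcal{D}}^* T^{vt}_\mu \mathcal{D}(\pi)$ agrees on the nose with $T^{vt}_\mu M$.
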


The rest of this subsection is dedicated to proving this proposition. The arguments are a direct adaptation of \cite[Section 4.4]{AMS}.

Each $g \in G$ sends a product neighborhood $\iota$ of $p$ to a new product neighborhood $g_* \iota$ of $g \cdot p$ given by the formula 
\begin{equation}
    g_* i: g(W) \to g(W \cap \pi^{-1}(b)) \times g(\pi(W)), g_*(\iota)(w) = g \iota (g^{-1} \cdot w). 
\end{equation}
\begin{definition}
A $G_p$-invariant product neighborhood is a product neighborhood of $p \in M$ which is $G_p$-invariant under the action above. We say that $G$ is a \emph{$G$-equivariant topological submersion} if every point $p \in M$ admits a $G_p$ invariant product neighborhood.
\end{definition}
 \begin{definition}
 We say that $G$ acts on $\R^n$ \emph{linearly relative to $v \in \R^n$} if the the action becomes linear after translating $v$ to $0$.
 
 A $G$-action on a topological $\langle k\rangle$-manifold is \emph{locally linear} if for each point $p$ in this manifold there is a chart around $p$ such that $G_p$ acts linearly in this chart relative to the image of $p$; such a chart is called a \emph{linear chart}. We say that $\pi$ is \emph{fiberwise locally linear} if it is a topological submersion, and if for each $b \in B$, the group $G_b$ is locally linear on the manifold $\pi^{-1}(b)$.  
 \end{definition}
 
 By using the existence of collars $G$-invariant collars of $G-\langle k \rangle$-manifolds (Lemma \ref{lemma:collars-and-doubles-are-smooth}), we have 
 \begin{lemma}
     Given a smooth $\langle k \rangle$-manifold $B$, the $G$-action on $B$ is \emph{strongly linear} in the sense that for each $p \in B(S)$, there exists a smooth $\langle k \rangle$-manifold chart around $p$ from $\R^\ell_+ \times \R^{n - \ell}$ such that $G$ acts \emph{trivially} on the coordinates of $\R^\ell_+$.
 \end{lemma}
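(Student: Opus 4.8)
The plan is to deduce this from the $G$-invariant collar of Lemma \ref{lemma:collars-and-doubles-are-smooth} together with Bochner linearization of smooth compact-Lie-group actions at a fixed point. Write $S = S(p)$ and $\ell = k - |S| = |[k]\setminus S|$, so that a $\langle k\rangle$-chart centered at $p$ has domain in $\R^{[k]\setminus S}_+\times\R^{n-\ell}$ with $n = \dim B$; if $\ell = 0$ the assertion is vacuous, so assume $\ell \geq 1$. Note also that only the stabilizer $G_p$ acts on a neighborhood of $p$, so the statement is really one about the $G_p$-action near $p$.

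First I would apply Lemma \ref{lemma:collars-and-doubles-are-smooth} to obtain a smooth embedding of $G$-$\langle k\rangle$-manifolds $\Phi\colon Coll(B)\setminus B \to B$ whose image is an open neighborhood of $\bigcup_j B([k]\setminus\{j\})$, hence of $p$; set $z = \Phi^{-1}(p)$. Since $\Phi$ is stratum-preserving, $z$ lies in $Coll(B)(S) = B(S)\times 0$, and by the very definition of the collar a neighborhood of $z$ in $Coll(B)\setminus B$ has the form $W_0 \times [0,1)^{[k]\setminus S}$, where $W_0$ is an open neighborhood of the relevant point in $B(S)$ — an ordinary smooth manifold near that point because $S(p) = S$. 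The $G$-action on $Coll(B)$ is $g\cdot(x,t) = (gx,t)$, so $G_p$ (which equals the stabilizer of $z$, by $G$-equivariance and injectivity of $\Phi$) acts here as its action on $W_0$ times the \emph{trivial} action on the collar coordinates $[0,1)^{[k]\setminus S}$. Transporting this picture through $\Phi$ gives a $G_p$-invariant neighborhood of $p$ in $B$ together with a $G_p$-equivariant diffeomorphism of $\langle k\rangle$-manifolds onto $W_0 \times [0,1)^{[k]\setminus S}$ that is trivial on the collar factor.

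It then remains to linearize the action on $W_0$. Because $G$ is a compact Lie group acting smoothly on $B(S)$ and fixing the point in question, Bochner's theorem (equivalently, the slice theorem at a fixed point, as used earlier in this work) provides a $G_p$-invariant $W \subset W_0$ and a $G_p$-equivariant diffeomorphism from $W$ onto an open neighborhood of $0$ in a linear $G_p$-representation $\R^{n-\ell}$. Composing the two identifications, and using that $[0,1)$ is an open submanifold of $\R_+$ (so that $[0,1)^{[k]\setminus S}\times W$ embeds as an open subset of $\R^{[k]\setminus S}_+\times\R^{n-\ell}$ once $W$ is identified with its image in $\R^{n-\ell}$), yields a smooth $\langle k\rangle$-manifold chart around $p$ on which $G_p$ acts linearly on $\R^{n-\ell}$ and trivially on the $\R^{[k]\setminus S}_+$-coordinates; after the order-preserving relabeling $[k]\setminus S \cong [\ell]$ this is exactly the chart claimed.

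The one point that deserves care — and where the preceding theory is essential — is that the collar coordinates must carry a genuinely \emph{trivial} $G_p$-action rather than merely a locally linear one fixing the boundary faces; this is why one must use the $G$-invariant product structure of Albin--Melrose \cite{AlbinMelrose} packaged in Lemma \ref{lemma:collars-and-doubles-are-smooth}, not an arbitrary collar. An alternative, essentially equivalent, argument runs the proof of Lemma \ref{lemma:smoothing-double-smooths-original} on the double $\mathcal{D}(B)$: a linear slice chart at $p$ for the $G\times(\Z/2)^k$-action writes $\R^n$ as the standard sign representation $\R^{[k]\setminus S}$ of $(\Z/2)^k_S$ plus a trivial complement; since $G_p$ commutes with each of the $\ell$ generating involutions it preserves their $(-1)$-eigenlines, and since it also preserves $B\subset\mathcal{D}(B)$ and hence the positive ray in each such line, compactness of $G_p$ forces the $G_p$-action on $\R^{[k]\setminus S}$ to be trivial.
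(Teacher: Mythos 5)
Your proof is correct and follows exactly the route the paper intends, which is given only as the one‑line hint preceding the lemma: pass to the $G$‑invariant collar of Lemma \ref{lemma:collars-and-doubles-are-smooth}, observe that $G_p$ acts on $W_0\times[0,1)^{[k]\setminus S}$ by its action on $W_0$ times the identity on the collar factor, and then linearize on the stratum $\oo{B}(S)$ via Bochner/slice. Your alternative argument via the double — identifying the $(-1)$-eigenlines of the generating involutions in $(\Z/2)^k_S$, noting $G_p$ commutes with them and preserves $B$, and invoking compactness to kill the resulting homomorphism $G_p\to\R_{>0}$ — is also valid and is in fact closer in spirit to Lemma \ref{lemma:smoothing-double-smooths-original}, though the paper does not spell it out.
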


In particular,  the lemma above implies that the action of $G \times (\Z/2)^k$ is linear on the double is linear.

\begin{proof}[Proof of Proposition \ref{prop:submersion-microbundle-splitting}.]
Apply apply \cite[Prop. 4.25, Corollary 4.26]{AMS} to the topological submersion $\mathcal{D}(M) \to \mathcal{D}(B)$ to produce a natural morphism of microbundles $P: T_\mu \mathcal{D}(M) \to T^{vt}_{\mathcal{D}(\pi)}\mathcal{D}(B)$ such that $P \oplus \tau \mathcal{D}(\pi): T_\mu \mathcal{D}(M) \to  T^{vt}_{\mathcal{D}(\pi)}\mathcal{D}(B) \oplus \mathcal{D}(\pi)^* T_\mu \mathcal{D}(B)$ is an isomorphism. Restrict these maps to to $M$ along the inclusion into $\mathcal{D}(M)$. $\qed$
\end{proof}

\paragraph{Fiberwise $C^1_{loc}$ structures and smoothing $G-\langle k \rangle$-manifolds}
We assume that we have topological submersion of topological $G-\langle k \rangle$-manifolds $\pi: M \to B$ such that the fibers of $\pi$ (which are topological manifolds) are each equipped with smooth structures. The definition \cite[Definition~4.27]{AMS} of a pair of $C^1_{loc}$-compatible product neighborhoods of $\pi$ adapts verbatim to this setting, as do the subsequent notions of a \emph{fiberwise-smooth $C^1_{loc}$ $G$-bundle}, and  \emph{vertical tangent bundle} $T^{vt} M$ of a $C^1_{loc}$ $G$-bundle $\pi$. Moreover, the proof of \cite[Lemma~4.29]{AMS} adapts verbatim to this setting, establishing that for $\pi$ which are fiberwise-smooth $C^1_{loc}$ $G$-bundles, there is a $G$-equivariant lift of the vertical tangent microbundle to the vertical tangent bundle. 

\begin{proposition}
\label{prop:smooth-one-k-manifold}
    Let $\pi: M \to B$ be a $G$-equivariant topological submersion of $\langle k\rangle$-manifolds, where $B$ is in fact smooth. Moreover, suppose $\pi$ has a fiberwise smooth $C^1_{loc}$ structure. Then there is a $G$-representation $V$ such that $M \times V$ has the structure of a smooth $\langle k \rangle$-manifold. 
\end{proposition}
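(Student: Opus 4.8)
The plan is to reduce the statement to Lashof's equivariant stable smoothing theorem (Proposition \ref{prop:lashof-smoothing}) applied to the double $\mathcal{D}(M)$, and then to descend the resulting smooth structure back to $M$ using Lemma \ref{lemma:smoothing-double-smooths-original}. The key observation is that all the hypotheses of Proposition \ref{prop:lashof-smoothing} can be verified for $\mathcal{D}(M)$ as a topological $(G \times (\Z/2)^k)$-manifold, where the finitely-many-orbit-types condition comes from the fact that $G$ has finite (hence boundedly many) orbit types since it acts with finite stabilizers, combined with the finitely many strata of $M$.

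First I would produce a $(G\times(\Z/2)^k)$-vector bundle lift of $T_\mu \mathcal{D}(M)$. By Proposition \ref{prop:submersion-microbundle-splitting} applied to $\pi: M \to B$ (hence to $\mathcal{D}(\pi): \mathcal{D}(M) \to \mathcal{D}(B)$, which is a topological submersion by the lemma preceding Proposition \ref{prop:submersion-microbundle-splitting}), we get an isomorphism $T_\mu \mathcal{D}(M) \simeq T^{vt}_{\mathcal{D}(\pi)} \mathcal{D}(M) \oplus \mathcal{D}(\pi)^* T_\mu \mathcal{D}(B)$ of $(G\times(\Z/2)^k)$-microbundles. The second summand has a $(G\times(\Z/2)^k)$-vector bundle lift because $B$ is a smooth $\langle k\rangle$-manifold (so $\mathcal{D}(B)$ is a smooth $(G\times(\Z/2)^k)$-manifold by Lemma \ref{lemma:collars-and-doubles-are-smooth}, and its tangent microbundle has the equivariant vector bundle lift $T\mathcal{D}(B)$ via an invariant metric, pulled back along $\mathcal{D}(\pi)$). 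The first summand has a $(G\times(\Z/2)^k)$-vector bundle lift $T^{vt}\mathcal{D}(M)$ by the adaptation of \cite[Lemma~4.29]{AMS} to this setting, using the hypothesis that $\pi$ (hence $\mathcal{D}(\pi)$) carries a fiberwise-smooth $C^1_{loc}$ structure. The direct sum of these lifts is the desired $(G\times(\Z/2)^k)$-vector bundle lift $E$ of $T_\mu\mathcal{D}(M)$.

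Next I would invoke Proposition \ref{prop:lashof-smoothing} with the group $G\times(\Z/2)^k$ and the manifold $\mathcal{D}(M)$: the hypotheses (finitely many orbit types; existence of an equivariant vector bundle lift of the tangent microbundle) are exactly what was just verified. This yields a $(G\times(\Z/2)^k)$-representation $W$ such that $W \times \mathcal{D}(M)$ admits a $(G\times(\Z/2)^k)$-equivariant smooth structure. One must check that this smooth structure on $W \times \mathcal{D}(M)$ is compatible with the doubling picture, i.e. that $W \times \mathcal{D}(M)$ is canonically $\mathcal{D}(W \times M)$ as a $(G\times(\Z/2)^k)$-space (which is immediate, since doubling commutes with taking products with a space carrying the trivial $(\Z/2)^k$-action, by the definition \eqref{eq:double}). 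Then Lemma \ref{lemma:smoothing-double-smooths-original}, applied to the topological $G$-$\langle k\rangle$-manifold $W \times M$ whose double has been given a smooth $(G\times(\Z/2)^k)$-structure, produces the desired smooth $\langle k\rangle$-manifold structure on $W \times M$. Taking $V = W$ as a $G$-representation (by restriction along $G \hookrightarrow G\times(\Z/2)^k$) finishes the proof.

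The main obstacle I expect is the verification that \cite[Lemma~4.29]{AMS} genuinely adapts verbatim — that is, that the passage to the double does not destroy the fiberwise-smooth $C^1_{loc}$ structure on $\pi$ and that the equivariant vertical tangent microbundle of $\mathcal{D}(\pi)$ lifts to $T^{vt}\mathcal{D}(M)$ in a way compatible with the $(\Z/2)^k$-action near the corner strata. This requires knowing that a fiberwise-smooth $C^1_{loc}$ structure on a topological submersion of $\langle k\rangle$-manifolds induces one on the doubled submersion, which should follow from examining doubled product neighborhoods (the $(\Z/2)^k$-action is by linear reflections in the corner charts, hence manifestly $C^1_{loc}$-compatible), but this is the step where the $\langle k\rangle$-manifold structure interacts most delicately with the smoothing machinery and where the bookkeeping is heaviest.
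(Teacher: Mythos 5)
Your proof takes essentially the same approach as the paper: double, split $T_\mu\mathcal{D}(M)$ via Proposition \ref{prop:submersion-microbundle-splitting}, lift the vertical summand via the adaptation of \cite[Lemma~4.29]{AMS} and the horizontal summand via smoothness of $\mathcal{D}(B)$, apply Lashof's theorem, and descend to $M$. You are somewhat more explicit than the paper about the final descent via Lemma \ref{lemma:smoothing-double-smooths-original} and the compatibility $W\times\mathcal{D}(M)\simeq\mathcal{D}(W\times M)$, which is a welcome clarification; the only small imprecision is your justification of the finitely-many-orbit-types hypothesis (finite stabilizers alone does not yield finitely many orbit types), but the paper does not address this point either.
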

\begin{proof}
    The map $\mathcal{D}(\pi): \mathcal{D}(M) \to \mathcal{D}(B)$ is a $G \times (\Z/2)^k$-equivariant topological submersion of topological manifolds, and it is immediate to verify that te $C^1_{loc}$-compatible product neighborhoods for $\pi$ induce corresponding $C^1_{loc}$-compatible product neighborhoods for $\mathcal{D}(\pi)$. By Proposition \ref{prop:submersion-microbundle-splitting} we have an isomorphism of microbundles $T_\mu \mathcal{D}(M) \to T^{vt}_{\mathcal{D}(\pi}) \oplus \mathcal{D}(\pi)^* T_\mu B$. We have lifts of both of these latter microbundles to vector bundles by  \cite[Lemma~4.29]{AMS} and Proposition \ref{prop:smooth-structures-on-mclean-stabilizations}. We conclude by applying Theorem \ref{prop:lashof-smoothing}. 
\end{proof}

By combining Propositions \ref{prop:smooth-one-k-manifold} and \ref{prop:fiberwise-c1loc-on-kuranishi-chart} with Lemma \ref{lemma:unique-smoothing-of-vector-bundle}, we have 
\begin{lemma}
\label{lemma:can-smooth-a-single-chart}
    There exists an elementary stabilization $\mathcal{K}^{\epsilon}_{\Lambda_d, 1}$ of the Kuranishi chart $\mathcal{K}^\epsilon_{\Lambda_d}$ by a faithful complex $PU(d+1)$ representation such that $\mathcal{K}^{\epsilon}_{\Lambda_d, 1}$ admits a smooth structure. 
\end{lemma}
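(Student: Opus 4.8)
The plan is to apply Proposition \ref{prop:smooth-one-k-manifold} to the topological submersion $\Pi : T^\epsilon_{\Lambda_d} \to \mathcal{F}(\CP^d, d)$ provided by Proposition \ref{prop:fiberwise-c1loc-on-kuranishi-chart}, and then use Lemma \ref{lemma:unique-smoothing-of-vector-bundle} to upgrade the smoothing of the thickening to a smoothing of the whole Kuranishi chart (i.e. also the vector bundle $\mathcal{V}_{\Lambda_d} \oplus i\mathfrak{pu}_{d+1}$). First I would record that the base $\mathcal{F}(\CP^d, d)$ is a smooth $PU(d+1)$-$\langle d-1\rangle$-manifold by Proposition \ref{prop:smooth-structures-on-mclean-stabilizations} and the discussion in Section \ref{sec:unitary-group-actions}; that the $PU(d+1)$-action on $T^\epsilon_{\Lambda_d}$ has finitely many orbit types, which follows from Lemma \ref{lemma:bounded-stabilizers} (finite stabilizers) together with the fact that $T^\epsilon_{\Lambda_d}$ is a compact-by-Gromov-compactness quotient situation — more precisely, $\sigma^{-1}(0)/PU(d+1)$ is compact and one covers $T^\epsilon_{\Lambda_d}$ by finitely many $PU(d+1)$-invariant neighborhoods with a uniform bound on stabilizer orders, so only finitely many conjugacy classes of (finite) subgroups occur; and that $\Pi$ is a $PU(d+1)$-equivariant topological submersion with a fiberwise-smooth $C^1_{loc}$ structure, which is exactly the content of Proposition \ref{prop:fiberwise-c1loc-on-kuranishi-chart} — the product neighborhoods constructed there via the Newton–Picard iteration of \cite{PardonHam} can be taken $G_p$-invariant by averaging, since $G_p$ is finite.

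With these hypotheses in place, Proposition \ref{prop:smooth-one-k-manifold} yields a $PU(d+1)$-representation $V_0$ such that $T^\epsilon_{\Lambda_d} \times V_0$ carries a smooth $\langle d-1\rangle$-manifold structure compatible with the smooth structure on the base and the fiberwise smooth structures. Enlarging $V_0$ to a \emph{faithful} complex representation (by direct-summing a faithful representation, which does not disturb the smoothing by the stable concordance statement in Proposition \ref{prop:lashof-smoothing}, or simply by replacing $V_0$ by $V_0$ plus such a summand and re-running the argument), we obtain the elementary stabilization $\mathcal{K}^\epsilon_{\Lambda_d, 1}$ whose thickening is this smooth $T^\epsilon_{\Lambda_d} \times V_0$. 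Concretely, stabilizing a Kuranishi chart by a $PU(d+1)$-representation $V_0$ means replacing $(G, T, \mathcal{V}, \sigma)$ by $(G, T\times V_0, \mathcal{V}\oplus (V_0 \times T\times V_0), \sigma \oplus \mathrm{pr}_{V_0})$, so the underlying thickening is indeed $T^\epsilon_{\Lambda_d}\times V_0$ and the footprint is unchanged; faithfulness of $V_0$ makes the stabilization \emph{convenient} in the sense of the paper.

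It remains to equip the vector bundle of $\mathcal{K}^\epsilon_{\Lambda_d, 1}$ with a compatible smooth structure: the bundle is $\mathcal{V}_{\Lambda_d}\oplus i\mathfrak{pu}_{d+1}\oplus (V_0 \times T^\epsilon_{\Lambda_d}\times V_0)$, pulled back to $T^\epsilon_{\Lambda_d}\times V_0$. Here $i\mathfrak{pu}_{d+1}$ and the tautological $V_0$-summand are already trivial $PU(d+1)$-equivariant bundles, hence smooth once the base is; and $\mathcal{V}_{\Lambda_d}$ is a topological $PU(d+1)$-equivariant vector bundle over the now-smooth $\langle d-1\rangle$-manifold $T^\epsilon_{\Lambda_d}\times V_0$ (it is pulled back from the finite-rank bundle $V_d$ on $\mathcal{F}(\CP^d,d)$ along the continuous map $\Pi$ composed with the projection forgetting $V_0$), so Lemma \ref{lemma:unique-smoothing-of-vector-bundle} supplies a smooth structure on it, canonical up to contractible choice. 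This gives $\mathcal{K}^\epsilon_{\Lambda_d, 1}$ the structure of a smooth Kuranishi chart.

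The main obstacle is verifying the hypotheses of Proposition \ref{prop:smooth-one-k-manifold} for $\Pi$ — specifically, that the fiberwise-smooth $C^1_{loc}$ structure produced in Proposition \ref{prop:fiberwise-c1loc-on-kuranishi-chart} is genuinely $PU(d+1)$-equivariant (one must average the Newton–Picard product neighborhoods over the finite stabilizer groups $G_p$ and check this preserves the $C^1_{loc}$-compatibility, as in \cite[Section~4.4]{AMS}) and that the orbit-type finiteness holds; everything after that is a formal combination of results already established in the excerpt.
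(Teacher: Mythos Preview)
Your proposal is correct and follows essentially the same route as the paper, which proves the lemma in one sentence by ``combining Propositions \ref{prop:smooth-one-k-manifold} and \ref{prop:fiberwise-c1loc-on-kuranishi-chart} with Lemma \ref{lemma:unique-smoothing-of-vector-bundle}.'' You have simply unpacked this: verifying the hypotheses of Proposition \ref{prop:smooth-one-k-manifold} (smooth base, fiberwise $C^1_{loc}$ structure, finite orbit types), then enlarging the resulting representation to a faithful complex one, and finally invoking Lemma \ref{lemma:unique-smoothing-of-vector-bundle} for the obstruction bundle---all of which the paper leaves implicit.
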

\subsection{Normally complex structures on Kuranishi charts}
\label{sec:normally-complex-structures}
We now explain how to put normally complex structures on the derived $\langle k \rangle$-orbifolds associated to the smoothed Kuranishi charts produced by Lemma \ref{lemma:can-smooth-a-single-chart}, as well as the compatility conditions between them. 
\subsubsection{Normally complex structures on global Kuranishi charts}
Given a vector bundle $V$ over a $G$-space $N$, we say that $V$ is \emph{G}-trivial when for each $x \in N$, $Stab(x) \subset G$ acts trivially on $V_{x}$. Moreover, when $N$ is smooth and $G$, a compact Lie group, acts with finite stabizers, there is a subvector bundle $\mathfrak{g} \times N \subset TN$ with $(\mathfrak{g} \times N)_x$ given by the tangent space to the $G$-orbit at $x$. 

    We note first that if a derived $\langle k \rangle$-orbifold chart arises from a smooth global Kuranishi chart $\mathcal{K} = (G, N, V, \sigma)$,  then to endow $[N/G]$ with a normal complex structure it suffices to write a decomposition into $G$-vector bundles $TN = \oo{TN} \oplus \check{TN}$, where $\oo{TN}_x$ is $G$-trivial while $\check{TN}/(\mathfrak{g} \times N)$ is a complex $G$-vector bundle. Similarly, to endow $[V/G]$  has a normal complex structure it suffices make $V$ into a complex $G$-vector bundle. We will call the data of such a decomposition of $TN$ together with the complex structures on $\check{TN}$ and on $V$ the data of a \emph{normal complex structure on a (smooth) global Kuranishi chart.}

\subsubsection{Normally complex structure on a single Kuranishi chart $T_{\Lambda_d}$.}
\label{sec:normally-complex-structures-single-chart}
    Recall that in the topological Kuranishi chart $\mathcal{K}^\epsilon_{\Lambda_d} = (PU(d+1), T^{\epsilon}_{\Lambda_d}, \mathcal{V}_d \oplus i \mathfrak{pu}_{d+1}, \sigma_{\Lambda_d} \oplus \sigma_H),$  there is a topological submersion $\pi: T^\epsilon_{\Lambda_d} \to \mathcal{F}(\CP^d, d)$. Moreover, the tangent microbundle of $T^\epsilon_{\Lambda_d}$ splits as $T_\mu^{vt}T^\epsilon_{\Lambda_d} \oplus \pi^* T_\mu \mathcal{F}(\CP^d, d)$. We have lifts of each of these summands to (topological) vector bundles; namely, $\pi^*T_\mu \mathcal{F}(\CP^d, d)$ lifts to $\pi^*T\mathcal{F}(\CP^d, d)$, while $T_\mu^{vt}T^\epsilon_{\Lambda_d}$ lifts to the vector bundle $T^{vt}T^{\epsilon}_{\Lambda_d}$ fiber at $((u, \Sigma, F), \eta) \in T^{\epsilon}_{\Lambda_d}$ given by the kernel of the surjective Fredholm operator \eqref{eq:linearized-perturbed-floer-equation}. 
    
    Moreover, after smoothing, the tangent bundle of the smoothed chart is isomorphic to (as a topological vector bundle) of this vector bundle, so it suffices to decompose this vector bundle into a direct sum of a $G$-trivial and a complex $G$-vector bundle. The vector bundle $V_d$ is complex. Moreover, after gluing a pair of $u$-independent Cauchy-Riemann operators (the linearized Cauchy Riemann operators associated to a pair of fixed caps of the Hamiltonian chords at the endpoints) onto the family of operators $((u, \Sigma,F), \eta) \mapsto D_u$, we can deform the entire family of these operators to a family complex linear operators as the complex antilinear-term is order $0$ \cite{FO3:book-vol12}. This implies that $\R^m \oplus T^{vt}T^{\epsilon}_{\Lambda_d}$ is a complex-linear $PU(d+1)$-vector bundle for some large $m$, where $PU(d+1)$ acts trivially on $\R^m$. This in turn implies that there is some $PU(d+1)$-trivial copy of a trivial vector bundle $\R^m \subset T^{vt}T^{\epsilon}_{\Lambda_d}$ such that the complement is a complex $PU(d+1)$-subbundle. Thus, $T^{vt}T^{\epsilon}_{\Lambda_d}$ has the desired decomposition. 
    
    Furthermore $T\mathcal{F}(\CP^d, d)$ splits as $\theta \oplus (\pi_2)^*T \Bl \mathcal{D}_2(\CP^d, d)$ (see Lemma \ref{lemma:equip-mclean-moduli-spaces-with-k-manifold-structure} where $\pi_2$ is denoted by $\pi$), where $\theta$ is a $1$-dimensional vector bundle vector bundle given by the vertical tangent bundle of $\Theta_-$. We claim that $\theta$ is $G$-trivial; indeed, the complex structure at $p_-$ gives a preferred orientation of the fibers of $\Theta_-$, and thus a preferred trivialization of $\eta$, and one sees that $PU(d+1)$ acts trivially on the fibers of $\eta$ with respect to this trivialization. Moreover,
    The bundle $T \Bl \mathcal{D}_2(\CP^d, d)$ is a complex $G$-vector bundle: indeed, $T\mathcal{D}_2(\CP^d, d)$ is a complex $G$-vector bundle since $\mathcal{D}_2(\CP^d, d)$ is a complex manifold, and the complex extends to the inverse image of the blow-up locus by requiring that in a complex chart on $\mathcal{D}_2(\CP^d, d)$ in which the stratification looks like $\C^\ell \times \CP^{d-1}_+$, in the corresponding chart modelled on $\C^{\ell} \times (S^1 \times \R_{>0})^{d-1}$ on $\Bl \mathcal{D}_2(\CP^d, d)$, the coordinate vector along the $i$-th $\R_{>0}$ factor is sent by the complex structure to clockwise rotation along the $i$-th $S^1$-factor. 
    
    Thus, stabilizing the global Kuranishi chart $\mathcal{K}^{\epsilon}_{\Lambda_d}$ by the $PU(d+1)$-representation $\mathfrak{pu}_{d+1}$, we get a global Kuranishi chart $(G, N,  \mathcal{V}_d \oplus i \mathfrak{pu}_{d+1} \oplus \mathfrak{pu}_{d+1}, \sigma_{\Lambda_d}\oplus \sigma_H \times id)$ where $N = T^{\epsilon}_{\Lambda_d}\times \mathfrak{pu}_{d+1}.$ Now $TN/\mathfrak{pu}_{d+1} \times N \simeq TT^\epsilon_{\Lambda_d}$ has the desired decomposition into a $G$-trivial and an equivariantly complex vector bundle. Similarly,  $\mathcal{V}_d \oplus i \mathfrak{pu}_{d+1} \oplus \mathfrak{pu}_{d+1}$ is now a complex $PU(d+1)$-vector bundle via the canonical complex structure on $\mathcal{V}_d$ and the action of $\C$ on $\C \mathfrak{pu}_{d+1}$. Thus the stabilization of any global Kuranishi chart produced by Lemma \ref{lemma:can-smooth-a-single-chart} is \emph{canonically} normally complex oriented.
    
\subsubsection{Normally complex structures and induction/restriction relations.}
\label{sec:normally-complex-structures-compatibility}
We now argue that the inductions, restrictions, and stabilizations of Proposition \ref{prop:induction-relation-between-kuranishi-charts} become compatible with the normal complex structures described above after stabilizing $\mathcal{K}^{\epsilon}_{\Lambda_d}$ by $\mathfrak{pu}_{d+1}$. 

 We now argue that these canonical normal complex structures are compatible in the sense described below. 
    
    First, a product of normally complex oriented global Kuranishi charts is normally complex. 
    
    Second, when we take the restriction of $\mathfrak{T}_{\Lambda_d}^\epsilon$ (or any stabilization of this chart by a complex vector bundle) along the inclusion $U(d) \subset PU(d+1)$, there the corresponding restriction of the stabilization of $\mathfrak{T}_{\Lambda_d}^\epsilon$ by $\mathfrak{pu}_{d+1}$ is the stabilization of the restriction by $\mathfrak{pu}_{d+1}$ by Lemma \ref{lemma:induction-of-vector-bundles}. In particular, smoothed stabilizations of $\mathcal{K}_1$ and $\mathcal{K}_2$ by complex vector bundles )as in \eqref{eq:k1-chart}, \eqref{eq:k2-chart} become normally complex after stabilizing by $\mathfrak{pu}_{d_i+1}$ for $i=1,2$. These latter charts  are normally complex because the complement of $\mathfrak{u}_d$ in $\mathfrak{pu}_{d+1}$ canonically a complex vector space. In particular, the smoothings of the charts $\mathcal{K}_{\Lambda_{d_i}}$ and $\mathcal{K}_i$ after stabilization by complex vector bundles and by $\mathfrak{pu}_{d+1}$  will correspond to the same normally complex derived $\langle k \rangle$-orbifold. 
    
    Similarly, when we induce a smoothed stabilization of $\mathcal{K}^\epsilon_3$ as in \eqref{eq:k3-chart} by a complex bundle along the inclusion $U(d_1) \times U(d_2) \subset PU(d_1 + d_2+1)$ to produce a corresponding smoothed stabilization of $\mathcal{K}^\epsilon_{\Lambda_d}$, stabilizing both of these charts by $\mathfrak{pu}_{d+1}$ will preserve the induction relation, will make them both normally complex and will make the corresponding normally complex derived $\langle k \rangle$-orbifolds equal. 
    
    Next, we study the map \eqref{eq:map-on-base-spaces-2} and its behavior upon stabilizing by $\mathfrak{pu}_{d_1+1} \oplus \mathfrak{pu}_{d_2+1}$. First, we note that $V^{tr}_{d, d_1}/U(d_1) \times U(d_2)$ is the moduli space of $d_2$-dimensional subspaces containing a fixed line and transverse to another $d_1$-dimensional subspace containing that line, and this has a complex structure. In fact, this moduli space is biholomorphic to $i \mathfrak{pu}_{d_1, d_2}$, where we view this latter space \emph{canonically} as a complex vector space via the complex-structure on its upper-right entries; indeed, the fact that the exponential of a Hermitian matrix is Hermitian shows that $M \mapsto g_{id}(M)$ induces such a biholomorphism.  
    
    Moreover, write $(i \mathfrak{pu}_{d_1, d_2})^{\perp} \subset i \mathfrak{pu}_{d+1}$ for the complement; then the map $i \mathfrak{pu}_{d_1+1} \oplus i \mathfrak{pu}_{d_2+1} \mapsto (i \mathfrak{pu}_{d_1, d_2})^{\perp}$, $(H_1, H_2) \mapsto H(H_1, H_2)$ \eqref{convenient-H-map} is a linear isomorphism. Write $-i(i \mathfrak{pu}_{d_1, d_2})^{\perp}$ for the image of $(i \mathfrak{pu}_{d_1, d_2})^{\perp}$ under multiplicaton by $-i$; then we also have an isomorphism 
    $\mathfrak{pu}_{d_1+1} \oplus \mathfrak{pu}_{d_2+1} \to -i(i \mathfrak{pu}_{d_1, d_2})^{\perp}$, $(H_1', H_2') \mapsto -iH(iH'_1, iH'_2)$, and the sum of this map with $H$ is complex linear. 
    
    Thus, extending \eqref{eq:map-on-base-spaces-2} to 
\begin{equation}
\label{eq:map-on-base-spaces-2-b}
\begin{gathered}
   \mathfrak{pu}_{d_1+1} \times \mathfrak{pu}_{d_2+1} \times i \mathfrak{pu}^{d_1,d_2}_{d+1} \times T^\epsilon_1 \times T^\epsilon_2 \to -i(i \mathfrak{pu}_{d_1, d_2})^{\perp} \times  T^{\epsilon, \epsilon}_3  \\
   (H_1', H_2', M, ((u_1, \Sigma_1, F_1), \eta_1), ((u_2, \Sigma_2, F_2), \eta_2)) \mapsto (-iH(iH'_1, iH'_2), (u_1 \# u_2, \Sigma_1 \# \Sigma_2, F_1 \#_{g_{H}(M)} F_2), (\eta_1 + g_H(M) \eta_2))
\end{gathered}
\end{equation}
induces, after stabilizing, a map preserving the decompositions into $U(d_1) \times U(d_2)$-trivial bundles and the complex structures on tangent bundles modulo $\mathfrak{u}_{d_1} \times \mathfrak{u}_{d_2}$. The above map is naturally covered by the stabilization 

\begin{equation}
    \label{eq:map-on-total-spaces-b}
    \begin{gathered}
      \mathfrak{pu}_{d_1+1} \times \mathfrak{pu}_{d_2+1} \times i \mathfrak{pu}^{d_1,d_2}_{d+1} \times i \mathfrak{pu}^{d_1, d_2}_{d+1} \times \mathcal{V}_a \times i \mathfrak{pu}_{d_1+1} \times \mathcal{V}_b \times i \mathfrak{pu}_{d_2+1} \to -i(i \mathfrak{pu}_{d_1, d_2})^{\perp} \times \mathcal{V}_{ab} \times i \mathfrak{pu}_{d+1}\\
     (H_1', H_2', M, N, ((u_1, \Sigma_1, F_1), \eta_1), \eta'_1, H_1, ((u_2, \Sigma_2, F_2), \eta_2), \eta'_2, H_2) \mapsto  \\(-iH(H'_1, H'_2), (u_1 \# u_2, \Sigma_1 \# \Sigma_2, F_1 \# g_H(M) F_2), \eta_1 + g_H(M) \eta_2), \eta_1' + g_H(M)\eta_2', H(H_1, H_2)+N).
     \end{gathered}
\end{equation}

This map is fiberwise $\C$-linear as a map of vector bundles. The coefficients of the map may not depend smoothly on the base parameter since $H$ does not depend smoothly on the underlying pre-Floer trajectory. Nonetheless, as we we will be inductively smoothing later, we will use this map to \emph{induce} a smooth structure on the target, and then textend, so that the final map is smooth, 

The remaining salient relation between these various charts is that $\mathcal{K}^{\epsilon, \epsilon}_3$ can then be stabilized by further by $\mathfrak{pu}^{d_1, d_2}_{d+1}$, which is naturally a complex vector bundle, to get the stabilization of $\mathcal{K}^{\epsilon, \epsilon}_3$ by  $\mathfrak{pu}_{d+1}$. We can then include it into the corresponding stabilization of $\mathcal{K}^{\epsilon}_3$ and then induce along the map $U(d_1) \times U(d_2) \subset PU(d+1)$ to produce the desired stabilization of $\mathcal{K}_{\Lambda_d}^{\epsilon}$.

\subsection{Compatibly smoothing Kuranishi charts}
We now describe how to make make the smoothings of the distinct Kuranishi charts compatible with one another. First we introduce a construction which will allow us to utilize a slightly upgraded version of  stable $G$-smoothing (Theorem \ref{prop:lashof-smoothing}): in essence, proving a multi-parameter version of uniqueness of smoothings up to concordance and stabilization.

\begin{definition}
Given a Kuranishi chart $\mathcal{K} = (G, T, E, s)$, we define the collaring of the Kuranishi chart $Coll(\mathcal{K})$ to be $(G, Coll(T), Coll(E), Coll(s))$. 
\end{definition}

Let $M$ be a topological $G-\langle k \rangle$-manifold and with a lift of the tangent microbundle. Consider $M \times [0,1]^{\ell}$, which is a topological $G-\langle k + \ell \rangle$-manifold. 
\begin{definition}
  We say that a \emph{parameterized smooth structure} on $M \times [0,1]^\ell$ is a $G \times (\Z/2)^k$-equivariant smooth structure on the $\langle \ell\rangle$-manifold $\mathcal{D}(M) \times [0,1]^\ell$ for which the projection to $[0,1]^\ell$  is a submersion of $\langle \ell\rangle$-manifolds. 
\end{definition}
 Lemma \ref{lemma:smoothing-double-smooths-original} and the functoriality of doubling immediately imply
 \begin{lemma}
 A parametrized smooth structure on $M \times [0,1]^\ell$ gives the topological $G-\langle k + \ell \rangle$-manifold a canonical smooth structure characterized by the property that the pullbacks of smooth functions on $\mathcal{D}(M) \times [0,1]^\ell$ to $M \times [0,1]^\ell$ are smooth.  With respect to this smooth structure, the projection to $[0,1]^\ell$ is a submersion from a $\langle k +\ell\rangle$-manifold to a $\langle k \rangle$-manifold with $S_f = [n]$ and $\tilde{f}(r) = r-n$ (see Definition \ref{def:general-submersion-of-k-manifolds}). $\qed$
 \end{lemma}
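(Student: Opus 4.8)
The plan is to deduce both assertions directly from Lemma~\ref{lemma:collars-and-doubles-are-smooth}, Lemma~\ref{lemma:smoothing-double-smooths-original}, and the functoriality of doubling, as the parenthetical remark before the statement indicates. The one structural fact to record first is that doubling commutes with products: for a $\langle k\rangle$-manifold $A$ and a $\langle\ell\rangle$-manifold $B$ there is a canonical $(\Z/2)^k\times(\Z/2)^\ell$-equivariant, stratification-preserving identification $\mathcal{D}(A\times B)\cong\mathcal{D}(A)\times\mathcal{D}(B)$; this is immediate from \eqref{eq:double} once one notes that on a product stratum one has $(\Z/2)^{k+\ell}_S=(\Z/2)^k_{S\cap[k]}\times(\Z/2)^\ell_{S\setminus[k]}$. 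Applying this with $A=M$, $B=[0,1]^\ell$, and also with $A=\mathcal{D}(M)$ (a $\langle0\rangle$-manifold, so $\mathcal{D}(\mathcal{D}(M))=\mathcal{D}(M)$) and $B=[0,1]^\ell$, one obtains canonical identifications $\mathcal{D}(\mathcal{D}(M)\times[0,1]^\ell)\cong\mathcal{D}(M)\times\mathcal{D}([0,1]^\ell)\cong\mathcal{D}(M\times[0,1]^\ell)$, all equivariant for the full group $G\times(\Z/2)^{k+\ell}=G\times(\Z/2)^k\times(\Z/2)^\ell$.

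Given this, the first assertion is immediate. A parameterized smooth structure on $M\times[0,1]^\ell$ is by definition a $G\times(\Z/2)^k$-equivariant smooth structure on the $\langle\ell\rangle$-manifold $\mathcal{D}(M)\times[0,1]^\ell$; Lemma~\ref{lemma:collars-and-doubles-are-smooth} applied to this $\langle\ell\rangle$-manifold endows its double with a canonical $G\times(\Z/2)^{k+\ell}$-equivariant honest smooth structure, which by the identifications above is a smoothing of $\mathcal{D}(M\times[0,1]^\ell)$. Lemma~\ref{lemma:smoothing-double-smooths-original}, applied to the topological $G$-$\langle k+\ell\rangle$-manifold $M\times[0,1]^\ell$, then yields a unique compatible smooth $\langle k+\ell\rangle$-structure; this is the asserted canonical smooth structure. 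The characterization by pullbacks of smooth functions is inherited from that lemma, together with the parallel fact (again Lemma~\ref{lemma:smoothing-double-smooths-original}, this time for $\mathcal{D}(M)\times[0,1]^\ell$ and its double) that the smooth functions on $\mathcal{D}(M)\times[0,1]^\ell$ are exactly the restrictions of the smooth functions on $\mathcal{D}(\mathcal{D}(M)\times[0,1]^\ell)$.

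For the submersion statement I would argue in charts. By the observation following Lemma~\ref{lemma:collars-and-doubles-are-smooth}, the hypothesis that $\mathcal{D}(M)\times[0,1]^\ell\to[0,1]^\ell$ is a submersion of $\langle\ell\rangle$-manifolds upgrades to the statement that $\mathcal{D}(M\times[0,1]^\ell)\to\mathcal{D}([0,1]^\ell)$ is an ordinary smooth submersion. Restricting the smooth $\langle k+\ell\rangle$-charts on $M\times[0,1]^\ell$ produced above (which are restrictions of doubled charts) and the smooth $\langle\ell\rangle$-charts on $[0,1]^\ell$, the projection $\pi$ becomes locally the linear coordinate projection that is zero on the $k$ corner directions coming from the $M$-factor, projects away the remaining interior directions of $M$, and is the order-preserving relabeling on the $\ell$ corner directions of $[0,1]^\ell$; the fibers are the $\langle k\rangle$-manifolds $M\times\{b\}$. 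This is exactly the local model of Definition~\ref{def:general-submersion-of-k-manifolds} with $S_f=[k]\subset[k+\ell]$ and $\tilde f(r)=r-k$, since the induced poset map $S\mapsto S\setminus[k]$ (relabeled to $[\ell]$) agrees with $S\mapsto[\ell]\setminus\tilde f([k+\ell]\setminus(S\cup S_f))$ and the dimension identity $|[k+\ell]\setminus(S\cup S_f)|=|[\ell]\setminus\bar\pi(S)|$ holds on every stratum. The only genuine task, and the nearest thing to an obstacle, is purely combinatorial bookkeeping: tracking which $(\Z/2)$-factors come from which round of doubling and confirming that all the identifications above are simultaneously group-equivariant and stratification-preserving, so that Lemma~\ref{lemma:smoothing-double-smooths-original} and Definition~\ref{def:general-submersion-of-k-manifolds} apply verbatim. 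There is no analytic content.
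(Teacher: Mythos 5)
Your proof is correct and takes exactly the route the paper implicitly asserts (the paper simply declares the lemma an immediate consequence of Lemma~\ref{lemma:smoothing-double-smooths-original} and functoriality of doubling, and offers no further detail). The one genuinely useful observation you supply that the paper leaves unstated is that doubling commutes with products, giving $\mathcal{D}(\mathcal{D}(M)\times[0,1]^\ell)\cong\mathcal{D}(M)\times\mathcal{D}([0,1]^\ell)\cong\mathcal{D}(M\times[0,1]^\ell)$ equivariantly for $G\times(\Z/2)^{k}\times(\Z/2)^{\ell}$; this is the precise content of ``functoriality of doubling'' needed for the first claim, and you are right that it also drives the submersion claim via the corollary after Lemma~\ref{lemma:collars-and-doubles-are-smooth}. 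Your identification $S_f=[k]$, $\tilde f(r)=r-k$ is the correct reading of Definition~\ref{def:general-submersion-of-k-manifolds} here; the paper's $[n]$ and ``$\langle k\rangle$-manifold'' in the statement appear to be typographical slips for $[k]$ and ``$\langle\ell\rangle$-manifold.''
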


\begin{definition}\cite{AlbinMelrose}
    A \emph{p-submanifold} of a $\langle k \rangle$-manifold $M$ of dimension $m$ is a subset $N \subset M$ such that for any $x \in N$ there is a $\langle k \rangle$-manifold chart $\psi: \R^{[k] \setminus S(x)_+} \times \R^{m +k-|S(x)|} \supset U \to V \subset M$ such that $\psi^{-1}(V \cap N) = U \cap \R^{[k] \setminus S(x)_+}(T_\psi) \times \R^{m +k-|S(x)|}$ for some subset $T_\psi \subset [k - |S(x)|]$.
\end{definition}

If a $p$-submanifold of a $\langle k \rangle$-manifold is naturally a topological $\langle k \rangle$-manifold with the induced stratification, then it inherits a unique smooth structure (as the stratification in fact determines the $T_\psi$ in the definition above). 

\begin{proposition}
\label{prop:upgraded-smoothing}
    Fix natural numbers $k, \ell$. Write $\vec{0} = (0,\ldots,0) \in \R^\ell$ and $\vec{1} = (1, \ldots, 1) \in \R^\ell$. 
    
    Suppose that we have specified patermetrized smooth strutures on $M \times C_i$ for some collection $C_1, \ldots, C_r \subset [0,1]^\ell$ of faces of the cube,  as well as parameterized smooth structures on $M(S) \times [0,1]^{\ell}$ for all non-maximal $S \in 2^{[k]}$, all of which which restrict to one another.  Then there is a smooth structure on $M \times [0,1]^k \times R(\sigma)$, for some $G$-representation $R(\sigma)$, such that $M \times \vec{0} \times R(\sigma)$, $M \times \vec{1} \times R(\sigma)$, and $M(S) \times [0,1]^k \times R(\sigma)$ are $p$-submanifolds and the smooth structures on them are induced from the given smooth structures.
\end{proposition}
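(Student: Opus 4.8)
The plan is to deduce this from a relative, parametrized form of Lashof's equivariant stable smoothing theorem (Theorem~\ref{prop:lashof-smoothing}), reducing to closed manifolds by doubling and then extending the prescribed smooth structure over the cube $[0,1]^\ell$ one face at a time. Throughout one uses that $M$ carries a fixed $G$-vector bundle lift of $T_\mu M$ and has finitely many orbit types (which holds in all the applications by compactness together with finiteness of stabilizers), and that a $p$-submanifold of a smooth $\langle k\rangle$-manifold inherits a canonical smooth structure from the ambient one.

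First I would reduce to doubles. By Lemma~\ref{lemma:smoothing-double-smooths-original}, Lemma~\ref{lemma:collars-and-doubles-are-smooth} and functoriality of doubling, a smooth structure on $M\times[0,1]^\ell\times R(\sigma)$ of the required type --- projection to $[0,1]^\ell$ a submersion of $\langle\ell\rangle$-manifolds, with the three families of distinguished $p$-submanifolds carrying prescribed structures --- amounts to a $(G\times(\Z/2)^k)$-equivariant smooth structure on the $\langle\ell\rangle$-manifold $P\times[0,1]^\ell\times R(\sigma)$, where $P:=\mathcal{D}(M)$ is a closed $(G\times(\Z/2)^k)$-manifold, with the analogous submersion property and with the doubled $p$-submanifolds $P\times\vec 0\times R(\sigma)$, $P\times\vec 1\times R(\sigma)$ and $P(S)\times[0,1]^\ell\times R(\sigma)$ carrying the doubled prescribed smooth structures. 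One routine point here is the identification of $P(S)$, the $(\Z/2)^k_S$-fixed locus in $P$, with the double $\mathcal{D}(M(S))$ of the $\langle|S|\rangle$-manifold $M(S)$ (matching $(\Z/2)^k_S$ with $(\Z/2)^{|S|}$), which is what guarantees that the hypothesis really supplies the data needed on those fixed loci. Using $G$-invariant collars (Lemma~\ref{lemma:collars-and-doubles-are-smooth}) one then widens the prescribed structure from $\bigcup_S M(S)\times[0,1]^\ell$ to a $G$-invariant collar neighborhood, so that on $P$ the prescribed data becomes a $(G\times(\Z/2)^k)$-equivariant smooth structure defined on an invariant open neighborhood of the lower fixed-point strata $\bigcup_{S\subsetneq[k]}P(S)$ as well as on $P\times\bigcup_iC_i$. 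The task is then to extend this over all of $P\times[0,1]^\ell$ after stabilizing by a $(G\times(\Z/2)^k)$-representation.

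The engine for the extension is the following relative form of Theorem~\ref{prop:lashof-smoothing}: given a topological $H$-manifold $Q$ with finitely many orbit types and an $H$-vector bundle lift of $T_\mu Q$, a face $F$ of a cube, and a smooth structure prescribed on an invariant neighborhood of $Q\times\partial F$ compatible with the lift, one can, after stabilizing by some $H$-representation, extend it over $Q\times F$ with projection to $F$ a submersion of $\langle\cdot\rangle$-manifolds. Existence of \emph{some} stable smoothing is the first clause of Theorem~\ref{prop:lashof-smoothing}; the ``rel $\partial F$'' and parametrized refinements are obtained by running the stable slice concordance clause of the same theorem along a handle decomposition of $F$ relative to $\partial F$, gluing in at each handle a concordance produced by Theorem~\ref{prop:lashof-smoothing}, stabilizing further as necessary and absorbing the new representation by Proposition~\ref{prop:complement-bundle}; the submersion property to $F$ is inherited because it is built into the concordances Theorem~\ref{prop:lashof-smoothing} produces. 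With this engine I would build $[0,1]^\ell$ up cell by cell: starting from the union $B$ of faces over which a compatible smooth structure is already prescribed (which we may assume contains $\vec 0$ and $\vec 1$, these being among the $C_i$), repeatedly pick a face $F\notin B$ of minimal dimension with $\partial F\subset B$, extend the already-prescribed structure near $P\times\partial F$ across $P\times F$ by one application of the engine, absorb the new representation into $R(\sigma)$ via Proposition~\ref{prop:complement-bundle}, and iterate until $F$ is the top cell of $[0,1]^\ell$. At the end the three $p$-submanifold conditions hold by construction, since the prescribed data on $\vec 0$, $\vec 1$ and on the strata $M(S)$ is never disturbed.

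The main obstacle is the relative smoothing engine of the previous paragraph: Theorem~\ref{prop:lashof-smoothing} as stated only gives existence of a stable smoothing together with a one-parameter concordance, whereas the cell-by-cell induction needs the full ``fill a cell rel its boundary, equivariantly, after stabilization, keeping the projection to the cube a submersion'' statement. One must either reprove the relevant portion of Lashof's handle induction in this relative setting or bootstrap it from Theorem~\ref{prop:lashof-smoothing} by an iterated mapping-cylinder argument; in either case the delicate bookkeeping is to ensure every stabilization is by an honest $H$-representation and that the ``all of which restrict to one another'' compatibility of the prescribed structures is preserved at each stage. The secondary, purely formal, difficulty is the identification $P(S)\cong\mathcal{D}(M(S))$ used in the reduction to doubles.
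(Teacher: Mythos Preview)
Your outline is in the right spirit but takes a genuinely different route from the paper, and the gap you flag at the end is exactly where the two diverge.

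You try to use Theorem~\ref{prop:lashof-smoothing} as a black box and bootstrap a ``relative, fill-a-cell-rel-boundary'' smoothing engine from its concordance clause, then induct over the face lattice of $[0,1]^\ell$. As you yourself note, the concordance clause only gives a one-parameter statement, and promoting it to the multi-parameter, rel-boundary version you need amounts to reproving the relevant part of Lashof's argument. That is a real gap: each cell-by-cell extension produces a new stabilizing representation, and the face-lattice induction forces you to control how these interact on overlaps, which is precisely the bookkeeping you warn about and do not carry out.

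The paper sidesteps this by \emph{not} treating Lashof's theorem as a black box. Instead it reopens the construction on \cite[pp.~302]{Lashof1979}: choose a single $G\times(\Z/2)^k$-representation $R(\sigma)$ large enough to admit compatible smooth equivariant embeddings of all the $\mathcal{D}(M(S))\times[0,1]^\ell$ and of the $\mathcal{D}(M)\times C_j$, extend to a continuous embedding of $\mathcal{D}(M)\times[0,1]^\ell$ with a normal microbundle, pick a compatible family of lifts $\lambda_t$ of the tangent microbundle, and then build the smooth structure on $\mathcal{D}(M)\times[0,1]^\ell\times R(\sigma)$ in one shot via the explicit exponential/retraction homeomorphism to an open set in the (already smooth) total space $\pi^*\xi$. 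All compatibility with the prescribed structures is encoded once and for all in the choice of embeddings and of $\lambda_t$, rather than being renegotiated at every face of the cube. Your reduction-to-doubles step and your identification $P(S)\cong\mathcal{D}(M(S))$ are correct and are used in the paper's argument as well; the difference is entirely in how the extension over $[0,1]^\ell$ is produced.
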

\begin{proof}
    
    We follow \cite[pp.~302]{Lashof1979}. We can find a $G \times (\Z/2)^k$-representation $R(\sigma)$ such that we have smooth equivariant embeddings of $\mathcal{D}(M(S)) \to R(\sigma)$ which restrict to one another. After stabilizing $\sigma$ further we can find smooth equivariant embeddings of $\mathcal{D}(M(S)) \times [0,1]^\ell \to R(\sigma) \times [0,1]^\ell$ which commute with the projections to $[0,1]^\ell$ and induce the fiberwise smooth structures on each strata and restrict to one another. 
    
    After stabilizing $\sigma$  further we can inductively (along the face lattice of the cube) extend the embeddings $\partial M \times C_j$ into  $R(\sigma) \times C_j$ to smooth equivariant embeddings of $M \times C_j$ for specified faces $C_j$ of $[0,1]^\ell$.
    
    After stabilizing $\sigma$ further we can find a continuous equivariant embedding of $\mathcal{D}(M) \times [0,1]^\ell$ to $R(\sigma)\times [0,1]^\ell$ commuting with the projections, restricting to stabilizations of all previously chosen embeddings on the subsets where they have been defined. Moreover by additional stabilization we can arrange  such that there is a neighborhood $U$ of the image of $\mathcal{D}(M) \times [0,1]^\ell$ equipped with a projection map $U \to \mathcal{D}(M) \times [0,1]^\ell$ commuting with the projections to $[0,1]^\ell$ such that the embedding into $U$ and the projection defines a $G \times (\Z/2)^k$-microbundles over $\mathcal{D}(M) \times [0,1]^\ell$ which are smooth vector bundles over the part of the domain with a smooth structure.
    
    We can find a family of lifts of the tangent microbundle $\lambda_t: \xi \to \tau \mathcal{D}(M)$, $t \in [0,1]^k$, such that $\lambda_{\vec 0}$ and $\lambda_{\vec{1}}$ agree with lifts of the tangent microbundle associated to the smooth structures specified on $\mathcal{D}(M) \times \vec{0}$ and $\mathcal{D}(M) \times \vec{1}$, and such that there are embeddings $\iota^S_t: T\mathcal{D}(M)(S) \to \xi$ such that $\lambda_t \iota^S_t \subset \tau\mathcal{D}(M)(S) \subset \tau\mathcal{D}$ and such that are lifts of of the tangent microbundles of $\mathcal{D}(M)(S)\times t$ associated to the given smooth structures. 
    
    Clearly $U$ has a smooth structure. Thus, so does $\pi^* \xi$ by by Lemma \ref{lemma:unique-smoothing-of-vector-bundle} since the latter is a vector bundle over a $\langle k \rangle$-manifold. We will show that an open neighborhood in $\pi^* \xi$ is equivariantly homeomorphic $\mathcal{D}(M) \times [0,1]^\ell \times R(\sigma)$. In fact, one simply performs the construction in \cite[pp.~302]{Lashof1979} which we explicate for convenience. Given a point $x \in \mathcal{D}(M)$ , an element $t \in [0,1]^k$, and a point $v \in R(\sigma)$, we can think of this as a point of $TU|_{\mathcal{D}(M) \times t}$ and apply the exponential map to get an element of $U$ if $v$ was sufficiently small. This gives us another element $y \times t \in \mathcal{D}(M) \times[0,1]^k$ via the projection $U \to \mathcal{D}(M) \times [0,1]^k$, and an element of the fiber of $U$ over $y \times t$. If $v$ was sufficiently small then $(y,x) \in \tau \mathcal{D}(M)$ will lie in the image of $\lambda_t$, and so applying $\lambda_t$ gives us an equivariant homeomorphism between an open subset of the zero section of $\mathcal{D}(M) \times [0,1]^k \times R(\sigma)$ and $\pi^* \xi$. Thus $\mathcal{D}(M) \times [0,1]^k \times R(\sigma)$ has a smooth structure. The construction shows that the map to $[0,1]^k$ is a submersion. The compatibility conditions follow from the fact that $U$ is a vector bundle in the regions where smooth structures have been specified, and from the compatibility conditions imposed on $\lambda_t$. 
    
\end{proof}

\begin{lemma}
\label{lemma:compatible-system-of-derved-orbifold-charts-1}
    There exist stabilizations $\mathcal{K}'_{\Lambda_d}$ of $Coll(\mathcal{K}_{\Lambda_d})$ such that $\mathcal{K}'_{\Lambda_d}$ admit normal complex structures and such that for each $(d_j)_{j=1}^r, \sum_j d_j = d$, writing $\mathcal{K}'_{\Lambda_d}//G$ for the associated normally complex derived $\langle k \rangle$-orbifold, $\mathcal{K}'_{\Lambda_d}//G)(S(d_1, \ldots, d_r))$ is, as a normal complex derived $\langle k \rangle$-orbifold, normally complex diffeomorphic to a stabilization of $\mathcal{K}'_{\Lambda_{d_1}}//G \times \ldots \mathcal{K}'_{\Lambda_{d_r}}//G$ by a complex vector bundle. 
\end{lemma}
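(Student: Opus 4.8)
The plan is an induction on $d$: at the inductive step one feeds to the ``upgraded'' equivariant stable smoothing result of Proposition~\ref{prop:upgraded-smoothing} the smooth and normal complex structures that the inductive hypothesis has already placed on the boundary strata of $Coll(\mathcal{K}_{\Lambda_d})$, and extends them to the interior. The base case $d=1$ is immediate: the thickening has no corners ($k=d-1=0$), so $Coll(\mathcal{K}_{\Lambda_1})=\mathcal{K}_{\Lambda_1}$, and one takes $\mathcal{K}'_{\Lambda_1}$ to be the smoothed, $\mathfrak{pu}_2$-stabilized chart furnished by Lemma~\ref{lemma:can-smooth-a-single-chart} together with the canonical normal complex structure of Section~\ref{sec:normally-complex-structures-single-chart}, the boundary condition being vacuous.

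For the inductive step, assume $\mathcal{K}'_{\Lambda_{d'}}$ has been built for all $d'<d$. By Proposition~\ref{prop:induction-relation-between-kuranishi-charts}, for each $S=S(d_1,\ldots,d_r)\in 2^{[d-1]}$ the restricted chart $\mathcal{K}^\epsilon_{\Lambda_d}(S)$ is obtained from the $\mathcal{K}_{\Lambda_{d_j}}$ by products, (generalized) inductions, restrictions, open subsets, stabilization and equivalence. First I would pin down one stabilization of $Coll(\mathcal{K}_{\Lambda_d})$ --- incorporating the representations by which the $\mathcal{K}'_{\Lambda_{d_j}}$ were stabilized and extending them minimally from the boundary strata into the interior of $\mathcal{F}(\CP^d,d)$, just as in the proof of Lemma~\ref{lemma:surjective-perturbation-data-exist-2} --- whose restriction to each stratum $Coll(\mathcal{K}_{\Lambda_d})(S)$ is identified, using that $Coll$ of a chart is canonically diffeomorphic to the chart and that induction and restriction are identities at the level of $//G$, with a stabilization of the product $\mathcal{K}'_{\Lambda_{d_1}}\times\cdots\times\mathcal{K}'_{\Lambda_{d_r}}$. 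These identifications agree on the overlaps $Coll(\mathcal{K}_{\Lambda_d})(S)\cap Coll(\mathcal{K}_{\Lambda_d})(T)=Coll(\mathcal{K}_{\Lambda_d})(S\cap T)$ by the associativity \eqref{eq:associativity-of-induction} of the induction construction and the inductive hypothesis. Consequently every boundary stratum of $Coll(\mathcal{K}_{\Lambda_d})$ inherits both a smooth structure and a normal complex structure; and --- this is the reason for passing to the collared chart --- the collar furnishes each stratum with a genuine product neighborhood $Coll(\mathcal{K}_{\Lambda_d})(S)\times[0,1]^{[d-1]\setminus S}$, on which the corresponding product smooth structure is well defined, with all these prescriptions mutually compatible.

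Next I would extend to the interior. The thickening carries a fiberwise-smooth $C^1_{loc}$ topological submersion onto the smooth $\langle d-1\rangle$-manifold $\mathcal{F}(\CP^d,d)$ by Propositions~\ref{prop:fiberwise-c1loc-on-kuranishi-chart} and \ref{prop:smooth-structures-on-mclean-stabilizations}, and this structure persists after collaring, over the smooth $\langle d-1\rangle$-manifold $Coll(\mathcal{F}(\CP^d,d))$. Feeding the prescribed smooth structures on the product neighborhoods of the strata into Proposition~\ref{prop:upgraded-smoothing} (which rests on the microbundle splitting of Proposition~\ref{prop:submersion-microbundle-splitting} and the single-chart smoothing of Proposition~\ref{prop:smooth-one-k-manifold}) yields, after one further stabilization by a $G$-representation which I choose complex, a smooth structure on the thickening of $Coll(\mathcal{K}_{\Lambda_d})$ restricting to the prescribed structure on every stratum. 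Together with the $\mathfrak{pu}_{d+1}$-stabilization this defines $\mathcal{K}'_{\Lambda_d}$; I would then smooth the bundle $\mathcal{V}_d\oplus i\mathfrak{pu}_{d+1}$ and its stabilizing summands using Lemma~\ref{lemma:unique-smoothing-of-vector-bundle}, selecting within its contractible space of smoothings one that agrees with the smoothings already fixed over the boundary. Finally I would install the canonical normal complex structure of Section~\ref{sec:normally-complex-structures-single-chart} on $\mathcal{K}'_{\Lambda_d}//G$ and, invoking the analysis of Section~\ref{sec:normally-complex-structures-compatibility} --- in particular the treatment of the generalized induction \eqref{eq:generalized-induction}, \eqref{eq:map-on-base-spaces-2-b}, where the non-smooth dependence of $\sigma_H$ on the underlying pre-Floer trajectory is absorbed by using the gluing map to \emph{induce} the smooth structure near the stratum before extending --- conclude that its restriction to $(\mathcal{K}'_{\Lambda_d}//G)(S(d_1,\ldots,d_r))$ is normally complex diffeomorphic to the asserted stabilization of $\mathcal{K}'_{\Lambda_{d_1}}//G\times\cdots\times\mathcal{K}'_{\Lambda_{d_r}}//G$ by a complex bundle (using again that at the level of $//G$, induction and restriction are identities and generalized inductions become complex stabilizations).

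The hard part will be the simultaneity of the interior-extension step: one must arrange the stabilizing representation and the collar-prescribed structures so that a \emph{single} smooth structure on $Coll(\mathcal{K}_{\Lambda_d})$ restricts to the inductively given smooth structure on \emph{every} boundary stratum at once, rather than being merely stably slice-concordant to it. This is exactly what Proposition~\ref{prop:upgraded-smoothing} is built to deliver, so the genuine labor lies in verifying its hypotheses --- above all, that the prescribed structures on the various strata and cube faces genuinely restrict to one another, which I would reduce to the associativity \eqref{eq:associativity-of-induction} of induction together with the inductive hypothesis, and that the fiberwise-smooth $C^1_{loc}$ topological submersion data survive the collaring operation.
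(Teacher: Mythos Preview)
Your proposal is correct and follows essentially the same approach as the paper: induction on $d$, with the base case handled by Lemma~\ref{lemma:can-smooth-a-single-chart} and the $\mathfrak{pu}$-stabilization of Section~\ref{sec:normally-complex-structures-single-chart}, and the inductive step combining Proposition~\ref{prop:induction-relation-between-kuranishi-charts} (to identify boundary strata with products of smaller charts), associativity \eqref{eq:associativity-of-induction} (for coherence on overlaps), and Proposition~\ref{prop:upgraded-smoothing} (to extend the smoothing across the collar), with Section~\ref{sec:normally-complex-structures-compatibility} governing the normal complex compatibility. If anything, you are slightly more explicit than the paper about why one passes to collars and about which proposition does the actual smoothing extension; the paper's proof cites Proposition~\ref{prop:complement-bundle} at the extension step, but the work is really done by Proposition~\ref{prop:upgraded-smoothing}, exactly as you say.
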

\begin{proof}

    We first replace all $\mathcal{K}_{\Lambda_d}$ by the stabilizations by complex vector bundles $\mathcal{K}^{\epsilon}_{\Lambda_d, 1}$ arising from Lemma \ref{lemma:can-smooth-a-single-chart}, and then further stabilize them to $\mathcal{K}^{\epsilon}_{\Lambda_d, 2}$ by $\mathfrak{pu}_d$ such that they come equipped with canonical normally complex structures as in Section \ref{sec:normally-complex-structures-single-chart}. The collars of these new charts will admit normally complex structures after they are smoothed, as normally-complex structures are a condition on the tangent bundle and obstruction bundle of a global Kuranishi chart and collaring manifestly preserves this condition.

    We now apply Propostion \ref{prop:upgraded-smoothing} inductively $d$ and on the posets $[d-1]$ for each $d$ to extend our smoothings of $\mathcal{K}^{\epsilon}_{\Lambda_d, 2}$ to $Coll(\mathcal{K}^{\epsilon}_{\Lambda_d, 2})$ and achieve the compatibility condition of the Lemma.  The case $d=1$ is already complete, as $Coll(\mathcal{K}^{\epsilon}_{\Lambda_1, 2}) = \mathcal{K}^{\epsilon}_{\Lambda_1, 2} =: \mathcal{K}'_{\Lambda_1}$.  Assume that for all $1 \leq \ell < d$, we have smooth Kuranishi charts $\mathcal{K}'_{\Lambda_\ell}$ satisfying the condition. Now Proposition \ref{prop:induction-relation-between-kuranishi-charts} and the discussion in Section \ref{sec:normally-complex-structures-compatibility} shows how to realize a stabilization of $Coll(\mathcal{K}^{\epsilon}_{\Lambda_d}, 2)(S_(d_1, \ldots, d_r))$ from  $Coll(\mathcal{K}^\epsilon_{\Lambda_{d_1}, 2})$, \ldots, $Coll(\mathcal{K}^\epsilon_{\Lambda_{d_r}, 2})$ by taking products, stabilizations by complex vector spaces, taking open subsets, and equivalences. These previous charts have been smoothed earlier so these charts come equipped with smooth structures; we have smoothed the chart everywhere except the interior of the collar. By applying Proposition \ref{prop:complement-bundle} inductively for each $S \in 2^{[d-1]} \setminus [d-1]$,  there is a stabilization $\mathcal{K}'_{\Lambda_d}$ of $Coll(\mathcal{K}^{\epsilon}_{\Lambda_d})$ by a faithful complex $PU(d+1)$-representation such that the smooth structures specified everywhere except in the interior of the collar are extended to a smooth structure on the whole chart. In particular, we still have  $\mathcal{K}'_{\Lambda_d}(S(d_1, \ldots, d_r))$ arising from $\mathcal{K}'_{\Lambda_{d_j}}$ (which are themselves stabilizations of collars of $\mathcal{K}^\epsilon_{\Lambda_d, 2}$) by products, inductions, restrictions, stabiliations, and difeomorphisms. We conclude by passing to associated derived $\langle k \rangle$-orbifold charts.
   
\end{proof}

\section{Floer complex from Kuranishi charts via strongly transverse perturbations}
We now develop some axiomatics for compatibly perturbing the defining sections of a compatible system of normally complex derived $\langle k \rangle$-orbifold charts.

\paragraph{Categories of spaces and orbifolds}
Let $Fin$ denote the monoidal category with objects given by finite sets, morphisms given by injective maps, and monoidal product given by disjoint union of finite sets. We enlarge $Fin$ to a category $Fin'$ by adding on the object $\bullet$, which can be thought of informally as the ``set with $-1$ element''. This object is defined by the properties that $Fin'(\bullet, S) = Fin'(S, \bullet) = \emptyset$, and $\bullet \tensor S = S \tensor \bullet = \bullet$. 

Let $Top^{Fin'}$ be the monoidal category with objects given by pairs $(S, \M)$ with $S \in Fin'$ and $\M$ a $2^{S}$-space, with $2^{\bullet}$ spaces required to be empty, such that that the maximal open stratum of $\M$ is nonempty if $S \neq \bullet$; with morphisms $(S, \M_S) \to (T, \M_T)$ given by a map $f: S \to T$ in $FinSet'$, and a map $\bar{f}: \M_S \to \M_T$ which is a homeomorphism onto its image $\M_T(f(S))$, which induces homeomorphisms $\M_S(S') \to \M_T(f(S'))$ for $S' \subset S$. The monoidal structure structure is given by $(S, \M_S) \times (T, \M_T) = (S \cup T, \M_S \times \M_T)$. Note that $(\emptyset, \{*\})$ is the unit object for this monoidal structure. 

We declare the unique derived $\bullet$-orbifold to be the derived $\langle 0 \rangle$-orbifold with the empty thickening and empty orbibundle, which we call the empty orbifold. Let $DerOrb'$ be the monoidal category consisting 
of

\begin{itemize}
    \item Objects given by pairs $(S, \widetilde{\mathcal{K}})$, with $S \in Fin'$ and $\widetilde{\mathcal{K}}$ an $S$-orbifold, such that that the maximal open stratum of $\M$ is nonempty if $S \neq \bullet$, and 
    \item morphisms $(S, \widetilde{\mathcal{K}}_S) \to (T, \widetilde{\mathcal{K}})$ are given by maps $f: S \to T$ together with a  diffeomorphism of $\widetilde{\mathcal{K}}_T(f(S))$ with a stabilization of $\widetilde{\mathcal{K}}_S$ by a vector bundle.
\end{itemize}  The monoidal structure is given by the monoidal structure on $FinSet'$ and the product of derived $\langle k \rangle$-orbifolds. 

Similarly, we have the monoidal category $DerOrb'_\C$ with objects $(S, \widetilde{\mathcal{K}})$ as in $DerOrb'$ but with $\widetilde{\mathcal{K}}$ equipped with a normal complex structure, and for which morphisms $(S, \widetilde{\mathcal{K}}_S) \to (T, \widetilde{\mathcal{K}})$ are as in $DerOrb'_\C$ but with $\widetilde{\mathcal{K}}_T(f(S))$ now identified via normally complex diffeomorpism from a stabilization of $\widetilde{\mathcal{K}}$ by a \emph{complex} vector bundle. We have forgetful monoidal functors $DerOrb'_\C \to DerOrb' \to Top^{Fin'}$. As usual, we will at times implicitly identify  elements of $DerOrb'$, respectively $DerOrb'_\C$, with those corresponding to replacing the underlying $\langle k \rangle$-orbifold by an open subset $U$ containing the zero set of the Kuranishi section, and replacing the vector bundle and the section by their restrictions to $U$. 

\paragraph{Topological flow categories}
\newcommand{\CC}{\mathcal{C}}

Given a category $\CC$ enriched in one of $Top(Fin'), DerOrb'$, or $DerOrb'_\C$, we will use slightly nonstandard notation and say that the morphism object representing maps from $\tilde{x}$ to $\tilde{y}$ is the tuple $(\mathcal{S}_{\CC(\tilde{x}, \tilde{y}}), \CC(\tilde{x}, \tilde{y}))$, and we will call the elements of $\CC(\tilde{x}, \tilde{y})$ the \emph{morphisms} from $\tilde{x}$ to $\tilde{y}$. Moreover, given a sequence $\tilde{x}_0, \ldots, \tilde{x}_r \in \CC$ with $\CC(\tilde{x}_j, \tilde{x}_{j+1})$ nonempty, we will write $\mathcal{S}_{\CC}(\tilde{x}_0, \ldots, \tilde{x}_r) \subset \mathcal{S}_{\CC(\tilde{x}_0, \tilde{x}_r)}$ for the image of $\mathcal{S}_{\CC(\tilde{x}_0, \tilde{x}_1)} \sqcup \ldots \sqcup \mathcal{S}_{\CC(\tilde{x}_{r-1}, \tilde{x}_r)}$ induced by the morphism in $Fin$ underlying the composition operation.

\begin{definition}
\label{def:topological-flow-category}
A topological flow category $\CC$ is a category enriched in $Top(Fin')$ such that the morphisms from $\tilde{x}$ to $\tilde{x}$ are the unit object in $Top(Fin')$.  We define $|\mathcal{S}_{\CC(\tilde{x}, \tilde{y})}| =: r(\tilde{x}, \tilde{y})$ if $\CC(\tilde{x}, \tilde{y}) \neq \emptyset$ and $\tilde{x} \neq \tilde{y}$.

A $\Pi$-equivariant topological flow category $\mathcal{F}$ is a topological flow category with a free $\Pi$-action on objects and morphisms. We say that $\mathcal{F}$ is \emph{proper} if $\CC(\tilde{x}, \tilde{y})$ is compact for any pair $(\tilde{x}, \tilde{y}) \in \CC^2$, and if the set
\begin{equation}
(\CC^2/\Pi)_q := \{ (\tilde{x}, \tilde{y}) \in \CC^2 | \tilde{x} \neq \tilde{y},  \CC(\tilde{x}, \tilde{y}) \neq 0, r(\tilde{x}, \tilde{y}) = q\} / \Pi
\end{equation}
is finite for each natural number $q$. We set $(\CC^2/\Pi) = \cup_{q=0}^\infty (\CC^2/\Pi)_q$ and $(\CC^2/\Pi)_{< r} = \cup_{q=0}^{r-1} (\CC^2/\Pi)_q$.

A grading on a $\Pi$-equivariant topological flow category $\CC$ is a map $\mu: Ob(\CC) \to \Z$ such that 
$\mu(\tilde{x}) - \mu(\tilde{y}) - 1 = \mu(g\tilde{x}) - \mu(g\tilde{y}) - 1$ for each $g \in \Pi$.
\end{definition}

\begin{remark}
To extend the method of this paper to the case of symplectic cohomology with quadratic Hamiltonians, it is necessary to weaken the notion of a proper flow category and to modify the various inductive constructions of this paper appropriately. 
\end{remark}

\paragraph{Flow categories equipped with charts}
\begin{definition}A \emph{compatible system of derived $\langle k \rangle$-orbifold charts} for a graded proper $\Pi$-equivariant topological flow category $\CC$ is a category $\CC'$ enriched in $DerOrb'_\C$ equipped with a free action of $\Pi$ with $Ob(\CC') = Ob(\CC)$ and such that applying the functor $DerOrb'_\C \to Top^{Fin'}$ recovers $\CC$; and such that $vdim(\mathcal{F'}(\tilde{x}, \tilde{y}) = \mu(\tilde{x} - \mu(\tilde{y})-1$ for each $\tilde{x}, \tilde{y} \in Ob(\CC)$. 
\end{definition}

Given such a $\CC'$, for any tuple $\tilde{x}_0, \ldots, \tilde{x}_r$ of objects of $\mathcal{F'}$ with $\CC(x_j, x_{j+1}) \neq \emptyset$ for $j=0, \ldots, r-1$, we have a map
\begin{equation}
    Ind: (\sigma'_0, \ldots, \sigma'_{r-1}) \mapsto \sigma'
\end{equation}from tuples of $(\sigma'_0, \ldots, \sigma'_{r-1})$, where $s'_j$ is a strongly transverse section of $\mathcal{V}_j$ for $\CC'(\tilde{x}_j, \tilde{x}_{j+1}) = (T_j, \mathcal{V}_j, \sigma_j)$, to strongly transverse sections $\sigma'$ of $\mathcal{V}$ (where $\CC'(\tilde{x}_0, \tilde{x}_r) = (\mathcal{T}, \mathcal{V}, \sigma)$) given by taking products and taking direct sum with the identity section of the stabilizing complex vector bundles, and applying the diffeomorphism induced by the map in $DerOrb'_\C$. By construction this map is \emph{associative} in the sense that 
\begin{equation}
\label{eq:induction-is-associative}
    Ind(Ind(\sigma'_0, \ldots, \sigma'_{r_1}), Ind(\sigma'_{r_1+1}, \ldots ), \ldots, Ind(\ldots, \sigma'_{r-1})) = Ind(\sigma'_0, \ldots, \sigma'_{r-1})
\end{equation}
for all tuples $\tilde{x}_0, \ldots, \tilde{x}_r$ as above. 

\begin{definition}
A \emph{compatible system of perturbations} for $\CC'$ is a choice, for every $\tilde{x}, \tilde{y} \in \CC$ with $\tilde{x} \neq \tilde{y}$ and $\CC(\tilde{x}, \tilde{y}) \neq \emptyset$, of choice of strongly transverse section $\sigma'(\tilde{x}, \tilde{y})$ of  $\mathcal{F'}(\tilde{x}, \tilde{y})$  such that 
\begin{equation}
\label{eq:compatible-system-of-perturbations-condition}
    Ind(\sigma'(\tilde{x}_0, \tilde{x}_1), \ldots, \sigma'(\tilde{x}_{r-1}, \tilde{x}_r)) = \sigma'(\tilde{x_0}, \tilde{x}_r)|_{T(\tilde{x}_0, \tilde{x}_r)(\mathcal{S}_{\CC}(\tilde{x}_0, \ldots, \tilde{x}_r))}
\end{equation}
for any tuple $\tilde{x}_0, \ldots, \tilde{x}_r$ of objects of $\mathcal{F'}$ with $\CC(\tilde{x}_j, \tilde{x}_{j+1}) \neq \emptyset$ for $j=0, \ldots, r-1$. 

We say that a compatible system of perturbations is $\Pi$-equivariant when $g \cdot \sigma(\tilde{x}, \tilde{y}) = \sigma(g\tilde{x}, g\tilde{y})$ for all $g \in \Pi$ and all pairs $\tilde{x}, \tilde{y} \in \CC$. 
\end{definition}

A sub-flow-category of a $\Pi$-equivariant topological flow category $\CC$ is an enriched subcategory $\CC_2 \subset \CC$ closed under the $\Pi$-action, where $\CC_2(\tilde{x}, \tilde{y}) \subset \CC(\tilde{x}, \tilde{y})$ is an inclusion of connected components for each $(\tilde{x}, \tilde{y}) \in Ob(\CC_2)$. By restricting to open subsets of the derived $\langle k \rangle$-orbifold charts, if $\CC'$ is a compatible system of derived $\langle k \rangle$-orbifold charts for $\CC$ then there is a corresponding compatible system of derived $\langle k \rangle$-orbifold charts $\CC'_2$ for $\CC_2$, which (after passing to open susbsets of morphism objects of $\CC'$) is a subcategory of $\CC'$ with morphisms given by connected components of morphisms in $\CC'$.

\paragraph{Existence of compatible systems of perturbations}
\begin{proposition}
\label{prop:transverse-perturbations-exist-inductively}
    There exists a $\Pi$-equivariant compatible system of perturbations for any proper $\Pi$-equivariant topological floer category $\CC$ equipped with a compatible system of derived $\langle k \rangle$-orbifold charts $\CC'$. Moreover, if $\CC_2$ is a sub-flow-category of $\CC$ such that the Kuranishi sections of the associated system of derived $\langle k \rangle$-orbifold charts $\CC'_2$ has $\sigma_2(\tilde{x}, \tilde{y})$ already strongly transverse (where we write $\sigma_2(\tilde{x}, \tilde{y})$ for the Kuranishi section associated to $\CC'_2(\tilde{x}, \tilde{y})$ for $(\tilde{x}, \tilde{y}) \in \CC'$) then we can choose $\sigma'(\tilde{x}, \tilde{y})|_{\CC'_2(\tilde{x}, \tilde{y})} = \sigma_2(\tilde{x}, \tilde{y})$.
\end{proposition}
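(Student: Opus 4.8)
\end{proposition}

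\begin{proof}[Proof idea]
The plan is to construct the sections $\sigma'(\tilde{x},\tilde{y})$ by induction on $r(\tilde{x},\tilde{y})$, using properness of $\CC$ so that at each stage only finitely many $\Pi$-orbits of pairs must be treated. As a preliminary I fix, for every pair with $\CC(\tilde{x},\tilde{y})\neq\emptyset$, a $\Pi$-equivariant straightened structure on $\mathcal{F}'(\tilde{x},\tilde{y})$, chosen coherently so that along each boundary stratum it restricts to the product of the chosen straightened structures on the factors, stabilized by the trivial connection on the stabilizing bundle; such a choice exists by \cite[Lemma~3.15]{bai_xu}, the tautological lemma producing straightened structures from global charts, the direct-sum statement for straightened structures, and the same inductive scheme used below. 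The normal complex structures are automatically coherent in this sense, since morphisms in $DerOrb'_\C$ are normally complex diffeomorphisms; in particular $Ind$ takes FOP sections to FOP sections and strongly $\langle k\rangle$-transverse sections to strongly $\langle k\rangle$-transverse sections. I also record that the Kuranishi section $\sigma$ of $\CC'$ is itself a compatible family in the sense of \eqref{eq:compatible-system-of-perturbations-condition} --- this is exactly how the composition maps of $\CC'$ were defined, $Ind$ being the operation of taking products, adding the identity section of the stabilizing bundle, and applying the structure diffeomorphism --- it merely fails to be strongly transverse. Hence, whenever $\sigma(\tilde{x},\tilde{y})$ is FOP and strongly $\langle k\rangle$-transverse, its restriction to a boundary stratum equals $Ind(\sigma(\tilde{x},\tilde{z}),\sigma(\tilde{z},\tilde{y}))$ and is FOP and strongly $\langle k\rangle$-transverse, so (the linearization of $s_1\oplus s_2\oplus\mathrm{id}$ being block diagonal, and FOP-ness and strong transversality both restricting to the factors) $\sigma(\tilde{x},\tilde{z})$ and $\sigma(\tilde{z},\tilde{y})$ are FOP and strongly $\langle k\rangle$-transverse as well.

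For the inductive step, assume $\sigma'(\tilde{x},\tilde{y})$ has been defined for all pairs with $r(\tilde{x},\tilde{y})<r$, so that it is FOP near its compact zero set and strongly $\langle k\rangle$-transverse, is $\Pi$-equivariant, satisfies \eqref{eq:compatible-system-of-perturbations-condition} whenever the outer pair still has $r<r$, and equals $\sigma(\tilde{x},\tilde{y})$ precisely on those morphism-components on which $\sigma(\tilde{x},\tilde{y})$ is FOP and strongly $\langle k\rangle$-transverse. Fix $(\tilde{x},\tilde{y})$ with $r(\tilde{x},\tilde{y})=r$ and write $\mathcal{F}'(\tilde{x},\tilde{y})=(\mathcal{T},\mathcal{V},\sigma)$, $\mathcal{T}$ a $\langle k\rangle$-orbifold. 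Every codimension-one boundary face of $\mathcal{T}$ breaks into connected components, each of which --- via the structure diffeomorphism underlying the composition in $\CC'$ --- is a stabilization by a complex vector bundle of $\mathcal{F}'(\tilde{x},\tilde{z})\times\mathcal{F}'(\tilde{z},\tilde{y})$ for an intermediate object $\tilde{z}$ with $r(\tilde{x},\tilde{z})<r$ and $r(\tilde{z},\tilde{y})<r$; on each such component set $\sigma'_\partial:=Ind(\sigma'(\tilde{x},\tilde{z}),\sigma'(\tilde{z},\tilde{y}))$. Associativity of $Ind$, \eqref{eq:induction-is-associative}, together with the inductive compatibility \eqref{eq:compatible-system-of-perturbations-condition}, forces these definitions to agree on the higher-codimension strata where two faces meet (such a stratum corresponds to a longer factorization of $\tilde{x}\to\tilde{y}$, grouped two different ways), so $\sigma'_\partial$ is a well-defined section over the union $\partial\mathcal{T}$ of the codimension-one faces; it is smooth, FOP, and --- being assembled by $Ind$ from strongly $\langle k\rangle$-transverse sections --- strongly transverse on every stratum of $\partial\mathcal{T}$.

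If $\sigma(\tilde{x},\tilde{y})$ is FOP and strongly $\langle k\rangle$-transverse (this includes every component lying in the given $\CC_2$), then by the last remark of the first paragraph and the inductive hypothesis $\sigma'(\tilde{x},\tilde{z})=\sigma(\tilde{x},\tilde{z})$ and $\sigma'(\tilde{z},\tilde{y})=\sigma(\tilde{z},\tilde{y})$ on each boundary face, hence $\sigma'_\partial=\sigma|_{\partial\mathcal{T}}$; in this case set $\sigma'(\tilde{x},\tilde{y}):=\sigma(\tilde{x},\tilde{y})$ and the required properties hold tautologically. Otherwise, first extend $\sigma'_\partial$ to a continuous section $s$ of $\mathcal{V}$ over $\mathcal{T}$ with compact zero set, using a collar neighborhood of $\partial\mathcal{T}$ (the collaring construction for $\langle k\rangle$-orbifolds) to interpolate between $\sigma'_\partial$ and the original section $\sigma$; then apply the fundamental extension lemma, Lemma~\ref{lemma:fundamental-extension-lemma} --- more precisely the argument in its proof, which feeds an FOP extension into \cite[Lemma~6.2]{bai_xu} and \cite[Theorem~6.3]{bai_xu} --- to obtain, on a precompact open neighborhood $D$ of the zero set, a section $\sigma'(\tilde{x},\tilde{y})$ agreeing with $\sigma'_\partial$ on $\partial\mathcal{T}$, $C^0$-close to $s$, and FOP and strongly $\langle k\rangle$-transverse near $\overline{D}$; replace the morphism-orbifold of $\CC'$ by the open subset $D$, which is allowed since objects of $DerOrb'_\C$ are taken up to such shrinking. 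Finally, make everything $\Pi$-equivariant by performing the construction for just one representative of each of the finitely many $\Pi$-orbits of pairs with $r(\tilde{x},\tilde{y})=r$ and setting $\sigma'(g\tilde{x},g\tilde{y}):=g\cdot\sigma'(\tilde{x},\tilde{y})$; this is well defined since $\Pi$ acts freely on objects, it preserves FOP-ness, strong transversality and zero sets, and it respects \eqref{eq:compatible-system-of-perturbations-condition} because the $\Pi$-action commutes with $Ind$ (the charts and structure diffeomorphisms of $\CC'$ being $\Pi$-equivariant). This completes the induction, and taking the union over all $r$ gives the desired compatible system.

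The genuinely analytic input --- the existence of a strongly $\langle k\rangle$-transverse FOP section extending strongly transverse FOP boundary data over a $\langle k\rangle$-orbifold carrying a straightened and a normal complex structure --- is already black-boxed in Lemma~\ref{lemma:fundamental-extension-lemma}, so the work here is almost entirely organizational. I expect the main obstacle to be verifying that $\sigma'_\partial$ is genuinely well defined and globally strongly transverse on all of $\partial\mathcal{T}$, i.e. that the various instances of $Ind$ produced by the different maximal factorizations of $\tilde{x}\to\tilde{y}$ agree on the strata where they overlap --- this is \eqref{eq:induction-is-associative} combined with \eqref{eq:compatible-system-of-perturbations-condition} for smaller $r$ --- while simultaneously threading the three constraints ($\Pi$-equivariance, coincidence with $\sigma$ wherever $\sigma$ is already transverse, and strong $\langle k\rangle$-transversality on every stratum) through the induction in a mutually consistent way; organizing the induction so that the rule ``$\sigma'=\sigma$ exactly where $\sigma$ is FOP and strongly transverse'' is self-consistent is what makes the last of these automatic.
\end{proof}
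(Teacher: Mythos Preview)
Your proposal is correct and follows essentially the same approach as the paper: induct on $r(\tilde{x},\tilde{y})$, split each morphism chart into connected components, on components where the Kuranishi section is already strongly transverse (in particular those coming from $\CC_2$) keep $\sigma'=\sigma$, and on the remaining components assemble the boundary section via $Ind$, verify well-definedness using associativity \eqref{eq:induction-is-associative} together with the inductive instance of \eqref{eq:compatible-system-of-perturbations-condition}, extend continuously $C^0$-close to $\sigma$, and apply Lemma~\ref{lemma:fundamental-extension-lemma}; $\Pi$-equivariance is handled by working with one representative per $\Pi$-orbit. The only differences are cosmetic: you make explicit the preliminary choice of coherent straightened structures and you phrase the $\CC_2$-clause as the slightly stronger inductive invariant ``$\sigma'=\sigma$ on every component where $\sigma$ is already FOP and strongly $\langle k\rangle$-transverse'', whereas the paper simply checks membership in $\pi_0(T_2(\tilde{x},\tilde{y}))$.
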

\begin{proof}
    We induct on $r$. Suppose that we have chosen $\sigma'(\tilde{x}, \tilde{y})$ strongly transverse for $([\tilde{x}], [\tilde{y}]) \in (\CC^2/\Pi)_{<r}$, such that
    \begin{itemize}
        \item Condition (\ref{eq:compatible-system-of-perturbations-condition}) holds for any tuple $\tilde{x}_0, \ldots, \tilde{x}_r$ as in (\ref{eq:compatible-system-of-perturbations-condition}) with  
    $([\tilde{x}_0], [\tilde{x}_{r}]) \in (\CC^2/\Pi)_{<r}$, and
    \item we have $g \cdot \sigma(\tilde{x}, \tilde{y}) = \sigma(g\tilde{x}, g\tilde{y})$ for all $g \in \Pi$ and all pairs $([\tilde{x}, [\tilde{y}]) \in  (\CC^2/\Pi)_{<r}$, and 
    \item we have $\sigma'(\tilde{x}, \tilde{y})|_{\CC'_2(\tilde{x}, \tilde{y})} = \sigma_2(\tilde{x}, \tilde{y})$ for $([\tilde{x}], [\tilde{y}]) \in (\CC^2/\Pi)_{<r}$.
    \end{itemize}
    This hypothesis is vacuously satisfied for $r=0$. We wish to apply Lemma \ref{lemma:fundamental-extension-lemma} to each element of $(\CC^2/\Pi)_r$. Write $\CC(\tilde{x}, \tilde{y}) = (T(\tilde{x}, \tilde{y}), V(\tilde{x}, \tilde{y}, \sigma(\tilde{x}, \tilde{y})$, and $\CC_2(\tilde{x}, \tilde{y}) = (T_2(\tilde{x}, \tilde{y}), V_2(\tilde{x}, \tilde{y}, \sigma_2(\tilde{x}, \tilde{y})$ Write $T(\tilde{x}, \tilde{y}) = \sqcup_{a \in \pi_0(T(\tilde{x}, \tilde{y}))} T(\tilde{x}, \tilde{y})_a$ for the connected components, and write $\sigma(\tilde{x}, \tilde{y})_a = \sigma(\tilde{x}, \tilde{y})|_{T(\tilde{x}, \tilde{y})_a}.$ If $a \in \pi_0(T_2(\tilde{x}, \tilde{y}))$ set $\sigma'(\tilde{x}, \tilde{y})|_{a} = \sigma_2(\tilde{x}, \tilde{y})|_{T(\tilde{x}, \tilde{y})_a}$. Otherwise for $a \neq \pi_0(T_2(\tilde{x}, \tilde{y}))$,  write 
    \begin{equation}
        \partial T(\tilde{x}, \tilde{y})_a = \bigcup_{\substack{\tilde{z} \in \CC \\ \CC(\tilde{x}, \tilde{y}) \neq \emptyset, \CC(\tilde{y}, \tilde{z}) \neq \emptyset}} T(\tilde{x}, \tilde{y}, \tilde{z})_a
    \end{equation}
    where $T(\tilde{x}, \tilde{y}, \tilde{z})_a \simeq V(\tilde{x}, \tilde{y}, \tilde{z})_a \to (T(\tilde{x}, \tilde{y}) \times T(\tilde{y}, \tilde{z}))_a$ is restriction of the stabilizing vector bundle to the corresponding family of connected components. (We are identifying $V(\tilde{x}, \tilde{y}, \tilde{z})$ with $T(\tilde{x}, \tilde{y}, \tilde{z}) \subset T(\tilde{x}, \tilde{y})$ via the normally complex diffeomorphism.) We can then write 
    \begin{equation}
        V(\tilde{x}, \tilde{y})|_{T(\tilde{x}, \tilde{y}, \tilde{z})} = 
        V(\tilde{x}, \tilde{y}, \tilde{z}) \oplus V(\tilde{x}, \tilde{y}) \oplus V(\tilde{y}, \tilde{z}) 
    \end{equation}
    where the direct sum is really the direct sum of the vector bundles pulled back from $V_{\tilde{x}, \tilde{y}, \tilde{z}}$, $T(\tilde{x}, \tilde{y}$, and $T(\tilde{y}, \tilde{z})$ respectively. We set $\sigma'(\tilde{x}, \tilde{y})|_{V(\tilde{x}, \tilde{y}, \tilde{z})_a} = Ind(\sigma'(\tilde{x}, \tilde{y}), \sigma'(\tilde{y}, \tilde{z}))|_{T(\tilde{x}, \tilde{y}, \tilde{z})_a}$. 
    By (\ref{eq:induction-is-associative}) and the inductive hypotesis this is a well-defined section. We extend this section to the interior as a contnuous section $C^0$ close to $\sigma(\tilde{x}, \tilde{y})_a$, and we apply Lemma \ref{lemma:fundamental-extension-lemma} to produce $\sigma'(\tilde{x}, \tilde{y})_a$. This defines $\sigma'(\tilde{x}, \tilde{y})$ by defining it on each connected component. We have now satisfied the inductive hypothesis for $r+1$. 
\end{proof}

Given a compatible system of perturbations $\sigma'$, Lemma \ref{lemma:dimension-count} implies that there is a finite number of zeros of $\sigma'(\tilde{x}_-, \tilde{x}_+)$ and they all lie away from the isotropy locus. In this case we define $\#\CC(\tilde{x}_-, \tilde{x}_+; \sigma')$ to be the sum of the number of zeros of $\sigma'(\tilde{x}_-, \tilde{x}_+)$ counted with multiplicity if $\vdim \CC'(\tilde{x}, \tilde{y}) = 0$, and otherwise we set $\#\CC(\tilde{x}_-, \tilde{x}_+; \sigma')$.

\paragraph{Chain complex associated to a compatible system of perturbations}
Given a compatible system of perturbations $\sigma'$, one can define a chain complex $|\mathcal{C}(\sigma')|$ when a few additional decorations are specified. 

\begin{definition}
	A \emph{Novikov group} is a triple $(\Pi, E, | \cdot |)$ where $\Pi$ is a finitely generated free abelian group, $E: \Pi \to \R$ is a homomorphism called the \emph{energy function}, and $\mu: : \Pi \to \Z$ is a homomorphism called the \emph{grading}. We will use $\Pi$ to refer to $(\Pi, E, \mu)$ when the additional data are clear from the context. We will write $N_\Pi$ for the positive generator of the image of the grading, unless $\mu(\Pi) = 0$ in which case we will say $N_\Pi = 0$.
	We have the group $\Pi_e = \Pi/\ker E$. 
	
	The group rings $\Z[\Pi]$, $\Z[\Pi_E]$, 
	inherit filtrations from $E$, and we write $\Lambda^{\Pi}_\Z$,  $\Lambda^{\Pi_E}_\Z$,
	for the respective completions of these rings with respect to this filtration. 
	

\end{definition}

\begin{lemma}
\label{lemma:qis-lemma}
    Let $C_1^\bullet$ be a cochain complex of free modules over $\Z$. Let $C_2^\bullet$ be the $\Z/2N_\Pi \Z$-graded version of $C_1^\bullet  \tensor_R \Lambda^{\Pi_E}_\Z$, that is 
    \begin{equation}
        C_2^{k} = \bigoplus_{r = k \mod 2N_\pi} C_1^\bullet \tensor \Lambda^\Pi_\Z, k \in \Z/2N_\Pi \Z.
    \end{equation}
    Then any $\Z/N_\Pi \Z$-graded cochain complex $D^\bullet$ of free $\Lambda^{\Pi_E}_\Z$-modules quasi-isomorphic to $C_2^\bullet$ must have at least the same number of generators as the minimum number of generators of a chain complex of free $\Z$-modules quasi-isomorphic to $C_1^\bullet$. 
\end{lemma}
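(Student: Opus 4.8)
\textbf{Proof plan for Lemma \ref{lemma:qis-lemma}.}

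The plan is to reduce the statement to an algebraic fact about minimal free resolutions / minimal models over the local ring $\Lambda^{\Pi_E}_{\Z}$, combined with the observation that passing from $C_1^\bullet$ to $C_2^\bullet$ only multiplies the number of generators in each cohomological degree by a combinatorial factor that is the same for any complex with the given cohomology. First I would recall that $\Lambda^{\Pi_E}_{\Z}$ is a complete local ring: its maximal ideal $\mathfrak{m}$ consists of those Novikov series with strictly positive valuation together with the maximal ideal of $\Z$ — wait, more precisely, since $\Z$ is not local we must be slightly careful; the correct statement is that for each prime $p$ the ring $\Lambda^{\Pi_E}_{\Z} \tensor \Z_{(p)}$ (or its $p$-completion) is local with residue field $\F_p$, while rationally $\Lambda^{\Pi_E}_{\Z} \tensor \Q = \Lambda^{\Pi_E}_{\Q}$ is local with residue field $\Q$. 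The key point is that a bounded complex of finitely generated free modules over such a ring has a well-defined \emph{minimal model}: one can cancel any differential entry that is a unit, and the number of generators of the minimal model in each degree is an invariant of the quasi-isomorphism type, computed as the total rank of cohomology (over the residue field) of the complex $D^\bullet \tensor_{\Lambda} (\Lambda/\mathfrak{m})$. So the minimal number of free generators of any complex quasi-isomorphic to $D^\bullet$ equals $\sum_k \dim_{k(\mathfrak{m})} H^k(D^\bullet \tensor_\Lambda k(\mathfrak{m}))$, and this is $\leq$ the number of generators of $D^\bullet$ itself.

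Next I would carry out the comparison with $C_1^\bullet$. Since $C_1^\bullet$ is a complex of free $\Z$-modules, the minimal number of generators of a complex of free $\Z$-modules quasi-isomorphic to $C_1^\bullet$ is, by the structure theorem for finitely generated abelian groups applied to $H^\bullet(C_1^\bullet)$, equal to $\sum_k \left( \mathrm{rk}\, H^k(C_1^\bullet) + (\text{number of torsion cyclic summands of } H^k \text{ counted with the convention that each contributes to two consecutive degrees})\right)$ — equivalently $\sum_k \dim_{\Q} (H^k(C_1^\bullet)\tensor\Q) + \sum_p \sum_k \dim_{\F_p} H^k(C_1^\bullet \tensor^{L} \F_p)$ after deduplication; the cleanest route is: the minimal number of $\Z$-generators equals $\max_p \left(\text{total } \F_p\text{-rank of } H^\bullet(C_1 \tensor^L \F_p)\right)$ maximized also against the rational rank. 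Then I would observe that tensoring with $\Lambda^{\Pi_E}_\Z$ is flat, so $H^\bullet(C_2^\bullet \tensor_\Lambda k(\mathfrak{m}))$ — for each choice of residue field, i.e. each prime $p$ or the rational point — is $2N_\Pi$-periodized from $H^\bullet(C_1^\bullet \tensor \F_p)$ respectively $H^\bullet(C_1^\bullet\tensor\Q)$, but the periodization is a \emph{direct sum over residues mod $2N_\Pi$}, so in fact the total rank of $H^\bullet$ of $C_2^\bullet \tensor k(\mathfrak{m})$ in the $\Z/2N_\Pi\Z$-grading equals exactly the total rank of $H^\bullet(C_1^\bullet \tensor k)$ in the original $\Z$-grading (each generator of $C_1$ in degree $r$ lands in the single residue class $r \bmod 2N_\Pi$). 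Hence the minimal generator count of $D^\bullet$ over $\Lambda^{\Pi_E}_\Z$ is bounded below by $\sum_k \dim_{k} H^k(C_1^\bullet \tensor k)$ for \emph{every} residue field $k$ simultaneously, and therefore by the max over all of them, which is precisely the minimal $\Z$-generator count of a complex quasi-isomorphic to $C_1^\bullet$.

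The step I expect to be the main obstacle is making the "minimal model" argument fully rigorous over $\Lambda^{\Pi_E}_\Z$, which is not a local ring in the naive sense because $\Z$ is not local — one has to either work prime-by-prime and rationally (using that $\Lambda^{\Pi_E}_\Z$ is a free $\Z$-module and that $\mathfrak{m}$-adic / $p$-adic completions behave well), or invoke that $\Lambda^{\Pi_E}_\Z$ is a regular-enough ring so that finitely generated free complexes admit minimal forms whose betti numbers are detected at each point of $\mathrm{Spec}$. Concretely I would: (i) fix a prime $p$, base-change the quasi-isomorphism $D^\bullet \simeq C_2^\bullet$ along $\Lambda^{\Pi_E}_\Z \to \Lambda^{\Pi_E}_\Z \tensor \F_p$, which is a complex of free modules over the \emph{local} ring $\Lambda^{\Pi_E}_{\F_p}$ whose minimal model has $\sum_k \dim_{\F_p} H^k(C_1 \tensor \F_p)$ total generators; since a free resolution can only lose generators under base change and minimalization, the original $D^\bullet$ has at least that many generators; (ii) do the same rationally; (iii) take the maximum. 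I would also need the elementary fact that the minimal number of generators of a free $\Z$-complex with prescribed cohomology equals this same maximum — standard from Smith normal form. Granting these, the chain of inequalities closes and the lemma follows.
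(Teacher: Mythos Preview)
Your approach has a genuine gap in step (iii): the claim that the minimal number of generators of a free $\Z$-complex with prescribed cohomology equals the maximum over all primes $p$ (and $\Q$) of the total $\F_p$-Betti number is \emph{false}. Consider a complex with $H^1 = \Z/2$, $H^3 = \Z/3$, and all other cohomology zero. A minimal free $\Z$-complex realizing this cohomology requires four generators (two for each torsion class, in adjacent degrees). But for any single prime $p$, the total $\F_p$-rank of $H^\bullet(C_1 \otimes^L \F_p)$ is at most $2$: over $\F_2$ only the $\Z/2$ contributes, over $\F_3$ only the $\Z/3$ contributes, and over $\Q$ nothing survives. So the prime-by-prime maximum gives $2$, not $4$. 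Working one prime at a time simply cannot see torsion at different primes simultaneously; this is precisely the distinction between the integral Arnol'd conjecture and the mod-$p$ results of Abouzaid--Blumberg alluded to in the introduction.

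The paper's proof sidesteps this entirely by exploiting the fact that $\Lambda^{\Pi_E}_\Z$ is a \emph{PID}, with every nonzero ideal generated by a positive integer (Hofer--Salamon). One can therefore put the differentials of $D^\bullet$ into Smith normal form directly over $\Lambda^{\Pi_E}_\Z$, obtaining diagonal matrices with nonnegative integer entries. This exhibits $D^\bullet$ as $D_2^\bullet \otimes_\Z \Lambda^{\Pi_E}_\Z$ for a $\Z/2N_\Pi\Z$-graded complex $D_2^\bullet$ of free $\Z$-modules with the same number of generators. The elementary divisors of $H^\bullet(D^\bullet)$ then match those of $H^\bullet(C_2^\bullet)$, which in turn match those of $H^\bullet(C_1^\bullet)$ after periodization, and the generator count follows. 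The key point you were missing is that Smith normal form over $\Lambda^{\Pi_E}_\Z$ already lands you back in $\Z$-coefficients, so no localization or prime-by-prime argument is needed.
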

\begin{proof}
    Note that  $\Lambda^{\Pi_E}_\Z$ is a PID with principal ideals given by $(n)$ for $n \in \Z$ \cite[Chapter 4]{HS-Novikov}. 
    
    By using the condition that $d^2=0$, we can choose bases in $D^\bullet$ such that all matrices of all the differentials in $D^\bullet$ are diagonal matrices with nonnegative integer entries.
    
   Thus $D^\bullet \simeq D_2^\bullet \tensor_R \Lambda^{E(\Pi)}_\Z$ for an $\Z/N_\Pi\Z$-graded complex of $\Z$-modules $D_2^\bullet$. We can thus write the cohomology of $D^\bullet$ as $H^k(D^\bullet) \simeq (\Lambda^{E(\Pi)}_\Z)^{k_f} \oplus \oplus_{i=1}^{k_t} \Lambda^{E(\Pi)}_\Z/(d^{k, i})$ with $d^{k, i} | d^{k, i+1}$ a sequence of positive integers at least $2$, and all the numbers $k_f, k_t$ and $d^{k, i}$ are well defined.  The  number of generators of $D^\bullet$ is then at least $\sum_{k \in \Z/N_\Pi \Z} k_f + k_t$, which, since $D^\bullet$ is isomorphic to $C_2^\bullet$, agrees with the minimal number of generators of $C_1^\bullet$. 
\end{proof}
\begin{definition}
	When $\Pi$ is a Novikov group, a $\Pi$-equivariant flow category is a $\Pi$-equivariant flow category $\CC$ in the sense of Definition \ref{def:topological-flow-category}, which is equipped with a grading $\mu$, as well as the additional data of an \emph{energy function} $E: Ob(\CC) \to \R$ satisfying
\begin{equation}
		E(g \cdot \tilde{x}) = E(g) + E(\tilde{x}) \text{ and } \mu(g \cdot \tilde{x}) = \mu(g) + 2\mu(\tilde{x}) \text{ for } \tilde{x} \in \CC, g \in \Pi. 
\end{equation}

	We say that $\CC$ as above is $E$-proper if it is proper and if for every pair of elements $x, y \in Ob(\mathcal(\C))/\Pi$, the sets
	\begin{equation}
		\{ (\tilde{x}, \tilde{y}) \in \CC | [\tilde{x}] = x, [\tilde{y}] = y, \CC(\tilde{x}, \tilde{y}) \neq 0, E(\tilde{x}) - E(\tilde{y}) < A\}/\Pi
	\end{equation}
	are finite for all $A \in \R$. We call the quantity $E(\tilde{x}) - E(\tilde{y})$ the \emph{energy} of elements in $\CC(\tilde{x}, \tilde{y})$. 
	
	We say that $\CC$ is $E$-positive if whenever $\tilde{x} \neq \tilde{y}$, $\mathcal{C}(\tilde{x}, \tilde{y}) \neq \emptyset$ implies $E(\tilde{x}) - E(\tilde{y}) > 0$. 
\end{definition}

From now on, $\Pi$ will be a Novikov group. 

If $\sigma'$ is a compatible system of perturbations associated to $\Pi$-equivariant $E$-proper flow-category $\CC$, we have $\Z/2N_{\Pi}\Z$-graded cochain complex $|\CC(\sigma')|_{\Lambda^\Pi_\Z}$ defined as follows. We set also
\begin{equation}
	|\CC(\sigma')|_{\Lambda^\Pi_\Z}^d = \{ \sum r_{\tilde{x}} \tilde{x} | r_{\tilde{x}} \in \Z, \tilde{x} \in \CC, \mu(\tilde{x}) = d + 2N_\Pi \Z, \#\{r_{\tilde{x}} \neq 0 | E(\tilde{x})< A\} < \infty \text{ for all } A \}. 
\end{equation} 
Each of these is a free $\Lambda^{\Pi}_\Z$-module. 

We define the differential by
\begin{equation}
	d \tilde{y} = \sum_{\substack{\tilde{x} \in \CC \\ \mu(\tilde{x}) = \mu(\tilde{y})+1}} \#\CC(\tilde{x}_-, \tilde{x}_+; \sigma') \tilde{x}. 
\end{equation}
The fact that the category $\CC$ is $E$-proper and proper implies that the sums above lie in $|\CC(\sigma')|_{\Lambda^\Pi_\Z}^{\mu(\tilde{y})+1}$. Extending $d$ by $\Lambda{\Pi}_\Z$-linearity, we apply the second part of Lemma \ref{lemma:dimension-count} to conclude 
\begin{lemma}
	$|\CC(\sigma')|_{\Lambda^{\Pi}_\Z}^d$ is a $\Z/2N_\Pi\Z$-graded cochain complex of free $\Lambda^{\Pi}_\Z$-modules.
\end{lemma}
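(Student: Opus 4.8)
The statement to prove is that $|\CC(\sigma')|_{\Lambda^{\Pi}_\Z}^\bullet$ is a $\Z/2N_\Pi\Z$-graded cochain complex of free $\Lambda^\Pi_\Z$-modules. There are really two assertions bundled here: that each graded piece is a free $\Lambda^\Pi_\Z$-module, and that $d$ is a well-defined degree-$+1$ map with $d^2 = 0$. The plan is as follows.

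\textbf{Freeness and well-definedness of the modules.} First I would observe that $|\CC(\sigma')|^d_{\Lambda^\Pi_\Z}$ is by construction a completed direct sum: pick a set of representatives $\{x_\alpha\}$ for the orbits of $\Pi$ acting on objects $\tilde x$ with $\mu(\tilde x) \equiv d \pmod{2N_\Pi}$ (the $\Pi$-action on objects is free, and the grading shifts by $\mu(g) \in 2N_\Pi\Z$ so the condition $\mu(\tilde x)\equiv d$ is $\Pi$-invariant up to the coset structure — here I'd use that $\mu(g\tilde x) = \mu(g) + 2\mu(\tilde x)$, so actually I need to be slightly careful and sum over objects up to the subgroup $\ker\mu \subset \Pi$ if $N_\Pi \neq 0$, or over all of $\Pi$ with a grading bookkeeping). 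Each such object generates a free rank-one $\Lambda^\Pi_\Z$-submodule via the connect-sum action, and the energy-properness ($E$-properness of $\CC$ plus the energy homomorphism $E\colon\Pi\to\R$) guarantees precisely that the allowed infinite sums $\sum r_{\tilde x}\tilde x$ with the finiteness-below-$A$ condition are exactly the elements of the completed free module $\widehat{\bigoplus}_\alpha \Lambda^\Pi_\Z\cdot x_\alpha$ with respect to the valuation filtration. This identification is the content of the sentence ``Each of these is a free $\Lambda^\Pi_\Z$-module'' and I would just spell out the bijection.

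\textbf{The differential is well-defined and degree $+1$.} For each pair $(\tilde y, \tilde x)$ with $\CC(\tilde x,\tilde y)\neq\emptyset$ and $\mu(\tilde x) = \mu(\tilde y)+1$, we have $\vdim\CC'(\tilde x,\tilde y) = \mu(\tilde x) - \mu(\tilde y) - 1 = 0$, so by Lemma~\ref{lemma:dimension-count} the zero set $(\sigma'(\tilde x,\tilde y))^{-1}(0)$ is a finite set of signed points lying away from the isotropy locus, hence $\#\CC(\tilde x,\tilde y;\sigma')\in\Z$ is well-defined. Then I must check the sum $d\tilde y = \sum_{\mu(\tilde x)=\mu(\tilde y)+1} \#\CC(\tilde x,\tilde y;\sigma')\,\tilde x$ lands in the completed module: for fixed $A$, the coefficient of $\tilde x$ with $E(\tilde x) < A$ can only be nonzero if $\CC(\tilde x,\tilde y)\neq\emptyset$, hence $E(\tilde y) - E(\tilde x)$ is bounded (indeed positive by $E$-positivity), so $E(\tilde x) > E(\tilde y) - (\text{something})$, and the set of such $\tilde x$ modulo $\Pi$ is finite by $E$-properness; combined with the $\Pi$-equivariance $g\cdot\sigma'(\tilde x,\tilde y) = \sigma'(g\tilde x,g\tilde y)$ this shows the sum defines an element of $|\CC(\sigma')|^{\mu(\tilde y)+1}$. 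Extending $\Lambda^\Pi_\Z$-linearly and using $\Pi$-equivariance once more shows $d$ is $\Lambda^\Pi_\Z$-linear.

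\textbf{The relation $d^2 = 0$ — the main obstacle.} This is the substantive point. I would compute, for $\tilde z$ with $\mu(\tilde x) = \mu(\tilde z) + 2$, the coefficient of $\tilde x$ in $d^2\tilde z$, which is $\sum_{\tilde y:\ \mu(\tilde y) = \mu(\tilde z)+1} \#\CC(\tilde x,\tilde y;\sigma')\cdot\#\CC(\tilde y,\tilde z;\sigma')$. By Lemma~\ref{lemma:dimension-count} applied to the stratum $S$ with $\vdim\CC'(\tilde x,\tilde z) = 1$: the perturbed zero set $(\sigma'(\tilde x,\tilde z))^{-1}(0)$ in the top open stratum is a compact one-manifold with boundary (compactness from properness of $\CC$), and its boundary lies on the codimension-one strata $T(\tilde x,\tilde y,\tilde z)$ indexed by intermediate objects $\tilde y$. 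The compatibility condition \eqref{eq:compatible-system-of-perturbations-condition}, $Ind(\sigma'(\tilde x,\tilde y),\sigma'(\tilde y,\tilde z)) = \sigma'(\tilde x,\tilde z)|_{T(\tilde x,\tilde y,\tilde z)}$, identifies the boundary points with the product $(\sigma'(\tilde x,\tilde y))^{-1}(0)\times(\sigma'(\tilde y,\tilde z))^{-1}(0)$ (the induction operation being products, direct sums with identity sections of complex stabilizing bundles, and diffeomorphisms — none of which change the signed point count), and the last clause of Lemma~\ref{lemma:dimension-count} says the boundary orientation of the one-manifold induces exactly the product signs on these points. Since a compact one-manifold with boundary has signed boundary count zero, summing over all $\tilde y$ gives $\sum_{\tilde y}\#\CC(\tilde x,\tilde y;\sigma')\cdot\#\CC(\tilde y,\tilde z;\sigma') = 0$. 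The finiteness of the set of relevant $\tilde y$ needed to make this a finite sum again follows from $E$-properness ($E(\tilde x) > E(\tilde y) > E(\tilde z)$). I expect the fiddly part to be matching sign conventions in the boundary-orientation identification precisely, and making sure the ``induction = products + stabilizations + diffeos'' description genuinely preserves the count with sign; this is where I would be most careful, leaning on Lemma~\ref{lemma:dimension-count} as a black box for the local statement and \eqref{eq:induction-is-associative} for consistency. Finally, $d$ being degree $+1$ on the $\Z/2N_\Pi\Z$-grading is immediate from the index formula. This completes the verification that $|\CC(\sigma')|^\bullet_{\Lambda^\Pi_\Z}$ is a cochain complex of free $\Lambda^\Pi_\Z$-modules.
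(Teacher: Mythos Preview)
Your proposal is correct and follows the same approach as the paper: the paper's proof is literally the single clause ``we apply the second part of Lemma~\ref{lemma:dimension-count} to conclude,'' and you have simply unpacked that citation into the standard one-manifold-with-boundary argument using the compatibility condition \eqref{eq:compatible-system-of-perturbations-condition}. One small slip: you invoke $E$-positivity in the well-definedness step, but only $E$-properness (and properness) is assumed at this point in the paper, and that is all that is needed for the finiteness of the relevant sums.
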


We note that $\Pi_E$ is also a Novikov group, and we can take the quotient of $\mathcal{CC}$ by $\ker E$ to get a $\Pi_E$-equivariant proper $E$-proper flow category, and $\sigma'$ descends to a compatible system of perturbations for this flow category.  We will then have 
\begin{equation}
    |\CC(\sigma')|_{\Lambda^{\Pi_E}_\Z} = |\CC(\sigma')|_{\Lambda^{\Pi}_\Z} \tensor_{\Lambda^{\Pi}_\Z} \Lambda^{\Pi_E}_\Z.
\end{equation}

\begin{definition}
	We say that a $\Pi$-equivariant $E$-positive flow category is \emph{gapped} if the minimum of $\{E(\tilde{x}) - E(\tilde{y}) \in \CC | \CC(\tilde{x}, \tilde{y}) \neq 0, \tilde{x} \neq \tilde{y}\}$ exists. 
\end{definition}

We define 
\begin{equation}
    |\CC(\sigma')|_{\Lambda^{univ}_\Z} = |\CC(\sigma')|_{\Lambda^{\Pi_E}_\Z} \tensor_{\Lambda^{\Pi_E}_\Z} \Lambda^{univ}_\Z.
\end{equation}

\begin{lemma}
\label{lemma:descend-to-lambda_0}
    The complex $|\CC(\sigma')|_{\Lambda^{univ}_\Z}|$ is the tensor product 
    \begin{equation}
        |\CC(\sigma')|_{\Lambda^{univ}_{\Z, 0}} \tensor_{\Lambda^{univ}_{\Z, 0}} \Lambda^{univ}_{\Z}
    \end{equation}
    where $|\CC(\sigma')|_{\Lambda^{univ}_{\Z, 0}}$ is a $\Z/2N_\Pi$-graded complex of $\Lambda^{univ}_{\Z, 0}$-modules. 
\end{lemma}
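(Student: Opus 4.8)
The plan is to produce $|\CC(\sigma')|_{\Lambda^{univ}_{\Z,0}}$ by \emph{renormalizing each generator by its action}, so that the renormalized matrix coefficients of the differential automatically have nonnegative $T$-exponent. For a $\Pi$-orbit $[\tilde x]\in Ob(\CC)/\Pi$ and any representative $\tilde x$, set $e_{[\tilde x]} := T^{-E(\tilde x)}\,\tilde x \in |\CC(\sigma')|_{\Lambda^{univ}_\Z}$. Since $g\in\Pi$ acts on $|\CC(\sigma')|_{\Lambda^{univ}_\Z}$ by multiplication by $T^{E(g)}$ (the image of $g$ under $\Lambda^\Pi_\Z\to\Lambda^{univ}_\Z$) and $E(g\cdot\tilde x)=E(g)+E(\tilde x)$, the element $e_{[\tilde x]}$ does not depend on the choice of representative; hence the $e_{[\tilde x]}$ form a $\Lambda^{univ}_\Z$-basis of $|\CC(\sigma')|_{\Lambda^{univ}_\Z}$, indexed by $Ob(\CC)/\Pi$ (on which $\Pi$ acts freely) and $\Z/2N_\Pi$-graded by $\mu$. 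I then define $|\CC(\sigma')|_{\Lambda^{univ}_{\Z,0}}$ to be the Novikov-completed free $\Lambda^{univ}_{\Z,0}$-module on the $e_{[\tilde x]}$, and must check that $d$ restricts to it.

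The substantive step is the exponent bound. Because $d$ raises $\mu$-degree by $1$, only morphism spaces $\CC(\tilde x,\tilde y)$ with $\mu(\tilde x)=\mu(\tilde y)+1$ — equivalently $\vdim\CC'(\tilde x,\tilde y)=0$, which is where $\#\CC(\tilde x,\tilde y;\sigma')$ is defined via Lemma \ref{lemma:dimension-count} as a signed integer — contribute to $d\tilde y$. Writing each contributing $\tilde x$ as $g\tilde x_0$ for a fixed representative $\tilde x_0$ and $g\in\Pi$, and using $\Pi$-equivariance of $\sigma'$, one gets
\[
d\,e_{[\tilde y]} \;=\; T^{-E(\tilde y)}\,d\tilde y \;=\; \sum_{[\tilde x_0]}\Bigl(\,\sum_{g}\#\CC(g\tilde x_0,\tilde y;\sigma')\,T^{\,E(g\tilde x_0)-E(\tilde y)}\Bigr)\,e_{[\tilde x_0]}.
\]
Every monomial appearing is $c\,T^{r}$ with $c\in\Z$ and $r=E(g\tilde x_0)-E(\tilde y)$ the energy of the corresponding morphism space; since $g\tilde x_0\neq\tilde y$ and $\CC(g\tilde x_0,\tilde y)\neq\emptyset$, $E$-positivity gives $r>0$ (and gappedness upgrades this to $r\ge\hbar$ for the uniform gap $\hbar>0$). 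Hence each coefficient of $d\,e_{[\tilde y]}$ lies in $\Lambda^{univ}_{\Z,+}\subset\Lambda^{univ}_{\Z,0}$; together with the already-established fact that $d$ is a well-defined cochain differential on $|\CC(\sigma')|_{\Lambda^\Pi_\Z}$ (which supplies the Novikov finiteness of the family of coefficients), this shows $d$ restricts to a $\Lambda^{univ}_{\Z,0}$-linear differential $d_0$ on $|\CC(\sigma')|_{\Lambda^{univ}_{\Z,0}}$. The identities $d_0^2=0$ and compatibility with the $\Z/2N_\Pi$-grading are inherited from $|\CC(\sigma')|_{\Lambda^{univ}_\Z}$ via the injection $\Lambda^{univ}_{\Z,0}\hookrightarrow\Lambda^{univ}_\Z$.

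Finally, I would observe that $|\CC(\sigma')|_{\Lambda^{univ}_\Z}$ is, by construction, the free $\Lambda^{univ}_\Z$-module on the same generating set $\{e_{[\tilde x]}\}$ with differential the $\Lambda^{univ}_\Z$-linear extension of $d_0$, so that $|\CC(\sigma')|_{\Lambda^{univ}_\Z}\cong |\CC(\sigma')|_{\Lambda^{univ}_{\Z,0}}\tensor_{\Lambda^{univ}_{\Z,0}}\Lambda^{univ}_\Z$ as $\Z/2N_\Pi$-graded complexes. The only place where care is needed is matching the Novikov completion on these modules with the algebraic tensor product, i.e.\ that extending scalars commutes with completion here; this is routine because the action values $E(\tilde x)$ govern both filtrations at once, and it is handled at the same level of detail as the earlier identity $|\CC(\sigma')|_{\Lambda^{\Pi_E}_\Z}=|\CC(\sigma')|_{\Lambda^{\Pi}_\Z}\tensor_{\Lambda^{\Pi}_\Z}\Lambda^{\Pi_E}_\Z$. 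I expect this completion-versus-tensor bookkeeping to be the only mild obstacle; the positivity of the renormalized exponents, which comes directly from $E$-positivity, is the real content.
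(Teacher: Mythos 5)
Your proof is correct and takes essentially the same approach as the paper: renormalize each generator by $T^{-E(\tilde x)}$ so the resulting basis is $\Pi$-independent, then use $E$-positivity to see that the differential has $\Lambda^{univ}_{\Z,0}$-coefficients in this basis. You have simply spelled out the "independence of choices" and the exponent bound in more detail than the paper's two-line argument.
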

\begin{proof}
    We note that $\Lambda^{\Pi_E}_\Z \subset \Lambda^{univ}_\Z$. A basis for $|\CC(\sigma')|^k_{\Lambda^{\Pi_E}_\Z}|$ is given by choosing an element $x \in Ob(\CC/\ker E)$ such that $\mu(x) = k \mod 2 N_\Pi$ for each $[x] \in Ob(\CC/\Pi)$. We define a new basis for $|\CC(\sigma')|_{\Lambda^{univ}_\Z}|$ to be given by $T^{-E(x)}(x)$. Now, this basis is independent of the choices of $x$ made. The $E$-positivity of $\CC$ implies that with respect to this basis, the diferentials have entries with coefficients in $\Lambda^{univ}_{\Z, 0}$, proving the claim.
\end{proof}

We can use flow categories to produce maps between chain complexes associated to other flow categories. We give two such constructions below; these can be verified by using the fact that the differential is defined entirely by looking at derived $\langle k \rangle$-orbifold charts of virtual dimension zero, and looking at the combinatorics of the virtual dimension $0$ boundaries of virtual dimension $1$ charts. 
\begin{lemma}
\label{lemma:cone-lemma}
	Say $\sigma'$ is a compatible system of perturbations for a $\Pi$-equivariant $E$-proper proper flow category $\CC$. Suppose also that there exists a pair of full sub-categories $\CC_1, \CC_2$ which are preserved by the $\Pi$-action, and which satisfy $Ob(\CC) = Ob(\CC_1) \sqcup Ob(\CC_2)$, and $\CC(\tilde{x}_2,\tilde{y}_1) = \emptyset$ whenever $\tilde{x}_2 \in \CC_2$ and $\tilde{y}_1 \in \CC_1$. Note that $\sigma'$ restricts to a compatible system of perturbations for $\CC_1$ and $\CC_2$, which we will also denote by $\sigma$. Then we have that 
	\begin{equation}
		|\CC(\sigma')| = Cone(f: |\CC_2[1](\sigma')|_R \to |\CC_1(\sigma')|_{R})
	\end{equation}
	for $f$ a map of chain complexes. 
	Here $\CC_2[1]$ is $\CC_2$ with a new grading $\mu'$ given by $\mu'(\tilde{x}) = \mu(\tilde{x}) +1$. 
	
	If $\CC$ is gapped then the above formula is induced tensoring to $\Lambda^{\Pi_0}_{R}$ from $\Lambda^{\Pi_0}_{R, +}$-linear versions of the corresponding chain complexes.
\end{lemma}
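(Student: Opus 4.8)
The plan is to verify the cone formula at the level of graded modules first, and then check that the differential of $|\CC(\sigma')|$ decomposes into the three blocks that exhibit it as a mapping cone. On the level of modules, the hypothesis $Ob(\CC) = Ob(\CC_1)\sqcup Ob(\CC_2)$ together with the fact that $|\CC(\sigma')|^d_{\Lambda^\Pi_\Z}$ is freely generated (over $\Lambda^\Pi_\Z$) by one orbit representative of each object of grading $d$ modulo $2N_\Pi$ gives, tautologically, a splitting of graded $\Lambda^\Pi_\Z$-modules $|\CC(\sigma')| \cong |\CC_1(\sigma')| \oplus |\CC_2[1](\sigma')|$, where the shift $[1]$ is forced by the convention $Cone(f)^k = C^k_1 \oplus C^{k+1}_2$. (I would be careful to match the grading shift conventions so that $\CC_2[1]$, with $\mu' = \mu+1$, is the correct summand; this is exactly what the statement asserts.) One should also note that $\sigma'$ restricts to compatible systems of perturbations on $\CC_1$ and $\CC_2$ because these are full sub-flow-categories preserved by $\Pi$, so restriction of perturbations along the open inclusions of derived $\langle k\rangle$-orbifold charts (as discussed after the definition of sub-flow-category) still satisfies the compatibility condition \eqref{eq:compatible-system-of-perturbations-condition}.

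Next I would analyze the differential. Writing $d$ for the differential of $|\CC(\sigma')|$ and using the block decomposition above, $d$ is the sum of components $\CC_i \to \CC_j$ given by counting zeros of $\sigma'(\tilde{x},\tilde{y})$ with $\tilde{x} \in \CC_i$, $\tilde{y}\in \CC_j$. The hypothesis $\CC(\tilde{x}_2,\tilde{y}_1)=\emptyset$ for $\tilde{x}_2 \in \CC_2$, $\tilde{y}_1 \in \CC_1$ kills the component $|\CC_1(\sigma')| \to |\CC_2[1](\sigma')|$ (note the differential lowers grading in $\CC_2$-indexing by $1$, hence the appearance in the off-diagonal slot of the cone). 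Thus $d$ is block upper-triangular: its diagonal blocks are the differentials $d_1$ of $|\CC_1(\sigma')|$ and $d_2$ of $|\CC_2(\sigma')|$ — here I would observe that the $\CC_i$-to-$\CC_i$ matrix entries of $d$ are literally given by the same zero counts $\#\CC(\tilde{x},\tilde{y};\sigma')=\#\CC_i(\tilde{x},\tilde{y};\sigma')$, since $\CC_i(\tilde{x},\tilde{y})$ is a union of connected components of $\CC(\tilde{x},\tilde{y})$ and $\sigma'$ restricts — and the off-diagonal block is a $\Lambda^\Pi_\Z$-linear map $f\colon |\CC_2[1](\sigma')| \to |\CC_1(\sigma')|$ counting zeros over $\CC(\tilde{x}_1,\tilde{y}_2)$ with $\tilde{x}_1 \in \CC_1$, $\tilde{y}_2 \in \CC_2$. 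The identity $d^2=0$, restricted to the $\CC_2 \to \CC_1$ block, then reads $d_1 f - f d_2 = 0$ (up to the sign introduced by the grading shift), i.e. $f$ is a chain map $|\CC_2[1](\sigma')| \to |\CC_1(\sigma')|$, and the whole package is exactly $Cone(f)$. This is the combinatorial heart of the argument: as the passage before the lemma indicates, one should spell out that $d^2=0$ here follows from examining the virtual-dimension-$1$ derived $\langle k\rangle$-orbifold charts $\CC'(\tilde{x},\tilde{z})$, whose codimension-$1$ boundary strata are, by the compatibility \eqref{eq:compatible-system-of-perturbations-condition} and Lemma \ref{lemma:dimension-count}, identified with the relevant fiber products of the virtual-dimension-$0$ moduli spaces, so that the signed count of boundary points vanishes.

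For the gapped addendum: when $\CC$ is gapped it is in particular $E$-positive, so Lemma \ref{lemma:descend-to-lambda_0} applies and yields the $\Lambda^{univ}_{\Z,0}$-model $|\CC(\sigma')|_{\Lambda^{univ}_{\Z,0}}$ (and likewise for $\CC_1$, $\CC_2$, which inherit gappedness). In the rescaled basis $T^{-E(x)}(x)$ used in the proof of that lemma, every matrix entry of $d_1$, $d_2$, and $f$ has coefficients in $\Lambda^{univ}_{\Z,0}$ — for the off-diagonal block $f$ this is where $E$-positivity is used, exactly as for the diagonal blocks. Hence the cone decomposition is defined already over $\Lambda^{univ}_{\Z,0}$ (equivalently over $\Lambda^{\Pi_0}_{R,+}$ in the paper's notation) and the stated formula over $\Lambda^{\Pi_0}_R$ is obtained by applying $-\tensor_{\Lambda^{univ}_{\Z,0}}\Lambda^{univ}_\Z$, which is exact and commutes with taking mapping cones. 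The main obstacle I anticipate is bookkeeping rather than conceptual: pinning down the grading-shift and sign conventions so that "block upper-triangular differential with off-diagonal $f$" is literally the paper's definition of $Cone(f)$ with the shift $\CC_2[1]$ landing in the right spot, and making sure the sign in $d_1 f \pm f d_2 = 0$ is the one that makes $f$ a genuine chain map; both of these are forced once conventions are fixed, but they need to be stated carefully.
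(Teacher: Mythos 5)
Your proof is correct and follows exactly the route the paper sketches in the paragraph preceding the lemma: the paper only remarks that the cone/square statements "can be verified by using the fact that the differential is defined entirely by looking at derived $\langle k\rangle$-orbifold charts of virtual dimension zero, and looking at the combinatorics of the virtual dimension $0$ boundaries of virtual dimension $1$ charts," and your block-decomposition of the module, the upper-triangularity from $\CC(\tilde{x}_2,\tilde{y}_1)=\emptyset$, the extraction of $d_1f\pm fd_2=0$ from $d^2=0$ on the off-diagonal block via Lemma \ref{lemma:dimension-count}, and the reduction of the gapped addendum to Lemma \ref{lemma:descend-to-lambda_0} are precisely the details being left to the reader. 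Your cautionary remarks about matching the $[1]$-shift and sign conventions are warranted but do not point to a gap.
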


The following lemma is, in essense, an iterated version of the previous lemma: 
\begin{lemma}
\label{lemma:square-diagram}
    Say we are given a $\Pi$-equivariant $E$-proper proper flow category $\mathcal{C}$ equipped with a system of derived $\langle k \rangle$-orbifold charts, which contains four full subcategories $\mathcal{C}_1, \ldots, \CC_4$, preserved by the $\Pi$-action. Suppose that we only have morphisms in $\CC$ between objects which are monotonically non-strictly decreasing under the following order relations: $Ob(\CC_1) \leq Ob(\CC_2) \leq Ob(\CC_4), Ob(\CC_1) \leq Ob(\CC_3) \leq Ob(\CC_4)$. Let $\sigma'$ be a compatible system of perturbations for $\CC$, which naturally restricts to the same for the aforementioned subcategories.
    
    Then there is a homotopy-commutative diagram 
    \begin{equation}
    \begin{tikzcd}
    \|\CC_4[2](\sigma')\|_{\Lambda^{\Pi_E}_\Z} \ar[r] \ar[d]  & \|\CC_3[1](\sigma')\|_{\Lambda^{\Pi_E}_\Z} \ar[d] \\
    \|\CC_2[1](\sigma')\|_{\Lambda^{\Pi_E}_\Z} \ar[r] & \|\CC_1[2](\sigma')\|_{\Lambda^{\Pi_E}_\Z}
    \end{tikzcd}
    \end{equation}
    where the horizontal and vertical arrows are induced as in Lemma \ref{lemma:cone-lemma} by restricting to the corresponding full subcategories on pairs in $\{\CC_1, \ldots \CC_4\}$. 
\end{lemma}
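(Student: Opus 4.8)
The plan is to deduce Lemma \ref{lemma:square-diagram} from Lemma \ref{lemma:cone-lemma} by applying the cone construction iteratively along the two ``factorizations'' of the filtration $Ob(\CC_1) \leq Ob(\CC_4)$. First I would observe that the hypothesis on morphisms means that $\CC$ is filtered: we may split $Ob(\CC) = Ob(\CC_1) \sqcup Ob(\CC_2) \sqcup Ob(\CC_3) \sqcup Ob(\CC_4)$ and there are no morphisms that strictly increase the order. In particular $\CC_{12} := \CC_1 \cup \CC_2$, $\CC_{34} := \CC_3 \cup \CC_4$, $\CC_{13}:= \CC_1 \cup \CC_3$, and $\CC_{24} := \CC_2 \cup \CC_4$ are all full sub-flow-categories preserved by the $\Pi$-action, each inheriting a compatible system of perturbations by restriction. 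Applying Lemma \ref{lemma:cone-lemma} to the decomposition $Ob(\CC) = Ob(\CC_{12}) \sqcup Ob(\CC_{34})$ (valid since there are no morphisms from $\CC_{34}$ into $\CC_{12}$) gives a map of complexes $f_{top}: \|\CC_{34}[1](\sigma')\|_{\Lambda^{\Pi_E}_\Z} \to \|\CC_{12}(\sigma')\|_{\Lambda^{\Pi_E}_\Z}$ with $\|\CC(\sigma')\| = Cone(f_{top})$; applying it instead to $Ob(\CC) = Ob(\CC_{13}) \sqcup Ob(\CC_{24})$ gives $f_{left}: \|\CC_{24}[1](\sigma')\| \to \|\CC_{13}(\sigma')\|$ with $\|\CC(\sigma')\| = Cone(f_{left})$.

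Next I would decompose each of these four complexes using Lemma \ref{lemma:cone-lemma} once more: $\|\CC_{12}(\sigma')\| = Cone(\|\CC_2[1](\sigma')\| \to \|\CC_1(\sigma')\|)$, $\|\CC_{34}(\sigma')\| = Cone(\|\CC_4[1](\sigma')\| \to \|\CC_3(\sigma')\|)$, $\|\CC_{13}(\sigma')\| = Cone(\|\CC_3[1](\sigma')\| \to \|\CC_1(\sigma')\|)$, and $\|\CC_{24}(\sigma')\| = Cone(\|\CC_4[1](\sigma')\| \to \|\CC_2(\sigma')\|)$. Since the structure maps in a cone are built entirely from counting zeros of $\sigma'$ in virtual-dimension-$0$ charts and, crucially, the compatible system of perturbations $\sigma'$ is genuinely compatible across all the relevant boundary strata (condition \eqref{eq:compatible-system-of-perturbations-condition} and associativity \eqref{eq:induction-is-associative}), all these chain maps and the resulting homotopies are determined by the \emph{same} underlying counts. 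Thus the double-cone description of $\|\CC(\sigma')\|$ — as $Cone$ of a map between $Cone$'s, computed in two different orders — produces, after shifting, precisely the four vertices $\|\CC_4[2](\sigma')\|, \|\CC_3[1](\sigma')\|, \|\CC_2[1](\sigma')\|, \|\CC_1[2](\sigma')\|$ (the shifts coming from the two nested cone operations) and the four edge maps of the asserted square, each edge map being the cone-induced map of Lemma \ref{lemma:cone-lemma} for the corresponding pair. The homotopy-commutativity of the square is exactly the statement that these two iterated-cone presentations of the single complex $\|\CC(\sigma')\|$ agree up to the canonical homotopy, which in turn is the associativity statement \eqref{eq:induction-is-associative} for the perturbations applied to the length-$3$ strata $T(\tilde{x}_0, \tilde{x}_1, \tilde{x}_2)$ lying over the corner of the relevant $\langle k \rangle$-orbifold charts.

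Concretely, I would write $\|\CC(\sigma')\|$ as a bicomplex-like total complex with four ``columns'' indexed by $\CC_1, \ldots, \CC_4$ with the order placing $\CC_1$ in the corner and $\CC_4$ opposite: the internal differentials are the differentials of the $\|\CC_i(\sigma')\|$, and the off-diagonal pieces of the total differential, read off from counting zeros of $\sigma'$ on $\CC(\tilde x_i, \tilde x_j)$, furnish the four edge maps of the square together with a ``diagonal'' component $\|\CC_4(\sigma')\| \to \|\CC_1(\sigma')\|$ whose appearance in $d^2 = 0$ forces the square to homotopy-commute, the homotopy being exactly that diagonal component. I would then note that, as in the last sentence of Lemma \ref{lemma:cone-lemma}, if $\CC$ is gapped the whole diagram is obtained by tensoring up from $\Lambda^{\Pi_E}_{\Z,+}$-linear (equivalently $\Lambda^{univ}_{\Z,0}$-linear, via Lemma \ref{lemma:descend-to-lambda_0}) versions, though this refinement is not needed for the statement as phrased.

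The main obstacle I anticipate is purely bookkeeping rather than conceptual: keeping track of the grading shifts $[1]$ and $[2]$ consistently across the two different orders of taking cones, and checking that the edge maps produced by the two iterations literally agree (not merely up to homotopy) with the single cone-induced maps of Lemma \ref{lemma:cone-lemma} applied to the pairs $(\CC_4, \CC_3)$, $(\CC_4, \CC_2)$, $(\CC_3, \CC_1)$, $(\CC_2, \CC_1)$. This is where one must carefully invoke the compatibility \eqref{eq:compatible-system-of-perturbations-condition} of the perturbation system on the common refinement of the boundary strata — i.e. that the restriction of $\sigma'(\tilde x_1, \tilde x_4)$ to the codimension-$2$ corner stratum $T(\tilde x_1, \tilde x_2, \tilde x_4)(\cdots)$ agrees with both $Ind(\sigma'(\tilde x_1,\tilde x_2),\sigma'(\tilde x_2,\tilde x_4))$ and, on the relevant intersection, with the analogous induction through $\CC_3$ — so that the zero counts defining the two candidate edge maps coincide on the nose. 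Everything else is the standard homological algebra of cones, which I would only sketch.
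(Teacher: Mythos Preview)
Your proposal is correct and follows exactly the approach the paper indicates: the paper does not give a proof of this lemma at all, stating only that it ``is, in essence, an iterated version of the previous lemma'' and noting beforehand that both Lemma~\ref{lemma:cone-lemma} and Lemma~\ref{lemma:square-diagram} ``can be verified by using the fact that the differential is defined entirely by looking at derived $\langle k \rangle$-orbifold charts of virtual dimension zero, and looking at the combinatorics of the virtual dimension $0$ boundaries of virtual dimension $1$ charts.'' Your write-up spells out precisely this iterated-cone/bicomplex argument, including the identification of the diagonal component as the homotopy, which is the content the paper leaves implicit.
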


\paragraph{Definition of Floer complex}
The work of Sections \ref{sec:global-charts} and \ref{sec:compatible-smoothing} can be summarized by the statement that there is a compatible system of derived $\langle k \rangle$-orbifold charts for a flow category with morphisms given by Floer trajectories. We spell out the details below. 

We define $\CC(H, J)$ as the topological flow category with objects given by $\widetilde{Fix}(H)$, with $\CC(H, J)(\tilde{x}, \tilde{y}) = \cM(\tilde{x}, \tilde{y})$, with $\mathcal{S}_{\CC(H, J)(\tilde{x}, \tilde{y})} = [\bar{\mathcal{A}}(\tilde{x}) - \bar{\mathcal{A}}(\tilde{y}) - 1]$ if $\cM(\tilde{x}, \tilde{y}) \neq \emptyset$, and otherwise $\mathcal{S}_{\CC(H, J)(\tilde{x}, \tilde{y})} = \bullet$.  Composition (whenever $\cM(\tilde{x}, \tilde{y}) \neq 0$ and $\cM(\tilde{y}, \tilde{z}) \neq 0$) is given by the gluing of of flow trajectories with the corresponding maps on the $\mathcal{S}_{\CC(H, J)(\tilde{x}, \tilde{y})}$ given by the isomorphism of $\mathcal{S}_{\CC(H, J)(\tilde{x}, \tilde{y})} \sqcup \mathcal{S}_{\CC(H, J)(\tilde{y}, \tilde{z})}$ with 
\begin{equation}
\label{eq:set-map-for-flow-category}
     \mathcal{S}_{\CC(H, J)(\tilde{x}, \tilde{y})} \cup (\mathcal{S}_{\CC(H, J)(\tilde{y}, \tilde{z})} + |\mathcal{S}_{\CC(H, J)(\tilde{x}, \tilde{y})}|) \subset \mathcal{S}_{\CC(H, J)(\tilde{x}, \tilde{z})}.
\end{equation}
We set $E(\tilde{x})$ to be the usual energy $\mathcal{A}_\omega(\tilde{x})$ of a cap. We set $\mu(\tilde{x})$ to be its Conley-Zehnder index. 
The recapping action maps $\CC_{Floer}(H, J)$ into a $\Pi$-equivariant flow category (\ref{eq:novikov-group}). 

We have 
\begin{lemma}
   The flow category $\CC(H, J)$ is a proper and $E$-proper $\Pi$-equivariant flow category admitting a compatible system of derived $\langle k \rangle$-orbifold charts. 
\end{lemma}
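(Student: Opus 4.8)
The plan is to verify the three separate claims in the lemma: properness and $E$-properness of $\CC(H,J)$ as a $\Pi$-equivariant flow category, and the existence of a compatible system of derived $\langle k\rangle$-orbifold charts. The first two are essentially bookkeeping on top of Gromov compactness: properness requires that $\cM(\tilde x,\tilde y)$ is compact for each pair and that $(\CC^2/\Pi)_q$ is finite for each $q$, while $E$-properness requires finiteness of the sets of $\Pi$-orbits of pairs with bounded energy $\mathcal A_\omega(\tilde x)-\mathcal A_\omega(\tilde y)$. I would deduce compactness of $\cM(\tilde x,\tilde y)$ directly from Lemma \ref{lemma:gromov-compactness-with-modified-energy} (each $\cM(\tilde x,\tilde y)$ sits inside some $\cM(H,d)$ with $d=\bar{\mathcal A}(\tilde x)-\bar{\mathcal A}(\tilde y)$, which is compact), and the finiteness statements from the fact that $\#Fix(\phi_H)<\infty$ combined with the observation that $r(\tilde x,\tilde y)=\bar{\mathcal A}(\tilde x)-\bar{\mathcal A}(\tilde y)-1$ takes each fixed value on only finitely many $\Pi$-orbits of pairs of capped orbits. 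For $E$-properness, Lemma \ref{lemma:gromov-compactness-with-modified-energy} gives $\mathcal A_\omega(\tilde x)-\mathcal A_\omega(\tilde y)\le C(\bar{\mathcal A}(\tilde x)-\bar{\mathcal A}(\tilde y))+D$, so bounding the $\omega$-energy bounds $\bar{\mathcal A}(\tilde x)-\bar{\mathcal A}(\tilde y)$, and then the finiteness of fixed-point count closes the argument. I should also check the equivariance identities for $E$ and $\mu$ under the recapping action, which follow from additivity of $\int_{\bar x}\omega$ and of Conley–Zehnder indices under connect-sum, and the compatibility of the $\mathcal S$-maps with composition, which is immediate from \eqref{eq:set-map-for-flow-category}.

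The substantive claim is the existence of the compatible system of derived $\langle k\rangle$-orbifold charts. Here the plan is to assemble the output of Sections \ref{sec:global-charts} and \ref{sec:compatible-smoothing}. For each pair $(\tilde x,\tilde y)$, the moduli space $\cM(\tilde x,\tilde y)$ decomposes by total degree $d=\int_\Sigma\Omega_u=\bar{\mathcal A}(\tilde x)-\bar{\mathcal A}(\tilde y)$, and the global Kuranishi chart $\mathcal K^\epsilon_{\Lambda_d}$ of Definition \ref{def:thickening}, restricted to the appropriate union of connected components via Lemma \ref{lemma:induced-topology} (so that the footprint is exactly $\cM(\tilde x,\tilde y)$), provides a topological global Kuranishi chart on $\cM(\tilde x,\tilde y)$ which is a $\langle k\rangle$-manifold with $k=\bar{\mathcal A}(\tilde x)-\bar{\mathcal A}(\tilde y)-1$, matching $\mathcal S_{\CC(H,J)(\tilde x,\tilde y)}=[\bar{\mathcal A}(\tilde x)-\bar{\mathcal A}(\tilde y)-1]$. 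By Proposition \ref{prop:fiberwise-c1loc-on-kuranishi-chart}, Lemma \ref{lemma:can-smooth-a-single-chart}, and Lemma \ref{lemma:compatible-system-of-derved-orbifold-charts-1}, after stabilizing and collaring we obtain smooth normally-complex-oriented derived $\langle k\rangle$-orbifold charts $\mathcal K'_{\Lambda_d}//G$ whose restrictions to boundary strata are, up to complex stabilization and normally complex diffeomorphism, products of lower-degree charts. I would then define $\CC'(\tilde x,\tilde y)$ to be the derived $\langle k\rangle$-orbifold chart obtained from $\mathcal K'_{\Lambda_d}//G$ by restricting to the open subset lying over $\cM(\tilde x,\tilde y)$, and check that the composition morphisms in $DerOrb'_\C$ are supplied precisely by Lemma \ref{lemma:compatible-system-of-derved-orbifold-charts-1}: the gluing map $\cM(\tilde x_0,\tilde x_1)\times\cdots\times\cM(\tilde x_{r-1},\tilde x_r)\hookrightarrow\partial\cM(\tilde x_0,\tilde x_r)$ corresponds to the stratum $S(d_1,\dots,d_r)$ with $d_j=\bar{\mathcal A}(\tilde x_{j-1})-\bar{\mathcal A}(\tilde x_j)$, and the lemma identifies that stratum of $\mathcal K'_{\Lambda_{d}}//G$ with a complex stabilization of the product of the lower charts. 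Associativity of this identification follows from \eqref{eq:associativity-of-induction} / the iterated form in Lemma \ref{lemma:compatible-system-of-derved-orbifold-charts-1}. Finally I would record that the recapping $\Pi$-action on $\widetilde{Fix}(H)$ lifts to a free action on the $\CC'(\tilde x,\tilde y)$ (the charts for $(\tilde x,\tilde y)$ and $(g\tilde x,g\tilde y)$ are literally identified, since the underlying Floer data are the same up to the capping), that applying $DerOrb'_\C\to Top^{Fin'}$ recovers $\CC(H,J)$, and that the virtual dimension $\vdim\CC'(\tilde x,\tilde y)=\dim\mathcal V-\dim T$ equals $\mu(\tilde x)-\mu(\tilde y)-1$ by the index formula for the linearized operator \eqref{eq:linearized-perturbed-floer-equation} together with the Conley–Zehnder/Fredholm index identity.

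The main obstacle I anticipate is not any single deep step but rather the careful matching of the combinatorial indexing: one must check that the degree $d$, the corner-parameter $k=d-1$, the finite set $\mathcal S_{\CC(H,J)(\tilde x,\tilde y)}=[\bar{\mathcal A}(\tilde x)-\bar{\mathcal A}(\tilde y)-1]$, and the stratification of $\cM(\tilde x,\tilde y)$ by broken trajectories all line up consistently — in particular that a broken configuration $\tilde x_0\rightsquigarrow\tilde x_1\rightsquigarrow\cdots\rightsquigarrow\tilde x_r$ lands in the stratum labelled by $S(d_1,\dots,d_r)$ with $\sum d_j = d$, which uses additivity of $\bar{\mathcal A}$ along broken trajectories, i.e. the positivity statement $\{\bar{\mathcal A}(\tilde x)-\bar{\mathcal A}(\tilde y)\mid(\tilde x,\tilde y)\in Pairs\}\subset\Z_{>0}$ established after \eqref{eq:define-a-bar} and the definition of $\Omega_u$ as a sum over cylindrical components. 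A secondary subtlety is that $\mathcal K^\epsilon_{\Lambda_d}$ a priori is a chart on all of $\cM(H,d)=\bigsqcup\cM(\tilde x,\tilde y)$ at once, so one must observe that $\cM(\tilde x,\tilde y)$ is a union of connected components of $\cM(H,d)$ (distinct pairs of capped orbits with the same degree difference give disjoint closed-and-open pieces, by continuity of the asymptotic evaluation maps) and restrict the chart accordingly, which is exactly the sort of passage to open subsets that $DerOrb'_\C$ already permits. Everything else is a direct citation of the results assembled above.
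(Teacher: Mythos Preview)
Your proposal is correct and takes essentially the same approach as the paper, which dispatches the lemma in two sentences: properness follows from Gromov--Floer compactness together with Lemma~\ref{lemma:gromov-compactness-with-modified-energy}, and the compatible system of charts is supplied by Lemma~\ref{lemma:compatible-system-of-derved-orbifold-charts-1}. Your version is a considerably more detailed unpacking of precisely these two citations, including the combinatorial bookkeeping (matching $\mathcal S_{\CC(H,J)(\tilde x,\tilde y)}$ with the stratification, restricting $\mathcal K^\epsilon_{\Lambda_d}$ to connected components over each $\cM(\tilde x,\tilde y)$, checking the $\Pi$-equivariance and virtual-dimension formula) that the paper leaves implicit.

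One small caution: for $E$-properness you write that ``bounding the $\omega$-energy bounds $\bar{\mathcal A}(\tilde x)-\bar{\mathcal A}(\tilde y)$'', but the inequality in Lemma~\ref{lemma:gromov-compactness-with-modified-energy} goes the other direction. The finiteness you need actually comes more directly from Gromov compactness itself: with $[\tilde x],[\tilde y]$ fixed and $\mathcal A_\omega(\tilde x)-\mathcal A_\omega(\tilde y)$ bounded, only finitely many homology classes of Floer trajectories occur, hence only finitely many recappings $g\in\Pi$ with $\cM(\tilde x, g\tilde y)\neq\emptyset$. This is consistent with the paper's one-line appeal to ``Gromov--Floer compactness and Lemma~\ref{lemma:gromov-compactness-with-modified-energy}''.
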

\begin{proof}
    The fact that this category is proper follows from Gromov-Floer compactness and Lemma \ref{lemma:gromov-compactness-with-modified-energy}. The $\langle k \rangle$-orbifold charts are constructed in Lemma \ref{lemma:compatible-system-of-derved-orbifold-charts-1}. 
\end{proof}

We define $CF^\bullet(H, J; \sigma') := |\CC(H, J)(\sigma')|_{\Lambda^{\Pi_E}_{\Z}}$. . 

Explicitly, this complex depended on 
\begin{itemize}
    \item The integralization data $(\tilde{\Omega}, N_1, \bar{H})$;
    \item The perturbation data $\Lambda_d$;
    \item The stabilizing vector bundles arising during the smoothing process;
    \item The compatible system of perturbations $\sigma'$. 
\end{itemize}

later in this paper, we will sketch a proof of how one can prove independence of the quasi-isomorphism class of this complex on the data above. However, this is not necessary for a verification of the integral Arnol'd conjecture, which we proceeed with below. 

\section{Continuation map and proof of integral Arnold conjecture}

In this section, we explain how to use the techniques described above to prove Theorem \ref{thm:integral-arnold}.
We will choose a $C^2$-small morse function $H_0$, a small time-dependent perturbation $H'_0$ of it, and a PSS map from the morse complex of $H_0$ to the Floer complex of $H'_0$, which will be an isomorphism. We will also construct continuation maps betwen the Floer complex of $H'_0$ and the Floer complex of our given Hamiltonian $H$ and show that they are quasi-isomorphisms, by showing that they compose to maps homotopic to the continuation map from the Floer complex of a Hamiltonian to itself that is induced by an $s$-independent family of Hamiltonians. 

\subsection{Moduli spaces for continuation map}
We let $\mathcal{F}^{cont}(\CP^{\ell}, d)$ be the set of non-nodal points in $\mathcal{C}(\CP^\ell, d)$ lying on one of the \emph{distinguished geodesics} of the universal curve. Thus we have a projection 
\begin{equation}
\label{eq:continuation-submersion}
    \pi: \mathcal{F}^{cont}(\CP^{\ell}, d) \to \mathcal{F}(\CP^{\ell}, d);
\end{equation}
The universal curve $\mathcal{C}^{cont}(\CP^\ell, d)$ over $\mathcal{F}^{cont}(\CP^{\ell}, d)$ is the pullback of the universal curve $\mathcal{C}(\CP^{\ell}, d)$ over $\mathcal{F}(\CP^{\ell}, d)$ via $\pi$, and its fibers over points of $\mathcal{F}^{cont}(\CP^{\ell}, d)$ are \emph{McLean-stabilized continuation domains}, namely, nodal curves mapping to $\CP^{\ell}$ and equipped with a certain geodesics as well as a \emph{marked point} $c$ on the interior of one of the geodeiscs.

The space $\mathcal{F}^{cont}(\CP^{\ell}, d)$ has the structure of a $\langle 2d-2 \rangle$-manifold. The stratification is given as follows: think of $[2d-2]$ as the spaces in between $2d-1$ lines, with the middle line marked by a star.  Otherwise, place a dot on the spaces in between $2d-1$-lines corresponding to the elements of $[2d-2]\setminus S$. Write $d(S) = (d_1, d_{r_1}, d', d''_1, \ldots d''_{r_2})$ with $d_1, \ldots, d_{r_1}$ for the numbers of lines in the $j$-th contiguous segment, reading left to right, lying \emph{left} of the contiguous segment containing the line marked by the dot; write $d''_1, \ldots, d''_{r_2}$  for the numbers of lines for the numbers of lines in the $j$-th contiguous segment, reading left to right, lying \emph{right} of the contiguous segment containing the line marked by the dot; and write $d' = d - \sum_{j=1}^{r_1} d'_j - \sum_{j=1}^{r_2} d''_j$. We set $\mathcal{F}^{cont}(\CP^{\ell}, d)(S)=\emptyset$ if $d' \leq 0$. Otherwise, we set
\begin{equation}
    \mathcal{F}^{cont}(\CP^{\ell}, d)(S)  \subset \mathcal{F}(\CP^\ell, d'_1) \times_{\CP^\ell} \ldots \times_{\CP^\ell} \mathcal{F}^{cont}(\CP^{\ell}, d') \times_{\CP^\ell} \ldots \times_{\CP^\ell} \mathcal{F}(\CP^\ell, d''_{r_2})
\end{equation}
to consist of those elements in $\mathcal{F}^{cont}(\CP^\ell, d)$ arising from joining elements of the target of the inclusion above along the respective marked points. We write $S = S(d'_1, \ldots; d; \ldots, d''_{r_2})$ for the stratum defined above. 

We will denote the component of a McLean stabilized continuation domain as $S^2_{cont} = S^2_{j_c}$, where $1 \leq j_c \leq r_{u}$ in the notation of Section \ref{sec:moduli-of-decorated-stable-maps}. 

\paragraph{Morse functions on the simplex and continuation data}

Recall that the $n$-simplex $\Delta^n$ with vertices labeled $0, \ldots, n$ has a \emph{canonical Morse function} \cite{PardonHam} $f_{\Delta^n}$ with critical points at the vertices of index a linear function of the vertex label. We will arrange for flow of the Morse function to increase the vertex label. 

A \emph{continuation datum} from $H_+$ to $H_-$ is a map $H^c: \Delta^1 \to \mathcal{H}(M)$, where  $\mathcal{H}(M)$ is the space of $S^1$-dependent Hamiltonians on $M$, and $H^c(0) = H_-, H^c(1) = H_+$, and $H^c$ a locally constant function near the vertices. This implies that, for any Morse trajectory of $f_{\Delta^1}$ from $0$ to $1$, we get by pullback to the domain a family $H_s$ of $S^1$-dependent Hamiltonians on $M$ for $s \in \R$, which are constant in neighborhoods of infinity and are well defined up to $s$-translation. We can choose $H^c$ so that these are only non-constant in a $s$-strip of size $1$. Make a fixed choice of Morse trajectory and and a corresponding $H_s$.

Write, as usual, $z = s+it$.

One then has the moduli spaces of Floer trajectories
\begin{equation}
\begin{gathered}
    \M^{cont}(\tilde{x}, \tilde{y}) = \{ u: Z \to M \,|\,\partial_s u(z) + J_z(\partial_t u - X_{H_s}) = 0; \lim_{s \to -\infty} u(s,\cdot) = x(\cdot), \lim_{s \to +\infty} u(s,\cdot) = y(\cdot), \,\tilde{x}\# u = \tilde{y}\} \\\text{ where } \tilde{x} \in \widetilde{Fix}(H_-), \tilde{y} \in \widetilde{Fix}(H_+).
    \end{gathered}
\end{equation}
and their corresponding Gromov-Floer compactifications $\cM^{cont}(\tilde{x}, \tilde{y})$.

\subsubsection{Convenient integral forms for the continuation map.}
\label{sec:integral-forms-for-continuation-map}
Recall that in the usual Floer theoretic setup, we have that for a Floer trajectory $u \in \M(\tilde{x}_-, \tilde{x}_+)$
\begin{equation}
    E_\Omega(u) = \int_Z \|\partial_s u\|^2_{\Omega} = \int_Z u^*\Omega - \partial_s (H(u(s,t)) ds dt + \int \Omega(X^{\Omega}_H-X^\omega_H, \partial_s u) ds dt. 
\end{equation}
This identity follows from a pointwise computation, and the first integral is exactly $\int_Z u^*\Omega - d(H dt) = \mathcal{A}_\Omega(\tilde{x}_-) - \mathcal{A}_\Omega(\tilde{x}_+)$. Since this computation was pointwise, the same computation implies that for $u \in \M^{cont}(\tilde{x}_-, \tilde{x}_+)$,
\begin{equation}
    E_\Omega(u) = \int_Z \|\partial_s u\|^2_{\Omega} = \int_Z u^*\Omega - (dH_s)(\partial_s u(s,t)) ds dt + \int \Omega(X^{\Omega}_{H_s}-X^\omega_{H_s}, \partial_s u) ds dt. 
\end{equation}
We now have two distinct action functionals $\mathcal{A}_{\Omega}^\pm$ defined using $H_\pm$. We then have that 
\begin{equation}
    \mathcal{A}_{\Omega}^-(\tilde{x}_-) - \mathcal{A}_\Omega^+(\tilde{x}_+) = \int_Z u^*\Omega - d(H_s dt) = \left\{\int u^*\Omega - (dH_s)(\partial_s u(s,t)) ds dt \right\} -\int_0^1\int_{-\frac{1}{2}}^{\frac{1}{2}} H'_s(u(s,t)) ds dt
\end{equation}
where $H'_s(x) = \partial_s H(x)$ for $x \in M$. We thus conclude that 
\begin{equation}
\label{eq:energy-indentity-continuation-map}
    \mathcal{A}_{\Omega}^-(\tilde{x}_-) - \mathcal{A}_\Omega^+(\tilde{x}_+) = E_\Omega(u) -\int_0^1\int_{-\frac{1}{2}}^{\frac{1}{2}} H'_s(u(s,t)) ds dt  - \int \Omega(X^{\Omega}_{H_s}-X^\omega_{H_s}, \partial_s u) ds dt. 
\end{equation}

Note that the absolute value of the middle term is bounded on both sides by a rational constant $K$

Now, we have open neighborhoods $U_\pm$ of $Fix(\phi_{H_\pm})$ constructed by Lemma \ref{lemma:make-form-zero-near-orbits}. Taking $U = U_- \cup U_+$ is now an open neighborhood of $Fix(\phi_{H_-}) \cup Fix(\phi_{H_+})$, and the conclusion of Lemma \ref{lemma:make-form-zero-near-orbits} still holds for this $U$. We take $\epsilon_0 = \min(\epsilon_0^-, \epsilon_0^+)$ where $\epsilon_0^\pm$ is the constant in Lemma \ref{lemma:small-l2-near-fixed-locus} for $H = H_\pm$ and $U=U_\pm$. 

Defining
\begin{equation}
    S^{\epsilon_1}_{\omega, H_s}  = \left\{ \Omega\in \Omega^2(M) | \Omega \text{ symplectic }, |\Omega(X^\Omega_{H_s} - X^\omega_{H_s}, v)| < \epsilon_1\|v\|_\omega \text{ for all } s \in \R, \|v\|_\Omega \geq \frac{1}{2} \|v\|_\omega \text{ for } v \in TM, \Omega|_U = \omega \right\}.
\end{equation}
we can again pick a rational $\Omega \in S^{\epsilon_1}_{\omega, H_s}$ for $\epsilon_1 = \sqrt{\epsilon_0}/8$. We define $L$ as in the proof of Lemma \ref{lemma:integral-taming-form}, The computation of \eqref{eq:bound-on-vector-field-error} now gives 
\begin{equation}
    \int_{\R \times S^1} |\Omega(X^\Omega_{H_s} - X^\omega_{H_s}, \partial_s u)| ds dt \leq \frac{\epsilon_1}{\sqrt \epsilon_0} (E^{cyl}_\omega(u) + (E^{cyl}_\omega(u))^{\frac{1}{2}})
\end{equation}
where the new term is because we separately have to apply Cauchy-Schwartz to \[\int_{[-1/2, 1/2] \times S^1} |\Omega(X^\Omega_{H_s} - X^\omega_{H_s}, \partial_s u)| ds dt\] since we have not arranged Lemma \ref{lemma:make-form-zero-near-orbits} to apply for $H = H_s$. Plugging this into \eqref{eq:energy-indentity-continuation-map} we conclude that 
\begin{equation}
    \mathcal{A}_{\Omega}^-(\tilde{x}_-) - \mathcal{A}_\Omega^+(\tilde{x}_+) + K \geq 2 E_\omega(u) -\frac{1}{8} (E^{cyl}_\omega(u) + (E^{cyl}_\omega(u))^{\frac{1}{2}}), \text{ and thus } \mathcal{A}_{\Omega}^-(\tilde{x}_-) - \mathcal{A}_\Omega^+(\tilde{x}_+) + K +\frac{1}{4} \geq E_\omega(u). 
\end{equation}
We define $\tilde{\Omega}$ using Lemma \ref{lemma:make-form-zero-near-orbits}. Now our chosen $\Omega$  still satisfies the conclusion of Lemma \ref{lemma:integral-taming-form} for $\mathcal{A}_\Omega$ replaced by \emph{either} of $\mathcal{A}_\Omega^+$ and $\mathcal{A}_\Omega^-$; we set $M'_\pm$ to be the minimum of the quantities $M'$ for each of these functionals.  Choose $\epsilon_x$ such that that $\mathcal{A}^\pm_\Omega(\tilde{x}_\pm) + \epsilon_x \subset \Q$ for $\tilde{x}_\pm \in Fix(\phi_{H_\pm})$, and choose $N_1$ such that $K' = N_1 (K + \frac{1}{4}) \subset \Z$ and $N_1(\mathcal{A}^\pm_\Omega(\tilde{x}_\pm) + \epsilon_x) \subset \Z$. We fix $\Omega_0 = N_1 \tilde{\Omega}$ and write $\bar{\mathcal{A}}^\pm$ defined by the formula \eqref{eq:define-a-bar} for $H = H_\pm$. 

We have that 
\begin{equation}
    \{\tilde{x}_\pm \in Fix(\phi_{H_\pm}), u \in \cM^{cont}(\tilde{x}, \tilde{y}) | \bar{\mathcal{A}}^-(\tilde{x}_-) - \bar{\mathcal{A}}^+(\tilde{x}_+) \leq K \}/\Pi
\end{equation}
is compact for each $K$. We choose $\epsilon_2$ to be the minimum of the quantities $\epsilon_2$ in the paragraph above Lemma \ref{lemma:bar-h} for $H = H_\pm$, and we choose $\bar{H}: M \times \R_s \times S^1_t$ to be equal to be defined as in \eqref{eq:bar-h} for $s < \frac{1}{2}$ and $H=H_-$, similarly for $s > \frac{1}{2}$ for $H=_+$, and otherwise extended smoothly. We have 
\begin{lemma}
\label{lemma:H-bar-continuation}
    For every smooth map
	\begin{equation}
		u: \R \times S^1 \to M, \lim_{s \to \pm\infty} u(s,t) = x_\pm(t) \text{(uniformly)}, u \# \bar{x}_- = \bar{x}_+
	\end{equation}
	with $x_\pm(t) \in Fix(H_\pm)$, 
	 define the $2$-form 
	\begin{equation}
		\label{eq:floer-stabilizing-form}
		\Omega^c_u \in \Omega^2(\R \times S^1), \Omega_u(s,t) = u^*\Omega_0(s,t) - d(\bar{H}_s) dt. 
	\end{equation}
	The form $\Omega_u$ is zero near $s = \pm \infty$. Moreover, $\int \Omega_u \in \Z$ agrees with $\bar{\mathcal{A}}(\tilde{x}) - \bar{\mathcal{A}}(\tilde{y})$, and if $u \in \M^{cont}(\tilde{x}_-, \tilde{x}_+)$, then $\int_Z \Omega_u + K' > 0$.
\end{lemma}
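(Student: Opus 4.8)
The plan is to mimic the proof of Lemma~\ref{lemma:bar-h} for the closed case, tracking how the continuation term $-\int_0^1\int_{-1/2}^{1/2}H'_s(u(s,t))\,ds\,dt$ enters. First I would verify that $\Omega^c_u$ vanishes near $s = \pm\infty$: by construction $\bar{H}_s$ equals the locally-constant value $N_1(H_\pm(x(t)) + \epsilon_x)$ near the fixed orbits (using the cutoff in \eqref{eq:bar-h} applied to $H = H_\pm$), so $d(\bar{H}_s)\,dt$ is zero there, while $u^*\Omega_0 = N_1 u^*\tilde\Omega$ vanishes near $s = \pm\infty$ because $\tilde\Omega$ restricts to zero on $U_\pm \supset Im(Fix(\phi_{H_\pm}))$ (Lemma~\ref{lemma:make-form-zero-near-orbits} as applied in Section~\ref{sec:integral-forms-for-continuation-map}), and $u(s,\cdot) \to x_\pm(\cdot)$ uniformly; for $s$ near $\pm\infty$ the image lands in $U_\pm$.

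Second I would compute $\int_Z \Omega^c_u$ and identify it with $\bar{\mathcal A}^-(\tilde x_-) - \bar{\mathcal A}^+(\tilde x_+)$. Here $d(\bar H_s\,dt)$ is an $s$-and-$t$ dependent exact-in-the-fiber form; I would decompose $d(\bar H_s) = \partial_s\bar H_s\,ds + (\text{fiber part})$, so that $\int_Z u^*\Omega_0 - d(\bar H_s)\,dt = \int_{\tilde x_-} \Omega_0 - \int_{\tilde x_+}\Omega_0 - \int_Z \partial_s(\bar H_s(u))\,ds\,dt + (\text{boundary terms at }s = \pm\infty\text{ that vanish})$. The subtlety is that $\bar H_s$ differs from $N_1 H_{\pm,s}$ only in the $\epsilon_2$-tubes around the orbits, where it interpolates to a constant; integrating $\partial_s$ telescopes to the difference of the values at $s = \pm\infty$, which are $N_1(H_-(x_-(t)) + \epsilon_{x_-})$ and $N_1(H_+(x_+(t)) + \epsilon_{x_+})$. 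Matching with the definition \eqref{eq:define-a-bar} of $\bar{\mathcal A}^\pm$ gives $\int_Z\Omega^c_u = \bar{\mathcal A}^-(\tilde x_-) - \bar{\mathcal A}^+(\tilde x_+)$, which lies in $\Z$ by our choice of $N_1$. Integrality is then immediate.

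Third, for positivity when $u \in \M^{cont}(\tilde x_-, \tilde x_+)$, I would invoke the energy identity \eqref{eq:energy-indentity-continuation-map} together with the estimate derived just above Lemma~\ref{lemma:H-bar-continuation}: that argument gives $\mathcal A_\Omega^-(\tilde x_-) - \mathcal A_\Omega^+(\tilde x_+) + K + \tfrac14 \geq E_\omega(u) \geq 0$ for $\Omega \in S^{\epsilon_1}_{\omega,H_s}$ rational, and since $\Omega_0 = N_1\tilde\Omega$ with $\tilde\Omega$ cohomologous to $\Omega$ and agreeing on all relevant integrals, $\bar{\mathcal A}^\pm$ is just $N_1$ times $\mathcal A_\Omega^\pm$ shifted by $N_1\epsilon_x$; hence $\int_Z\Omega^c_u + K' = N_1(\mathcal A_\Omega^-(\tilde x_-) - \mathcal A_\Omega^+(\tilde x_+)) + N_1(K+\tfrac14) \geq N_1 E_\omega(u) \geq 0$, with strict inequality unless $u$ is constant, which cannot occur for a genuine continuation trajectory between distinct-Hamiltonian orbits when $H_- \neq H_+$ along the strip (or, if they could be equal, $E_\omega(u) > 0$ unless $u$ is $s$-independent, handled as in the closed case).

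The main obstacle I anticipate is the bookkeeping in the second step: being careful that the boundary contributions of $\int_Z \partial_s(\bar H_s(u))$ at $s = \pm\infty$ really do reproduce exactly the fixed locally-constant additive constants $N_1\epsilon_{x_\pm}$ built into \eqref{eq:define-a-bar}, rather than the pointwise values of $H_\pm$ along the orbit, and that the fiberwise-exact part of $d(\bar H_s)\,dt$ integrates to zero over each circle fiber. This is exactly the place where the choice of the bump function $h$ with $h(x) + h(1-x) = 1$ and the tube radius $\epsilon_2$ matters, and it is the only genuinely non-formal point; everything else is a direct transcription of the closed-string computation in Lemma~\ref{lemma:bar-h} and the estimate preceding the statement.
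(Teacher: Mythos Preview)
The paper states this lemma without proof; it is meant to follow from the construction of $\tilde\Omega$, $N_1$, $K'$, and $\bar H$ carried out in the paragraphs immediately preceding it, together with the energy inequality $\mathcal A_\Omega^-(\tilde x_-) - \mathcal A_\Omega^+(\tilde x_+) + K + \tfrac14 \geq E_\omega(u)$ derived there. Your three-step outline is exactly this intended argument, and matches the (equally unproven) Lemma~\ref{lemma:bar-h} in the closed case.

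Two small remarks. First, the expression $d(\bar H_s)\,dt$ in the statement should be read as $d(\bar H_s\,dt)$, the exterior derivative of the $1$-form $\bar H_s\,dt$ on $Z$; since $dt\wedge dt=0$ this is automatically $\partial_s(\bar H_s\circ u)\,ds\wedge dt$, so the ``fiber part'' you set aside is identically zero rather than something that needs to integrate away. Second, in your positivity step you write $\int_Z\Omega^c_u + K' = N_1(\mathcal A_\Omega^-(\tilde x_-)-\mathcal A_\Omega^+(\tilde x_+)) + N_1(K+\tfrac14)$, silently dropping the contribution $N_1(\epsilon_{x_+}-\epsilon_{x_-})$ coming from the shifts in \eqref{eq:define-a-bar}. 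This term is controlled by the smallness condition $0<\epsilon_x<\min(1/m,M')$ imposed in Section~\ref{sec:integral-two-form}, which the continuation-map section implicitly carries over; once that bound is in place, integrality of $\int_Z\Omega^c_u + K'$ upgrades the non-strict inequality to a strict one. The paper is no more explicit about this last point than you are.
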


\subsubsection{The continuation moduli space.}
We can now make $\cM^{cont}(\tilde{x}_-, \tilde{x}_+)$ into a $[2(\bar{\mathcal{A}}^-(\tilde{x}_-) - \bar{\mathcal{A}}^+(\tilde{x}_+) + K')-2]$-space. Recall that the elements of this spaces are tuples $u = (u^-_1, \ldots u^-_{r_1}, u^c, u^+_{1}, \ldots, u^+_{r_2})$ where $u^\pm_j$ is bubbled, $1$-level Floer trajectory from $\tilde{x}_\pm^{j-1}$ to $\tilde{x}_-\pm^j$ with respect to the Hamiltonian, where we have set $\tilde{x}_-^0 = \tilde{x}_-, \tilde{x}_+^{r_2} = \tilde{x}_+$, and where $u^c$ is a solution to the continuation equation with bubbles attached from $\tilde{x}_-^{r_1}$ to $\tilde{x}_+^{0}$. We assign to such a $u$ to open stratum associated to the set 
\begin{equation}
    S = S(\bar{\mathcal{A}}^-(\tilde{x}_-^0) - \bar{\mathcal{A}}^-(\tilde{x}^1_-), \ldots, ;\bar{\mathcal{A}}^-(\tilde{x}^-_{r_1}) - \bar{\mathcal{A}}^+(\tilde{x}^+_0) + K'; \ldots, \bar{\mathcal{A}}^+(\tilde{x}^+_{r_2-1}) - \bar{\mathcal{A}}^+(\tilde{x}^+_{r_2})). 
\end{equation}

\paragraph{Thickenings}
Write $S^{cont}_d \subset 2^{[2d-2]}$ for the set of nonempty strata of $\mathcal{F}^{cont}(\CP^\ell, d)$. There is a natural map of posets $S^{cont}_d \to 2^{[d-1]}$ which sends codmension strata to codimension $r$ strata; with respect to this map, $\pi$ of (\ref{eq:continuation-submersion}) is a submersion.
 We have that 
    \begin{equation}
    \label{eq:continuation-map-stratum-induction}
        \mathcal{F}_{S(d'_1, \ldots; d'_{r_1}; d'; d''_1, \ldots, d''_{r_2})}(\CP^d, d) = Ind(\mathcal{F}(\CP^{d'_1}, d'_1), \ldots, \mathcal{F}^{cont}(\CP^{d'}, d'), \ldots, \mathcal{F}(\CP^{d''_{r_2}}, d''_{r_2})). 
    \end{equation}
is expressible as a sequence of generalized inductions and restrictions from a product, as in Section \ref{sec:unitary-group-actions}, whenever $d'_1 + \ldots + d'_{r_2} + d'+ \ldots d''_1 + \ldots + d''_{r_2} = d$.

A perturbation datum over a substratum of $\mathcal{F}^{cont}(\CP^\ell, d)$ is defined exactly as in Definition \ref{def:perturbation-data}. Formula \ref{eq:compatible-system-of-perturbations-condition} regarding compatibility of inductions along decompositions \eqref{eq:continuation-map-stratum-induction} continues to hold, making ``compatibility of perturbation data'' adapt to this setting as in Definition \ref{def:compatible-choice-of-perturbation-data}, with $\mathcal{F}$ replaced by $\mathcal{F}^{cont}$. Assume that now we have fixed a compatible choice of perturbation data $\Lambda^{cont}_d = (V^{cont}_d, \lambda^{cont}_d)$ over $\mathcal{F}^{cont}(\CP^d, d)$ for all $d$, extending a compatible set of perturbation data $\Lambda^\pm_d$ over $\mathcal{F}(\CP^d, d)$ chosen for $H^+$ and $H^-$. We note that we can choose $\Lambda^+_d = \Lambda^-_d =: \Lambda_d$ as we only need to make the pertubation data ``large enough'' to achieve regularity of the thickenings for the Floer moduli spaces. 

For the remainder of this paper we \emph{fix} a sequence of Hermitian line bundles $L_k$ on $\CP^1$ for every integer $k$. 

\begin{definition}
	\label{def:framed-contination-trajectory} A framed bubbled pre-continuation trajectory is a tuple $(u, \Sigma, F)$, where 
\begin{itemize}
    \item $\Sigma$, the \emph{domain} of the trajectory, is a genus zero nodal curve with two marked points $p_-, p_+$, equipped with geodesic segments from $p_{j-1}$ to $p_j$ $S^2_J$, with $S^2_1, \ldots, S^2_j, \ldots, S^2_{r_u}$ the sequence of components  from $p_- = p_0$ to $p_+ = p_{r_u}$. One of the geodesic segment also carries a marked point on $S^2_{cont} = S^2_{j_c}$.
    \item The map $u: \Sigma \setminus \{p_0, \ldots, p_{r_u}\} \to M$ is a smooth function such that the integral $2$-form $\Omega^c_u \in \Omega^2(\Sigma)$ defined subsequently satisfies 
    \begin{equation}
    \label{eq:shift-the-line-bundle}
        \int_{\Sigma_\alpha} \Omega^c_u + K' > 0 \text{ for every unstable component } \Sigma_\alpha. 
    \end{equation}
    The form $\Omega^c_u$ is defined to be $u|_{S^2_\pi}^* \Omega_0$ (see \eqref{eq:floer-stabilizing-form}) on each component $S^2_\pi$ of $\Sigma$ that is not one of the $S^2_j$, and on each $S^2_j$ with $j \neq j_c$ equal to the extension by zero of the form denoted $\Omega_u$ in (\ref{eq:floer-stabilizing-form}) under any preferred biholomorphism $\phi_j: S^2_j \setminus \{p_{j-1}, p_j\} \simeq \R \times S^1$. Finally, on $S^2_{j_c}$ it is defined to be the extension by zero of the form denoted $\Omega^c_u$ above under the unique biholomorphism $\psi_{u, j_c}$ which maps $Z$ to $S^2_{j_c} \setminus \{p_{j_c-1}, p_{j_c}\}$ such that $\psi_{u, j_c}$ sends the marked point on the geodesic to a point to $s=0$. 
    \item Additionally, the map is required converge uniformly to Hamiltonian loops at the ends corresponding to the $p_i$ under the biholomorphisms described above;
    \item Moreover, writing $u^*g$ for the pull-back of the metric on $M$ by $u$, and $dV_{u^*g}$ for the associated volume form (which will in general not be defined on the $p_i$), we have the volume of $\Sigma$ with respect to $dV_{u^*g}$ is finite. 
    \item The datum $F = (f_0, \ldots, f_d)$ is a $\C$-basis for $H^0(L_{\Omega_u + K'})$, where $L_{\Omega_u+K'}$ is the line bundle on $\CP^1$ which is obtained from $L_\Omega$ by tensoring it with the line bundle which is trivial on every component except for $S^2_{j_c}$, and agrees with $L_{K'}$ on $S^2_{j_c}$. Note that $L_{\Omega_u + K'}$ has positive degree on every unstable component by Equation \eqref{eq:shift-the-line-bundle}. 
\end{itemize}   
\end{definition}
We continue to use notation as in the paragraph subsequent to Definition \ref{def:framed-floer-trajectory} for pre-continuation trajectories.

We now define thickenings $\mathcal{T}_{\Lambda^{cont}_d}$ to consist o pairs $((u, \Sigma, F), \eta)$ where $(u, \Sigma, F)$ is a framed bubbled pre-continuation trajectory, with $u_F \in \mathcal{F}^{cont}(\CP^d, d)$ and $\eta \in (V^{cont}_d)$, which satisfies 
\begin{equation}
    \begin{gathered}
    	\partial_s u_j+ J(\partial_t u_j - X_{H^-}) + \lambda_d(\eta) \circ di_F = 0 \text{ for } 1 \leq j < j_c, \text{ where } u_j= u|_{S_j} \circ \psi_{u, j}.  \\
    	\partial_s u_j+ + J(\partial_t u_j - X_{H^+}) + \lambda_d(\eta) \circ di_F = 0 \text{ for } j_c < j \leq r_u, \\
    	\partial_s u_{j_c} + J \partial_t(u_{j_c}- X_{H_s}) + \lambda_d(\eta) \circ di_F= 0 , \\
 	\bar{\partial}u|_{S^2_\alpha} + \lambda_d(\eta)\circ di_F= 0 \text{ for all other components } S^2_\alpha \neq S^2_1, \ldots, S^2_{r_u}
    \end{gathered}
\end{equation}

	We then define $\mathcal{K}_{\Lambda^{cont}_d} =(PU(d+1), T_{\Lambda^{cont}_d}, \mathcal{V}_{\Lambda^{cont}_d} \oplus i \mathfrak{pu}_{d+1}, \sigma_{\Lambda^{cont}_d} \oplus \sigma_H)$ defined exactly as in Definition \ref{def:thickening}. We write $\mathcal{T}^\epsilon_{\Lambda^{cont}_d}$ to to be the sub-chart of $\mathcal{T}_{\Lambda^{cont}_d}$ for which $\|\eta\| < \epsilon$. We have that 
 	    $\overline{\M}^{cont}(d)$, the quotient of the zero section of the Kuranishi section of $\mathcal{K}_{\Lambda^{cont}_d}$ by the $PU(d+1)$-action, is the set of bubbled continuation trajectories in $M$ (of $\Omega_u$-degree $d$). We write 
 	    \begin{equation}
 	        \cM^{cont}(d) = \bigcup_{\substack{x_\pm \in \widetilde{Fix}(\phi_{H_\pm}^1) \\ \bar{\mathcal{A}}^-(\tilde{x}_-) - \bar{\mathcal{A}}^+(\tilde{x}_+) + K' = d}}  \cM^{cont}(\tilde{x}_-, \tilde{x}_+). 
 	    \end{equation}

 As before, we have that, for sufficiently small $\epsilon >0$, the $PU(d+1)$ action on $\mathcal{T}^\epsilon_{\Lambda^{cont}_d}$ has finite stabilizers, and that the induced topology on $\cM^{cont}(d)$ agrees with the Gromov-Floer topology. The argument of Lemma \ref{lemma:surjective-perturbation-data-exist-2} adapts without change to show that one can choose compatible perturbation data $\Lambda^{cont}_d$ such that the extended linearized operators are always surjective on continuation solutions, which implies that $T_{\Lambda^{cont}_d}^\epsilon$ is a $PU(d+1)-\langle d-1\rangle$-manifold with projection $\Pi$ to $\mathcal{F}^{cont}(\CP^d, d)$ a fiberwise-smooth $C^1_{loc} PU(d+1)$ bundle. 
 
 In the boundary of $\cM^{cont}(\tilde{x}_-, \tilde{x}_+)$, we have (broken bubbled) Floer trajectories for $H_\pm$ appearing. We denote the moduli spaces of these trajectories via $\cM^\pm(\tilde{x}_\pm, \tilde{y}_\pm)$ for $\tilde{x}_\pm, \tilde{y}_\pm \in \widetilde{Fix}(H_\pm)$.
 
\subsubsection{Flow category for continuation map}
We now construct a proper, $E$-proper, $\Pi$-equivariant graded flow category $\CC(H^c, J)$, where $(\Pi, E, \mu)$ are a Novikov group and $E$ is associated to $\Omega$ as chosen in this section. 

The objects of $\CC(H^c, J)$ are $\widetilde{Fix}(H_-) \sqcup \widetilde{Fix}(H_+)$. The full subcategory on $\widetilde{Fix}(H_-)$ is $\mathcal{C}(H_-, J)$, and the full subcategory on $\widetilde{Fix}(H_+)$ is $\mathcal{C}(H_+, J)[-1]$, as needed to apply Lemma \ref{lemma:cone-lemma}. This fixes the gradings, the $E$-functions, and the $\Pi$-action. We set $\CC(H^c, J)(\tilde{x}_-, \tilde{x}_+) = \cM^{cont}(\tilde{x}_-, \tilde{x}_+)$ for $\tilde{x}_\pm \in \widetilde{Fix}(H_\pm)$. We set  $\mathcal{S}_{\CC(H^c, J)(\tilde{x}_-, \tilde{x}_+)} = [2(\mathcal{A}^-(\tilde{x_-}) - \mathcal{A}^+(\tilde{x_+}) + K')-2]$. To define composition, we glue bubbled broken Floer trajectories to bubbled broken continuation map trajectories in the standard way; the composition on the sets $\mathcal{S}_{\CC(H^c, J)(\tilde{x}, \tilde{y})}$ is defined exactly as in \eqref{eq:set-map-for-flow-category} but with $\CC_(H, J)$ replaced by $\CC(H^c, J)$.

The methods of Section \ref{sec:compatible-smoothing} adapt verbatim to this setting to show
\begin{proposition}
\label{prop:charts-for-continuation-category}
The flow category $\CC^{cont}$ admits a compatible system of derived $\langle k \rangle$-orbifold charts $\CC'_{cont}$. 
\end{proposition}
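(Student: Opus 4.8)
The plan is to transport the entire apparatus of Sections~\ref{sec:global-charts} and~\ref{sec:compatible-smoothing} across to the continuation setting, which has already been done for the construction of thickenings and their fiberwise-smooth $C^1_{loc}$ structure in the preceding subsection. First I would record the combinatorial skeleton: the strata of $\mathcal{F}^{cont}(\CP^d,d)$ are indexed by $2^{[2d-2]}$ with a distinguished ``starred'' line, and the induction relation~\eqref{eq:continuation-map-stratum-induction} expresses each stratum $\mathcal{F}_{S(d'_1,\ldots;d';\ldots,d''_{r_2})}^{cont}(\CP^d,d)$ as a product of ordinary moduli spaces $\mathcal{F}(\CP^{d'_i},d'_i)$, $\mathcal{F}(\CP^{d''_j},d''_j)$ with a single continuation factor $\mathcal{F}^{cont}(\CP^{d'},d')$ in the middle, assembled by a sequence of (generalized) inductions and restrictions exactly as in Section~\ref{sec:unitary-group-actions}. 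The real-oriented blow-up construction of Section~\ref{sec:moduli-of-decorated-stable-maps} applies to give $\mathcal{F}^{cont}(\CP^d,d)$ the structure of a smooth $\langle 2d-2\rangle$-manifold (the extra marked point on the continuation geodesic contributes the factor of two), with a submersion $\pi$ to $\mathcal{F}(\CP^d,d)$ in the sense of Definition~\ref{def:general-submersion-of-k-manifolds}; this step is formally identical to Proposition~\ref{prop:smooth-structures-on-mclean-stabilizations}.

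Next I would invoke, for the continuation perturbation data $\Lambda^{cont}_d$ already fixed above, the analogue of Proposition~\ref{prop:induction-relation-between-kuranishi-charts}: the Kuranishi charts $\mathcal{K}^\epsilon_{\Lambda^{cont}_d}(S)$ are obtained by products, (generalized) inductions, restrictions, open restrictions, stabilizations, and equivalences from the charts $\mathcal{K}^\epsilon_{\Lambda^{\pm}_\ell}$ of the two endpoint Floer categories and from the lower-degree continuation charts $\mathcal{K}^\epsilon_{\Lambda^{cont}_{\ell'}}$, $\ell'<d$. The key computation carrying over is the identification of the generalized-induction maps~\eqref{eq:generalized-induction} with stabilizations of Kuranishi charts via the $i\mathfrak{pu}^{d_1,d_2}_{d+1}$-bookkeeping of~\eqref{convenient-H-map}; here the only new feature is that one of the two tensor factors carries the shifted line bundle $L_{\Omega_u+K'}$, but since $K'$ is a fixed integer this only translates the degrees appearing and does not affect the gluing formula for $H(u,\Sigma,u_F)$. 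Then Proposition~\ref{prop:fiberwise-c1loc-on-kuranishi-chart} (already adapted in the text above), Lemma~\ref{lemma:can-smooth-a-single-chart}, the normal-complex-structure discussion of Section~\ref{sec:normally-complex-structures}, and Proposition~\ref{prop:upgraded-smoothing} apply \emph{mutatis mutandis}. Finally I would run the inductive argument of Lemma~\ref{lemma:compatible-system-of-derved-orbifold-charts-1} simultaneously over $d$ and over the poset $S^{cont}_d$, using at each step that the endpoint charts $\mathcal{K}'_{\Lambda^\pm_\ell}$ and the lower continuation charts $\mathcal{K}'_{\Lambda^{cont}_{\ell'}}$ have already been compatibly smoothed; this produces stabilizations $\mathcal{K}'_{\Lambda^{cont}_d}$ of $Coll(\mathcal{K}_{\Lambda^{cont}_d})$ with normal complex structures whose boundary strata $\mathcal{K}'_{\Lambda^{cont}_d}//G)(S)$ are normally complex diffeomorphic to stabilizations of the appropriate products of endpoint and lower continuation charts. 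Passing to associated derived $\langle k\rangle$-orbifold charts and assembling these into the enrichment $\CC'_{cont}$ of $\CC^{cont}$ in $DerOrb'_\C$ gives the proposition.

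The main obstacle I anticipate is purely organizational rather than mathematical: one must be careful that the two-layer stratification (the refinement of $2^{[d-1]}$-strata into $2^{[2d-2]}$-strata recording the position of the continuation component relative to the breaking) is handled consistently, so that the map of posets $S^{cont}_d\to 2^{[d-1]}$ interacts correctly with the doubling and collaring operations and so that the associativity relation~\eqref{eq:associativity-of-induction} still holds when one factor is a continuation moduli space. Concretely, one needs the analogue of Lemma~\ref{lemma:induction-relation-between-strata} asserting that $\mathcal{F}^{cont}_S(\CP^d,d)$ is built from the $U(h_i)$-$\langle\cdot\rangle$-manifolds $\mathcal{F}(\CP^{h_i},h_i)$ together with a single $\mathcal{F}^{cont}(\CP^{h_{i_0}},h_{i_0})$ by products, (generalized) induction, and restriction — a statement whose proof is a direct iteration of the constructions in Section~\ref{sec:unitary-group-actions} but which must be stated with the correct indexing. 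Once that bookkeeping lemma is in place, every subsequent step is a verbatim transcription, since all the analytic inputs (gluing from \cite{PardonHam}, surjectivity of linearized operators, finiteness of stabilizers, Gromov--Floer compactness via Lemma~\ref{lemma:H-bar-continuation}) have already been established above in the continuation setting.
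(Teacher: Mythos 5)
Your proposal is correct and follows essentially the same route as the paper: a systematic transcription of the Floer-trajectory machinery of Sections~\ref{sec:global-charts} and~\ref{sec:compatible-smoothing} to the continuation setting. The one difference is emphasis: the paper's own proof is a single sentence, declaring that ``there is nothing new'' except verifying the analog of Proposition~\ref{prop:fiberwise-c1loc-on-kuranishi-chart}, which holds precisely because the continuation datum is parameterized by Morse trajectories on $\Delta^1$ (so the Pardon gluing machinery applies verbatim, and this parameterization is what becomes essential in Proposition~\ref{prop:charts-for-continuation-homotopy}); you treat that step as already disposed of ``in the text above'' and instead flag the two-layer stratification bookkeeping as the main concern. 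Both readings are accurate — the analytic input really is the crux from the paper's viewpoint, while the combinatorial indexing is where a careless transcription would most likely produce errors — and your extra detail on the poset map $S^{cont}_d \to 2^{[d-1]}$ and the associativity of~\eqref{eq:associativity-of-induction} with a continuation factor is a useful explicitation of what the paper leaves implicit.
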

\begin{proof}
There is nothing new in the argument, except that we must make sure that the analog of Proposition \ref{prop:fiberwise-c1loc-on-kuranishi-chart} for $\mathcal{T}_{\Lambda^{cont}_d}$ still holds. However, as we use Morse trajectories to parameterize the continuation datum, exactly the same argument goes through. This is immaterial for a single continuation map, but becomes important in the subsequent Proposition \ref{prop:charts-for-continuation-homotopy}, where it becomes important that we use the gluing of Morse trajectories to parameterize gluing of continuation data, as does \cite[Appendix C]{PardonHam}.  
\end{proof}

Now, fixing compatible systems of perturbations $\sigma'_\pm$ for $\mathcal{C}(H_\pm, J)$, we use Proposition \ref{prop:transverse-perturbations-exist-inductively} to extend this to a compatible system of perturbations for $\CC(H^c, J)$, and then apply Lemma \ref{lemma:cone-lemma} to produce a map 
\begin{equation}
    c_{H_-, H_+}: |\mathcal{C}(H_+, J)(\sigma'_+)| \to |\mathcal{C}(H_-, J)(\sigma'_-)|
\end{equation} 
called the \emph{continuation map}. The construction of this map depended on the choice of $\Omega$ and $\bar{H}$; the choice of $H_s$; the choices of perturbation data, the stabilizations performed in the repeated application of Proposition \ref{prop:upgraded-smoothing};  and on the compatible systems of perturbations. 

We note that shifting $E|_{\widetilde{Fix}(H_-)} \to E|_{\widetilde{Fix}(H_-)}+C$ for a sufficiently large constant $C$ makes this category into an $E$-positive flow category. Write $c_{\Lambda^{univ}} = c \tensor_{\Lambda^{\Pi_E}_\Z} 1: |\mathcal{C}(H_+, J)(\sigma'_+)|_{\Lambda^{univ}_\Z} \to |\mathcal{C}(H_-, J)(\sigma'_-)|_{\Lambda^{univ}_\Z}$ for the corresponding map over the universal Novikov ring. By working through Lemma \ref{lemma:cone-lemma} and Lemma \ref{lemma:descend-to-lambda_0}, we see that this change of $E$ modifies $c_{\Lambda^{univ}}$  by multiplying it by it by $T^{C}$; in particular, $T^{C}c_{\Lambda^{univ}}$ has coefficients in $\Lambda_{\Z, 0}^{univ}$.  


\subsection{Compositions of continuation maps are homotopic}
Let $H^1, H^2, H^3, H^4$ be four $S^1$-families of Hamiltonians with nondegenerate fixed points, none of which are constant Hamiltonian trajectories. Supose we choose continuation data from $H^1$ to $H^2$, from $H^1$ to $H^3$, from $H^2$ to $H^4$, and from $H^3$ to $H^4$. 

A \emph{homotopy datum} $H^h$ is a pair of maps $\Delta^{2} \to \mathcal{H}(M)$ which glue together to a smooth map from a square, are locally constant near the vertices, and restrict to the continuation data as in the below diagram (the top left vertex corresponds to the $0$ vertex on both simplices, and the bottom right to the $2$ vertex):
\begin{equation}
\label{eq:commutative-square-of-Hamiltonians}
\begin{tikzcd}
H_4 \ar[r] \ar[d] & H_3 \ar[d] \\ H_2 \ar[r] & H_1
\end{tikzcd}
\end{equation}

Any Morse flow line $\tau$ on either of these simplices gives a family of Hamiltonians $H_{s, \tau}$ which could come from a continuation datum. 

Choose a \emph{single}  $(\Omega, N_1)$ such that, together with choices of perturbed action functionals $\bar{\mathcal{A}}^j$ for $j=0,1,2$, as well as choices of functions $\bar{H}_j$, $j=1 \ldots 4$, as well as a smooth family of functions $\bar{H}_{s, \tau}$, for each morse trajectory $\tau$ in the interior of the simplices, all defined via the construction above Lemma \ref{lemma:H-bar-continuation} such that these functions all satisfy the conclusion of Lemma \ref{lemma:H-bar-continuation} and such that, viewing $\bar{H}_{s, \tau}$ as a map from the interior of the moduli space of Morse trajectories on the square to $\mathcal{H}(M)$, on the boundary it restricts to the map specified by the $\bar{H}_j$ as in the diagram \eqref{eq:commutative-square-of-Hamiltonians}. For every (possibly) broken Morse trajectory $\tau$ in the space of Morse trajectories on the square, we thus have defined an appropriate two-form $\Omega_{u, \tau}^c$ which when integrated over a broken continuation trajectory gives the corresponding difference of integralized actions $\bar{\mathcal{A}}^j$.

With these choices, we have that for any solution of the continuation map equation for $H_{s, \tau}$ for any fixed $\tau$, or for $H_s^{01}$ or $H_s^{1,2}$ from $\tilde{x}_2 \in \widetilde{Fix}(H_2)$ to $\tilde{x}_0 \in \widetilde{Fix}(H_0)$, we have that 
\begin{equation}
    \int \Omega_{u, \tau}^c = \bar{A}^2(\tilde{x}_2) - \bar{A}^0(\tilde{x}_0) \in \{ \ell \in \Z | \ell \geq K' \} 
\end{equation}
for some fixed $K' \in \Z$. We then use \emph{this} $K'$ to define continuation maps described in the next paragraph. 

Write the flow category used to define the continuation map $c_{H^j, H^i}: |\mathcal{C}(H_j, J)(\sigma'_+)| \to |\mathcal{C}(H_i, J)(\sigma'_-)|$ as $\CC(H^{j}, H^i; J)$ with $(j, i)$ the sources and targets of arrows in \eqref{eq:commutative-square-of-Hamiltonians}, with associated systems of derived $\langle k \rangle$-orbifold charts $\CC'(H^{j}, H^i; J)$ and  choices of compatible systems of perturbations $\sigma_{H^j, H^i}'$ used to define these maps.

\begin{proposition}
The following maps are homotopic as maps of $\Lambda^{\Pi_E}_{\Z}$-modules:
\begin{equation}
    c_{H^4, H^3} c_{H^3, H^1} \sim c_{H^4, H^2} c_{H^2, H^1}. 
\end{equation}
\end{proposition}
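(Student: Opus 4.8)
The plan is to realize both compositions as continuation maps induced by \emph{iterated} flow categories built from the homotopy datum $H^h$, and then to apply Lemma \ref{lemma:square-diagram} to a single flow category whose four corner subcategories are $\CC(H_4,J)[2]$, $\CC(H_2,J)[1]$, $\CC(H_3,J)[1]$, and $\CC(H_1,J)[2]$. Concretely, first I would build a ``big'' flow category $\CC(H^h;J)$ whose objects are $\bigsqcup_{i=1}^{4}\widetilde{Fix}(H_i)$, whose morphism spaces $\CC(H^h;J)(\tilde x_i,\tilde x_j)$ for $i>j$ (in the partial order $4 \leq 2 \leq 1$, $4 \leq 3 \leq 1$ of \eqref{eq:commutative-square-of-Hamiltonians}) are the Gromov--Floer compactified moduli spaces of solutions to the parametrized continuation equation associated to Morse flow lines on the square, with the appropriate $\langle k\rangle$-stratification coming from breaking of Floer trajectories \emph{and} breaking of the Morse trajectory on the square (the latter producing the intermediate objects over $H_2$ or $H_3$). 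The full subcategories on $\{H_4,H_3\}$, $\{H_3,H_1\}$, $\{H_2,H_1\}$, $\{H_4,H_2\}$ are exactly the flow categories $\CC(H^j,H^i;J)$ defining the four edge continuation maps, and there are no morphisms from $H_2$-objects to $H_3$-objects or vice versa, so the hypotheses of Lemma \ref{lemma:square-diagram} are met.

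The key steps, in order, are: (1) set up the integralization data uniformly as described in the paragraph preceding this proposition, so that a \emph{single} $(\Omega,N_1,K')$ and a smooth family $\bar H_{s,\tau}$ over the (compactified) moduli of Morse trajectories on the square control the two-forms $\Omega^c_{u,\tau}$ and hence the integralized actions on all the relevant moduli spaces simultaneously; this guarantees properness and $E$-properness of $\CC(H^h;J)$, exactly as in Lemma \ref{lemma:gromov-compactness-with-modified-energy} and Section \ref{sec:integral-forms-for-continuation-map}. (2) Construct McLean-stabilized domain moduli spaces $\mathcal{F}^{sq}(\CP^d,d)$ for square-homotopy domains — these are fibered over $\mathcal{F}^{cont}$-type spaces by the position of the extra interior marked point/geodesic decorations, and carry a $\langle 2d-2\rangle$- or appropriate-$\langle k\rangle$-manifold structure by the same real-oriented-blow-up argument as in Lemma \ref{lemma:equip-mclean-moduli-spaces-with-k-manifold-structure} and Section \ref{sec:integral-forms-for-continuation-map}; here one must parametrize the gluing of continuation data by gluing of Morse trajectories on the square, as flagged in Proposition \ref{prop:charts-for-continuation-category}, so that the fiberwise-smooth $C^1_{loc}$ structure of Proposition \ref{prop:fiberwise-c1loc-on-kuranishi-chart} survives (this uses \cite[Appendix C]{PardonHam} for the relevant gluing). (3) Build global Kuranishi charts $\mathcal{K}_{\Lambda^{sq}_d}$ with surjective linearized operators (adapting Lemma \ref{lemma:surjective-perturbation-data-exist-2}), verify the induction/restriction relations between strata (adapting Proposition \ref{prop:induction-relation-between-kuranishi-charts} and \eqref{eq:continuation-map-stratum-induction}), then compatibly smooth and equip with normal complex structures (Section \ref{sec:compatible-smoothing}, Lemma \ref{lemma:compatible-system-of-derved-orbifold-charts-1}), obtaining $\CC'(H^h;J)$ — this is Proposition \ref{prop:charts-for-continuation-homotopy}. (4) Extend the chosen compatible systems of perturbations $\sigma'_{H^j,H^i}$ on the four edge subcategories to a compatible system $\sigma'$ on all of $\CC(H^h;J)$ using Proposition \ref{prop:transverse-perturbations-exist-inductively} (whose second clause precisely allows prescribing the perturbation on sub-flow-categories where it is already strongly transverse), after first choosing the $\sigma'_{H^j,H^i}$ so that they themselves agree on the common corner subcategories $\CC(H_i,J)$, which is again arranged by a nested application of Proposition \ref{prop:transverse-perturbations-exist-inductively}. (5) Apply Lemma \ref{lemma:square-diagram} to $\CC'(H^h;J)$ with this $\sigma'$: the two composites around the square of the resulting homotopy-commutative diagram are, by construction and by \eqref{eq:induction-is-associative}/\eqref{eq:compatible-system-of-perturbations-condition}, the chain-level maps $c_{H^4,H^3}\circ c_{H^3,H^1}$ and $c_{H^4,H^2}\circ c_{H^2,H^1}$, so the diagonal homotopy furnished by Lemma \ref{lemma:square-diagram} is the desired chain homotopy over $\Lambda^{\Pi_E}_\Z$.

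The main obstacle I expect is Step (2)--(3): giving the square-homotopy domain moduli spaces $\mathcal{F}^{sq}$ their $\langle k\rangle$-manifold structure and, crucially, checking that $\Pi$ for these charts (the fiberwise-smooth $C^1_{loc}$ topological submersion of Proposition \ref{prop:fiberwise-c1loc-on-kuranishi-chart}) still holds. The delicate point is that here the stratification records \emph{both} the decomposition of the Floer/continuation domain into levels and the breaking of the parametrizing Morse trajectory on the $2$-simplices making up the square; one must check these two sources of corners interact as a product of strata (so the stratification is still modeled on a product of $\C^{\bullet}_+$'s) and that Pardon's gluing in \cite[Appendix C]{PardonHam}, which is already written to handle gluing parametrized by gluing of simplicial Morse trajectories, supplies the required local product structure uniformly. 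Once the analytic input is in place, the algebra (Steps 4--5) is a direct invocation of the already-established Propositions \ref{prop:transverse-perturbations-exist-inductively} and Lemma \ref{lemma:square-diagram}, and the identification of the composites follows formally from associativity of induction of sections \eqref{eq:induction-is-associative}.
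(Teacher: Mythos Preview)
Your proposal is correct and follows essentially the same approach as the paper: build the big flow category $\CC(H^h;J)$ on $\bigsqcup_i \widetilde{Fix}(H_i)$, establish a compatible system of derived $\langle k\rangle$-orbifold charts via Proposition \ref{prop:charts-for-continuation-homotopy}, extend the edge perturbations using Proposition \ref{prop:transverse-perturbations-exist-inductively}, and conclude with Lemma \ref{lemma:square-diagram}. The one concrete point the paper makes explicit that you leave vague is the construction of the domain moduli space: rather than a single extra marked point, the paper's $\mathcal{F}^{cont,b}(\CP^\ell,d)$ carries \emph{two} ordered marked points $\hat{p}^1,\hat{p}^0$ on the geodesics (with a minimum-separation condition when on the same component), together with a map to the moduli of Morse trajectories on each $2$-simplex so that the breaking of the Morse trajectory on the square corresponds exactly to the two marked points landing on distinct components---this is what makes the corner structure and Pardon's gluing in \cite[Appendix C]{PardonHam} go through, and is precisely the obstacle you flagged in Step (2)--(3).
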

\begin{proof}
We will construct a diagram of flow categories as in Lemma \ref{lemma:square-diagram}. We choose $\CC_j = \CC(H_j, J)$ for $j=0, \ldots, 3$. The full subcategories on pairs of these will be $\CC(H^{j}, H^i; J)$ in the natural way, with the appropriate grading shifts to fit the Lemma.  This specifies the gradings, the $E$-function, and the derived $\langle k \rangle$-orbifold charts on the subcategory. It remains to specify the morphisms from $\CC(H_4, J)$ to $\CC(H_1, J)$ and build a compatible system of derived $\langle k \rangle$-orbifold charts for the whole category.

The morphisms from $\CC(H^h; J)(\tilde{x}_, \tilde{x}_+)$ from $\tilde{x}_- \in \CC(H_4, J)$ to $\tilde{x}_+\in \CC(H_1, J)$ are defined to be the Gromov-Floer compactification of the space of pairs of a morse trajectory $\tau$ in the interior of the square that is the domain $H^h$ together with a solution of the continuation map equation for the continuation datum $H_{s,\tau}$, up to simultaneous $\R$-translation of both data. This space is naturally a $[2 (\mathcal{A}(\tilde{x}_-) - 2 \mathcal{A}(\tilde{x}_+ 2K')-3]$-space: it can break into at most two continuation map trajectories, and we use the extra element of the labeling set to describe where the map is broken into a pair of compactified continuation map trajectories. 
The moduli spaces $\CC(H^h; J)(\tilde{x}_, \tilde{x}_+)$ admit continuous maps $\alpha$ the moduli space of Morse trajectories in the square which specify which Morse trajectory defines the continuation datum that the corresponding solutions to the continuation map equation solve.

To equip these spaces with global Kuranishi charts we need to introduce another moduli space of domains akin to $\mathcal{F}^{cont}$; we will sketch the construction in outline. We let $\mathcal{F}^{cont, b}(\CP^\ell, d)$ be the space of elements of $\mathcal{F}^{cont}(\CP^\ell, d)$ with \emph{two} additional marked points $\hat{p}^1$, $\hat{p}^0$ on the geodesics, with $\hat{p}^0$ closer to $p_+$ than $\hat{p}^1$, and such that if $\hat{p}^0$ and $\hat{p}^1$ lie on the same component $S^2_{j_c}$, then their images under any biholomorphism $S^2_{j_c} \simeq Z$ lie at least $10$ apart. As before, these moduli spaces can be related via generalized inductions and restrictions, and have a natural notion of a compatible set of perturbation data. A pair of maps from $\mathcal{F}^{cont, b}(\CP^\ell, d)$ to the moduli space of Morse trajectories (a interval) in each of the two simplices comprising the homotopy datum $H^c$, which map to the broken Morse trajectory exactly when $\hat{p}^i$ are on different components, and which map to the Morse trajectory which bisects the square exactly when the $\hat{p}^i$ are on the same component and like a distance $10$ apart on a biholomorphism to the cylinder. These maps are required to satisfy the property that near the locus where $\hat{p}^0$ and $\hat{p}^1$ are on distinct components, if we choose biholomorphisms to the cylinder on these components for which these points get mapped to $s=0$, then as we glue these curves to a curve with a single component, the corresponding biholomorphisms to the cylidner map these points to those with $s$ values agreeing with those where the gluing map for the corresponding  Morse trajectories maps the mid-points of the pair of broken curves.  For each of these maps, one defines thickenings $T_{bord, d, \pm}^\epsilon(\tilde{x}_-, \tilde{x}_+)$ of the preimages of $\alpha$ in $\CC(H^h; J)(\tilde{x}_, \tilde{x}_+)$ of each half of the moduli space of Morse trajectories in the square, and the final thickening of $\CC(H^h; J)(\tilde{x}_, \tilde{x}_+)$ is simply a union of these two pieces. The thickenings consist of choices of a \emph{pre-continuation-homotopy trajectory} which as in Definition \ref{def:framed-contination-trajectory} are simply choices of an element $u_F \in \mathcal{F}^{cont, b}(\CP^\ell, d)$ together with a map $u$ from the universal curve over $u_F$ with the points $p_0, \ldots, p_{r_u}$ removed to $M$ with the usual Hamiltonian endpoint trajectories, such that the $\int \Omega^c_{u, \alpha(u_F)}$-degrees of the map agree with the degrees assigned to components of $u_F$, and such that if a component contains a pair of $\hat{p}^i$, we use the midpoint of the $\hat{p}^i$ along the geodesic to determine the biholomorphism to the cylinder and we solve the continuation map equation for $H_{s, \alpha(u_F)}$ on that component; and otherwise we solve the continuation map equations corresponding to each piece of the broken morse trajectory on the corresponding components.

Using the methods of Sections 3 and 4, one establishes as in Proposition \ref{prop:charts-for-continuation-category}:
\begin{proposition}
\label{prop:charts-for-continuation-homotopy}
There exist compatible choices of perturbation data such that the linearizations of the defining equations of $T_{bord, d, \pm}^\epsilon(\tilde{x}_-, \tilde{x}_+)$  are surjective Fredholm operators on all solutions. The thickenings $T_{bord, d, \pm}^\epsilon(\tilde{x}-, \tilde{x}_+)$ are topological $PU(d+1)-\langle \langle 2 (\mathcal{A}(\tilde{x}_-) - 2 \mathcal{A}(\tilde{x}_+ 2K')-3\rangle$ manifolds, and a stabilization of collars of these charts can be smoothed such that the resulting derived $\langle k \rangle$-orbifold charts give a compatible system of normally complex derived $\langle k \rangle$-orbifold charts for $\CC(H^h; J)$.
\end{proposition}

We now note that the choices used to define $c_{H^1_s}, c{H^0_s},$, and $c{H^{10, \epsilon}_s}$ give us compatible systems of perturbations $\sigma'_0$ for $\CC'_{cont, bord}|_0$ and $\sigma'_1$ $\CC'_{cont, bord}|_1$, which satisfy $|\CC'_{cont, bord}|_0(\sigma'_0)| = Cone(c_{H^1_s} c_{H^0_s})$ and $|\CC'_{cont, bord}|_1(\sigma'_1)| = Cone(c{H^{10, \epsilon}_s})$.  We thus have a homotopy equivalence of cones $Cone(c_{H^1_s} c_{H^0_s}) \sim Cone(c{H^{10, \epsilon}_s})$. Because the perturbation data restricted to $\CC'_{cont, bord}|_0(\widetilde{H}_i)$ agrees with that for $\CC'_{cont, bord}|_1(\widetilde{H}_i)$ for $i=0,2$ respecitvely, this homotopy equivalence of cones is in fact induced by a homotopy equivalence of maps. $c_{H^1_s} c_{H^0_s} \sim c{H^{10, \epsilon}_s}$.
\end{proof}

\paragraph{The identity continuation datum}
\begin{proposition}
\label{prop:constant-continuation-datum}
Choose $H_s = H$ independent of $S$, i.e. let the continuation datum be a constant map from $\Delta^1$. Then $c_{H_s}: |\mathcal{C}(H, J)(\sigma')| \to |\mathcal{C}(H, J)(\sigma')|$ is a quasi-isomorphism over $\Lambda_0$.
\end{proposition}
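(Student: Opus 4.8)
The plan is to show that the constant continuation datum induces a map homotopic to the identity on the Floer complex $|\mathcal{C}(H,J)(\sigma')|$ over $\Lambda^{univ}_{\Z,0}$, and then invoke a standard argument to pass to $\Lambda^{univ}_\Z$ and conclude it is a quasi-isomorphism. The key observation is that when $H_s = H$ is $s$-independent, the continuation equation on a cylindrical component is \emph{exactly} the Floer equation, and the only additional structure a continuation trajectory carries relative to an ordinary Floer trajectory is the marked point $c$ on the interior of the distinguished component $S^2_{cont}$. Thus there is a forgetful map from the continuation moduli space $\cM^{cont}(\tilde x_-, \tilde x_+)$ to $\cM(\tilde x_-,\tilde x_+)$, and the extra datum of the marked point contributes an extra $\R$-factor (the position of $c$ along the geodesic) which is exactly cancelled by the fact that we are \emph{not} quotienting by the extra $\R$-translation on $S^2_{cont}$; concretely, the continuation moduli spaces fiber over the Floer moduli spaces with fiber a point after accounting for the shift in the line bundle degree by $K'$.

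First I would make this precise at the level of flow categories: I would construct an isomorphism (or at least a diffeomorphism of the underlying topological flow categories, compatible with the $\langle k\rangle$-orbifold chart structure) between $\CC(H^c, J)$ with the constant continuation datum and the flow category $\CC(H,J)$ itself, suitably shifted in grading and with the $E$-function modified by the constant $K'$ (equivalently $K$ plus the various $\epsilon_x, N_1$ bookkeeping). The point is that the line-bundle shift $L_{\Omega_u + K'}$ and the marked-point datum together reproduce, after dividing by the residual $\R$-action, the same moduli problem as the ordinary Floer trajectories, and the perturbation data $\Lambda^{cont}_d$ can be chosen to be pulled back from $\Lambda_d$ under this identification (using Proposition \ref{prop:charts-for-continuation-category} and the freedom in choosing compatible perturbation data). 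Then a compatible system of perturbations $\sigma'$ for $\CC(H,J)$ pulls back to a compatible system of perturbations for $\CC(H^c,J)$, and the resulting zero counts agree. Applying Lemma \ref{lemma:cone-lemma}, the continuation map $c_{H_s}$ is then literally the identity map on $|\mathcal{C}(H,J)(\sigma')|$ on the nose, or differs from it by a filtered automorphism.

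The subtlety — and the step I expect to be the main obstacle — is that one is not guaranteed that the chosen compatible system of perturbations $\sigma'$ for $\CC(H^c,J)$ actually equals the one pulled back from $\CC(H,J)$; in general, by the time the continuation datum is built, the perturbations on the boundary strata (the Floer complexes at the two ends, both equal to $\CC(H,J)$) are fixed, but the interior perturbations need to be extended using Proposition \ref{prop:transverse-perturbations-exist-inductively}, and there is no reason the extension is the tautological one. This is precisely where one uses the \emph{homotopy} rather than strict equality: one should argue that any two choices of compatible perturbations for $\CC(H^c,J)$ extending the \emph{same} boundary data give \emph{chain homotopic} continuation maps. This itself follows from a one-parameter version of Proposition \ref{prop:transverse-perturbations-exist-inductively} (interpolating between the two systems of perturbations via a compatible system over $\CC(H^c,J) \times [0,1]$, using the collaring machinery and the fundamental extension Lemma \ref{lemma:fundamental-extension-lemma} applied one dimension higher, exactly as in the proof of the homotopy of composed continuation maps in the preceding proposition). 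Granting this, $c_{H_s}$ is homotopic over $\Lambda^{univ}_{\Z,0}$ to the map induced by the pulled-back perturbation, which is the identity (up to a filtered isomorphism coming from the line-bundle shift and energy shift, captured by a power of $T$).

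Finally I would conclude: $c_{H_s}$, possibly after multiplying by $T^{C}$ as in the discussion following Proposition \ref{prop:charts-for-continuation-homotopy}, reduces modulo $\Lambda^{univ}_{\Z,+}$ to the identity on $|\mathcal{C}(H,J)(\sigma')|_{\Lambda^{univ}_{\Z,0}} \otimes_{\Lambda^{univ}_{\Z,0}} \Z$. Since the complexes in question are complexes of free $\Lambda^{univ}_{\Z,0}$-modules that are bounded in each degree with respect to the energy filtration (by $E$-properness), a chain map reducing to a quasi-isomorphism modulo the maximal ideal is itself a quasi-isomorphism — this is the standard completed-filtered Nakayama/convergence argument, which I would either cite or spell out in one line. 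Hence $c_{H_s}$ is a quasi-isomorphism over $\Lambda^{univ}_\Z$, which is the assertion of the proposition.
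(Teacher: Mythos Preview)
Your final step (reducing modulo $\Lambda^{univ}_{\Z,+}$ and applying Nakayama) matches the paper's argument exactly, but your route to showing that $c_{H_s}$ reduces to the identity is both more complicated than necessary and contains a real gap.

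The paper's argument is much more direct: when $H_s = H$, the \emph{constant} map $u(s,t) = x(t)$ solves the continuation equation from $\tilde{x}$ to $\tilde{x}$ (with the constant cap), and this solution is \emph{regular}. Moreover, since $H'_s = 0$ the constant $K$ vanishes, so the continuation flow category is already $E$-positive without any shift. One then invokes the second clause of Proposition~\ref{prop:transverse-perturbations-exist-inductively}, taking the sub-flow-category $\CC_2$ to consist of these constant trajectories, so that the perturbations $\sigma'$ may be chosen to leave them untouched. Any other contribution to $c_{H_s}$ has strictly positive energy, giving $c_{H_s} = 1 + O(T^\epsilon)$ directly.

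Your claimed identification of $\cM^{cont}(\tilde x_-,\tilde x_+)$ with $\cM(\tilde x_-,\tilde x_+)$ does not work as stated. For $\tilde x_- \neq \tilde x_+$ the continuation moduli space is the \emph{parametrized} Floer moduli space (an $\R$-bundle over $\cM(\tilde x_-,\tilde x_+)$, not a point-bundle), so the zero-dimensional counts defining $c_{H_s}$ between distinct same-index orbits do not correspond to any count in the ordinary Floer theory; there is nothing for them to ``agree'' with. In particular, your assertion that pulling back $\sigma'$ makes $c_{H_s}$ literally the identity is unjustified. What you actually need is that these off-diagonal contributions all carry positive energy, and the only zero-energy contribution is the constant trajectory --- which you never isolate. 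Your detour through a one-parameter homotopy of perturbation systems is also unnecessary here: once the constant solutions are preserved, the energy filtration alone forces the conclusion.
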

\begin{proof}
The constant continuation trajectory is regular. Thus, we can choose the system of perturbations used to define $c_{H_s}$ such that we preserve these constant perturbation data. For this perturbation datum, we moreover have that $c$ is a map of $\Lambda_0$ modules, since $\mathcal{C}_{cont}$ is $E$-proper already, because the last term in \eqref{} is zero. Analyzing the construction of $c_{H_s}$ from Lemma \ref{lemma:cone-lemma}, we see that we can write $c_{H_s}$ as $1 + O(T^{\epsilon})$ for some $\epsilon>0$. By the Nakayama lemma, this implies that $c_{H_s}$ is a quasi-isomorphism. 
\end{proof}

We have proven:
\begin{proposition}
The quasi-isomorphism type of $|\mathcal{C}(H, J)(\sigma')|$ over $\Lambda^{univ}$ is independent of $H$ and $J$, as well as all auxiliary data used to define it. 
\end{proposition}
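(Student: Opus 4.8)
The plan is to package the comparison maps and their coherences produced in this section into a formal argument that the quasi-isomorphism type is independent of all choices, following the classical pattern that ``invertible up to a web of homotopy-commuting continuation maps implies canonical quasi-isomorphism type.'' First I would fix two sets of data, producing Floer complexes $|\mathcal{C}(H, J)(\sigma')|$ and $|\mathcal{C}(H', J')(\sigma'')|$ over $\Lambda^{univ}_\Z$ (this is where we pass to the universal Novikov ring, so that the $E$-positivity shifts of the previous subsection are harmless and all continuation maps become $\Lambda^{univ}_{\Z,0}$-linear after multiplying by a power of $T$, by Lemma \ref{lemma:descend-to-lambda_0} and the discussion following Proposition \ref{prop:charts-for-continuation-category}). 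Choosing a continuation datum in each direction gives maps $c_{H, H'}$ and $c_{H', H}$; choosing a homotopy datum interpolating the composite continuation datum $H \to H' \to H$ with the constant datum $H \to H$, together with the homotopy-commutativity Proposition of the previous subsection and Proposition \ref{prop:constant-continuation-datum}, shows $c_{H', H} \circ c_{H, H'} \sim c_{H_s} \sim \mathrm{id}$ up to a unit $T^{C}$; symmetrically for the other composite. Hence $c_{H, H'}$ is a quasi-isomorphism over $\Lambda^{univ}_\Z$, and the quasi-isomorphism type is the same.

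The key steps, in order, are: (1) formalize that the construction $(\text{data}) \mapsto |\mathcal{C}(H, J)(\sigma')|_{\Lambda^{univ}_\Z}$ extends to a construction on pairs of data, yielding continuation maps, by realizing each continuation datum as a morphism in a flow category $\CC(H^c, J)$ and applying Lemma \ref{lemma:cone-lemma}; (2) show that if two continuation data are themselves homotopic (connected by a homotopy datum), the induced continuation maps are chain homotopic — this is exactly the content of the homotopy-commutativity Proposition applied with $H^1 = H^4 = H$ and suitable choices, using Lemma \ref{lemma:square-diagram}; (3) invoke Proposition \ref{prop:constant-continuation-datum} for the constant datum; (4) assemble (1)--(3): $c_{H',H} \circ c_{H, H'}$ is the continuation map of the concatenated datum, which is homotopic to the constant datum's map, hence a quasi-isomorphism over $\Lambda^{univ}$, so $c_{H,H'}$ has a one-sided homotopy inverse on each side and is therefore a quasi-isomorphism; (5) finally note that varying the auxiliary data $(\tilde\Omega, N_1, \bar H)$, the perturbation data $\Lambda_d$, the stabilizing bundles, and the compatible system of perturbations $\sigma'$ while keeping $(H, J)$ fixed is the special case $H = H'$, $J = J'$ of the same argument, so that all these choices are absorbed.

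One technical point I would be careful about is that concatenation of continuation data is strictly associative only up to reparametrization of the Morse trajectory on $\Delta^1$; this is why the homotopy datum formalism (maps out of $\Delta^2 \cup_{\Delta^1} \Delta^2$, with gluing of Morse trajectories parametrizing gluing of continuation data, as emphasized in Proposition \ref{prop:charts-for-continuation-homotopy} and in \cite[Appendix C]{PardonHam}) is essential rather than cosmetic: it is what makes the composite $c_{H',H}\circ c_{H,H'}$ literally equal to the continuation map of a single (concatenated) continuation datum, to which the homotopy-commutativity Proposition then applies. I expect the \emph{main obstacle} to be purely bookkeeping: carefully choosing a \emph{single} $(\Omega, N_1)$ and compatible families of functions $\bar H$ and perturbation data that work simultaneously for both Hamiltonians, both continuation directions, and the homotopy datum, so that all the flow categories involved share compatible systems of derived $\langle k \rangle$-orbifold charts and compatible systems of perturbations; once that is arranged, the homological algebra (cones, Nakayama over $\Lambda^{univ}_{\Z,0}$) is routine. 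In particular the $T^C$ bookkeeping from the $E$-positivity shifts must be tracked so that ``$c_{H_s} = 1 + O(T^\epsilon)$'' survives composition and the Nakayama argument of Proposition \ref{prop:constant-continuation-datum} still applies to the composite.
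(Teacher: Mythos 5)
Your steps (1)--(4) are essentially the paper's argument: the Proposition in question is stated as a consequence of the homotopy-commutativity of compositions of continuation maps together with Proposition \ref{prop:constant-continuation-datum}, and the square-diagram argument is later spelled out explicitly in the proof of Theorem \ref{thm:integral-arnold}. One small correction to step (2): to reduce a composite $c_{H,H'}c_{H',H}$ to the identity continuation map you should take three corners equal, e.g. $H^1 = H^3 = H^4 = H$ with identity continuation data on the two corresponding arrows and $H^2 = H'$, so that Lemma \ref{lemma:square-diagram} yields $c_{H,H}\,c_{H,H} \sim c_{H,H'}\,c_{H',H}$; your phrasing ``$H^1 = H^4 = H$ and suitable choices'' does not force one side of the square to collapse to the identity.

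The genuine gap is step (5). The continuation-map machinery as developed in Section 5 presupposes a \emph{single} choice of $(\Omega, N_1, \bar H)$ and one coherent compatible system of perturbation data and stabilizations built on that background; the thickenings $T_{\Lambda_d}^\epsilon$, the framings by $H^0(L_{\Omega_u})$, and the $\langle k\rangle$-stratification via $\bar{\mathcal{A}}$ are all defined relative to that fixed data, and $\mathcal{F}^{cont}(\CP^\ell,d)$ interpolates between Hamiltonians, not between integralization backgrounds. When the two Floer complexes being compared use different $(\tilde\Omega, N_1, \bar H)$, the framed pre-Floer trajectories of the two sides live over different projective spaces and have different $\bar{\mathcal{A}}$-stratifications, and the construction does not supply a continuation map between them. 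The paper acknowledges this explicitly at the start of Section 6 (``we have never established the independence of the Floer complex of the choice of systems of perturbations, nor of the choices of integralization data ...'') and proves the auxiliary-data independence by a different route, the bordism-of-flow-categories / bifurcation argument of Propositions \ref{prop:qis-type-indep-of-perturbation} and \ref{prop:independence-of-integralization}; the integralization-data case in particular requires the bivariant moduli spaces $\mathcal{F}(\CP^{\ell_1},d_1;\CP^{\ell_2},d_2)$ and doubly framed trajectories, which have no analogue in your proposal. So ``absorb by taking $H = H'$, $J = J'$'' is not a special case of the argument you set up; it needs either a bivariant continuation construction of that kind or the bordism/bifurcation method.
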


\subsection{Moduli spaces for PSS map}
\label{sec:pss-moduli-spaces}
Finally, we show that if $H_1$ is a small time-dependent perturbation of a $C^2$-small  \emph{strictly positive} Morse function $H$ with distinct critical values, then $CF(H_1, J)$ is quasi-isomorphic to the Morse complex of $H$. To do this we will, following \cite{PSS}, define a map $PSS: CM(H) \to CF(H_1, J_1)$ which is an isomorphism after setting the Novikov parameter to zero, and is thus a quasi-isomorphism of complexes. 

For sufficiently small perturbations of $H$, we have a natural bijection $Crit(H) \simeq Fix(\phi_{H_1}^1)$. We denote this bijection by writing that $x' \in Crit(H)$ corresponds to $x \in Fix(\phi_{H_1}^1)$. Thinking of a critical point as a constant loop, we have a bijection between cappings 
\begin{equation}
\begin{gathered}
    \widetilde{Crit}(H) = \widetilde{Fix}(\phi^1_H) \simeq \widetilde{Fix}(\phi^1_{H_1}) \\
    \tilde{x}' \xleftrightarrow{} \tilde{x}. 
    \end{gathered}
\end{equation}

For convenience, we will also choose $H_1$ such that $\mathcal{A}_\omega(\tilde{x}') = \mathcal{A}_\omega(\tilde{x})$. 

To define the domain of the perturbation datum we must define yet another moduli space of McLean-stabilized domains. Luckily, in this case the combinatorics are essentially identical to the combinatorics of the differential.  We recall that $D_{k}(\CP^{\ell}, d) \subset \overline{\M}_{0, k}(\CP^\ell, d)$ consists of the stable maps which have linearly nondegenrate image. Let $\mathcal{F}^{PSS, R}(\CP^{\ell}, d) \subset D_{3}(\CP^{\ell}, d)$ consist those maps such that, writing $(p_-, p_+, p)$ for the three marked points on $u \in D_{3}(\CP^{\ell}, d)$, we have that $(p, p_-)$ lie on the same component. As before, we can write $S_1, \ldots, S_{r_u}$ for the components of the domain of $u$ where $S_1 \ni p_- = p_1$, $S_j$ is joined to $S_{j+1}$ along $p_j$, and $S_{r_u} \ni p_{r_u} = p_+$. We can think of an element $u \in \mathcal{F}^{PSS, R}(\CP^{\ell, d})$ as a stable map where $S^1$ is labeled by a geodesic from $p_-$ to $p_1$ through $p$, and is marked by $p$; thus, $S_1 \setminus \{p_-, p\}$ is canonically biholomorphic to $Z$.  

We define 
 \begin{equation}
 \mathcal{F}^{PSS}(\CP^\ell, d) \subset \bigcup_{d_1+d_2 = d} \mathcal{F}^{PSS_R}(\CP^\ell, d_1) \times_{\CP^\ell} \mathcal{D}_{2}(\CP^\ell, d_2)
 \end{equation}
 to consist of those pairs $(C, D)$ for which $p_+ \in C$ is joined to $p_- \in D$, and such that the image of the joined map is not contained in any $(d-1)$-dimensional linear subspace.  
 
 This space is a $PU(\ell+1)$-$\langle d-1 \rangle$-manifold in the same way that $\mathcal{F}(\CP^\ell, d)$ is; there are no extra corners introduced by the $\mathcal{D}_{2}(\CP^\ell, d_2)$ factors, which are manifolds. In particular, we have that  
 \begin{equation}
 	\label{eq:define-pss-stratification}
 	 \mathcal{F}^{PSS}(\CP^d, d)(S(d_1, \ldots, d_r))\subset 
 	\mathcal{F}(\CP^{d}, d_1)\times_{\CP^d} \ldots \times_{\CP^d} \mathcal{F}(\CP^{d}, d_{r-1})\times_{\CP^d} \mathcal{F}^{PSS}(\CP^{d}, d_r)
 \end{equation}
 is related to the spaces $\mathcal{F}(\CP^{d_1}, d_1), \ldots, \mathcal{F}(\CP^{d_{r-1}}, d_{r-1}), \mathcal{F}^{PSS}(\CP^{d_r}, d_r)$ by restrictions, products, equivalences, and stabilizations, as in Proposition \ref{prop:induction-relation-between-kuranishi-charts},whenever $d_1 + \ldots + d_r = d$. 
 
 A perturbation datum over a substratum of $\mathcal{F}^{PSS}(\CP^\ell, d)$ is defined exactly as in Definition \ref{def:perturbation-data}. Formula \ref{eq:associativity-of-induction} regarding compatibility of the inductions of perturbation data along decompositions \eqref{eq:define-pss-stratification} continues to hold, making ``compatibility of perturbation data'' adapt to this setting as in Definition \ref{def:compatible-choice-of-perturbation-data}, with $\mathcal{F}$ replaced by $\mathcal{F}^{PSS}$. Assume that now we have fixed a compatible choice of perturbation data $\Lambda^{PSS}_d$ over $\mathcal{F}^{PSS}(\CP^d, d)$ for all $d$, extending a compatible set of chosen perturbation data $\Lambda_d$ over $\mathcal{F}(\CP^d, d)$. 
 
 We choose a \emph{PSS datum}, namely a smooth map $H^{PSS}_s: \R_s \to \mathcal{H}(M)$ such that $H^{PSS}_s = H_1$ for $s \leq -1$, $\partial_s H^{PSS}(x) \leq 0$ for all $s, x$; and $H^{PSS}_s = 0$ for $s > 1$. This is possible since $H$ is a strictly positive Morse function. With this choice of $H^{PSS}_s$, the estimate \eqref{eq:energy-indentity-continuation-map} shows that for a map 
 \begin{equation}
 \label{eq:pss-trajectory}
 	u: Z \to M; \partial_s u + J (\partial_t u - X_{H^{PSS}_s}) = 0; \lim_{s \to - \infty}u(s,t) = x_-(t); \lim_{s \to + \infty} u(s,t) = u_{+\infty} \in M; \tilde{x} = [u]
 \end{equation}
(where the last condition is simply that $u$ is that $u$ viewed as a map from $S^2\setminus \{p_-\}$ is in the homology class of the capping $\tilde{x}_-$ of the Hamiltonian orbit $x_-$) we have that 
\begin{equation}
    \label{eq:action-positivity}
    \mathcal{A}_\Omega(\tilde{x_-}) \geq \epsilon > 0
\end{equation} for some fixed $\epsilon$ independent of $\tilde{x}_-$. As in Section , we use a process identical to that of Section \ref{sec:integral-forms-for-continuation-map} to choose an $(\tilde{\Omega}, N_1)$ and as well as $\epsilon_x$ for $x \in Fix(\phi_{H})$; and then we choose a $\bar{H}$ as in section \ref{sec:integral-two-form} and a $\bar{H}^{PSS}_s$ which has conditions as in \ref{eq:bar-h} for $s \to - \infty$, and still satisfies $\partial_s \bar{H}^{PSS}_s(x) \geq 0$ and $\bar{H}^{PSS}_s = 0$ for $s \geq \frac{1}{2}$. We then have the form $\Omega_u^{PSS}$ defined identically to \eqref{eq:floer-stabilizing-form} with $\bar{H}_s$ replaced by $\bar{PSS}_s$. Thus we have 
\begin{lemma}
    If $u$ is a solution to \eqref{eq:pss-trajectory} then $\bar{A}(\tilde{x}-) + K' > 0$ for some fixed $K'$; and the set of $u$ for which $\bar{A}(\tilde{x}_-) = \int_Z \Omega_u^{PSS} < C$ is compact for each $C$. 
\end{lemma}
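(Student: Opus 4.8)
The plan is to mirror the arguments used for the differential in Lemma~\ref{lemma:gromov-compactness-with-modified-energy} and for the continuation map in Section~\ref{sec:integral-forms-for-continuation-map}; the only genuinely new feature is that one asymptote of a PSS trajectory is a point of $M$ rather than a Hamiltonian orbit. Since $H^{PSS}_s = 0$ for $s \geq \tfrac12$, a solution $u$ of \eqref{eq:pss-trajectory} extends holomorphically over $p_+$ with finite energy, and the ``action at $+\infty$'' of the constant loop at $u_{+\infty}$ for the zero Hamiltonian is $0$. The pointwise energy computation underlying \eqref{eq:energy-indentity-continuation-map}, applied with $H_s = H^{PSS}_s$, then gives
\begin{equation}
\mathcal{A}_\Omega(\tilde{x}_-) = E_\Omega(u) - \int_Z (\partial_s H^{PSS}_s)(u(s,t))\,ds\,dt - \int_Z \Omega\big(X^{\Omega}_{H^{PSS}_s} - X^\omega_{H^{PSS}_s},\, \partial_s u\big)\,ds\,dt ,
\end{equation}
and, by exactly the manipulation that produces $\bar{\mathcal A}$ from $\mathcal A_\Omega$ in Section~\ref{sec:integral-two-form} (replacing $H^{PSS}_s$ by the cutoff $\bar H^{PSS}_s$ and shifting by the $\epsilon_x$), the left-hand side differs from $\int_Z \Omega_u^{PSS} = \bar{\mathcal A}(\tilde x_-)$ by a pointwise, hence uniformly bounded, correction and an overall factor of $N_1$.

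For the first assertion I would argue as follows. By construction $\partial_s H^{PSS}_s \leq 0$, so $-\int_Z (\partial_s H^{PSS}_s)(u) \geq 0$; and since $\Omega$ is chosen in the $s$-family class $S^{\epsilon_1}_{\omega, H^{PSS}_s}$, the Cauchy--Schwarz estimate of \eqref{eq:bound-on-vector-field-error}, run precisely as in the continuation setting (picking up the harmless extra $\sqrt{E^{cyl}_\omega(u)}$ term on the compact $s$-strip where $H^{PSS}_s$ is genuinely non-constant), bounds the last integral by $\tfrac18\big(E^{cyl}_\omega(u) + (E^{cyl}_\omega(u))^{1/2}\big)$. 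Using $E_\Omega(u) \geq \tfrac12 E_\omega(u) \geq 0$ one deduces $\mathcal{A}_\Omega(\tilde x_-) \geq -K$ for a fixed constant $K$ (indeed this is the estimate already invoked for \eqref{eq:action-positivity}). Passing to $\bar{\mathcal A}$ via the bounded correction above and enlarging $K'$ to absorb it yields $\bar{\mathcal A}(\tilde x_-) + K' > 0$.

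For the compactness assertion the same inequalities, read in the other direction, give a uniform bound $E_\omega(u) \leq C'(C+1)$ on the set where $\bar{\mathcal A}(\tilde x_-) = \int_Z \Omega_u^{PSS} < C$. Such a bound restricts the possible negative asymptotes $\tilde x_-$ to the finite collection of capped orbits with $\mathcal{A}_\Omega(\tilde x_-)$ below a fixed threshold --- finitely many because $\Omega$ is rational (so the action spectrum is discrete) and $Fix(\phi_{H_1})$ is finite --- and for each such $\tilde x_-$ the space of finite-energy solutions of \eqref{eq:pss-trajectory} with that asymptote and bounded energy is compact by Gromov--Floer compactness for the PSS equation (the positive end fills in, the negative end degenerates into broken Hamiltonian/PSS trajectories, sphere bubbles form trees). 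As in Lemma~\ref{lemma:induced-topology} and its continuation-map analogue, the topology induced on this set from the Kuranishi-chart construction agrees with the Gromov--Floer topology, so the union of these finitely many moduli spaces is compact. The main obstacle is the one already present in the continuation case: Lemma~\ref{lemma:make-form-zero-near-orbits} cannot be arranged on the $s$-strip where $H^{PSS}_s$ actually varies, so the vector-field error term must be controlled over the non-compact cylinder with the slightly lossy estimate above --- this is precisely why a fixed additive constant $K'$ appears --- after which everything else is bookkeeping already carried out earlier in the paper.
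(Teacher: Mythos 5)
Your proof is correct and matches the paper's implicit argument: the paper states this lemma without an explicit proof, but the surrounding text (the construction of $(\tilde{\Omega}, N_1, \bar{H}^{PSS}_s)$ ``identical to that of Section \ref{sec:integral-forms-for-continuation-map}'' and the appeal to \eqref{eq:energy-indentity-continuation-map}) signals exactly the reduction you carry out. Your observation that the $\partial_s H^{PSS}_s \leq 0$ term has a favorable sign, together with the Cauchy--Schwarz bound on the vector-field error term and the identification $\int_Z \Omega_u^{PSS} = \bar{\mathcal{A}}(\tilde{x}_-)$ via Stokes, is precisely the intended chain of estimates, and the compactness claim follows from the resulting uniform energy bound plus Gromov--Floer compactness as in Lemma \ref{lemma:gromov-compactness-with-modified-energy}.
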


 A \emph{framed bubbled pre-PSS trajectory} is defined as in Definition \ref{def:framed-contination-trajectory}, with the same notion of equivalence,  but with different markings: now we only have marked geodesic segments from $p_{j-1}$ to $p_j$ only up to $j \leq J$ for some $J \leq r_u$, with the geodesic segment from $p_{J-1}$ to $p_J$ carrying the extra marked point; and the map is required to limit to fixed point in $M$ at $p_{r_u}$.  As before we write $\psi_{u,J}$ for the canonical biholomorphism $S_J^2 \setminus \{p_{J-1}, p_J\} \simeq Z.$  
 
  We can use $\Omega_u$ chosen based on $H_1$ as in Section \ref{sec:integral-two-form} to make sense of this definition, and the arguments of that section continue to show that $\int_{\Sigma_\alpha} \Omega_u$ is integral on any component of a framed bubbled pre-PSS trajectory. 
 
  We can now define the thickenings $T_{\Lambda_d^{PSS}}$ to consist of pairs $((u, \Sigma, F), \eta)$ where $(u, \Sigma, F)$ is a framed bubbled pre-PSS trajectory with $u_F \in \mathcal{F}^{PSS}(\CP^d, d)$, and $\eta \in (V^{PSS}_d)_{u_F}$ which satisfies 
 \begin{equation}
 	\begin{gathered}
 	\bar{\partial}_Fu|_{S_j} + \lambda^{PSS}_d(\eta) \circ di_F = 0; \\
 	\partial_s \tilde{u} + J \partial_t(\tilde{u} - X_{H_s}) + \lambda^{PSS}_d(\eta) \circ di_F \circ d\psi_{u, J}= 0 , \text{ where } \tilde{u} =  u|_{S^j} \circ \psi_{u, J}; \\
 	\bar{\partial}u|_{S^2_\alpha} + \lambda^{PSS}_d(\eta)\circ di_F= 0 \text{ for all other components } S^2_\alpha \neq S^2_1, \ldots, S^2_J. 
 	\end{gathered}
 \end{equation}
 	We then define $\mathcal{K}_{\Lambda^{PSS}_d} =(PU(d+1), T_{\Lambda^{PSS}_d}, \mathcal{V}_{\Lambda^{PSS}_d} \oplus i \mathfrak{pu}_{d+1}, \sigma_{\Lambda^{PSS}_d} \oplus \sigma_H)$ defined exactly as in Definition \ref{def:thickening}. We write $\mathcal{T}^\epsilon_{\Lambda^{PSS}_d}$ to to be the sub-chart of $\mathcal{T}_{\Lambda^{PSS}_d}$ for which $\|\eta\| < \epsilon$. We have that 
 	    $\overline{\M}^{PSS}(d)$, the quotient of the zero section of the Kuranishi section of $\mathcal{K}_{\Lambda^{PSS}_d}$ by the $PU(d+1)$-action, is the set of bubbled PSS trajectories in $M$ (of $\Omega_u$-degree $d$). We write 
 	    \begin{equation}
 	        \cM^{PSS}(\tilde{x}_-) = \{ u \in \bigcup_d \cM^{PSS}(\overline{\mathcal{A}}(\tilde{x}_-)) | \lim_{z \to p_-} u(z) = x_-, u \text{ defines the capping } \tilde{x}_-\}.
 	    \end{equation}

 As before, we have that, for sufficiently small $\epsilon >0$, the $PU(d+1)$ action on $\mathcal{T}^\epsilon_{\Lambda}$ has finite stabilizers, and that the induced topology on $\cM^{PSS}(d)$ agrees with the Gromov-Floer topology. The argument of Lemma \ref{lemma:surjective-perturbation-data-exist-2} adapts to show that one can choose compatible perturbation data $\Lambda^{PSS}_d$ such that the extended linearized operators are always surjective on PSS solutions, which implies that $T_{\Lambda^{PSS}_d}^\epsilon$ is a $PU(d+1)-\langle d-1\rangle$-manifold with projection $\Pi$ to $\mathcal{F}^{PSS}(\CP^d, d)$ a fiberwise-smooth $C^1_{loc} PU(d+1)$ bundle.

 \paragraph{Morse moduli spaces}
 We now need to couple the moduli spaces defined above to Morse-theoretic moduli spaces to define the PSS map. Given $x'_\pm \in Crit(H)$, write $\mathcal{M}^{M}(x'_-, x'_+)$ for the set of Morse gradient trajectories
 \begin{equation}
     \mathcal{M}^M(x_-, x_+) = \{ \gamma: \R \to M | \dot{\gamma} = -\Grad f, \lim_{s \to \pm \infty} \gamma(s) = x_\pm\}/\R
 \end{equation}
 and write $\cM^M(x'_-, x'_+)$ for its compactification by broken morse trajectories. Furthermore, let
 $\cM^{M-out}(x_+)$ be the natural compactification of the moduli space of Morse trajectories starting from an arbitrary point of $M$ and flowing into $x_+$:
 \begin{equation}
 \begin{gathered}
     \mathcal{M}^{M-out}(x_+) = \{ \gamma: [0, \infty) \to M | \dot{\gamma} = -\Grad f, \lim_{s \to \infty} \gamma(s) = x_+\} \\
     \cM^{M-out}(x_+) = \bigcup_{\substack{k \geq 0 \\ x_0, \ldots, x_k = x_+ \in Crit(f) \\ x_i \neq x_j \text{ for } i \neq j}} \mathcal{M}^{M-out}(x_0) \times \mathcal{M}^M(x_0, x_1) \times \ldots \times \mathcal{M}^M(x_0, x_k)\\
     \end{gathered}
 \end{equation}
 We note that there is a bound on the number of breakings an element of $\cM^{M}(x_-, x_+)$ or of $\cM^{M-out}(x_+)$, namely $ind(x_-) - ind(x_+)-1$ for $\cM^{M}(x_-, x_+)$ and $2n - ind(x_+)$ for $\cM^{M-out}(x_+)$. In particular, for generic Morse-Smale $H$, we have that $\cM^M(x_-, x_+)$ is a $\langle ind(x_-) - ind(x_+)-1 \rangle$ manifold, while
 $\cM^{M}(x_-, x_+)$ is a $\langle 2n - ind(x_+)\rangle$ manifold. We have an evaluation map $ev: \cM^{M-out}(x_+) \to M$ sending $(\gamma, \ldots )$ to $\gamma(0)$. 
 	
We then define
\begin{equation}
    T^{\epsilon}_{\Lambda_d^{PSS}, M}(x'_+) = T^\epsilon_{\Lambda_d^{PSS}} \times_M \cM^{M-out}(x'_+)
\end{equation}
where the fiber product is over the evaluation map $ev \cM^{M-out}(x'_+) \to M$ and the evaluation map $u \mapsto u(p_+)$ for $((u, \Sigma, F), \eta) \in T^\epsilon_{\Lambda^{PSS}_d}$. We define $\mathcal{T}^\epsilon_{\Lambda_d^{PSS}, M}(x_+) = (PU(d+1), T^{\epsilon}_{\Lambda_d^{PSS}, M}(x'_+), \pi_1^*(\mathcal{V}_{\Lambda^{PSS}_d} \oplus i \mathfrak{pu}_{d+1}), (\sigma_{\Lambda^{PSS}_d} \oplus \sigma_H) \circ \pi_1)$ where $\pi_1$ is projection onto $T^\epsilon_{\Lambda_d^{PSS}}$ and the $PU(d+1)$ action is only on that factor. We have that $T^{\epsilon}_{\Lambda_d^{PSS}, M}(x_+)$ is a $\langle d-1 + (2n-x_+-1)\rangle$-manifold. 

Let us write the quotient of the zero section of the Kuranishi section by the group action on $T^{\epsilon}_{\Lambda_d^{PSS}, M}(x_+)$ as $\cM^{PSS, M}(x_+)$. Writing $\tilde{x}'_+$ for the constant cap on $x'_+$,  let $\cM^{PSS, M}(\tilde{x}_-, \tilde{x}'_+)$ be the subset of $\cM^{PSS, M}(x_+)$ for which the projection to the first factor lies in $\cM^{PSS}(\tilde{x}_-)$. We call elements of $\cM^{PSS, M}(\tilde{x}_-, \tilde{x}_+)$ \emph{PSS trajectories from $\tilde{x}_-$ to $\tilde{x}_+$.}

\subsection{Definition of the PSS map}

\paragraph{Morse Flow category} We first define a proper $E$-proper graded $\Pi$-equivariant flow category $\mathcal{C}^{Morse}$ corresponding to Morse cohomology. The objects are $Ob(M) = \Pi Crit(H)$. We have that $\mathcal{C}^{Morse}(\pi\tilde{x}'_-, \pi\tilde{x}'_+) = \cM^(x_-, x_+)$ for $\pi \in \Pi$ and $\tilde{x}'_\pm $ the constant caps at $x'_\pm \in Crit(H)$. Otherwise we set $\mathcal{C}^{Morse}(\tilde{x}'_-, \tilde{x}'_+) = \emptyset$. We use the usual $\mu$ and energy function $E$, thinking of the capped critical points as constant Hamiltonian trajectories. We set $\mathcal{S}_{\mathcal{C}^{Morse}(\pi\tilde{x}'_-, \pi\tilde{x}'_+)} = [ind(x'_-) - ind(x'_+)-1]$ and otherwise we set $\mathcal{S}_{\mathcal{C}^{Morse}(\tilde{x}'_-, \tilde{x}'_+)} = \bullet$. Composition is given by gluing Morse trajectories and maps of sets directly analogous to \eqref{eq:set-map-for-flow-category}. The recapping action makes $\mathcal{C}^{Morse}$ $\Pi$-equivariant. 

By choosing a compatible system of framings for the Morse moduli spaces \cite{CJS} we can think of $\cM(x_-, x_+)$ as normally complex derived $\langle k \rangle$-orbifolds in a trivial way, as they are smooth manifolds \cite{burghelea-haller} \cite{wehrheim-corners}; thus we have a compatible system of derived $\langle k \rangle$-orbifold charts $\mathcal{C}'^{Morse}$ which  $\mathcal{C}^{Morse}$. The process of choosing a compatible system of perturbations $\sigma'$ for $\mathcal{C}^{Morse}$ is trivial as such a choice is unique, and we have that $|\mathcal{C}^{Morse}(\sigma')|_{\Lambda^\Pi_\Z}$ is the Morse cochain complex over $\Lambda^\Pi_\Z$. 

\paragraph{PSS Flow Category}
We now proceed to define a filtered flow category $\mathcal{C}^{PSS}$ which will contain $\mathcal{C}_{Floer}(H_1, J)$ and $\mathcal{C}^{Morse}[-1]$ as full subcategories satisfying the conditions of Lemma \ref{lemma:cone-lemma}. This specifies the gradings, the functions $E$ (given by the usual symplectic actions), and the $\Pi$ actions.  It suffices to define $\mathcal{C}^{PSS}(\tilde{x}_-, \tilde{x}'_+)$ for all pairs $\tilde{x}_- \in \widetilde{Fix}(\phi_{H_1}^1)$, $\tilde{x}_+ \in \widetilde{Fix}(\phi_H^1)$. We set $\mathcal{C}^{PSS}(\pi \tilde{x}_-, \pi\tilde{x}'_+) = \cM(\tilde{x}_-, \tilde{x}'_+)$ for $\tilde{x}'_+$ the constant capping at $x'_+$ and $\pi \in \Pi$. We set $\mathcal{S}_{\mathcal{C}^{PSS}(\pi \tilde{x}_-, \pi\tilde{x}'_+)} = [\mathcal{A}(\tilde{x}_-)-1+(2n - ind x_+)-1]$. Otherwise we set $\mathcal{C}^{PSS}( \tilde{x}_-, \tilde{x}'_+) = \emptyset$. The composition operation glues Morse trajectories or Floer trajectories to PSS trajectories in the usual way, while the composition operation on sets is as in \eqref{eq:set-map-for-flow-category}, with $\mathcal{C}_{cont}$ replaced by $\mathcal{C}^{PSS}$. This makes $\CC^{PSS}$ into proper,$E$-proper $\Pi$-equivariant flow category.

The methods of Sections 3 and 4 adapt to this setting to show that 
\begin{proposition}
The flow category $\mathcal{C}^{PSS}$ admits a compatible system of derived $\langle k \rangle$-orbifold charts arising from the topological Kuranishi charts $\mathcal{K}_{\Lambda^{PSS}_d, M}$. 
\end{proposition}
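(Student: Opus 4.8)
The plan is to run the argument of Lemma~\ref{lemma:compatible-system-of-derved-orbifold-charts-1} and Proposition~\ref{prop:charts-for-continuation-category} essentially verbatim; the one genuinely new feature is that the thickenings $T^{\epsilon}_{\Lambda^{PSS}_d, M}(x'_+) = T^{\epsilon}_{\Lambda^{PSS}_d}\times_M \cM^{M-out}(x'_+)$ are formed by a \emph{fiber product} with a Morse-theoretic moduli space rather than as a plain product. The key combinatorial point is that the $\langle k\rangle$-structure on $T^{\epsilon}_{\Lambda^{PSS}_d, M}(x'_+)$ has two independent sources of corners: the codimension $d-1$ corners of $\mathcal{F}^{PSS}(\CP^d, d)$ recording degenerations of the McLean-stabilized PSS domain (exactly as in Proposition~\ref{prop:fiberwise-c1loc-on-kuranishi-chart}), and the corners of $\cM^{M-out}(x'_+)$ recording breakings of the outgoing Morse trajectory; since both evaluation maps land in the corner-free manifold $M$, these combine, as in the product construction for $G$-$\langle k\rangle$-manifolds of Section~\ref{sec:k-manifolds}, into the $\langle k\rangle$-structure recorded in Section~\ref{sec:pss-moduli-spaces}, with the recapping $\Pi$-action permuting charts freely as for $\CC(H_1,J)$.

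Concretely, first I would, as in Lemma~\ref{lemma:surjective-perturbation-data-exist-2}, choose compatible perturbation data $\Lambda^{PSS}_d$ by induction on $d$ (extending the previously fixed $\Lambda_d$ along $\mathcal{F}(\CP^d,d)\subset\mathcal{F}^{PSS}(\CP^d,d)$) so that the extended linearized operators are surjective on all bubbled PSS solutions and, in addition, so that the fiberwise evaluation map $ev_{p_+}\colon ((u,\Sigma,F),\eta)\mapsto u(p_+)$ restricts to a submersion onto $M$ on each fiber of $\Pi\colon T^{\epsilon}_{\Lambda^{PSS}_d}\to\mathcal{F}^{PSS}(\CP^d,d)$. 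This extra evaluation transversality is arranged by the same argument, since the $(0,1)$-form perturbations can be chosen to move $u(p_+)$ freely and $H$ is taken Morse--Smale, and it is the condition which makes the fiber product with $ev\colon\cM^{M-out}(x'_+)\to M$ a pullback of a fiberwise submersion. Granting this, running the local product models of Proposition~\ref{prop:fiberwise-c1loc-on-kuranishi-chart} for $\Pi$ fiberwise over $\mathcal{F}^{PSS}(\CP^d,d)$ and pulling them back along $ev$, one sees that $T^{\epsilon}_{\Lambda^{PSS}_d, M}(x'_+)$ is a topological $PU(d+1)$-$\langle k\rangle$-manifold and that the induced projection to $\mathcal{F}^{PSS}(\CP^d,d)$ is a fiberwise-smooth $C^1_{loc}$ $PU(d+1)$-equivariant topological submersion; the $PU(d+1)$-action is trivial on the Morse factor, stabilizers remain finite for $\epsilon$ small, and the induced topology on the zero locus is the Gromov--Floer topology. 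This is the analogue of Proposition~\ref{prop:fiberwise-c1loc-on-kuranishi-chart} here.

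Next I would record the induction/restriction relations analogous to Proposition~\ref{prop:induction-relation-between-kuranishi-charts}. Using \eqref{eq:define-pss-stratification} and the operations of Section~\ref{sec:unitary-group-actions} --- together with the description of the Morse boundary strata by gluing of Morse trajectories, handled exactly as the extra corners of $\mathcal{F}^{cont}$ were handled --- the chart $\mathcal{K}^{\epsilon}_{\Lambda^{PSS}_d, M}(S(d_1,\dots,d_r))$ is obtained from the Floer charts $\mathcal{K}^{\epsilon}_{\Lambda_{d_j}}$ for $j<r$, the lower-degree PSS chart $\mathcal{K}^{\epsilon}_{\Lambda^{PSS}_{d_r}, M}$, and the (smooth) Morse charts, by products and fiber products, (generalized) inductions, restrictions, stabilizations by complex vector bundles, passage to open subsets, and equivalences. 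One then feeds this into the compatible-smoothing machinery of Section~\ref{sec:compatible-smoothing}: replace each chart by its stabilized, collared version, equip it with the canonical normal complex structure of Section~\ref{sec:normally-complex-structures-single-chart} --- the Morse factor carries no $PU(d+1)$-action, so its tangent bundle is $PU(d+1)$-trivial and gets absorbed into the $\oo{TN}$ summand of the normal complex decomposition, compatibly under the induction/restriction maps as in Section~\ref{sec:normally-complex-structures-compatibility} --- and apply Proposition~\ref{prop:upgraded-smoothing} inductively in $d$ (and over the posets $2^{[d-1]}$) to produce compatibly smoothed, normally complex derived $\langle k\rangle$-orbifold charts $\mathcal{K}'_{\Lambda^{PSS}_d, M}$ restricting correctly to the charts for $\CC(H_1,J)$ and for $\mathcal{C}^{Morse}$ already constructed. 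Passing to the associated derived $\langle k\rangle$-orbifold charts produces the required compatible system for $\mathcal{C}^{PSS}$.

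The main obstacle I anticipate lies purely in the fiber-product bookkeeping: one must verify that the product operation on $G$-$\langle k\rangle$-manifolds of Section~\ref{sec:k-manifolds} extends to fiber products over a corner-free base along a fiberwise submersion, that it commutes with the doubling and collaring functors (so that $\mathcal{D}$ and $Coll$ of the fiber product are the fiber products of the respective $\mathcal{D}$'s and $Coll$'s --- immediate from functoriality applied to the two projection maps) and with Lemma~\ref{lemma:unique-smoothing-of-vector-bundle}, and that the resulting corner stratification agrees with the stratification of $\mathcal{C}^{PSS}(\tilde{x}_-,\tilde{x}'_+)$ from Section~\ref{sec:pss-moduli-spaces}. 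Once this is in place, every analytic input --- gluing, surjectivity of linearizations, Gromov--Floer compactness giving properness and $E$-properness of $\mathcal{C}^{PSS}$ --- and every smoothing input is exactly as in Sections~\ref{sec:global-charts} and~\ref{sec:compatible-smoothing}.
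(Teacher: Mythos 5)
The paper itself offers no written proof for this proposition — the preceding sentence simply asserts ``The methods of Sections 3 and 4 adapt to this setting'' — so you are filling in a deliberate gap rather than reproducing a given argument. Your proposal does this correctly and in the spirit the paper intends: you identify the one genuinely new analytic ingredient (that the thickening is formed by a \emph{fiber product} with $\cM^{M-out}(x'_+)$ over $M$ rather than a plain product), you correctly note that this forces an evaluation-transversality condition on $ev_{p_+}$ so that the fiber product inherits a $\langle k \rangle$-manifold structure with corner labels combining the $d-1$ domain corners and the Morse breaking corners, and you route the rest through the existing machinery of Sections 3 and 4 (induction/restriction relations, canonical normal complex structures with the $PU(d+1)$-trivial Morse factor absorbed into $\oo{TN}$, compatible smoothing via Proposition~\ref{prop:upgraded-smoothing}, compatibility with the previously built charts for $\CC(H_1,J)$ and $\CC^{Morse}$).

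Two small points worth tightening. First, requiring $ev_{p_+}$ to be a \emph{fiberwise submersion} onto $M$ is sufficient but stronger than necessary; transversality of $ev_{p_+}$ to the (finitely many) images of the $ev$-maps from $\cM^{M-out}(x'_+)$ and its boundary strata suffices, and the latter can equally well be arranged by a small perturbation of the Morse data, which may be the lighter-weight route. Second, your remark that ``the $(0,1)$-form perturbations can be chosen to move $u(p_+)$ freely'' should be phrased more carefully, since perturbation data are required to vanish in a neighborhood of $p_+$; the evaluation is made submersive not by perturbations \emph{at} $p_+$ but by enlarging the kernel of the thickened operator with perturbations supported on nearby annuli and noting the resulting elements of the kernel evaluate nontrivially at $p_+$. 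Neither of these affects the correctness of the argument, and your anticipated ``fiber-product bookkeeping'' — that doubling, collaring, and Lemma~\ref{lemma:unique-smoothing-of-vector-bundle} commute with fiber products over a corner-free base, and that the induced stratification matches the one declared in Section~\ref{sec:pss-moduli-spaces} — is indeed where the remaining diligence lies, as you say.
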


Applying Lemma \ref{lemma:cone-lemma}, we produce a map 
\begin{equation}
	\label{eq:pss-chain-map}
	PSS: |\mathcal{C}^{Morse}(\sigma')|_{\Lambda^{\Pi_E}_\Z}  \to |\mathcal{C}(H_1, J)(\sigma')|_{\Lambda^{\Pi_E}_\Z}. 
\end{equation}

\paragraph{Completion of existence of PSS map}
We now argue almost identically to \ref{prop:constant-continuation-datum}. We note first that the flow category $\mathcal{C}^{PSS}$ is $E$-positive by \eqref{eq:action-positivity}. 
Thus we can tensor $PSS$ up to a map
\begin{equation}
    \label{eq:pss-chain-map-univ}
	PSS^{univ}: |\mathcal{C}^{Morse}(\sigma')|_{\Lambda^{univ}_\Z}  \to |\mathcal{C}(H_1, J)(\sigma')|_{\Lambda^{univ}_\Z}. 
\end{equation}

The $E$-positivity of $\CC^{PSS}$ implies that $PSS^{univ}$ is induced from a corresponding map $PSS^{univ}_0$ of $\Lambda^{univ}_{0,\Z}$ modules. Now, we claim that for sufficiently small perturbations $H_1$ of $H$ (which we choose to not change actions of corresponding fixed points), there is a unique nondegenerate PSS trajectory from $\tilde{x}$ to $\tilde{x}'$, where the caps are the constant caps at $x$ and $x'$. This holds because this would be true if $H = H_1$ and $H^{PSS}_s$ was constant then the \emph{constant} PSS trajectory at the constant orbit $x = x'$ would do this. But this solution is nondegenerate and so persists upon a sufficiently small perturbation.

We have that $E(\tilde{x}) = E(\tilde{x}')$, and thus (c.f. Lemma \ref{lemma:cone-lemma}) we can write $PSS^{univ}_0$ as $1 + T^{\epsilon}PSS'$ for some map $PSS'$ of $\Lambda^{\Pi_E}_\Z$ modules. By the Nakayama lemma this proves

\begin{proposition}
\label{prop:pss-is-qis}
	The map $PSS$ of \eqref{eq:pss-chain-map} is a quasi-isomorphism.
\end{proposition}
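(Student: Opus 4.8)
### Proof proposal for Proposition \ref{prop:pss-is-qis}

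The plan is to assemble the pieces already constructed in this section into a standard PSS-type argument, reducing everything to the Nakayama lemma over $\Lambda^{univ}_{\Z,0}$. First I would recall that the $E$-positivity of $\CC^{PSS}$, which was established via the action estimate \eqref{eq:action-positivity}, guarantees by Lemma \ref{lemma:descend-to-lambda_0} that the tensored map $PSS^{univ}$ of \eqref{eq:pss-chain-map-univ} descends to a map $PSS^{univ}_0$ of $\Lambda^{univ}_{\Z,0}$-modules between the $\Lambda^{univ}_{\Z,0}$-versions of the Morse and Floer complexes. Since both $|\mathcal{C}^{Morse}(\sigma')|_{\Lambda^{univ}_{\Z,0}}$ and $|\mathcal{C}(H_1,J)(\sigma')|_{\Lambda^{univ}_{\Z,0}}$ are complexes of free modules, and the bijection $\widetilde{Crit}(H) \simeq \widetilde{Fix}(\phi^1_{H_1})$ identifies their generating sets preserving the energy filtration (this is the condition $\mathcal{A}_\omega(\tilde{x}') = \mathcal{A}_\omega(\tilde{x})$ we imposed on $H_1$ and the choice of constant caps), we may compare the two complexes generator-by-generator.

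The key input is the count of virtual-dimension-zero PSS trajectories of minimal energy. I would argue that for $H_1$ a sufficiently small time-dependent perturbation of the $C^2$-small strictly positive Morse function $H$, and for a suitable choice of PSS datum $H^{PSS}_s$, the unique rigid PSS trajectory from $\tilde x$ to $\tilde x'$ of zero energy is the constant one: when $H = H_1$ and $H^{PSS}_s \equiv H$ is constant in $s$, the constant map at the critical point $x = x'$ is a solution of the PSS equation coupled to the trivial Morse flow line, and a direct linearization computation (as in the proof of Proposition \ref{prop:constant-continuation-datum}, using that the constant solution is cut out transversally because $x'$ is a nondegenerate critical point and the relevant linearized operator is the direct sum of the Morse Hessian and the Floer linearization at a constant orbit) shows it is regular and isolated. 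By the inverse function theorem / implicit function theorem for the parametrized problem, this solution persists and remains the unique rigid zero-energy solution under a sufficiently small perturbation to $(H_1, H^{PSS}_s)$; all other PSS trajectories from $\tilde x$ to any $\tilde y'$ carry strictly positive energy by \eqref{eq:action-positivity} (and by the compactness statement, strictly positive energy bounded below by a fixed $\epsilon$). I would then note that we may choose the compatible system of perturbations $\sigma'$ for $\CC^{PSS}$ to leave this rigid constant solution unperturbed, exactly as in Proposition \ref{prop:constant-continuation-datum}, so that the corresponding matrix entry of $PSS^{univ}_0$ is $1$.

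Consequently, working through the cone construction of Lemma \ref{lemma:cone-lemma} that defines $PSS$, we may write $PSS^{univ}_0 = \mathrm{id} + T^{\epsilon} \cdot PSS'$ for some $\Lambda^{univ}_{\Z,0}$-linear map $PSS'$ and some $\epsilon > 0$ (the $\mathrm{id}$ term coming from the diagonal constant-trajectory contributions just analyzed, and the correction term collecting all higher-energy contributions, which exist by $E$-properness). This is an isomorphism of $\Lambda^{univ}_{\Z,0}$-modules: its reduction modulo the maximal ideal $T^{\epsilon}\Lambda^{univ}_{\Z,0}$ is the identity, so by the Nakayama lemma it is an isomorphism on the nose, hence a quasi-isomorphism. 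Tensoring back up to $\Lambda^{univ}_{\Z}$, and then observing (as in the independence discussion after Proposition \ref{prop:constant-continuation-datum}) that quasi-isomorphism over $\Lambda^{univ}_{\Z}$ together with freeness descends to quasi-isomorphism over $\Lambda^{\Pi_E}_\Z$ of \eqref{eq:pss-chain-map}, we conclude. The main obstacle in carrying this out rigorously is verifying the regularity and uniqueness of the constant PSS trajectory — i.e.\ checking that the linearized operator of the PSS equation at the constant solution, which mixes the Morse and Floer linearizations through the fiber product over the evaluation map $ev$, is genuinely surjective with one-dimensional cokernel-free kernel of the expected dimension, and that this transversality is preserved under the perturbation to a time-dependent $H_1$; this is where the "$C^2$-small" and "strictly positive" hypotheses on $H$ are really used.
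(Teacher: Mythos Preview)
Your proposal is correct and follows essentially the same approach as the paper: use $E$-positivity from \eqref{eq:action-positivity} to descend $PSS^{univ}$ to a map $PSS^{univ}_0$ over $\Lambda^{univ}_{\Z,0}$, identify the unique regular constant PSS trajectory from $\tilde{x}$ to $\tilde{x}'$ (which persists under small perturbation of $H$ to $H_1$), write $PSS^{univ}_0 = 1 + T^{\epsilon}PSS'$, and conclude by Nakayama. Your write-up is in fact somewhat more careful than the paper's in making explicit the descent step via Lemma~\ref{lemma:descend-to-lambda_0} and the final passage back to $\Lambda^{\Pi_E}_\Z$.
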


\subsection{Proof of Theorem \ref{thm:integral-arnold}}
We now combine the methods of this section to prove the main theorem.
\begin{proof}[Proof of Theorem \ref{thm:integral-arnold}]
First, we replace the main Hamiltonian $H$ of the theorem with a new Hamiltonian, which we will also denote $H$, so that the number of fixed points does not change but all of the corresponding orbits are nonconstant. We choose a $C^2$-small morse function $H_0$ and a small perturbation $H'_0$ of $H_0$ as in Section \ref{sec:pss-moduli-spaces} which would allow us to define the $PSS$ map. We fix a background almost complex structure $J$ We consider the following pair of diagrams
\begin{equation}
\begin{tikzcd}
H'_0 \ar[r] \ar[d] & H'_0 \ar[d] & H \ar[r] \ar[d] & H \ar[d] \\
H \ar[r] & H'_0 & H'_0 \ar[r] & H
\end{tikzcd}
\end{equation}
We choose continuation homotopy data filling out each of these pairs of diagrams, such that the continuation data associated to any arrow from a Hamiltonian to itself is the identity continuation datum; as well as a PSS datum to compare Morse theory of $H_0$ and Floer theory of $H'_0$. We can $(\tilde{\Omega}, N_1)$ so that we can simultaneously define all of these maps, and then choose $\bar{H}_{s,\tau}$ and $\bar{H}^{PSS}$ as in the corressponding sections. Applying the constructions of the previous sections defines maps on Floer complexes on the Hamiltonians as in the diagrams above, but with the arrows reversed, which are homotopy commutative. By Proposition \ref{prop:constant-continuation-datum} the nontrivial continuation maps when composed with quasi-isomorphisms become homotopic to quasi-isomorphisms, so they themselves are quasi-isomorphisms. Thus the Floer complex of $H$ is quasi-isomorphic to the Floer complex of $H'_0$, which is quasi-isomorphic to the Morse complex of $H_0$ by Proposition \ref{prop:pss-is-qis}. We concluse by Lemma \ref{lemma:qis-lemma}. 
\end{proof}

\section{Bifurcation analysis and bordisms of flow categories}
We note that in the above proof of Theorem \ref{thm:integral-arnold}, we have never established the independence of the Floer complex of the choice of systems of perturbations, nor of the choices of integralization data or the choices of $J$ or the perturbation data and stabilizing vector bundles. Showing independence of $J$ can be done as in the proof above, using continuation maps. While it is possible to show independence of the perturbation data or or the integralization data using continuation-like ideas, below we describe a different method to show independence of various data of Floer theoretic maps. Namely, we describe the idea of a \emph{bordism of flow categories} and a \emph{bordism of derived $\langle k \rangle$-orbifold charts}, and outline how this can show independence of floer complexes of the choices of perturbations or of the choices of integralization data. In essence, adapting the \emph{bifurcation method} for studying Floer theoretic operations to the setting of flow-categories can lead to more elegant proofs. We hope this will be helpful to symplectic topologists trying to explore the new invariants made possible by global charts in Hamiltonian floer homology.
\begin{definition}
For each of $\mathcal{D} = Top(Fin'), DerOrb', DerOrb'_{\C}$ there is a corresponding monoidal category 
$\mathcal{D}_{bord} = Top(Fin')_{bord}, DerOrb'_{bord}$, $DerOrb'_{\C, bord}$ defined as follows. 

The category $Top(Fin')_{bord}$ has objects given by $(S, X) \in Top(Fin')$ equipped with continuous $p: X \to [0,1]$ such that $p^{-1}(0) \cup p^{-1}(1) = X(S\setminus \bar{p})$ for some distinguished $\bar{p} \in S$. Morphisms from $(S, X_S)$ to $(T, X_T)$ are those in $Top(Fin')$ but with the map $\bar{f}: X_S \to X_T$ required to commute with $p$ and the map $f: S \to T$ required to send the distinguished element $\bar{p} \in S$ to the distinguished element $\bar{p} \in T$. The monoidal structure is given b $(S, X_S) \times(T, X_T) = (S \cup_{\bar{p}} T, X_S \times_p X_T)$ where $S \cup_{\bar{p}} T$ means we have identified the two copies of the distinguished element in the disjoint union, and the stratification on $X_S \times_p X_T$ is $X_S \times_p X_T(R) = X_S(R \cap S) \times X_T(R \cap T)$ for $R \subset S \cup_{\bar p} T$. 

The categories $DerOrb'_{bord}$ and $DerOrb'_{\C, bord}$ have objects pairs $(S, X)$ which are elements of $DerOrb'$ or $DerOrb'_\C$, respectively, with $\bar{p} \in S$ a distinguished element, and, writing $X = (T, V, \sigma)$, with an additional submersion $p$ from the $\langle S \rangle$-orbifold $T$ to the $\langle 1 \rangle$-orbifold $[0,1]$ for which $S_f = S \setminus \{\bar{p}\}$, and the map of sets $S \setminus S_f = \{\bar{p}\} \to [1]$ is the unique such map. In particular, this implies that $T(S \setminus \bar{p}) = p^{-1}(0) \cup p^{-1}(1)$. Maps are as in $DerOrb'$ or $DerOrb'_\C$, respectively, but the maps on sets must preserve distinguished elements and the maps on underlying $\langle k \rangle$ orbifolds must commute with the maps $p$. The monoidal structure is given by 
\begin{equation}
    (S_1, (T_{S_1}, V_{S_1}, \sigma_{S_1})) \times (S_2, (T_{S_2}, V_{S_2}, \sigma_{S_2})) = 
    (S_1 \cup_{\bar{p}} S_2, 
    (T_{S_1} \times_p T_{S_2}, 
    V_{S_1} \times V_{S_2}|_{T_{S_1} \times_p T_{S_2}}, 
    \sigma_{S_1} \times \sigma_{S_2}|_{T_{S_1} \times_p T_{S_2}})). 
\end{equation}

There are natural monoidal forgetful functors $DerOrb'_{\C, bord} \to DerOrb'_{bord} \to Top(Fin')_{bord}$. Moreover, for each of these categories $\mathcal{D}$, there are a pair of monoidal restriction functors $\mathcal{D}_{bord} \to \mathcal{D}$, denoted $X \mapsto X|_{0}$ and $X \mapsto X|_{1}$, which restict all the data to $p^{-1}(0)$ or to $p^{-1}(1)$, respectively. These restiction functors commute with the forgetful functors. 
\end{definition}

\begin{definition}
Suppose that $\CC$ is graded, proper, and $\Pi$-equivariant. 

A \emph{bordism of flow categories} is a category $\CC_{bord}$ enriched in $Top(Fin')_{bord}$. We define $|S_{\CC_{bord}(\tilde{x}, \tilde{y})}| = r(\tilde{x}, \tilde{y})$. The notions of $\Pi$-equivariance and properness, and gradings carry over. 

A bordism of derived $\langle k \rangle$-orbifold charts over a graded proper $\Pi$-equivariant bordism of flow categories $\CC_{bord}$ is a category $\CC'_{bord}$ enriched in $DerOrb'_{\C, bord}$ equipped with a free action of $\Pi$ such that applying the forget functor $DerOrb'_{\C, bord} \to Top(Fin')_{bord}$ recovers $\CC_{bord}$, and such that $vdim(\CC'_{bord}(\tilde{x}, \tilde{y})) = \mu(\tilde{x}) - \mu(\tilde{y})$ for each $\tilde{x}, \tilde{y} \in Ob(\CC_{bord})$ with $\CC_{bord}(\tilde{x}, \tilde{y}) \neq \emptyset$ and $\tilde{x} \neq \tilde{y}$. 

Note that $\CC_{bord}|_0$, $\CC_{bord}|_1$ are both flow categories, and $\CC'_{bord}|_0$, $\CC'_{bord}|_1$ are compatible systems of derived $\langle k \rangle$-orbifold charts for these flow categories, respectively. 

If $\Pi$ is a Novikov group, the notions of a $\Pi$-equivariant bordism of flow categories are modified as they are for flow categories, and the notions of $E$-positivity and $E$-properness extend to this case via the same definition.
\end{definition}

\begin{proposition}
\label{prop:novikov-bifurcation-continuation}
    Say $\Pi$ is a Novikov group. 
    
    Given a bordism of perturbation data $\CC'_{bord}$ over a $\Pi$-equivariant $E$-proper $E$-positive topological flow category $\CC$, and a pair of choices of compatible systems of perturbations $\sigma'_0$, $\sigma'_1$ for $\CC'_0 := \CC_{bord}|_0$ and $\CC'_1 := \CC_{bord}|_1$ respectively, there is a homotopy equivalence 
    \begin{equation}
    \label{eq:bordism-equivalence}
        \bar{f}: |\CC(\sigma'_0)|_{\Lambda^{\Pi_E}_\Z} \to |\CC(\sigma'_1)|_{\Lambda^{\Pi_E}_\Z} 
    \end{equation}
    and similarly for the complexes over $\Lambda^{\Pi^0_E}_\Z$. 
\end{proposition}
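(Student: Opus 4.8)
The plan is to build, out of the bordism of derived $\langle k \rangle$-orbifold charts $\CC'_{bord}$, a new $\Pi$-equivariant $E$-proper $E$-positive \emph{flow category} $\hat{\CC}$ whose associated chain complex is simultaneously a mapping cone for a map $|\CC(\sigma'_0)|_{\Lambda^{\Pi_E}_\Z} \to |\CC(\sigma'_1)|_{\Lambda^{\Pi_E}_\Z}$ and, by a Nakayama-type argument, shows that this map is a quasi-isomorphism. This is the standard ``bifurcation analysis'' philosophy recast in the flow-category language, exactly parallel to how Proposition \ref{prop:constant-continuation-datum} and Proposition \ref{prop:pss-is-qis} were handled: first produce a cone via Lemma \ref{lemma:cone-lemma}, then identify its leading-order term with the identity using $E$-positivity and conclude via Nakayama.

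\textbf{Step 1: construct the interpolating flow category.} I would define $\hat{\CC}$ to have $Ob(\hat{\CC}) = Ob(\CC_{bord}|_0) \sqcup Ob(\CC_{bord}|_1)$, with the full subcategory on the first factor being $\CC_{bord}|_0$, the full subcategory on the second being $\CC_{bord}|_1[-1]$ (the grading shift needed to invoke Lemma \ref{lemma:cone-lemma}), and with $\hat{\CC}(\tilde{x}_0, \tilde{x}_1)$ for $\tilde{x}_0$ in the first factor and $\tilde{x}_1$ in the second defined using the submersion $p: T \to [0,1]$ supplied by the bordism: the morphism space is obtained from $\CC_{bord}(\tilde{x}_0,\tilde{x}_1)$ by ``cutting'' along the $p$-coordinate, remembering the $p$-value of the (broken) trajectory as part of the $\langle k \rangle$-structure — that is, the new boundary face $S \setminus \{\bar p\}$ collects the breakings where $p$ reaches $0$ or $1$. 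Since $\CC_{bord}$ is proper, $E$-proper, and $\Pi$-equivariant, and the distinguished element $\bar p$ contributes exactly one extra corner direction, $\hat{\CC}$ inherits these properties, and $vdim\, \hat{\CC}'(\tilde{x}_0,\tilde{x}_1) = \mu(\tilde{x}_0) - \mu(\tilde{x}_1)$; the compatible system of derived $\langle k \rangle$-orbifold charts $\CC'_{bord}$ restricts/extends to one on $\hat{\CC}$ because the monoidal and restriction functors $DerOrb'_{\C,bord} \to DerOrb'_\C$ intertwine the gluing and boundary-restriction operations.

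\textbf{Step 2: choose a compatible system of perturbations and extract the map.} Using Proposition \ref{prop:transverse-perturbations-exist-inductively} applied to $\hat{\CC}$, with the sub-flow-categories $\CC_{bord}|_0$ and $\CC_{bord}|_1$ already equipped with the strongly transverse perturbations $\sigma'_0$ and $\sigma'_1$, I obtain a compatible system of perturbations $\hat{\sigma}$ on $\hat{\CC}$ restricting to $\sigma'_i$ on the two ends. The hypotheses of Lemma \ref{lemma:cone-lemma} are satisfied (there are no morphisms from the second factor to the first, by construction of the ordering coming from $p$-direction breakings), so $|\hat{\CC}(\hat\sigma)|_{\Lambda^{\Pi_E}_\Z} = Cone(\bar f: |\CC(\sigma'_1)[1]|_{\Lambda^{\Pi_E}_\Z} \to |\CC(\sigma'_0)|_{\Lambda^{\Pi_E}_\Z})$ for a chain map $\bar f$; this is the map in \eqref{eq:bordism-equivalence} (up to relabeling $0 \leftrightarrow 1$ and a shift, which is harmless). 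The same construction works after tensoring to $\Lambda^{\Pi^0_E}_\Z$ as in Lemma \ref{lemma:descend-to-lambda_0}, using $E$-positivity of $\CC$ and hence of $\hat\CC$.

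\textbf{Step 3: identify the leading term and conclude via Nakayama.} It remains to see that $\bar f$ is a quasi-isomorphism. The key observation is that along the bordism the moduli space of ``constant'' or ``$p$-vertical'' trajectories from $\tilde{x}$ to the corresponding $\tilde{x}$ of the same energy is regular and persists — for each generator $\tilde{x}$ of the Floer complex there is a unique nondegenerate zero-energy element of $\hat\CC(\tilde x, \tilde x)$ coming from the trivial interpolation, exactly as in the proof of Proposition \ref{prop:pss-is-qis} and Proposition \ref{prop:constant-continuation-datum}. Therefore, after passing to $\Lambda^{univ}_{\Z,0}$ (or the $E \geq 0$ part of $\Lambda^{\Pi_E}_\Z$), $\bar f$ takes the form $1 + T^{\epsilon}(\cdots)$ for some $\epsilon > 0$; the Nakayama lemma then forces $\bar f$ to be an isomorphism on cohomology over $\Lambda^{\Pi_E}_\Z$ and over $\Lambda^{\Pi^0_E}_\Z$.

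\textbf{Main obstacle.} The delicate point is Step 1 — setting up $\hat\CC$ so that the $p$-coordinate genuinely defines an extra $\langle 1 \rangle$-direction compatibly with all the gluing maps, i.e. verifying that the submersions $p$ on the bordism morphism spaces, together with the fiber-product monoidal structure of $DerOrb'_{\C,bord}$, assemble into an honest $DerOrb'_\C$-enriched category with the correct corner combinatorics (so that Lemma \ref{lemma:dimension-count} and Lemma \ref{lemma:cone-lemma} apply verbatim). This is a bookkeeping analogue of what was done for the continuation-homotopy category in Proposition \ref{prop:charts-for-continuation-homotopy}: one must check that breakings in the $p$-direction and breakings in the original flow direction are independent corner strata, which is exactly the content of requiring $S_f = S \setminus \{\bar p\}$ and $p$ a submersion of $\langle k \rangle$-orbifolds with the distinguished new label mapping to $[1]$. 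Once this structural point is in place, everything else is a direct transcription of arguments already carried out in the paper.
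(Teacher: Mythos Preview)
Your approach has a genuine structural gap in Step~1, and it is exactly the obstruction the paper's proof is designed to circumvent. In the bordism category $DerOrb'_{\C,bord}$ the monoidal product is a \emph{fiber product over $p$}: composition sends $\CC'_{bord}(\tilde x,\tilde z)\times_{[0,1]}\CC'_{bord}(\tilde z,\tilde y)$ into a boundary stratum of $\CC'_{bord}(\tilde x,\tilde y)$. If you take the whole bordism space $\CC_{bord}(\tilde x,\tilde y)$ as $\hat\CC(\tilde x_0,\tilde y_1)$, then the codimension-$1$ strata of $\hat\CC(\tilde x_0,\tilde y_1)$ are these fiber products, whereas the strata required by the flow-category axioms are ordinary products $\CC|_0(\tilde x,\tilde z)\times\CC_{bord}(\tilde z,\tilde y)$ and $\CC_{bord}(\tilde x,\tilde z)\times\CC|_1(\tilde z,\tilde y)$, which are one dimension larger. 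The corner combinatorics simply do not match, so $\hat\CC$ is not a category enriched in $DerOrb'_\C$, and Proposition~\ref{prop:transverse-perturbations-exist-inductively} and Lemma~\ref{lemma:cone-lemma} cannot be applied to it as written. The paper says this explicitly: ``the method of Proposition~\ref{prop:transverse-perturbations-exist-inductively} does not adapt to this setting because, while the product of a collection of FOP-transverse functions is transverse, the \emph{fiber product} of a collection of FOP-transverse functions can very easily not be transverse.'' Your Step~3 inherits the same problem: the unit of $Top(Fin')_{bord}$ is $(\{\bar p\},[0,1])$, so $\CC_{bord}(\tilde x,\tilde x)$ is the interval with virtual dimension $1$, not a rigid ``constant trajectory'' contributing a diagonal entry to $\bar f$.

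The paper takes a genuinely different route: a bifurcation analysis in the style of Hutchings and Lee. One first makes $p$ a smooth submersion, so that $\CC'_{bord}|_t$ is a compatible system of charts for every $t$. One then constructs perturbations $\sigma'_{bord}$ inductively, accepting controlled failures of FOP-transversality on the boundary strata precisely where fiber products of virtual-dimension-zero pieces collide at a common $p$-value. Away from a countable bad set $T_{bad}\subset[0,1]$ this yields genuine chain complexes, and the change across each $t\in T_{bad}$ is analyzed as one of two bifurcation types: (B) an ordinary handleslide $\tilde z\mapsto\tilde z\pm\tilde y$, or (A) a self-gluing $\tilde x\mapsto(1\pm\pi\pm\cdots)\tilde x$, handled by temporarily breaking $\Pi$-equivariance to $N\Pi$. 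The homotopy equivalence $\bar f$ is then an infinite convergent composition of these elementary basis changes, not a map extracted from a cone. None of the Nakayama/identity-plus-higher-order reasoning from Propositions~\ref{prop:constant-continuation-datum} and~\ref{prop:pss-is-qis} appears here, because there is no underlying geometric PDE supplying a regular ``constant'' solution.
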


\begin{proof}
    This statement is directly analogous to the proof of the invariance of the Novikov complex computing the homology of a closed $1$-form on a manifold by analyzing a generic family of closed $1$-forms in the same cohomology class. We follow the strategy of \cite{hutchings-reidemeister-torsion}, which was implemented for several variants of Hamiltonian Floer homology in the semipositive setting by \cite{lee2005reidemeister-1, lee2005reidemeister-2}. Alternatively, \cite{FO3:book-vol12} uses a variant of this method to prove invariance of Floer homology under deformations of $J$ and of other auxiliary data chosen in that method, and \cite[Section~7.2.14]{FO3:book-vol12} contains several interesting comments about various implementations of this general idea. 
    
    Let us recall the finite-dimensional situation. Let $\alpha \in \Omega^1(M)$ be closed $1$-form and $V$ the vector field dual to $\alpha$ for some metric on $M$. Assume moreover that the zeros of $V$ are nondegenerate, as are its closed orbits, and the stable and unstable manifolds of all zeros of $V$ intersect transversely. Let $C_i$ be the set of zeros of $V$ of index $i$. Choosing a minimal abelian cover $pr: \tilde{M} \to M$ on which the pullback of $\alpha$ becomes exact, we have a Novikov group $(\Pi, E, \mu)$ where $\Pi$ is the covering group of $\tilde{M}$, $\mu$ is identically zero, and $E(\pi) = \int_\alpha [\gamma_\pi]$ where $\gamma_\pi: S^1 \to M$ is a loop representing a class in $H_1(M)$ which maps to $\pi$ under the surjection $H_1(M) \to \Pi$. We can then form the (cohomological) \emph{Novikov complex} $CN(\alpha)^\bullet$ of $\alpha$ \cite{hutchings-reidemeister-torsion} ; this is generated as a module over $\Lambda^{\Pi}_\Z$ 
     by the zeros of $V$, and the differential counts rigid flow lines of $V$ between its zeros. We write the set of zeros of $V$ as of index $i$ as $C_i$, and the set of lifts of these zeros to $\tilde{X}$ i.e. the set of index-$i$ zeros of $pr^*V$, as $\tilde{C}_i$. Given a one-parameter family of closed $1$-forms $\alpha_\tau$ with corresponding vector fields $V_\tau$, the following bifurcations can occur:
    \begin{enumerate}[(a)]
    	\item A degenerate flow line from $\tilde{p} \in \tilde{C}_i$ to $\tilde{q} \in \tilde{C}_{i-1}$.
    	\item A degenerate closed orbit;
    	\item A flow line from $\tilde{p} \in \tilde{C}_i$ to $\tilde{q}\in\tilde{C}_i$, $pr(\tilde{p}) \neq pr(\tilde{q})$;
    	\item A flow line from $\tilde{p}$ to $\pi \tilde{p}$ for some element $\pi$ in the Novikov group $\Pi$;
    	\item Birth or death of two critical points of two critical points at a degenerate critical point.
    \end{enumerate}
    
    The first two bifurcations do not change the Novikov complex. The last bifurcation adds a pair of canceling to the Novikov complex. The second and third bifurcations change the Novikov complex by a change of basis, in the sense that, writing $d^j, d'_j: CN^j \to CN^{j+1}$ for the differentials before and after the bifurcation, we have that $d^j = d'^j$ for $j \neq i, i-1$, and $d'_{i-1} = A d_{i-1}$, $d'_i = d_i A$, where $A: CN^i \to CN^i$ is some $\Lambda^\Pi_\Z$-linear isomorpism. 
    Concretely, (c) corresponds to changing the basis such that $\tilde{p}$ is replaced by $\tilde{p} \pm \tilde{q}$, and (d) corresponds changing the basis of the Novikov complex for which $\tilde{p}$ is multiplied by an invertible power series $(1 + h + \ldots)$. 
    
    We will see that there for bordisms of flow categories there are a corresponding set of bifurcations, and the analysis can be performed as in \cite{hutchings-reidemeister-torsion} because Kuranishi charts give finite-dimensional models of all phenomena involved.

    Note that $Ob(\CC'_{bord}) = Ob(\CC'_0) = Ob(\CC'_1)$, and that as $\Z/N_\Pi\Z$-graded modules the domain and the codomain of $\bar{f}$ agree. Since $\CC_{bord}$ is proper we can assume for the purposes of this proof that $\#(Ob(\CC)/\Pi) < \infty$. 
    
    As before, write $\CC'_{bord}(\tilde{x}, \tilde{y}) = (T_{bord}(\tilde{x}, \tilde{y}), V_{bord}(\tilde{x}, \tilde{y}), \sigma_{bord}(\tilde{x}, \tilde{y}))$.

    First, using an induction as in Proposition \ref{prop:transverse-perturbations-exist-inductively}, we modify the continuous functions $p: T_{bord}(\tilde{x}, \tilde{y})$ to functions $p': T_{bord}(\tilde{x}, \tilde{y})$ which are smooth, are submersions along $T_{bord}(\tilde{x}, \tilde{y})|_{i}$ for $i=0,1$, and still make $\CC'_{bord}$ into a bordism of derived $\langle k \rangle$-orbifold charts for $\CC_{bord}$. This is possible since we can take $p'$ to be linear functions of the distances to $T_{bord}(\tilde{x}, \tilde{y})_i$ for $i=0,1$ in open neighborhoods of these boundaries and inductively interpolate by smooth functions on the rest of the thickenings without introducing any additional $0$s or $1$s. We then inductively stabilize all charts by trivial complex vector bundles and extend $p'$ to smooth functions on the stabilized thickenings in order to arrange for the functions $p'$ to be submersions. 
    With this modification, we will have that for all $t \in [0,1]$, $\CC_{bord}|_t(\tilde{x}, \tilde{y})$ is a smooth derived $\langle k \rangle$-orbifold chart given by $p^{-1}(t) \subset T_{bord}(\tilde{x}, \tilde{y})$ (with the restrictions of the vector bundle and the Kuranishi section to this smooth sub-orbifold). These charts satisfy the needed compatibility conditions whenever they exist, and so for all   $t \in [0,1]$, we have that $\CC_{bord}|_t$ admits a system of derived $\langle k \rangle$-orbifold charts $\CC'_{bord}|_{t}$. 
    
    Now, we would like to apply a modification of  Proposition \ref{prop:transverse-perturbations-exist-inductively} to find a ``$\Pi$-equivariant compatible system of perturbations $\sigma'_{bord}$ for $\CC'_{bord}$'' such that $\sigma'_{bord}(\tilde{x}, \tilde{y})(i, x) = \sigma'_i(\tilde{x}, \tilde{y})(x)$ for $i = 0,1$ and $\tilde{x}, \tilde{y} \in Ob(\CC'_{bord})$ with $\CC'_{bord}(\tilde{x}, \tilde{y})$ (whenever $\CC'_i(\tilde{x}, \tilde{y}) \neq \emptyset$, which may or may not hold for any particular pair $\tilde{x}, \tilde{y}$). However, the method of Proposition \ref{prop:transverse-perturbations-exist-inductively} does not adapt to this setting because, while the product of a collection of FOP-transverse functions is transverse, the \emph{fiber product} of a collection of FOP-transverse functions can very easily not be transverse. However, if one of the FOP-transverse functions in the collection has no zeros, then the fiber product will have no zeros and so will be transverse. This allows for the induction of Proposition \ref{prop:transverse-perturbations-exist-inductively} to go through for all pairs $(\tilde{x}, \tilde{y})$ such that $\vdim T_{bord}(\tilde{x}, \tilde{y}) < 0$. 
    
    Write $Pairs_0$ for set of pars $(\tilde{x}, \tilde{y})$ with $vdim T_{bord}(\tilde{x}, \tilde{y}) = 1$ and $Pairs_{-1}$ for the set of such pairs with $vdim T_{bord}(\tilde{x}, \tilde{y}) = 0$. The first challenge is that for pairs $(\tilde{x}, \tilde{y}) \in Pairs_{-1}$, there can be codimension-1 boundary components of $T_{bord}(\tilde{x}, \tilde{y})$ which are fiber products $T_{bord}(\tilde{x}, \tilde{z}) \times_{[0,1]} T_{bord}(\tilde{x}, \tilde{z})$ where $(\tilde{x}, \tilde{z})$ and $(\tilde{z}, \tilde{y})$ are in $Pairs_{-1}$. As such, if $\sigma'_{bord}(\tilde{x}, \tilde{z})$ and $\sigma'_{bord}(\tilde{z}, \tilde{y})$ have (isolated, by inductive assumption) zeros with equal values of $p$, then $\sigma'_{bord}(\tilde{x}, \tilde{y})$ would be forced to have a zero in a boundary component, which is not acceptable by inductive assumption. Nonetheless, we can perform the induction for all pairs $(\tilde{x}, \tilde{y}) \in Pairs_{-1}$ while \emph{weakening} the transvesrality assumption to transversality of $\sigma'_{bord}$ on the interior of these moduli spaces, together with the condition that all bounady zeros arise as fiber products of transverse zeros on interiors of moduli spaces. Now, if we did not have the requirement that $\sigma'_{bord}$ is $\Pi$-equivariant, then by choosing $\sigma'_{bord}(\tilde{x}, \tilde{y})$ generically for $(\tilde{x}, \tilde{y}) \in Pairs_{-1}$, we would be able to arrange that no pair of interior zeros of these sections have equal values of $p$, and thus that the above fiber products are empty, and that the resulting sections are FOP transverse. However, when the sections $\sigma'_{bord}(\tilde{x}, \tilde{y})$ are required to be $\Pi$ equivariant, an interior zero of $\sigma'_{bord}(\tilde{x},\pi\tilde{x})$ or $\pi \in \Pi$ generates a sequence of boundary zeros for $\sigma'_{bord}(\tilde{x}, \pi^r \tilde{x})$ for all $r = 1, 2, \ldots.$, where we note that if $(x, \pi x) \in Pairs_{-1}$ then $(x, \pi^r x) \in Pairs_{-1}$. By requiring that all interior zeros of $\sigma'_{bord}(\tilde{x}, \tilde{y})$ have distinct values of $p$, we can arrange that these are the only violations of FOP-transversality of $\sigma'_{bord}(\tilde{x}, \tilde{y})$ for $(\tilde{x}, \tilde{y}) \in Pairs_{-1}$. 
    
    We then choose perturbations $\sigma'_{bord}(\tilde{x}, \tilde{y})$ for $(\tilde{x}, \tilde{y}) \in Pairs_{0}$, satisfying the induction conditions of Proposition \ref{prop:transverse-perturbations-exist-inductively}. If a boundary component of $T_{bord}(\tilde{x}, \tilde{y})$ for $(\tilde{x}, \tilde{y}) \in Pairs_0$ is a fiber product of thickenings none of which have negative virtual dimension, then they all must have virtual dimension zero except for one of them. By inductive assumption, we assume that the perturbation chosen for that virtual dimension $1$ component is FOP transverse, which is possible as that chart has no boundary components. We extend $\sigma'(\tilde{x}, \tilde{y})$ to a neighborhood of the boundary exactly as in Lemma \ref{lemma:fundamental-extension-lemma}, and then extend via an FOP-transverse section on the interior. As such, we have $\sigma'_{bord}(\tilde{x}, \tilde{y})^{-1}(0)$, for $(\tilde{x}, \tilde{y}) \in Pairs_0$, is a $1$-manifold with boundary, with boundary either lying on $T_{bord}(\tilde{x}, \tilde{y})_{i}$ for $i=0,1$, or arising from  a joining of an interior zero of $\sigma'_{bord}(\tilde{x}', \tilde{y}')$ for some $(\tilde{x}', \tilde{y}') \in Pairs_0$ with either 
    \begin{enumerate}[(A)]
    \item a sequence of self-glued trajectories arising from an interior zero of $\sigma'_{bord}(\tilde{z}, \pi\tilde{z})$ with $(\tilde{z}, \pi \tilde{z}) \in Pairs_{-1})$, or 
    \item with a single zero of $\sigma'_{bord}(\tilde{x}'', \tilde{y}'')$ for some pair $(\tilde{x}'', \tilde{y}'') \in Pairs_{-1}$
    \end{enumerate}

    We can consider the restrictions of these perturbations $\sigma'_{bord}|_t$ to $T(\tilde{x}, \tilde{y})|_t$. By perturbing $\sigma'_{bord}(\tilde{x}, \tilde{y})$ on the interior of $T(\tilde{x}, \tilde{y})$ for all $(\tilde{x}, \tilde{y})$, we can arrange for the restrictions $\sigma'_{bord}(\tilde{x}, \tilde{y})|_{t}$ to be FOP-transverse on $T(\tilde{x}, \tilde{y})$ for all $(\tilde{x}, \tilde{y}) \in Pairs_0$ for all $t \in [0,1]$ such that $t$ is not in some countable set $T_{bad} \subset [0,1]$. For any $t \notin T_{bad}$, we can then apply the induction of Proposition \ref{prop:transverse-perturbations-exist-inductively} to $\CC'_{bord}|_{t}$ to define $|\CC'_{bord}|_t(\sigma'_{bord}|_t)|$; note, however, that this complex is entirely determined by the choices of $\sigma'_{bord}(\tilde{x}, \tilde{y})$ for $(\tilde{x}, \tilde{y}) \in Pairs(0)$ made previously, and this step is only needed to verify that the putative differential squares to zero.
    
    The elements of $T_{bad}$ consist of those $t \in [0,1]$ such that there exists a $(\tilde{x}, \tilde{y}) \in Pairs_{-1}$ and a $u \in \sigma'_{bord}(\tilde{x}, \tilde{y})^{-1}(0)$ with $p(u) = t$; or those for which $\sigma'_{bord}(\tilde{x}, \tilde{y})|_{t}$ is not FOP-transverse at some interior point for some $(\tilde{x}, \tilde{y}) \in Pairs_0$; and only one of these alternatives occurs for any given $t \in T_{bad}$.  By $E$-properness of $\CC_{bord}$, to compute the differential up to $\Lambda^{E_0}$ it suffices to analyze finitely-many moduli spaces. Thus, for every $E_0$ we can compare terms in the differential associated to $\mathcal{C}_{bord}|_{t'}$ up to order $T^{E_0}$ by analyzing a finite number of moduli spaces; in particular, there is an $\epsilon$ such that the terms in the differential up to order $T^{E_0}$ all have the same value for $t' \in [t-\epsilon, t)$ and a different value for $t' \in (t, t+\epsilon]$. To understand this change in the chain complex corresponding to these $t \in T_{bad}$, it suffices to study the manifolds
    \begin{equation}
    R(\tilde{x}, \tilde{y}; t-\epsilon, t+\epsilon) = \{u \in \sigma'_{bord}(\tilde{x}, \tilde{y})^{-1}(0) | p(y) \in [t-\epsilon, t + \epsilon])\}
    \end{equation}
    as they control  whether the number of points of $\sigma'_{bord}(\tilde{x}, \tilde{y})_{t - \epsilon}^{-1}(0)$ agree or not  with $\sigma'_{bord}(\tilde{x}, \tilde{y})_{t + \epsilon}^{-1}(0)$ for sufficiently small $\epsilon$. But the second kind of point $t \in T_{bad}$ in the classification at the beginning of this paragraph does not change the number of points of $\sigma'_{bord}(\tilde{x}, \tilde{y})|_{t \pm \epsilon}$ since $R(\tilde{x}, \tilde{y}; t-\epsilon, t+\epsilon)$ gives a bordism between the points on the two sides. 
    
    The other kind of points $t \in T_{bad}$ make it possible for one or neither of the alternatives (A), (B) above to occur. If neither occurs then clearly $R(\tilde{x}, \tilde{y}; t-\epsilon, t+\epsilon)$ is still a bordism and the counts are unchanged. Otherwise, the cases (A), (B) corresponds respectively to the cases (d), (c) of the finite dimensional model, and one can argue as in  \cite{Hutchings05}. In case (B), we must either have $\tilde{x}' = \tilde{x}$, $\tilde{y}' = \tilde{x}''$ and $\tilde{y}'' = \tilde{y}$; or $\tilde{x}'' = \tilde{x}$, $\tilde{y}'' = \tilde{x}'$, and $\tilde{y}' = \tilde{y}$. Let us assume the first case without loss of generality; then as $t'$ increases through $t$, orientation computations as in \cite[(g)]{laudenbach} (see \cite[Lemma~3.4]{hutchings-reidemeister-torsion}) show that the number of zeros of $\sigma'_{bord}(\tilde{x}, \tilde{y})|_{t'}$ either increases or decreases by the number of zeros of $\sigma'_{bord}(\tilde{x}, \tilde{z})|_t$ in a way corresponding to the basis change $\tilde{z} \mapsto \tilde{z} \pm \tilde{y}$. Reversing the roles of $(\tilde{x}, \tilde{y})$ and $(\tilde{y}, \tilde{z})$ shows that for this bifurcation, for any fixed $E_0$ there is an $\epsilon$ such that terms in the diferential up to order $T^{E_0}$ change as in case (c) of the finite dimensional case as $t'$ passes from $t'-\epsilon$ to $t'+\epsilon$. 
	
    To deal with the subcase of (A) above corresponding to the finite-dimensional case (d), we note that the only obstruction to the above argument is that the $\Pi$-equivariance of $\sigma'_{bord}$, which makes the non-transverse self-gluings of (A) possible. However, as in \cite{hutchings-reidemeister-torsion} we break $\Pi$-equivariance of the perturbation $\sigma'_{bord}$ in a small region of $t' \in [t-\epsilon, t+\epsilon]$, instead (say) making the perturbation equivariant for $N\Pi \subset \Pi$ for some large $N$; for sufficiently large $N$ this reduces cases (A) to case (B) and one sees that this corresponds to performing a basis change like $\tilde{x} \mapsto (1 \pm \pi \pm O(\pi^2)) \tilde{x}$ in the sense that for any fixed $E_0$ there is an $\epsilon>0 $ such that terms in the differential up to order $T^{E_{0}}$ will undergo precisely this change as $t'$ passes from $t'-\epsilon$ to $t'+\epsilon$. 
    
    As the Novikov ring is complete, we can conclude that the homotopy-equivalence $|\CC(\sigma'_0)|_{\Lambda^{\Pi_E}_\Z} \to |\CC(\sigma'_1)|_{\Lambda^{\Pi_E}_\Z}$ can be constructed from an infinite convergent composition of basis changes corresponding to cases (A) and (B) above.

\end{proof}

    Using the above lemmata, we can prove
\begin{proposition}
\label{prop:qis-type-indep-of-perturbation}
	Suppose that $\CC(\tilde{x}, \tilde{y}) \neq \emptyset$ and $\tilde{x} \neq \tilde{y}$ implies that $E(\tilde{x}) - E(\tilde{y}) \leq 0$. Then the homotopy type of $|\CC(\sigma')|_{\Lambda^{\Pi_E}_\Z}$ only depends on the underlying compatible system of derived $\langle k \rangle$-orbifold charts. 
\end{proposition}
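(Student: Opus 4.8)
The plan is to reduce Proposition \ref{prop:qis-type-indep-of-perturbation} to Proposition \ref{prop:novikov-bifurcation-continuation} by producing, from any two compatible systems of perturbations $\sigma'_0, \sigma'_1$ for the same underlying system of derived $\langle k \rangle$-orbifold charts $\CC'$, a bordism of derived $\langle k \rangle$-orbifold charts $\CC'_{bord}$ over a bordism of flow categories $\CC_{bord}$ with $\CC'_{bord}|_0 \simeq \CC'_{bord}|_1 \simeq \CC'$. The natural candidate is the ``constant'' bordism: set $\CC_{bord}(\tilde{x}, \tilde{y}) = \CC(\tilde{x}, \tilde{y}) \times [0,1]$ with the projection $p$ to $[0,1]$, the distinguished element $\bar{p}$ recording the new collar direction, and with morphism objects in $DerOrb'_{\C, bord}$ given by $\CC'(\tilde{x}, \tilde{y}) \times [0,1]$ (product of derived $\langle k \rangle$-orbifolds, with $p$ the coordinate projection, which is a submersion with $S_f = S \setminus \{\bar p\}$ as required). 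One checks that the composition maps in $\CC'$, being enriched-functorial, induce composition maps in $DerOrb'_{\C,bord}$ after crossing with $[0,1]$, using the fact that fiber product over $[0,1]$ of two copies of $T \times [0,1]$ is canonically $T_1 \times T_2 \times [0,1]$; this makes $\CC'_{bord}$ into a legitimate bordism of derived $\langle k \rangle$-orbifold charts, $\Pi$-equivariant by taking the diagonal action trivial on $[0,1]$, and the virtual dimension shifts by one exactly as the definition of a bordism demands (since $p$ is a submersion to a $\langle 1 \rangle$-manifold, $\vdim \CC'_{bord}(\tilde x,\tilde y) = \vdim \CC'(\tilde x, \tilde y) + 1 = \mu(\tilde x) - \mu(\tilde y)$).

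Next I would verify the hypotheses of Proposition \ref{prop:novikov-bifurcation-continuation}. The flow category $\CC$ is assumed $\Pi$-equivariant and (implicitly, from the enrichment in $Top(Fin')$ and properness carried along through the paper) $E$-proper; the extra hypothesis in the present proposition, that $\CC(\tilde x, \tilde y) \neq \emptyset$ with $\tilde x \neq \tilde y$ forces $E(\tilde x) - E(\tilde y) \leq 0$, is precisely what lets us produce an $E$-positive flow category after a global energy shift: replacing $E$ by $-E + C\cdot(\text{something})$, or more simply observing that $-E$ makes $\CC$ into an $E$-positive $E$-proper $\Pi$-equivariant category in the sense needed (we are free to rename the energy function since Proposition \ref{prop:novikov-bifurcation-continuation} only uses $E$-positivity and $E$-properness, and the Novikov ring $\Lambda^{\Pi_E}_\Z$ is unchanged under $E \mapsto -E$ up to the automorphism $T \mapsto T^{-1}$, which does not affect whether a map is a homotopy equivalence). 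With $\CC_{bord}$ the constant bordism and $\sigma'_0, \sigma'_1$ the two given systems of perturbations for $\CC'_{bord}|_0 = \CC'_{bord}|_1 = \CC'$, Proposition \ref{prop:novikov-bifurcation-continuation} directly yields a homotopy equivalence $\bar f \colon |\CC(\sigma'_0)|_{\Lambda^{\Pi_E}_\Z} \to |\CC(\sigma'_1)|_{\Lambda^{\Pi_E}_\Z}$, which is the assertion.

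The main obstacle is checking that the constant bordism $\CC'_{bord}$ is genuinely an object of the enriched category $DerOrb'_{\C, bord}$ in the required way, i.e.\ that all the structural conditions — the submersion $p$ with the prescribed $S_f$, the compatibility of the monoidal/composition structure with restriction to $p^{-1}(0)$ and $p^{-1}(1)$, and the normal complex structure (just pulled back from $\CC'$ along the projection, which is manifestly normally complex) — hold simultaneously. This is essentially bookkeeping: the product of a derived $\langle k \rangle$-orbifold with the interval $[0,1]$, viewed as a $\langle 1 \rangle$-manifold, is a derived $\langle k+1 \rangle$-orbifold, and the coordinate projection to $[0,1]$ is a submersion with $S_f$ equal to the original corner-labeling set, which is exactly the shape of morphism required in $DerOrb'_{\C,bord}$. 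One must also note that $p$ is trivially a submersion along $p^{-1}(0) \cup p^{-1}(1)$ (it is a submersion everywhere), so the preliminary modification of $p$ in the proof of Proposition \ref{prop:novikov-bifurcation-continuation} is unnecessary here, and the bifurcation analysis degenerates: since the perturbations $\sigma'_0$ and $\sigma'_1$ are both already strongly $\langle k \rangle$-transverse and compatible, the resulting bordism-level perturbation interpolating between them produces a finite (within each energy window) sequence of bifurcations of types (A)--(B), hence a convergent composition of elementary basis changes over the complete Novikov ring, giving the homotopy equivalence. A remark noting that the same argument over $\Lambda^{\Pi^0_E}_\Z$ (using the gapped/$E$-positive structure) gives compatibility with the universal Novikov coefficients would round out the statement, but is not needed for the proof of the proposition as stated.

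\begin{proof}
Given two compatible systems of perturbations $\sigma'_0$ and $\sigma'_1$ for $\CC'$, form the \emph{constant bordism} $\CC_{bord}$ of flow categories: it has the same objects as $\CC$, morphism objects $\CC_{bord}(\tilde x, \tilde y) = (\mathcal{S}_{\CC(\tilde x, \tilde y)} \cup \{\bar p\},\ \CC(\tilde x,\tilde y) \times [0,1])$ with $p$ the projection to $[0,1]$, and composition induced by that of $\CC$ crossed with the identity on $[0,1]$ (using the canonical identification $(X \times [0,1]) \times_{[0,1]} (Y \times [0,1]) \simeq X \times Y \times [0,1]$). It is $\Pi$-equivariant with $\Pi$ acting trivially on $[0,1]$. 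The assignment $\tilde x, \tilde y \mapsto \CC'(\tilde x, \tilde y) \times [0,1]$, with the product derived $\langle k+1\rangle$-orbifold structure, the normal complex structure pulled back from $\CC'$, and the coordinate projection $p$ to $[0,1]$ (a submersion of orbifolds with corners with $S_f$ equal to the corner-labeling set of $\CC'(\tilde x, \tilde y)$ and the distinguished element $\bar p$ mapping to $[1]$), defines a bordism of derived $\langle k \rangle$-orbifold charts $\CC'_{bord}$ over $\CC_{bord}$ in the sense of the definitions above; the composition maps of $\CC'$ induce those of $\CC'_{bord}$ because the relevant identifications of products and fiber products over $[0,1]$ are canonical, and $\vdim \CC'_{bord}(\tilde x, \tilde y) = \vdim \CC'(\tilde x, \tilde y) + 1 = \mu(\tilde x) - \mu(\tilde y)$ as required. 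By construction $\CC'_{bord}|_0 = \CC'_{bord}|_1 = \CC'$, and $\sigma'_0$, $\sigma'_1$ are compatible systems of perturbations for $\CC'_{bord}|_0$ and $\CC'_{bord}|_1$ respectively.

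The hypothesis that $\CC(\tilde x, \tilde y) \neq \emptyset$ and $\tilde x \neq \tilde y$ imply $E(\tilde x) - E(\tilde y) \leq 0$ says exactly that, after replacing the energy function $E$ by $-E$ (which leaves $\Lambda^{\Pi_E}_\Z$ unchanged up to the ring automorphism $T \mapsto T^{-1}$, and in particular does not affect which maps are homotopy equivalences), $\CC$ becomes an $E$-positive, $E$-proper, $\Pi$-equivariant flow category. Applying Proposition \ref{prop:novikov-bifurcation-continuation} to the bordism $\CC'_{bord}$ and the pair $(\sigma'_0, \sigma'_1)$ produces a homotopy equivalence
\begin{equation}
    \bar f \colon |\CC(\sigma'_0)|_{\Lambda^{\Pi_E}_\Z} \to |\CC(\sigma'_1)|_{\Lambda^{\Pi_E}_\Z}.
\end{equation}
Since $\sigma'_0$ and $\sigma'_1$ were arbitrary compatible systems of perturbations for the fixed system of derived $\langle k \rangle$-orbifold charts $\CC'$, this shows the homotopy type of $|\CC(\sigma')|_{\Lambda^{\Pi_E}_\Z}$ depends only on $\CC'$.
\end{proof}
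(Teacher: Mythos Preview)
Your proposal is correct and follows essentially the same approach as the paper: both construct the trivial (``constant'') bordism $\CC_{bord}(\tilde x,\tilde y)=\CC(\tilde x,\tilde y)\times[0,1]$ with the product derived $\langle k\rangle$-orbifold charts $\CC'_{bord}(\tilde x,\tilde y)=\CC'(\tilde x,\tilde y)\times[0,1]$, and then invoke Proposition~\ref{prop:novikov-bifurcation-continuation}. Your write-up is more detailed than the paper's---you verify the enrichment and submersion conditions explicitly and address the sign of the energy hypothesis via $E\mapsto -E$, whereas the paper's proof simply declares the trivial bordism and applies the proposition without comment.
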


\begin{proof} 
Let $\CC_{bord}$ be the ``trivial bordism of flow categories on $\CC$''; this has $Ob(\CC_{bord}) = Ob(\CC)$, and $\CC_{bord}(\tilde{x}, \tilde{y}) = [0,1] \times \CC(\tilde{x}, \tilde{y})$ whenever $\tilde{x} \neq \tilde{y}$ and $CC(\tilde{x}, \tilde{y}) \neq \emptyset$, and the map $p: \CC_{bord}(\tilde{x}, \tilde{y}) \to [0,1]$ is simply the projection. We inherit the $\Pi$-action, grading, and energy function from $\CC$. We apply Proposition \ref{prop:novikov-bifurcation-continuation} $\CC'_{bord}$, the trivial bordism of derived $\langle k \rangle$-orbifold charts on $\CC_{bord}$, which has $\CC'_{bord}(\tilde{x}, \tilde{y}) = ([0,1] \times T(\tilde{x}, \tilde{y}), [0,1] \times V(\tilde{x}, \tilde{y}), \sigma_{bord}(\tilde{x}, \tilde{y})$ where $\tilde{x} \neq \tilde{y}$ and $\CC'(\tilde{x},\tilde{y}) = (T(\tilde{x}, \tilde{y}), V(\tilde{x}, \tilde{y}), \sigma(\tilde{x}, \tilde{y}))$ with $\sigma_{bord}(\tilde{x}, \tilde{y})(t, x) = \sigma(\tilde{x}, \tilde{y})(x)$. 
\end{proof}

\begin{proposition}
\label{prop:independence-of-integralization}
    The quasi-isomorphism type of $CF^{\bullet}$ does not depend on the choices of  $\tilde{\Omega}$, $N_1$, $h$ or the perturbation data and stabilizing vector bundles used to define it.
\end{proposition}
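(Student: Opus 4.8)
The plan is to realize any two choices of integralization data and perturbation data/stabilizing bundles as the two ends of a bordism of flow categories, and then to invoke Proposition~\ref{prop:novikov-bifurcation-continuation} together with Proposition~\ref{prop:qis-type-indep-of-perturbation}. Concretely, suppose we are given two collections of auxiliary data $(\tilde\Omega_0, N_1^0, h_0, \{\Lambda^0_d\}, \{\text{stab. bundles}\}_0, \sigma'_0)$ and $(\tilde\Omega_1, N_1^1, h_1, \{\Lambda^1_d\}, \{\text{stab. bundles}\}_1, \sigma'_1)$, each producing a Floer complex $CF^\bullet_0$, $CF^\bullet_1$. I would first reduce to a sequence of ``elementary'' changes of data, each of which is carried out one parameter at a time: (i) changing $N_1$ (and correspondingly rescaling $\tilde\Omega$, $\bar{\mathcal A}$, $\bar H$); (ii) changing the representative $\tilde\Omega$ within its cohomology class subject to the constraints of Lemma~\ref{lemma:integral-taming-form}; (iii) changing the bump function $h$ and the cutoff data entering $\bar H$; (iv) changing the perturbation data $\Lambda_d$; and (v) changing the stabilizing vector bundles produced during the smoothing process. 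Each of (i)--(v) can be connected by a path of valid choices: for (i) one uses that if $N_1$ works then so does any positive integer multiple, and two multiples $aN_1$, $bN_1$ have common multiple $abN_1$, so it suffices to compare $N_1$ with $mN_1$; for (ii) the set $S^{\epsilon_1}_\omega$ in the proof of Lemma~\ref{lemma:integral-taming-form} is convex (or at least path-connected) in the relevant region, so a linear path of forms stays taming and keeps $Pairs \subset Pairs_\Omega$; for (iii) the space of admissible bump functions is convex; for (iv) and (v) one uses convexity of the space of perturbation data over a fixed stratum together with Proposition~\ref{prop:complement-bundle} to absorb bundle changes into stabilizations.

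For each elementary change, I would build a $\Pi$-equivariant $E$-proper $E$-positive bordism of flow categories $\CC_{bord}$ over $[0,1]$ whose restriction to $t=0$ is $\CC(H,J)$ with the first data and whose restriction to $t=1$ is $\CC(H,J)$ with the second data, by running the entire construction of Sections~\ref{sec:global-charts}--\ref{sec:compatible-smoothing} parametrically in $t$. The morphism objects $\CC_{bord}(\tilde x, \tilde y)$ are the Gromov--Floer compactifications of the family over $[0,1]$ of the moduli spaces $\cM^{(t)}(\tilde x, \tilde y)$ defined using the $t$-dependent data, and the submersion $p$ to $[0,1]$ is projection to the parameter. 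The point is that every ingredient of the earlier construction — the integral two-form $\Omega_u$ of Section~\ref{sec:integral-two-form}, the moduli spaces $\mathcal F(\CP^d,d)$ and their $\langle k\rangle$-manifold structure, the thickenings of Definition~\ref{def:thickening}, the surjectivity of linearized operators (Lemma~\ref{lemma:surjective-perturbation-data-exist-2}), the fiberwise $C^1_{loc}$ structure (Proposition~\ref{prop:fiberwise-c1loc-on-kuranishi-chart}), the compatible smoothing (Lemma~\ref{lemma:compatible-system-of-derved-orbifold-charts-1}), and the normal complex structures (Section~\ref{sec:normally-complex-structures}) — is stable under adding the extra $[0,1]$-parameter: this merely changes a $\langle k\rangle$-manifold into a $\langle k+1\rangle$-manifold with one distinguished boundary direction, exactly the setting of $DerOrb'_{\C,bord}$. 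One must verify that $\bar{\mathcal A}^{(t)}(\tilde x) - \bar{\mathcal A}^{(t)}(\tilde y)$ can be arranged to be \emph{independent of $t$} (so that the labeling sets $\mathcal S$ are constant along the bordism) — this is where the reduction to elementary changes pays off, since for each of (i)--(v) the integralized action is either literally unchanged or changes by an overall integer scaling that is absorbed into the choice of $N_1$ along the path. This produces a bordism of derived $\langle k\rangle$-orbifold charts $\CC'_{bord}$ over $\CC_{bord}$.

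Then I would apply Proposition~\ref{prop:novikov-bifurcation-continuation} to $\CC'_{bord}$ with the compatible systems of perturbations $\sigma'_0$ on the $t=0$ end and $\sigma'_1$ on the $t=1$ end (choosing these to be the given ones, or, if the perturbation data being compared are themselves part of the change, extending arbitrarily and then invoking Proposition~\ref{prop:qis-type-indep-of-perturbation} to remove the dependence on the perturbation choice at each end), obtaining a homotopy equivalence $|\CC(\sigma'_0)|_{\Lambda^{\Pi_E}_\Z} \simeq |\CC(\sigma'_1)|_{\Lambda^{\Pi_E}_\Z}$, i.e.\ $CF^\bullet_0 \simeq CF^\bullet_1$ as $\Lambda^{\Pi_E}_\Z$-complexes, hence a fortiori a quasi-isomorphism. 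Composing the equivalences for the elementary changes (i)--(v), and noting that any two collections of auxiliary data are connected by a finite chain of such elementary changes, gives the result. The same argument applies verbatim to $CF^\bullet_{\text{cont}}$, $CF^\bullet_{\text{PSS}}$, and the homotopy-datum flow categories.

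\textbf{Main obstacle.} The hard part is not the bifurcation analysis itself — that is black-boxed by Proposition~\ref{prop:novikov-bifurcation-continuation} — but rather the bookkeeping needed to run the construction of Sections~\ref{sec:global-charts}--\ref{sec:compatible-smoothing} in families over $[0,1]$ while keeping the combinatorial labeling data (the sets $\mathcal S_{\CC(\tilde x,\tilde y)}$, equivalently the integralized action differences $\bar{\mathcal A}(\tilde x)-\bar{\mathcal A}(\tilde y)$) \emph{constant} along the bordism, so that the $t$-family genuinely assembles into an object of $DerOrb'_{\C,bord}$ rather than merely a continuous family of $DerOrb'_{\C}$-objects with jumping discrete invariants. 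Reducing to the elementary changes (i)--(v) is the device that makes this manageable: for a change of $N_1$ alone the labeling set scales by a fixed integer which one arranges to be $1$ by passing through a common multiple; for the remaining changes $\Omega_0$ and $\bar{\mathcal A}$ are literally fixed. A secondary technical point is that the parametric version of Lemma~\ref{lemma:surjective-perturbation-data-exist-2} must produce perturbation data making the linearized operators surjective uniformly in $t\in[0,1]$, which follows from the usual compactness of $\cM_{bord}(d)$ (a consequence of the $t$-uniform Gromov compactness of Lemma~\ref{lemma:gromov-compactness-with-modified-energy}, which holds since the energy estimates are $t$-uniform for each elementary change) together with the same gluing-and-cokernel argument as in the non-parametric case.
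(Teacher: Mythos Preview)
Your overall strategy—build a bordism of flow categories and then invoke Propositions~\ref{prop:novikov-bifurcation-continuation} and~\ref{prop:qis-type-indep-of-perturbation}—matches the paper's. But the paper implements the bordism in a genuinely different way, and the difference matters.

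The paper does \emph{not} run the construction parametrically in $t$ with $t$-dependent integralization data. Instead, following \cite[Proposition~6.31]{AMS}, it carries \emph{both} framings simultaneously. One introduces moduli spaces $\mathcal{F}(\CP^{d_1},d_1;\CP^{d_2},d_2)$ of linearly nondegenerate stable maps to the product $\CP^{d_1}\times\CP^{d_2}$, and defines \emph{doubly framed} pre-Floer trajectories $(u,\Sigma,F_1,F_2)$, where $F_i$ is a basis of $H^0(L_{\Omega^i_u})$ built from the $i$-th choice of $(\tilde\Omega,N_1,h)$. The bordism thickening consists of tuples $(t,(u,\Sigma,F_1,F_2),\eta_1,\eta_2)$ solving
\[
\bar\partial_F u + t\,\lambda^1_{d_1}(\eta_1)\circ di_{F_1} + (1-t)\,\lambda^2_{d_2}(\eta_2)\circ di_{F_2}=0,
\]
with obstruction bundle $V^1_{d_1}\oplus i\mathfrak{pu}_{d_1+1}\oplus V^2_{d_2}\oplus i\mathfrak{pu}_{d_2+1}$ and group $PU(d_1{+}1)\times PU(d_2{+}1)$. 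At $t=1$ (resp.\ $t=0$) the $\eta_2$-term (resp.\ $\eta_1$-term) drops out and the chart is visibly an induction/stabilization of the chart built from the first (resp.\ second) data alone.

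Your parametric approach has a genuine gap, and it is worse than the bookkeeping issue you flag as the ``main obstacle''. The problem is not just keeping the labeling sets $\mathcal S$ constant; it is that the \emph{degree} $d=\int_\Sigma\Omega_u$, and hence the target $\CP^d$ and the entire domain moduli space $\mathcal F(\CP^d,d)$, depends on the integralization data. Your elementary change~(i) (replacing $N_1$ by $mN_1$) scales $d$ by $m$, so the two ends live over non-isomorphic base $\langle k\rangle$-manifolds $\mathcal F(\CP^d,d)$ and $\mathcal F(\CP^{md},md)$; there is no continuous one-framing family interpolating between them, so the ``run Sections~\ref{sec:global-charts}--\ref{sec:compatible-smoothing} in a $[0,1]$-family'' step has no meaning here. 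Your remark that one ``arranges the scaling to be $1$ by passing through a common multiple'' is circular: passing from $N_1$ to a multiple is precisely the comparison you are trying to prove. Similarly, two admissible rational $\Omega$'s need not lie in the same cohomology class, so your change~(ii) (varying $\tilde\Omega$ only within its class) plus change~(i) do not exhaust all pairs of integralization data. The doubled-framing construction sidesteps all of this by allowing $d_1\neq d_2$ from the outset and never asking for a path of rational forms.
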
  
\begin{proof} (Sketch).

This argument is a coherent, categorical, and slightly clarified version of \cite[Proposition~6.31]{AMS}.

There are spaces $\mathcal{F}(\CP^{\ell_1}, d_1; \CP^{\ell_2}, d_2)$ for $\ell_1 \geq d_1$ and $\ell_2 \geq d_2$ consisting of stable maps with two marked points $p_\pm$ on their domains, with target $\CP^{\ell_1} \times \CP^{\ell_2}$ such that the projected maps to each factor are also stable, and such that the domains are equipped with geodesic markings as in Section \ref{sec:moduli-of-decorated-stable-maps}.
There is a pair of maps $f_i: \mathcal{F}^{dbl}(\CP^{\ell_1}, d_1; \CP^{\ell_2}, d_2) \to \mathcal{F}(\CP^{\ell_i}, d_i)$ for $i=1,2$, which send a stable map to its two projections. Notice, however, that the product map $f_1 \times f_2$ is not an inclusion: given a pair of stable maps with isomorphic domain types, one must choose an \emph{isomorphism of the domains} to choose an element of the fiber of $f_1 \times f_2$. The universal curve over $\mathcal{F}(\CP^{d_1}, d_1; \CP^{d_2}, d_2)$ is denoted $\mathcal{C}(\CP^{d_1}, d_1; \CP^{d_2}, d_2)$.

Now, $\mathcal{F}(\CP^{\ell_1}, d_1; \CP^{\ell_2}, d_2)$ is a $2^{[d_1-1] \times [d_2-1]}$-stratified space with stratification as follows.  Given  $S \subset [d_1 -1] \times [d_2-1]$, we write $S_1 \subset [d_1 -1]$ and $S_2 \subset [d_2-1]$ for the projections. We say that $\mathcal{F}(\CP^{\ell_1}, d_1; \CP^{\ell_2}, d_2)(S) = \emptyset$ if, writing $S = \{(n_1, m_1), (n_2, m_2), \ldots, (n_r, m_r)\}$ with $n_1 < n_2 < \ldots < n_r$, we do not have that $m_1 < m_2 \ldots < m_r$. If this condition is satisfied we set $\mathcal{F}(\CP^{\ell_1}, d_1; \CP^{\ell_2}, d_2)(S) = f_1^{-1}(S_1) \cap f_2^{-1}(S_2)$. By a construction analogous to Lemma \ref{lemma:equip-mclean-moduli-spaces-with-k-manifold-structure}, these spaces are smooth $(PU(\ell_1+1) \times PU(\ell_2+1))-\langle [d_1-1]\times[d_2-1]\rangle$-manifolds. 
There are analogous spaces $\mathcal{F}(\CP^{\ell_1}, d_1; \CP^{\ell_2}, d_2; p, \pm)$ for which the image of $p_\pm$ must go through a given point $p \in \CP^{\ell_1} \times \CP^{\ell_2}$. These are restrictions of $\mathcal{F}(\CP^{\ell_1}, d_1; \CP^{\ell_2}, d_2)$ to a $(U(\ell_1) \times U(\ell_2))$-space along the inclusion $U(\ell_1) \times U(\ell_2) \subset PU(\ell_1+1) \times PU(\ell_2+1)$. 

Write $d_1 = d'_1 + d''_1$ and $d_2 = d'_2 + d''_2$, choose $p_1 \in \CP^{d'_1} \subset \CP^{d_1}$ and $p_2 \in \CP^{d'_1} \subset \CP^{d_2}$, and set $p = (p_1, p_2)$. Let $\CP^{d''_i} \subset \CP^{d_i}$ be the complementary subspaces to $\CP^{d'_i} \subset \CP^{d_i}$ in the sense of Section \ref{}. 
Given $S_1 = \{n_1 < \ldots < n_r\} \subset [d_1-1]$ and $S_2 = \{m_1 < \ldots < m_r\} \subset [d_2-1]$ we define $S_1 . S_2 = \{(n_1, m_1), \ldots, (n_r, m_r)\}$. We further define
\begin{equation}
    \mathcal{F}^{tr}_{S(d'_1, d''_1).S(d'_2, d''_2)}(\CP^{\ell_1}, d_1; \CP^{\ell_2}, d_2) \subset \mathcal{F}(\CP^{\ell_1}, d_1; \CP^{\ell_2}, d_2)
\end{equation} analogously to the definition of $\mathcal{F}^{tr}_{S(d_1, d_2)}(\CP^{d}, d)$ in \eqref{eq:split-mclean-domain-subset} as maps from a domain with two components where the map from the first component has degree $(d'_1, d'_2)$ and has image lying in $\CP^{d'_1} \times \CP^{d'_2}$, and the nodal point joining the components maps to $p$. We have have that $\mathcal{F}(\CP^{\ell_1}, d_1; \CP^{\ell_2}, d_2)(S(d'_1, d''_1).S(d'_2, d''_2))$ is an induction of $\mathcal{F}^{dbl, tr}_{S(d'_1, d''_1).S(d'_2, d''_2)}(\CP^{\ell_1}, d_1; \CP^{\ell_2}, d_2) $ along $(U(d'_1) \times U(d''_1)) \times (U(d'_2) \times U(d''_2)) \subset PU(\ell_1+1) \times PU(\ell_2+1)$, and that $\mathcal{F}^{tr}_{S(d'_1, d''_1).S(d'_2, d''_2)}(\CP^{d_1}, d_1; \CP^{d_2}, d_2)$ is a generalized induction of 

$\mathcal{F}(\CP^{d'_1}, d'_1; \CP^{d'_2}, d'_2; p, +) \times \mathcal{F}(\CP^{d''_1}, d''_1; \CP^{d''_2}, d'_2; p, -)$ along $(V^{tr}_{d, d_1})^2 \subset PGL(d_1, \C)^2$ after modifying the set labeling the stratification by applying the inclusion $[d'_1-1] \times [d'_2-1] \sqcup [d''_1-1] \times [d''_2-1] \to [d_1 -1] \times [d_2 - 2]$ given by including elements of $[d'_1-1] \times [d'_2-1]$ as is and adding $(d'_1, d'_2)$ to elements of $ [d''_1-1] \times [d''_2-1]$. 

Thus the analog of Lemma \ref{lemma:induction-relation-between-strata} with the the new spaces $\mathcal{F}(\CP^d, d; \CP^{d}, d)$ holds. 

Recall that we have two different compatible choices of peturbation data peturbation data $\Lambda^1_d = (V^1_d, \lambda^1_d)$  and $\Lambda^2_d= (V^2_d, \lambda^2_d)$. We also have a pair of choices $(\tilde{\Omega}^1, N^1_1, h^1)$ and $(\tilde{\Omega}^2, N^2_1, h^2)$. 

We define $\mathcal{T}^{dbl, bord}$ to consist of tuples $(t, (u, \Sigma, F_1, F_2), \eta_1, \eta_2)$ were $t \in [0,1]$, $\Sigma$ is a domain with two marked points $p_\pm$ and geodesic markings as before,  $u: \Sigma \to M$ is a map, $(u, \Sigma, F_i)$ are framed bubbled pre-Floer trajectories defined with respect to $\tilde{\Omega}^i, N^1_i, h^i$, (of corresponding degrees $d_1, d_2$) and $\eta_i \in (V^i_d)_{u_{F_i}}$, satisfying

\begin{equation}
    \bar{\partial}_F u + t\lambda_d^1(\eta_1) \circ di_{F_1} + (1-t)\lambda_d^2(\eta_2) \circ di_{F_2} = 0,
\end{equation}
up to equivalence. An equivalence of such data only exist when the data $t\in [0,1]$ agree, and in that case an equivalence is an equivalence o the \emph{doubly framed bubbled pre-Floer trajectories} $(u, \Sigma, F_1, F_2)$, namely isomorphism between domains intertwining the markings and the maps $u$, together with isomorphisms of the line bundles $L_{\Omega_u^1}$ and $L_{\Omega_u}^2$ over isomorphisms of domains which send the bases $(F_1, F_2)$ to the new bases. We define $\mathcal{T}^{dbl, bord}_{\Lambda_d, \epsilon}$ to be the subset with $\|\eta_1\|< \epsilon$ and $\|\eta_2\|< \epsilon$.

The set of equivalence classes of doubly framed bubbled pre-Floer trajectories can be viewed as a subset of the set of choices of pairs of integers $(d_1, d_2)$, an element $u_{F_1, F_2} \in \mathcal{F}(\CP^{d_1}, d_1; \CP^{d_2}, d_2)$, and a smooth map from the fiber of the universal curve over this space satisfying several conditions. As before we can equip this set with the topology induced from the Hausdorff topology on the closures of these graphs in $\mathcal{C}(\CP^{d_1}, d_1; \CP^{d_2}, d_2) \times M$. 

Over $\mathcal{T}^{dbl, bord}_{\Lambda_d}$ there is a vector bundle with fiber over $(t, (u, \Sigma, F_1, F_2), \eta_1, \eta_2)$ given by $(V^1_d)_{u_{F_1}} \oplus i \mathfrak{pu}_{d_1+1} \oplus (V^1_d)_{u_{F_2}} \oplus i \mathfrak{pu}_{d_2+1}$. This vector bundle has a section $\sigma = \sigma_{\Lambda^1_{d_1}} \oplus \sigma^1_H \oplus \sigma_{\Lambda^2_{d_2}} \oplus \sigma^2_H)$ where $\sigma^i_H$ is given by $((u, \Sigma, F_1, F_2), \eta_1, \eta_2) \mapsto H_i(u, \Sigma, u_{F_i})$, where $H_i$ is $H$ defined using $\Omega_i$ for $i=1,2$. 

The group $PU(d_1+1) \times PU(d_2+1)$ acts on everything in sight, and we have that $\M(d_1, d_2) = \sigma^{-1}(0)/PU(d_1+1) \times PU(d_2+1)$. For sufficiently small $\epsilon, \mathcal{T}^{dbl, bord}_{\Lambda_d}$ is a $\langle 1 + (d_1-1) \times(d_2-1) \rangle$-manifold because adding $\eta_1$ or $\eta_2$ individually to the moduli problem was already enough to make the linearized Cauchy-Riemann operators surjective. As in Lemma \ref{lemma:bounded-stabilizers}, the stailizer groups are finite, and the argument of Lemma \ref{lemma:induced-topology} adapt to this context essentially without change, and we thus have that $\mathcal{K}^{dbl, bord} = (T,V, \sigma)$ are topological Kuranishi charts on 
\begin{equation}
\cM(d_1, d_2) = \bigcup_{\substack{\tilde{x}_\pm\in \widetilde{Fix}(H) \\ \bar{\mathcal{A}}^i(\tilde{x}_-) - \bar{\mathcal{A}}^i(\tilde{x}_+) = d_i \text{ for } i=1, 2}} \cM(\tilde{x}_-, \tilde{x}_+) \times [0,1].
\end{equation}

Restricting these charts to $t=0$ or $t=1$ manifestly makes them into inductions of the Kuranishi charts constructed using $(\tilde{\Omega}^i, N^1_i, h^i)$ for $i=2,1$, respectively, as in 
\cite[Proposition~6.31]{AMS}. 
    
The methods of Section 3 and \ref{sec:compatible-smoothing} allow one to use these to define a compatible system of derived $\langle k \rangle$-orbifold charts $\mathcal{C}'(H, J)_{bord}$ on the trivial bordism of flow categories $\mathcal{C}(H, J)_{bord}$, in such a way that this gives a bordism of derived $\langle k \rangle$-orbifold charts between the previously chosen systems of derived $\langle k \rangle$-orbifold  charts $\CC'(H, J)_1$ and $\CC'(H, J)|_2$ associated to $(\tilde{\Omega}^i, N^1_i, h^i)$ for $i=1,2$, respectively. We conclude by applying Propositions  \ref{prop:novikov-bifurcation-continuation} and \ref{prop:qis-type-indep-of-perturbation}.

\end{proof}

\printbibliography

\end{document}